\documentclass[12pt, reqno]{amsart}
\usepackage{inputenc}
\usepackage[T1]{fontenc}
\usepackage{lmodern}
\usepackage{amsmath}
\usepackage{amscd}
\usepackage{amssymb}
\usepackage{mathtools}
\usepackage{latexsym}
\usepackage{MnSymbol}
\usepackage{amsrefs}
\usepackage{nicefrac}
\usepackage{microtype}
\usepackage{color}

\usepackage{enumitem} % advanced enumerate, which handles label and ref
\setlist[enumerate,1]{label=\textup{(\arabic*)}}% ensure enumerates in theorems are upright

\usepackage[all]{xy}
\newdir{ >}{{}*!/-5pt/@{>}}

\usepackage[pdftitle={Nonarchimedean bornologies, cyclic homology and rigid cohomology},
pdfauthor={Guillermo Corti\~nas, Joachim Cuntz, Ralf Meyer, Georg Tamme},
pdfsubject={Mathematics}
]{hyperref}

% Print dois also for books, and amke them hyperlinks
% doi and eprint added to standard definition from amsrefs.sty
\BibSpec{book}{%
  +{}  {\PrintPrimary}                {transition}
  +{,} { \textit}                     {title}
  +{.} { }                            {part}
  +{:} { \textit}                     {subtitle}
  +{,} { \PrintEdition}               {edition}
  +{}  { \PrintEditorsB}              {editor}
  +{,} { \PrintTranslatorsC}          {translator}
  +{,} { \PrintContributions}         {contribution}
  +{,} { }                            {series}
  +{,} { \voltext}                    {volume}
  +{,} { }                            {publisher}
  +{,} { }                            {organization}
  +{,} { }                            {address}
  +{,} { \PrintDateB}                 {date}
  +{,} { }                            {status}
  +{}  { \parenthesize}               {language}
  +{}  { \PrintTranslation}           {translation}
  +{;} { \PrintReprint}               {reprint}
  +{.} { }                            {note}
  +{.} {}                             {transition}
  +{} { \PrintDOI}                   {doi}
  +{} { available at \url}            {eprint}
  +{}  {\SentenceSpace \PrintReviews} {review}
}

\renewcommand*{\PrintDOI}[1]{\href{http://dx.doi.org/\detokenize{#1}}{doi: \detokenize{#1}}}

\setlength{\textwidth}{14cm} \setlength{\textheight}{22cm}
\setlength{\oddsidemargin}{0cm} \setlength{\topmargin}{0cm}
\setlength{\parindent}{0pt} \setlength{\parskip}{0.5cm}
\setcounter{tocdepth}{2}

\newcommand{\comment}[1]{}  %to comment out chunks of text

%\definecolor{orange}{rgb}{1,0,0}
%\definecolor{cadmiumgreen}{rgb}{0, 0.78, 0.05}
%\newcommand{\jc}[1]{\begin{color}{blue}{#1}\end{color}}
%\newcommand{\ralf}[1]{\begin{color}{blue}{#1}\end{color}}
%\newcommand{\jo}[1]{\begin{color}{cadmiumgreen}{#1}\end{color}}
%\newcommand{\ja}[1]{\begin{color}{orange}{#1}\end{color}}

\theoremstyle{plain}
\newtheorem{theorem}{Theorem}[section]
\newtheorem{lemma}[theorem]{Lemma}
\newtheorem{corollary}[theorem]{Corollary}
\newtheorem{proposition}[theorem]{Proposition}
\theoremstyle{remark}
\newtheorem{remark}[theorem]{Remark}
\theoremstyle{definition}
\newtheorem{definition}[theorem]{Definition}
\newtheorem{example}[theorem]{Example}
\numberwithin{theorem}{section}
%
%-------------------------------------------------------------------------------
% Abkuerzungen fuer LaTEX -- Befehle
%
%
%
\newcommand{\bgl}{\begin{equation}} %eine Gleichung mit Ziffer
\newcommand{\egl}{\end{equation}}
\newcommand{\btheo}{\begin{theorem}}
\newcommand{\etheo}{\end{theorem}}
\newcommand{\blemma}{\begin{lemma}}
\newcommand{\elemma}{\end{lemma}}
\newcommand{\bproof}{\begin{proof}}
\newcommand{\eproof}{\end{proof}}
\newcommand{\bbew}{\begin{beweis}}
\newcommand{\ebew}{\end{beweis}}
\newcommand{\bremark}{\begin{remark}}
\newcommand{\eremark}{\end{remark}}
\newcommand{\bex}{\begin{example}\em}
\newcommand{\eex}{\end{example}}
\newcommand{\bdefin}{\begin{definition}}
\newcommand{\edefin}{\end{definition}}
\newcommand{\bprop}{\begin{proposition}}
\newcommand{\eprop}{\end{proposition}}
\newcommand{\bcor}{\begin{corollary}}
\newcommand{\ecor}{\end{corollary}}
\newcommand{\bfa}{\begin{cases}} %Fallunterscheidung
\newcommand{\efa}{\end{cases}}

%
%---------------------------------------------------------------------------
% Kalligraphische Buchstaben
%
\newcommand{\cC}{\mathcal C}

\newcommand{\cF}{\mathcal F}
\newcommand{\cI}{\mathcal I}

\newcommand{\cO}{\mathcal O}

%
% doppelter Balken
%
\newcommand\C{\mathbb C}
\newcommand\N{\mathbb N}
\newcommand\Q{\mathbb Q}
\newcommand\R{\mathbb R}
\newcommand\Z{\mathbb Z}

\newcommand{\bdd}{\mathcal B}% bornology
\newcommand{\coma}{\widehat}% pi-adic completion
\newcommand{\comb}{\overbracket[.7pt][1.4pt]}% bornological completion
% .7 is thickness, 2pt height of corners, was \wideparen

\newcommand*{\ling}[1]{#1_\mathrm{lg}}% linear growth bornology
\newcommand*{\comling}[1]{\comb{#1_\mathrm{lg}}}% linear growth bornology completion

\newcommand*{\gral}[3]{\ul{#1}{}_{#2,#3}}% alpha-growth bornology
\newcommand*{\comg}[3]{\comb{\ul{#1}_{#2,#3}}}% alpha-growth bornology completion
\newcommand*{\comJ}[2]{\comb{\ul{#1}_{#2}}}
\newcommand*{\tub}[3]{\mathcal{T}_{#3}(#1,#2)}% tube algebra

\newcommand{\updagger}{\textup{\tiny\!\!\dagger}}% upright dagger also in italics

\newcommand{\Bor}{\fB\mathrm{or}}% category of all bornological modules
\newcommand{\borno}{\comb{\Bor}(\dvr)}% complete bornological modules
\newcommand{\fB}{\mathfrak B}

\newcommand{\Vect}{\mathrm{Vect}}
\newcommand*{\norm}[1]{\left\|#1\right\|} % Norm
\newcommand{\defeq}{\mathrel{:=}} % per Definition

\newcommand{\triqui}{\vartriangleleft}
\newcommand{\rig}{\mathrm{rig}}
\newcommand{\Spec}{\operatorname{Spec}}% Spektrum

\newcommand\hotimes{\mathbin{\comb{\otimes}}}% bornologically complete tp
\newcommand\haotimes{\mathbin{\coma{\otimes}}}% adically complete tp

\DeclareMathOperator{\coker}{coker}
\DeclareMathOperator{\Tor}{Tor}

\DeclareMathOperator{\ind}{ind-}
\DeclareMathOperator*{\holim}{holim}

\newcommand{\Hy}{\mathbb{H}}

\newcommand{\cS}{\mathcal{S}}
\newcommand{\Sp}{\mathrm{Sp}}
\newcommand{\fm}{\mathfrak{m}}
\newcommand{\spec}{\mathrm{sp}}

\newcommand{\op}{\mathrm{op}}% opposite algebra
% noncommutative
\newcommand{\ev}{\mathrm{ev}}% evaluation homomorphism
\newcommand{\even}{\mathrm{ev}}% even forms
\newcommand{\odd}{\mathrm{odd}}% odd forms
\newcommand{\id}{\mathrm{id}}% identity map
\newcommand{\nb}{\nobreakdash}% non-breaking hyphen

\newcommand{\dvr}{V}% discrete valuation ring
\newcommand{\dvgen}{\pi}% uniformiser
% unique maximal ideal in \dvr
\newcommand{\dvf}{K}% field of fractions of \dvr
\newcommand{\resf}{k}% residue field of \dvr
\newcommand{\born}{\mathcal B}% bornology

\newcommand*{\abs}[1]{\lvert #1\rvert}% absolute value, valuation norm
% round down
% round down
%\newcommand*{\pepsilon}{\abs{\dvgen}}
\newcommand*{\pepsilon}{\epsilon}

% characteristic
\DeclareMathOperator{\Hom}{Hom}% space of linear maps
\DeclareMathOperator{\HP}{HP}% periodic cyclic homology
\DeclareMathOperator{\HH}{HH}% Hochschild homology
\newcommand{\ul}{\underline}

\newcommand{\chain}{\mathsf{cc}}% the most important chain complex
\newcommand{\homo}{\mathsf{hc}}% the homology of the most important chain complex
\newcommand{\chaindR}{\mathsf{cdR}}
\newcommand{\homdR}{\mathsf{hdR}}

\begin{document}

\title[Bornologies, cyclic homology and rigid cohomology]{Nonarchimedean bornologies, cyclic homology and rigid cohomology}
\author{Guillermo Corti\~nas}
\address{Dep. Matem\'atica-IMAS, FCEyN-UBA\\ Ciudad Universitaria Pab 1\\
1428 Buenos Aires\\ Argentina}
\email{gcorti@dm.uba.ar}\urladdr{http://mate.dm.uba.ar/\~{}gcorti}

\author{Joachim Cuntz}
\address{Mathematisches Institut\\
  Westf\"alische Wilhelms-Universit\"at M\"unster\\
  Ein\-stein\-str.\ 62\\
  48149 M\"unster\\
  Germany}
\email{cuntz@math.uni-muenster.de}

\author{Ralf Meyer}
\address{Mathematisches Institut\\
  Georg-August Universit\"at G\"ottingen\\
  Bun\-sen\-stra\-\ss{}e 3--5\\
  37073 G\"ottingen\\
  Germany}
\email{rmeyer2@uni-goettingen.de}

\author{Georg Tamme}
\address{Universit\"at Regensburg\\
  Fakult\"at f\"ur Mathematik\\
  93040 Regensburg\\
  Germany}
\email{georg.tamme@ur.de}

%\date{\today} 

\begin{abstract}
  Let~\(\dvr\) be a complete discrete valuation ring with residue
  field~\(\resf\) and with fraction field~\(\dvf\) of characteristic~\(0\).
  We clarify the analysis behind the Monsky--Washnitzer completion
  of a commutative \(\dvr\)\nb-algebra using spectral radius
  estimates for bounded subsets in complete bornological
  \(\dvr\)\nb-algebras.  This leads us to a functorial chain complex
  for commutative \(\resf\)\nb-algebras that computes Berthelot's
  rigid cohomology.  This chain complex is related to the
  periodic cyclic homology of certain complete bornological
  \(\dvr\)\nb-algebras.
\end{abstract}
\thanks{2010 Mathematics Subject Classification. Primary 14F30, 14F40, 19D55; Secondary 14G22, 13D03.}
\thanks{The first named author was supported by Conicet
and partially supported by grants UBACyT 20021030100481BA, PIP
112-201101-00800CO, PICT 2013-0454, and MTM2015-65764-C3-1-P (Feder funds).\\ The second named
  author was supported by DFG through CRC 878 and by the ERC through
  AdG 267\,079. \\
  The fourth named author was supported by DFG through CRC 1085.}

\maketitle
\section{Introduction}

The problem of defining a cohomology theory with good properties for
an algebraic variety over a field~\(\resf\)
of non-zero characteristic has a long history.  In the breakthrough
paper~\cite{mw} by Monsky and Washnitzer, such a theory for smooth
affine varieties was constructed as follows.  Take a complete discrete
valuation ring~\(\dvr\) of mixed characteristic
with uniformizer~\(\dvgen\)
and residue field \(\resf=\dvr/\dvgen\dvr\)
(for example, \(\dvr\)
the ring of Witt vectors~\(W(\resf)\)
if~\(\resf\)
is perfect).  Let~\(\dvf\)
be the fraction field of~\(\dvr\).
Choose a \(\dvr\)\nb-algebra~\(R\)
which is a lift mod~\(\dvgen\)
of the coordinate ring of the variety and which is smooth
over~\(\dvr\)
(such a lift exists by~\cite{mme}).  Monsky--Washnitzer then introduce
the `weak' or dagger-completion~\(R^\updagger\)
of~\(R\)
and define their cohomology as the de~Rham cohomology of
\(R^\updagger\otimes_\dvr\dvf\).
The construction of a weak completion has become a basis for the
definition of cohomology theories in this context ever since.  The
Monsky--Washnitzer theory has been generalized by
Berthelot~\cite{berth} to ``rigid cohomology,'' which is a
satisfactory cohomology theory for general varieties over~\(\resf\).
Its definition uses certain de~Rham complexes on rigid analytic spaces.

The definition of the Monsky--Washnitzer cohomology  depends on the
choice of certain smooth algebras over~\(\dvr\). Also Berthelot's definition of
rigid cohomology depends on choices.
So the chain complexes that compute them are not yet functorial for
algebra homomorphisms.  Only their homology is functorial.
Functorial complexes that compute
rigid cohomology have been constructed by Besser \cite{Besser}. However,
the construction is based on some abstract existence statements, and is not at all
explicit.

One aim of our article is the construction of a natural and explicit
chain complex that computes rigid cohomology. A second aim is
linking rigid cohomology to cyclic homology. For cyclic homology in
characteristic 0 it was recognized long ago that analytic versions
of the theory can be treated in an elegant way using bornological
structures on the underlying algebras, see \cites{ConEntire,
  Meyer:HLHA}.
On the other hand, the relevance of a bornological point of view in the context of dagger completions has
been highlighted recently by Bambozzi~\cite{Bambozzi:Affinoid}. For our purposes here, we again find that it is natural to work in a framework based on bornological structures. This allows to generalize the weak completions of Monsky--Washnitzer to bornological versions of $J$-adic completions for an ideal $J$ in a commutative $\dvr$-algebra. The development of the corresponding techniques is our third project.

To prepare the ground for our theory, in Section~\ref{sec:bornologies} we first recall some basics on
bornological \(\dvr\)\nb-modules
and \(\dvf\)\nb-vector
spaces.  We are particularly
interested in completions and completed tensor products, which play a
crucial role in our theory.  We also relate bornological
\(\dvr\)\nb-modules
to inductive systems of \(\dvr\)\nb-modules,
carrying over well known results for bornological vector spaces over
\(\R\)
and~\(\C\).
Section~\ref{sec:borno} contains our bornological interpretation of
the weak completions used for Monsky--Washnitzer cohomology and rigid
cohomology.  The main point here is the spectral radius~\(\varrho(S)\)
of a bounded subset~\(S\)
in a complete bornological algebra, which is concerned with the growth
rate of the powers~\(S^n\),
\(n\in\N\).
This is a non-negative \emph{real} number or~\(\infty\).
The Monsky--Washnitzer completion of a finitely generated, commutative
\(\dvr\)\nb-algebra~\(R\)
is the smallest completion of~\(R\)
in which all  finitely generated~\(\dvr\)\nb-submodules $S$ of~\(R\)
have \(\varrho(S)\le 1\).
This completion makes sense also for an infinitely generated
for noncommutative
algebra~\(R\)
and is denoted by~\(\comling{R}\)
in this generality.  Similarly, \(\comg{R}{J}{\alpha}\)
for an ideal \(J\triqui R\)
with \(\dvgen\in J\)
and \(\alpha\in [0,1]\)
is the smallest completion of~\(\ul{R}\defeq R\otimes \dvf\)
in which all finitely generated~\(\dvr\)\nb-submodules~\(S\)
of~\(J\)
have \(\varrho(S) \le\pepsilon^\alpha\) where $\pepsilon=\abs{\dvgen}$ is the absolute value of the uniformizer of $\dvr$.
Section~\ref{sec:borno} first describes these completions more
explicitly and proves some basic properties.  Then it relates them to
Monsky--Washnitzer completions of finitely generated subalgebras
of~\(R\) and certain generalized tube algebras for \(J\triqui R\).

The first step then, in our natural construction of a complex computing rigid cohomology, is to present a
commutative \(\resf\)\nb-algebra~\(A\)
by the free commutative \(\dvr\)\nb-algebra
\(R\defeq \dvr[A]\)
generated by the set~\(A\).
This comes with a canonical surjective \(\dvr\)\nb-algebra
homomorphism \(p\colon R\to A\).
Let \(J\triqui R\)
be its kernel.  Since the algebra~\(R\)
is in general neither finitely generated nor Noetherian,
many results of Monsky and Washnitzer do not apply to it.
But our bornological
approach also works fine for such infinitely generated algebras.  We
define a family of weak completions \(\comg{R}{J}{\alpha}\)
of \(\ul{R}\defeq R\otimes \dvf\)
that depend on the ideal \(J\)
and \(\alpha\in [0,1]\).  Set $\N\defeq\Z_{\ge 0}$.
The de~Rham complexes of these weak completions for
\(\alpha=\nicefrac1m\),
\(m\in\N_{\ge1}\),
form a projective system.  Its homotopy projective limit is a chain
complex that is naturally associated to~\(A\).
We show that it computes the rigid cohomology if~\(A\)
is of finite type over \(\resf\)\nb.
The first step to see this is a homotopy invariance result: the
completions \(\comg{R}{J}{\alpha}\)
and \(\comg{R'}{J'}{\alpha}\)
for another free commutative \(\dvr\)\nb-algebra~\(R'\)
with a surjection \(p'\colon R'\to A\)
and \(J'\defeq \ker(p')\)
are homotopy equivalent with `dagger continuous' homotopies. This kind of homotopy is defined
using a weak completion of the polynomial
algebra~\(\dvf[t]\). As a consequence of the homotopy,
the de~Rham chain complexes for \(\comg{R}{J}{\alpha}\)
and \(\comg{R'}{J'}{\alpha}\) are homotopy equivalent.
When~\(R'\)
is finitely generated, we then show in Section~\ref{sec:compare_rigid} that the homotopy limit of the de Rham
complexes for the system \(\left(\comg{R'}{J'}{1/m}\right)\)
computes rigid cohomology using results of Gro{\ss}e-Kl\"onne
in~\cite{gkdr}.

To establish the link to cyclic homology, we compute in Section~\ref{sec:HH_dagger} the Hochschild and periodic
cyclic homology of the completions \(\comling{R}\)
and \(\comg{R}{J}{\alpha}\).
This is based on flatness results for Monsky--Washnitzer completions
of torsion-free, finitely generated, commutative \(\dvr\)\nb-algebras,
which allow to compute their Hochschild homology.  We obtain an analogue of Connes' computation, in \cite{connes}, of the periodic cyclic homology for
algebras of smooth functions on manifolds in this setting, showing
that the periodic cyclic homology of \(\comg{R}{J}{\alpha}\)
is naturally isomorphic to the cohomology of the de~Rham complex for
\(\comg{R}{J}{\alpha}\) made periodic. This result is formally very similar to the theorem of Feigin--Tsygan in~\cite{ft}*{Theorem 5}, \cite{ft2}*{Theorem 6.1.1},  which relates cyclic homology in characteristic 0 to Grothendieck's infinitesimal cohomology (for a different proof, covering also the non-affine case, see \cite{coco}*{Theorem 6.7}). Our proof uses the fact that different flat resolutions give quasi-isomorphic complexes and in fact, we find that this method also gives a very short new proof of the Feigin--Tsygan Theorem. We further prove in this section that periodic cyclic
homology for complete bornological \(\dvf\)\nb-algebras
is invariant under dagger-continuous homotopies.

On the basis of the results in Section~\ref{sec:HH_dagger} we obtain in Section~\ref{sec:homology_residue} a second chain complex
that models rigid cohomology made periodic.  Namely, we take the cyclic bicomplexes of
\(\comg{R}{J}{\alpha}\)
for \(\alpha=\nicefrac1m\),
which compute the periodic cyclic homology of these algebras. In Section~\ref{sec:HH_dagger} we had seen that the periodic cyclic homology of \(\comg{R}{J}{\alpha}\)
is naturally isomorphic to the cohomology of the de~Rham complex for
\(\comg{R}{J}{\alpha}\) made periodic.
Thus we see that periodic rigid cohomology is isomorphic to the periodic cyclic homology (as defined in~\cite{corval}) of
the pro-algebra given by the projective system of algebras
\(\comg{R}{J}{\nicefrac1m}\);
here pro-algebras are needed, and they produce the homotopy projective limit above.

Our article develops a general conceptual framework for the study of bornological completions that generalize the weak completions of Monsky--Washnitzer, and contains much more material than what is needed for the proof of our result in Theorem \ref{thm:rig}. In the last section we describe a quick route to the construction of the natural complexes that compute rigid cohomology. This approach uses only elementary properties of bornological completions and avoids the finer analysis of spectral radius or linear growth completions. It also brings to light more clearly the analogy with Grothendieck's original construction of infinitesimal cohomology in characteristic 0.

\goodbreak

\noindent\emph{Acknowledgements.} The authors wish to thank Peter
Schneider for communicating a very helpful result at an early stage of the project.

\subsection{Notation}
\label{sec:notations}

Let~\(\dvr\) be a complete discrete valuation ring and
let~\(\dvgen\) be a generator for the maximal ideal in~\(\dvr\).
Let~\(\dvf\) be the fraction field of~\(\dvr\), that is,
\(\dvf=\dvr[\dvgen^{-1}]\).  Let \(\resf=\dvr/\dvgen \dvr\) be the
residue field.  In the sections on periodic cyclic homology and
rigid cohomology, we need~\(\dvf\) to have characteristic~\(0\).  In the
earlier sections, this assumption is not needed and not made.

Every element of~\(\dvf\) is written uniquely as \(x=u\dvgen^{\nu(x)}\),
where \(u\in \dvr\setminus\dvgen \dvr\) and \(\nu(x)\in\Z\cup\{-\infty\}\)
is the \emph{valuation} of~\(x\).  We fix \(0<\pepsilon<1\), and define
the \emph{absolute value} \(\abs{\hphantom{x}}\colon \dvf\to\R_{\ge
  0}\) by \(\abs{x}=\pepsilon^{\nu(x)}\) for \(x\neq 0\) and \(\abs{0}=0\).

If~\(M\) is a \(\dvr\)\nb-module, let~\(\ul{M}\) be the associated
\(\dvf\)\nb-vector space \(M \otimes \dvf\).  A \(\dvr\)\nb-module~\(M\) is
\emph{flat} if and only if the canonical map \(M \to \ul{M}\)
is injective, if and only if it is \emph{torsion-free}, that is,
\(\dvgen x=0\) implies \(x=0\).

\section{Bornological modules over discrete valuation rings}
\label{sec:bornologies}

This section defines (convex) bornological \(\dvr\)\nb-modules,
convergence of sequences in them, and related notions of
separatedness and completeness, completion, and completed tensor
products.  We also establish some basic results about these notions.
We show, for example, that the category of complete bornological
modules is equivalent to the category of inductive systems of
complete \(\dvr\)\nb-modules with injective transition maps
(Proposition~\ref{prop:born_versus_ind_complete}).  We use the
notation in Section~\ref{sec:notations} without further comment.

\begin{definition}[compare~\cite{Hogbe-Nlend:Bornologies}]
  Let~\(M\) be a \(\dvr\)\nb-module.  A (convex) \emph{bornology}
  on~\(M\) is a family~\(\bdd\) of subsets of~\(M\), called
  \emph{bounded} subsets, satisfying
  \begin{itemize}
  \item every finite subset is in~\(\bdd\);
  \item subsets and finite unions of sets in~\(\bdd\) are in~\(\bdd\);
  \item if \(S \in \bdd\), then the \(\dvr\)\nb-submodule generated
    by~\(S\) also belongs to~\(\bdd\).
  \end{itemize}
  A \emph{bornological $\dvr$\nb-module} is a $\dvr$\nb-module equipped with a bornology.
\end{definition}

The condition on unions holds if and only if~\(\bdd\) with inclusion
as partial order is directed.  The third condition is a convexity
condition and is related in~\cite{Bambozzi:Affinoid} to standard
notions of convexity for bornological \(\mathbb{R}\)\nb-vector
spaces.  One may also consider bornologies that do not satisfy the
convexity condition above, but we shall not do so in this article.

\begin{definition}
  \label{def:born_dvf}
  A \emph{bornological \(\dvf\)\nb-vector space} is a bornological
  \(\dvr\)\nb-module such that multiplication by~\(\dvgen\) is an
  invertible map with bounded inverse.
\end{definition}

\begin{example}
  \label{ex:fine}
  Let~\(M\) be a \(\dvr\)\nb-module.  The collection~\(\cF\) of all
  subsets of finitely generated submodules of~\(M\) is a bornology, called the
  \emph{fine} bornology.

  The fine bornology is the smallest possible bornology: a finitely
  generated submodule of~\(M\) is the set of \(\dvr\)\nb-linear
  combinations of the elements of a finite subset and hence bounded in any
  bornology.  If~\(M\) itself is finitely generated, then all bounded
  subsets are bounded in the fine bornology, so this is the only
  bornology on~\(M\) in this case.
\end{example}

\begin{example}
  \label{exa:norm_bornology}
  Let~\(M\) be a \(\dvr\)\nb-module.  A (nonarchimedean) \emph{seminorm}
  on~\(M\) is a map \(\norm{\hphantom{x}}\colon M\to \R_{\ge 0}\) such
  that
  \begin{alignat*}{2}
    \norm{ax}&=\abs{a}\norm{x},&\qquad \text{for all }&a\in \dvr,\ x\in M,\\
    \norm{x+y}&\le\max\{\norm{x},\norm{y}\}&\qquad \text{for all }&x,y\in M.
  \end{alignat*}
  A \emph{norm} is a seminorm for which \(\norm{x}=0\)
  implies \(x=0\).
  If~\(M\)
  admits a norm, \(M\) must be torsion-free.  Let~\((I,\le)\)
  be a directed set and let~\(\norm{\hphantom{x}}_i\)
  for \(i\in I\)
  be seminorms on~\(M\)
  with \(\norm{\hphantom{x}}_i \le \norm{\hphantom{x}}_j\)
  for \(i,j\in I\)
  with \(i\le j\).
  Call a subset of~\(M\)
  bounded if it is bounded with respect to all these seminorms.  This
  is a bornology on~\(M\).
  It is analogous to the standard (von Neumann) bornology on a locally
  convex topological vector space over~\(\R\), see
  \cite{Meyer:HLHA}*{Example~1.11}.

  A seminorm on a %bornological
	\(\dvf\)\nb-vector space $X$ is %uniquely
  determined up to equivalence by its unit ball \(M\);  it is equivalent to the \emph{gauge seminorm}
	\begin{equation}\label{eq:gauge}
	\norm{x}_M = \inf \{\abs{a}^{-1} : a x\in M\}.
	\end{equation}
	Indeed, by \cite{schneider}*{Lemma~2.2.ii}, we have
\begin{equation}\label{ineq:gauge}
	\pepsilon\norm{x}_M\le \norm{x}\le \norm{x}_M \qquad (x\in X).
	\end{equation}
		
	Thus the bornology associated to a seminorm
  determines the seminorm up to equivalence.
\end{example}

An \emph{inductive system} in a category~\(\cC\) is a functor \(X\colon
I\to \cC\) from a directed set~\(I\).  We identify such a functor with
the collection \(X=\{X_i,\,\phi_{ij}\colon X_i\to X_j\}\) of the objects
\(X_i=X(i)\) and the transition maps \(\phi_{ij}=X(i<j)\).  The
inductive systems in \(\cC\) form a category \(\ind\cC\), where the
homomorphisms from \(X\) to \(Y\colon J\to\cC\) are given by
\[
\hom_{\ind\cC}(X,Y)=\varprojlim_i\varinjlim_j\hom_\cC(X_i,Y_j).
\]

\begin{proposition}
  \label{prop:born_versus_ind}
  The category of bornological \(\dvr\)\nb-modules and bounded
  \(\dvr\)\nb-\hspace*{0pt}module maps is equivalent to the full
  subcategory of the category of inductive systems of\/
  \(\dvr\)\nb-modules consisting of those inductive systems
  \((X_i,\varphi_{ij}\colon X_i \to X_j)\) that have injective
  maps~\(\varphi_{ij}\).
\end{proposition}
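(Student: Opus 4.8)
The plan is to exhibit mutually quasi\nb-inverse functors between the two categories; this is the nonarchimedean analogue of the classical equivalence for bornological \(\R\)\nb- and \(\C\)\nb-vector spaces, and the proof is essentially the same.

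In one direction, attach to a bornological \(\dvr\)\nb-module~\(M\) the inductive system \(F(M)\) whose index set is the collection of \emph{bounded \(\dvr\)\nb-submodules} of~\(M\), ordered by inclusion, with the inclusions as transition maps. This index set is directed: if \(M_1,M_2\) are bounded submodules, then \(M_1\cup M_2\) is bounded, so by the convexity axiom the submodule \(M_1+M_2\) it generates is bounded and contains both. All transition maps are injective, so \(F(M)\) lies in the asserted full subcategory. On morphisms, a bounded \(\dvr\)\nb-linear map \(f\colon M\to N\) restricts on each bounded submodule \(M_i\subseteq M\) to a map into the bounded submodule \(\langle f(M_i)\rangle\subseteq N\) generated by the bounded set \(f(M_i)\); these restrictions are compatible as \(M_i\) grows and hence define an element of \(\varprojlim_i\varinjlim_j\hom_\dvr(M_i,N_j)=\hom_{\ind}(F(M),F(N))\), making \(F\) a functor.

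In the other direction, given an inductive system \(X=(X_i,\varphi_{ij})\) of \(\dvr\)\nb-modules with injective transition maps, set \(G(X)\defeq\varinjlim_i X_i\). Since the colimit is filtered and the \(\varphi_{ij}\) are injective, an element of \(X_i\) maps to zero in \(G(X)\) only if some \(\varphi_{ij}\) already kills it; thus each canonical map \(X_i\to G(X)\) is injective, with image a \(\dvr\)\nb-submodule \(\operatorname{im}(X_i)\subseteq G(X)\), and \(\operatorname{im}(X_i)\subseteq\operatorname{im}(X_j)\) for \(i\le j\). Endow \(G(X)\) with the bornology in which a subset is bounded iff it is contained in some \(\operatorname{im}(X_i)\); the bornology axioms are immediate from directedness of the index set and the fact that each \(\operatorname{im}(X_i)\) is a submodule. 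A morphism of ind\nb-objects \(X\to Y\) is represented by a compatible family of \(\dvr\)\nb-linear maps \(X_i\to Y_{j(i)}\), hence induces a map \(G(X)\to G(Y)\) carrying \(\operatorname{im}(X_i)\) into \(\operatorname{im}(Y_{j(i)})\), which shows it is bounded; so \(G\) is a functor.

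It then remains to construct natural isomorphisms \(G\circ F\cong\id\) and \(F\circ G\cong\id\). For \(G\circ F\): for a bornological module~\(M\), the colimit of its bounded submodules is \(\bigcup_i M_i=M\), since every \(x\in M\) lies in the finitely generated—hence bounded (Example~\ref{ex:fine})—submodule \(\dvr x\); and the bornology reconstructed by~\(G\), whose bounded sets are those contained in some bounded submodule, coincides with the original one, since a subset is bounded iff it is contained in the bounded submodule it generates. For \(F\circ G\): for \(X\) with injective transitions and \(M=G(X)\), the isomorphisms \(X_i\weq\operatorname{im}(X_i)\) identify the diagram \(X\) with the composite of the ``all bounded submodules'' diagram \(F(M)\) with the functor \(i\mapsto\operatorname{im}(X_i)\); this functor is cofinal, because every bounded submodule of~\(M\) is by construction contained in some \(\operatorname{im}(X_i)\), and precomposition with a cofinal functor leaves the associated ind\nb-object unchanged, so \(X\cong F(M)\) as ind\nb-objects. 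Naturality of both isomorphisms is routine. The only genuinely substantive steps are the embedding of each \(X_i\) into the filtered colimit and this cofinality argument; everything else is bookkeeping.
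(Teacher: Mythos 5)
Your proposal is correct and follows essentially the same route as the paper: the same two functors (bounded submodules with inclusions in one direction, the filtered colimit with the ``contained in some image'' bornology in the other), with the isomorphism \(F\circ G\cong\id\) established by the same cofinality argument. No gaps.
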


\begin{proof}
  Let~\(X\) be a bornological \(\dvr\)\nb-module.  Let~\(\born\) be
  the directed set of bounded subsets, and let
  \(\born'\subseteq\born\) be the subset of all bounded
  \(\dvr\)\nb-submodules.  Convexity means that~\(\born'\) is
  cofinal in~\(\born\), so it is again directed.  The
  \(\dvr\)\nb-submodules \(S\in\born'\) with the inclusion maps
  \(S\hookrightarrow T\) for \(S\subseteq T\) form an inductive
  system of \(\dvr\)\nb-modules with injective transition maps.  A
  bounded map \((X_1,\born_1)\to (X_2,\born_2)\) maps any bounded
  \(\dvr\)\nb-submodule of~\(X_1\) into some bounded
  \(\dvr\)\nb-submodule of~\(X_2\) and thus gives a morphism of
  inductive systems.  Hence we have defined a functor~\(A\) from
  bornological \(\dvr\)\nb-modules to inductive systems.

  Conversely, an inductive system of \(\dvr\)\nb-modules
  \((X_i,\varphi_{ij}\colon X_i \to X_j)\) has an inductive limit
  \(\dvr\)\nb-module~\(X\).  We equip~\(X\) with the ``inductive
  limit bornology'': a subset of~\(X\) is bounded if and only if it
  is contained in the image of~\(X_i\) for some~\(i\).  A morphism
  of inductive systems induces a bounded \(\dvr\)\nb-module map on
  the inductive limits, so we get a functor~\(L\) from inductive
  systems of \(\dvr\)\nb-modules to bornological
  \(\dvr\)\nb-modules.  The inductive limit comes with natural
  bounded \(\dvr\)\nb-module maps \(X_i\to X\).  These are injective
  if all the transition maps \(\varphi_{ij}\colon X_i\to X_j\) are
  injective.  In this case, the inductive system \(A\circ
  L(X_i,\varphi_{ij})\) is isomorphic to~\((X_i,\varphi_{ij})\)
  because the bounded \(\dvr\)\nb-submodules~\(X_i\) form a cofinal
  subset in~\(\born'\).  Given a bornological
  \(\dvr\)\nb-module~\((X,\born)\), the inclusion maps
  \(S\hookrightarrow X\) for \(S\in\born'\) induce a natural
  isomorphism \(L\circ A(X) \cong X\).  Hence the functors \(A\)
  and~\(L\) provide the desired equivalence of categories.
\end{proof}

Let~\(S\) be a \(\dvr\)\nb-module.  The
\emph{\(\dvgen\)\nb-adic completion} of~\(S\) is the \(\dvr\)\nb-module
\[
\coma{S} = \varprojlim_{m\to\infty} S/\dvgen^m S,
\]
equipped with the map \(S\to \coma{S}\),
\(x\mapsto [x\bmod \dvgen^m S]_{m\in\N}\).
The module~\(S\)
is \emph{\(\dvgen\)\nb-adically
  complete} if the map \(S\to\coma{S}\)
is an isomorphism, and \emph{\(\dvgen\)\nb-adically
  separated} if the map \(S\to\coma{S}\)
is injective, that is, \(\bigcap_{m\in\N} \dvgen^m S = \{0\}\).
We give each \(S/\dvgen^m S\)
the discrete topology, and then~\(\coma{S}\)
the projective limit topology.  The topology on~\(S\)
induced by~\(\coma{S}\)
is called the \emph{\(\dvgen\)\nb-adic
  topology} on~\(S\).
Thus a sequence~\((x_n)_{n\in\N}\)
in~\(S\)
converges to \(x\in S\)
if and only if for each \(m\in\N\)
there are only finitely many \(n\in\N\)
with \(x_n - x \notin \dvgen^m S\).
The \(\dvgen\)\nb-adic
topology on~\(S\)
is generated by the pseudometric
\(d_S(x,y) = \pepsilon^{\sup \{m: x-y \in \dvgen^m S\}}\),
which is a metric if and only if~\(S\)
is \(\dvgen\)\nb-adically separated.

\begin{definition}
  \label{def:converge}
  Let~\(X\)
  be a bornological \(\dvr\)\nb-module.
  Let~\((x_n)_{n\in\N}\)
  be a sequence in~\(X\)
  and let \(x\in X\).
  If \(S\subseteq X\)
  is bounded, then \((x_n)_{n\in\N}\)
  \emph{\(S\)\nb-converges}
  to~\(x\)
  if there is a sequence~\((\delta_n)_{n\in\N}\)
  in~\(\dvr\)
  with \(\lim \delta_n=0\)
  in the \(\dvgen\)\nb-adic
  topology and \(x_n-x \in \delta_n\cdot S\)
  for all \(n\in\N\).
  A sequence in~\(X\)
  \emph{converges} if it \(S\)\nb-converges
  for some bounded subset~\(S\)
  of~\(X\).
  If~\(S\)
  is bounded, then \((x_n)_{n\in\N}\)
  is \emph{\(S\)\nb-Cauchy}
  if there is a sequence~\((\delta_n)_{n\in\N}\)
  in~\(\dvr\)
  with \(\lim \delta_n=0\)
  and \(x_n-x_m \in \delta_l\cdot S\)
  for all \(n,m,l\in\N\)
  with \(n,m\ge l\).
  A sequence in~\(X\)
  is \emph{Cauchy} if it is \(S\)\nb-Cauchy
  for some bounded \(S\subseteq X\).
  The bornological \(\dvr\)\nb-module~\(X\)
  is \emph{separated} if limits of convergent sequences are unique.
  It is \emph{complete} if it is separated and for every bounded
  \(S\subseteq X\)
  there is a bounded \(S'\subseteq X\)
  so that all \(S\)\nb-Cauchy sequences are \(S'\)\nb-convergent.
\end{definition}

\begin{example}
  \label{exa:fine_complete}
  Any \(\dvr\)\nb-module~\(M\)
  is complete in the fine bornology of Example~\ref{ex:fine} because
  all finitely generated \(\dvr\)\nb-modules
  are \(\dvgen\)\nb-adically
  complete.  A finite-dimensional bornological \(\dvf\)\nb-vector
  space carries a unique \emph{separated} bornology, namely, the fine
  bornology.
\end{example}

\begin{proposition}
  \label{prop:separated_complete}
  Let~\(X\) be a bornological \(\dvr\)\nb-module and~\(\born\) its
  family of bounded subsets.  A sequence in~\(X\) converges if and
  only if it is contained in a bounded \(\dvr\)\nb-submodule
  \(Y\subseteq X\) and converges in~\(Y\) in the \(\dvgen\)\nb-adic
  topology.  The bornological \(\dvr\)\nb-module \(X\) is separated
  if and only
  if all bounded \(\dvr\)\nb-submodules are \(\dvgen\)\nb-adically
  separated, if and only if bounded, \(\dvgen\)\nb-adically
  separated \(\dvr\)\nb-submodules are cofinal in~\(\born\) for
  inclusion.  It is complete if and only if bounded,
  \(\dvgen\)\nb-adically complete \(\dvr\)\nb-submodules are cofinal
  in~\(\born\).
\end{proposition}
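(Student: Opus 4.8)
The plan is to prove the three equivalences one at a time, relying throughout on two elementary facts: by the convexity axiom the bounded $\dvr$-submodules of~$X$ are cofinal in~$\born$, and a $\dvr$-submodule of a $\dvgen$-adically separated module is again $\dvgen$-adically separated. For the convergence criterion, if $(x_n)$ $S$-converges to~$x$ with witness $\delta_n\to 0$, then the $\dvr$-submodule $Y$ generated by $S\cup\{x\}$ is bounded, contains every~$x_n$ and~$x$, and satisfies $x_n-x\in\delta_n S\subseteq\dvgen^m Y$ whenever $\abs{\delta_n}\le\pepsilon^m$, so $(x_n)\to x$ in the $\dvgen$-adic topology of~$Y$. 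Conversely, if $(x_n)$ lies in a bounded submodule~$Y$ and converges $\dvgen$-adically to $x\in Y$, I set $k(n)\defeq\sup\{m\ge 0:x_n-x\in\dvgen^m Y\}\in\N\cup\{\infty\}$ and $\delta_n\defeq\dvgen^{\min(k(n),n)}$; then $\delta_n\to 0$ because $k(n)\to\infty$, and $x_n-x\in\delta_n Y$, so $(x_n)$ $Y$-converges to~$x$. The same computation applied to differences shows that a $\dvgen$-adically Cauchy sequence inside a bounded submodule is bornologically Cauchy, and that a $\dvgen$-adically convergent one is bornologically convergent with the same limit; this is used repeatedly below.

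For separatedness: if some bounded submodule~$Y$ is not $\dvgen$-adically separated, I pick $0\ne y\in\bigcap_m\dvgen^m Y$; then the constant zero sequence $Y$-converges to~$0$ and, via $\delta_n=\dvgen^n$, also to~$y$, so $X$ is not separated. Conversely, assuming all bounded submodules are $\dvgen$-adically separated, if a sequence converges to~$x$ and to~$x'$ I choose bounded submodules witnessing the two convergences and let~$Z$ be the bounded submodule they generate; both limits are $\dvgen$-adic limits of the sequence in the separated module~$Z$, so $x=x'$. The equivalence with ``$\dvgen$-adically separated bounded submodules are cofinal in~$\born$'' is then formal: bounded submodules are already cofinal, and submodules of $\dvgen$-adically separated submodules are $\dvgen$-adically separated.

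For completeness, the easy implication is that if the $\dvgen$-adically complete bounded submodules are cofinal then $X$ is separated by the previous paragraph, and given an $S$-Cauchy sequence $(x_n)$ with~$S$ a bounded submodule, subtracting~$x_0$ lets me assume $x_n\in S$; choosing a $\dvgen$-adically complete bounded submodule $S'\supseteq S$, the sequence is $\dvgen$-adically Cauchy, hence $\dvgen$-adically convergent, in~$S'$, hence $S'$-convergent by the convergence criterion. The converse is where the work lies. Given a bounded submodule~$T$ in a complete~$X$, I would apply completeness to~$T$ to obtain a bounded submodule $S_1\supseteq T$ through which all $T$-Cauchy sequences converge, and define $\overline T\subseteq X$ to be the set of limits in~$X$ of $\dvgen$-adically Cauchy sequences with all terms in~$T$. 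Using the convergence criterion one sees that each such limit lies in~$S_1$, so $\overline T$ is a $\dvr$-submodule, contained in~$S_1$ and hence bounded; it contains~$T$ (constant sequences) and is $\dvgen$-adically separated because $X$ is. The point is then to show $\overline T$ is $\dvgen$-adically complete.

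The main obstacle is exactly this last claim, more precisely the statement that every $\dvgen$-adically Cauchy sequence in~$\overline T$ has its limit — which exists in~$X$, the sequence being bornologically Cauchy in the bounded submodule~$\overline T$ — again in~$\overline T$. The naive argument fails because bornological convergence of such a sequence only produces $\dvgen$-adic convergence inside some possibly larger bounded submodule than~$\overline T$. I would circumvent this by a diagonal argument: pass to a subsequence whose successive differences lie in $\dvgen^\ell\overline T$, write these differences as $\dvgen^\ell w_\ell$ with $w_\ell\in\overline T$, expand each~$w_\ell$ as the limit of a $\dvgen$-adically Cauchy sequence in~$T$, and choose the diagonal indices growing fast enough (finitely many ``choose the index large enough'' conditions at each stage) that the resulting partial sums~$\sigma_L$ form a single $\dvgen$-adically Cauchy sequence in~$T$ while staying $\dvgen$-adically close inside~$S_1$ to $\sum_{\ell\le L}\dvgen^\ell w_\ell$; then $\sum_\ell\dvgen^\ell w_\ell=\lim_L\sigma_L$ lies in~$\overline T$ by definition, and adding back the first term of the subsequence gives the desired limit. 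Granting this, $\overline T\to\varprojlim_m\overline T/\dvgen^m\overline T$ is injective by separatedness and surjective: a compatible system lifts to a sequence $(z_m)$ in~$\overline T$ with $z_{m+1}-z_m\in\dvgen^m\overline T$, its limit~$z$ lies in~$\overline T$, and writing $z_{m+1}-z_m=\dvgen^m u_m$ with $u_m\in\overline T$ one gets $z-z_m=\dvgen^m\sum_{j\ge m}\dvgen^{j-m}u_j\in\dvgen^m\overline T$ after applying the claim once more to the partial sums, so $z$ represents the system. Hence $\overline T$ is a $\dvgen$-adically complete bounded submodule containing~$T$, and such submodules are cofinal.
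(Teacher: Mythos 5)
Your treatment of the convergence criterion, of separatedness, and of the easy direction of completeness matches the paper's. For the hard direction of completeness you build the same object as the paper --- the set $\overline T$ of limits of $\dvgen$-adically Cauchy sequences in~$T$, i.e.\ the image of $\coma{T}$ in~$S_1$ --- but you prove its $\dvgen$-adic completeness by a direct diagonal construction, whereas the paper argues structurally: the map $\coma{T}\to S_1$ has $\dvgen$-adically closed kernel (because $X$ is separated), so its image is a quotient of a $\dvgen$-adically complete module by a closed submodule and hence $\dvgen$-adically complete, with no rearrangement of double limits needed. Your route is workable but you should state the diagonal more carefully than ``partial sums along a single diagonal'': with a \emph{fixed} choice of approximants $t_{\ell,k_\ell}\in T$ for the $w_\ell$, the sums $\sigma_L=\sum_{\ell\le L}\dvgen^\ell t_{\ell,k_\ell}$ converge to $\sum_\ell\dvgen^\ell t_{\ell,k_\ell}$, which differs from $\sum_\ell\dvgen^\ell w_\ell$ by the fixed nonzero error $\sum_\ell\dvgen^\ell(t_{\ell,k_\ell}-w_\ell)$, and showing that this error lies in $\overline T$ is circular. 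The fix is to re-select the approximants at each stage: put $\tau_L=\sum_{\ell\le L}\dvgen^\ell t_{\ell,k_\ell(L)}$ with $k_\ell(L)$ large enough that $t_{\ell,k_\ell(L)}-w_\ell\in\dvgen^L S_1$ for all $\ell\le L$ (so $\tau_L-y_L\in\dvgen^L S_1$) and that $t_{\ell,k_\ell(L')}-t_{\ell,k_\ell(L)}\in\dvgen^{L+1-\ell}T$ for $L'\ge L$ (so $(\tau_L)$ is $\dvgen$-adically Cauchy in~$T$). This is still only ``finitely many conditions at each stage,'' consistent with what you wrote, and with it your argument closes; the paper's quotient argument simply buys you this step for free.
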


\begin{proof}
  We may choose~\(S\)
  in Definition~\ref{def:converge} as a bounded \(\dvr\)\nb-submodule
  and assume that \(x_0\in S\).
  Then \(x = (x-x_0)+x_0 \in \delta_0\cdot S + S\subseteq S\)
  and \(x_n= (x_n-x)+x\in \delta_n \cdot S + S\subseteq S\)
  for all \(n\in\N\).
  A sequence~\((x_n)\)
  in~\(S\)
  \(S\)\nb-converges
  to \(x\in S\)
  if and only if it converges in the \(\dvgen\)\nb-adic
  topology on~\(S\).
  If a sequence has two limit points \(x\neq y\),
  then we may find a single bounded \(\dvr\)\nb-module~\(S\)
  so that it \(S\)\nb-converges
  towards both \(x\)
  and~\(y\).
  Hence~\(S\)
  is not \(\dvgen\)\nb-adically
  separated.  Conversely, if there is such an~\(S\),
  then there is a \(\dvgen\)\nb-adically
  convergent sequence in~\(S\)
  with two limit points.  Hence the bornological
  \(\dvr\)\nb-module~\(X\)
  is not separated.  Since submodules of \(\dvgen\)\nb-adically
  separated \(\dvr\)\nb-modules
  remain separated, all bounded \(\dvr\)\nb-submodules
  are \(\dvgen\)\nb-adically
  separated once this happens for a cofinal set of bounded
  \(\dvr\)\nb-submodules.

  If a sequence~\((x_n)\) is Cauchy, then there is a bounded
  \(\dvr\)\nb-submodule~\(S\) such that \(x_n\in S\) for all
  \(n\in\N\) and the sequence is \(S\)\nb-Cauchy in the standard
  quasi-metric defining the \(\dvgen\)\nb-adic topology on~\(S\).
  In particular, if~\(S\) is \(\dvgen\)\nb-adically complete, then
  any \(S\)\nb-Cauchy sequence is \(S\)\nb-convergent.  Thus~\(X\)
  is complete if bounded, \(\dvgen\)\nb-adically complete
  \(\dvr\)\nb-submodules are cofinal in~\(\born\).  Conversely,
  assume that~\(X\) is complete and let \(S\in\born\).  Then~\(S\)
  is contained in a bounded \(\dvr\)\nb-submodule~\(S'\) by
  convexity.  Choose a bounded \(\dvr\)\nb-submodule \(S''\subseteq
  X\) so that \(S'\)\nb-Cauchy sequences are \(S''\)\nb-convergent.
  Any point in the \(\dvgen\)\nb-adic completion~\(\coma{S}'\)
  of~\(S'\) is the limit of a Cauchy sequence in~\(S'\).  Such
  sequences are \(S'\)\nb-Cauchy and hence \(S''\)\nb-convergent.
  Writing a point in~\(\coma{S}'\) as a limit of a Cauchy
  sequence and taking its limit in~\(S''\) defines a bounded
  \(\dvr\)\nb-module map \(\coma{S}'\to S''\).  The kernel of
  this map is \(\dvgen\)\nb-adically closed because~\(X\) is
  separated.  Hence its image is a quotient of a
  \(\dvgen\)\nb-adically complete \(\dvr\)\nb-module by a closed
  submodule; such quotients remain \(\dvgen\)\nb-adically complete.
  Hence the image of~\(\coma{S}'\) is a bounded,
  \(\dvgen\)\nb-adically complete \(\dvr\)\nb-submodule of~\(X\)
  containing~\(S\).  This proves that such submodules are cofinal
  in~\(\born\).
\end{proof}

\begin{remark}
  \label{rem:torsion-free_separated}
  Let~\(X\) be torsion-free.  Then \(y= \lambda x\) for
  \(x\in X\) and \(\lambda \in \dvr\) with \(\lambda\neq0\)
  determines~\(x\).  Thus \(\lambda^{-1} y = x\) is
  well-defined.  Then~\(X\) is not separated if and only if~\(X\)
  contains a bounded \(\dvf\)\nb-vector subspace.  If \(0\neq x\in
  \bigcap \dvgen^l S\) for some bounded \(\dvr\)\nb-submodule~\(S\),
  then the \(\dvr\)\nb-submodule generated by~\(\dvgen^{-n} x\) for
  \(n \in \N\) is a bounded \(\dvf\)\nb-vector subspace in~\(X\).
\end{remark}

\begin{proposition}
  \label{prop:born_versus_ind_complete}
  The category \(\borno\) of complete bornological \(\dvr\)\nb-modules
  and bounded \(\dvr\)\nb-module maps is equivalent to the full
  subcategory of the category of inductive systems of
  \(\dvgen\)\nb-adically complete \(\dvr\)\nb-modules consisting of
  all inductive systems \((X_i,\varphi_{ij}\colon X_i \to X_j)\)
  with injective maps~\(\varphi_{ij}\).  A similar statement holds
  for separated bornological \(\dvr\)\nb-modules and inductive
  systems of \(\dvgen\)\nb-adically separated \(\dvr\)\nb-modules.
\end{proposition}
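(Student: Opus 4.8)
The plan is to upgrade the equivalence of Proposition~\ref{prop:born_versus_ind} by restricting the functors~\(A\) and~\(L\) appearing in its proof to the relevant subcategories and checking that they land where we want. Concretely, I would proceed as follows.

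First, I would show that the functor~\(A\) sends a complete bornological \(\dvr\)\nb-module~\(X\) to an inductive system of \(\dvgen\)\nb-adically complete \(\dvr\)\nb-modules with injective transition maps. Recall that \(A(X)\) was defined using the directed set~\(\born'\) of all bounded \(\dvr\)\nb-submodules of~\(X\), with the inclusion maps as transition maps; these are obviously injective. The point is that, by Proposition~\ref{prop:separated_complete}, if~\(X\) is complete then the bounded, \(\dvgen\)\nb-adically complete submodules are cofinal in~\(\born'\), so we may pass to this cofinal subsystem without changing the isomorphism class in \(\ind\). (Passing to a cofinal subsystem gives an isomorphic object of \(\ind\cC\).) Hence~\(A\) restricts to a functor from \(\borno\) to inductive systems of \(\dvgen\)\nb-adically complete modules with injective transition maps.

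Next, I would show that the functor~\(L\), applied to an inductive system \((X_i,\varphi_{ij})\) of \(\dvgen\)\nb-adically complete \(\dvr\)\nb-modules with injective transition maps, produces a \emph{complete} bornological \(\dvr\)\nb-module. By construction the bounded submodules of \(L(X_i,\varphi_{ij})\) are (subsets of) the images of the~\(X_i\), which are themselves \(\dvgen\)\nb-adically complete since the~\(\varphi_{ij}\) are injective; hence bounded, \(\dvgen\)\nb-adically complete submodules are cofinal, and Proposition~\ref{prop:separated_complete} gives completeness. Since Proposition~\ref{prop:born_versus_ind} already provides the natural isomorphisms \(L\circ A\cong\id\) and \(A\circ L\cong\id\) on the larger categories, these restrict to give the asserted equivalence. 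The separated case is handled identically, using the ``separated'' clauses of Propositions~\ref{prop:separated_complete} and~\ref{prop:born_versus_ind}, together with Remark~\ref{rem:torsion-free_separated} if one wants to double-check the torsion-free picture.

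The only slightly delicate point—the ``main obstacle,'' such as it is—is the bookkeeping about cofinal subsystems: one must make sure that replacing \(\born'\) by its cofinal subset of \(\dvgen\)\nb-adically complete submodules yields an object genuinely isomorphic to \(A(X)\) in the category of inductive systems, and that the natural isomorphisms from Proposition~\ref{prop:born_versus_ind} remain natural after this replacement. This is a standard fact about filtered colimits indexed by cofinal subsets, and it causes no real trouble; everything else is a direct transcription of the proof of Proposition~\ref{prop:born_versus_ind} combined with the characterizations of separatedness and completeness in Proposition~\ref{prop:separated_complete}.
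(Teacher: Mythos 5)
Your proposal is correct and follows essentially the same route as the paper: the paper's proof likewise invokes Proposition~\ref{prop:separated_complete} to identify the bounded \(\dvgen\)\nb-adically complete (resp.\ separated) submodules as a cofinal subset of~\(\born\) and then repeats the proof of Proposition~\ref{prop:born_versus_ind} with this cofinal subset in place of~\(\born'\). The cofinality bookkeeping you flag is exactly the point the paper relies on, and it goes through as you describe.
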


\begin{proof}
  Let~\(\born\) be the family of bounded subsets of~\(X\).
  Proposition~\ref{prop:separated_complete} shows that the
  \(\dvgen\)\nb-adically complete or separated bounded
  \(\dvr\)\nb-modules of~\(X\) are cofinal in~\(\born\) if and only
  if~\(X\) is complete or separated, respectively.  Now copy the
  proof of Proposition~\ref{prop:born_versus_ind} with these cofinal
  subsets in the place of~\(\born'\).
\end{proof}

\begin{example}
  \label{ex:banach_mods}
  Let~\(M\) be a \(\dvr\)\nb-module with a \emph{seminorm} as in
  Example~\ref{exa:norm_bornology}.  If this is a norm, \(M\)
  becomes an ultra-metric space; when it is complete, we call~\(M\)
  a \emph{Banach} \(\dvr\)\nb-module.  A \emph{Banach
    \(\dvf\)\nb-vector space} is a Banach \(\dvr\)\nb-module where
  multiplication by~\(\dvgen\) is bijective.  Since
  \(\norm{ax}=\abs{a}\norm{x}\), every torsion element \(x\in M\)
  has \(\norm{x}=0\); thus normed modules are torsion-free or,
  equivalently, flat.  If~\(M\) is any torsion-free
  $\dvr$\nb-module, then the valuation \(\nu\colon M\to
  \N\cup\{\infty\}\), \(x\mapsto \sup\{n: x\in \dvgen^nM\}\),
  defines a seminorm \(\norm{x}_c=\pepsilon^{\nu(x)}\), the
  \emph{canonical} seminorm.  Its associated topology is the
  \(\dvgen\)\nb-adic topology.  In particular,
  \(\norm{\hphantom{x}}_c\) is a norm if and only if~\(M\) is flat
  and~\(0\) is the only divisible submodule of~\(M\).  If~\(M\) is
  flat, then \((M,\norm{\hphantom{x}}_c)\) is Banach if and only
  if~\(M\) is \(\dvgen\)\nb-adically complete.  Thus
  Proposition~\ref{prop:born_versus_ind_complete} implies that any
  flat complete bornological \(\dvr\)\nb-module is a filtered union
  of Banach submodules with norm-decreasing inclusions.  The norm
  \(\norm{\hphantom{x}}_c\) is always bounded above by~\(1\).
  If~\(M\) is \(\dvgen\)\nb-adically separated,
  \(\norm{\hphantom{x}}\) is any other seminorm, and \(x=\dvgen^ny\)
  with \(y\notin \dvgen M\), then \(\norm{x}=\norm{x}_c\norm{y}\).
  Hence if also~\(\norm{\hphantom{x}}\) is bounded, then the
  identity is bounded as a map \((M,\norm{\hphantom{x}}_c)\to
  (M,\norm{\hphantom{x}})\).  The identity is bounded as a map
  \((M,\norm{\hphantom{x}})\to (M,\norm{\hphantom{x}}_c)\) if and
  only if~\(\norm{\hphantom{x}}\) is bounded below on \(M\setminus
  \dvgen M\) if and only if~\(\dvgen M\) is open in~\(M\).

  We call a Banach module \((M,\norm{\hphantom{x}})\)
  \emph{bornological} if \(\norm{\hphantom{x}}\) is equivalent to
  the canonical norm on each closed ball \(B_\rho\defeq \{x\in M :
  \norm{x}\le\rho\}\) for \(\rho>0\).  Equivalently, there is
    \(\delta>0\) so that \(B_\delta \subset\dvgen\cdot M\) because then automatically \(x\in
    \dvgen\cdot B_\rho\) if
    \(\norm{x}\le\min\{\delta,\pepsilon\cdot\rho\}\).  Thus any
    Banach \(\dvf\)\nb-vector space is bornological.  The
    \(\dvr\)\nb-module \(M=\prod_{n\ge 0}\dvgen^n \dvr\) with the
    supremum norm is a Banach \(\dvr\)\nb-module that is not
    bornological.
\end{example}

\begin{definition}
  \label{def:closed}
  A subset~\(S\) in a bornological \(\dvr\)\nb-module is
  \emph{bornologically closed} if all limits of convergent sequences
  in~\(S\) again belong to~\(S\).  The \emph{bornological closure}
  of~\(S\) is the smallest bornologically closed subset
  containing~\(S\).  The \emph{separated quotient} of a bornological
  \(\dvr\)\nb-module is the quotient by the bornological closure
  of~\(\{0\}\), equipped with the quotient bornology.
\end{definition}

The bornological closure of~\(S\)
may not be easy to compute explicitly.  We certainly have to add all
limits of convergent sequences in~\(S\).
There are usually more convergent sequences in this larger set.  So we
may have to repeat this step to arrive at a bornologically closed
subset, compare \cite{Hogbe-Nlend:Theorie}*{p.~14}.  The notion of a closed subset defines a topology. The translations
\(x\mapsto x+y\)
for a fixed element~\(y\)
and the maps \(x\mapsto \lambda x\)
for \(\lambda \in \dvr\)
are then continuous and closed.  This implies that the
bornological closure of a \(\dvr\)\nb-submodule
is again a \(\dvr\)\nb-submodule,
compare \cite{Hogbe-Nlend:Theorie}*{Corollaire~1 on p.~15}.  A
bornological \(\dvr\)\nb-module~\(X\)
is separated if and only if~\(\{0\}\)
is bornologically closed.  Thus the separated
quotient~\(X/\overline{\{0\}}\)
is indeed separated, and it is the largest quotient with this
property: any bounded \(\dvr\)\nb-module
map \(X\to Y\)
into a separated bornological \(\dvr\)\nb-module
factors uniquely through~\(X/\overline{\{0\}}\).
A \(\dvr\)\nb-submodule~\(X\)
of a complete bornological \(\dvr\)\nb-module~\(Y\),
equipped with the subset bornology, is complete if and only if~\(X\)
is bornologically closed in~\(Y\)
because a sequence in~\(X\)
converges in the subset bornology on~\(X\)
if and only if it converges in~\(Y\).

\begin{definition}
  \label{def:completion}
  The \emph{completion} of a bornological \(\dvr\)\nb-module~\(X\)
  is a complete bornological \(\dvr\)\nb-module~\(\comb{X}\)
  with a bounded map \(X\to \comb{X}\)
  that is universal in the sense that any map from~\(X\)
  to a complete bornological \(\dvr\)\nb-module
  factors uniquely through it.
\end{definition}

In order to construct completions, we first discuss inductive limits
in the categories of separated and complete bornological
\(\dvr\)\nb-modules.  The category of bornological
\(\dvr\)\nb-modules is complete and cocomplete, that is, any diagram
has both a limit and a colimit.  They are constructed by equipping
the limit or colimit in the category of \(\dvr\)\nb-modules with a
canonical bornology.  Any limit may be described as a submodule of a
product; hence limits of separated bornological \(\dvr\)\nb-modules
remain separated.  Thus the category of separated bornological
\(\dvr\)\nb-modules has all limits, and these are just the same as
in the larger category of bornological \(\dvr\)\nb-modules.  In
contrast, a colimit may be described as a quotient of a direct sum,
and such a quotient need not remain separated.  To make it so, we
take the separated quotient.  This has exactly the right universal
property for a colimit in the category of separated bornological
\(\dvr\)\nb-modules.

For a diagram of complete bornological \(\dvr\)\nb-modules,
the limit in the category of bornological \(\dvr\)\nb-modules
is easily seen to be complete again because it is the kernel of a
certain map between products.  Products inherit completeness from
their factors, and the kernel of a bounded linear map between
separated bornological \(\dvr\)\nb-modules
is bornologically closed.  Hence limits also inherit completeness.
Quotients of complete bornological \(\dvr\)\nb-modules
by bornologically closed \(\dvr\)\nb-submodules
remain complete.  Hence the separated quotient of the usual colimit is
a complete bornological \(\dvr\)\nb-module
and so has the right universal property for the colimit in the
category of complete bornological \(\dvr\)\nb-modules.

\begin{proposition}
  \label{prop:completion}
  Completions of bornological \(\dvr\)\nb-modules
  always exist and may be constructed as follows.  Write
  \(X=\varinjlim {}(X_i)_{i\in I}\)
  as the inductive limit of the directed set of its bounded
  \(\dvr\)\nb-submodules.
  Then~\(\comb{X}\)
  is the separated quotient of the bornological inductive limit
  \(\varinjlim {}(\coma{X_i})_{i\in I}\).
  The completion functor commutes with colimits, that is, the
  completion of a colimit of a diagram of bornological
  \(\dvr\)\nb-modules
  is the \emph{separated quotient} of the colimit of the diagram of
  completions.
\end{proposition}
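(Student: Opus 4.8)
The plan is to verify the universal property of the completion directly for the proposed construction, and then to deduce the compatibility with colimits from the characterization of colimits in complete bornological modules discussed just before the proposition. Write \(X=\varinjlim (X_i)_{i\in I}\) as the filtered union of its bounded \(\dvr\)-submodules \(X_i\in\born'\), with the inclusion maps as transition maps. Each \(\coma{X_i}\) is \(\dvgen\)-adically complete, and the system \((\coma{X_i})_{i\in I}\) is again a filtered inductive system of \(\dvr\)-modules. Form its bornological inductive limit \(Y\defeq\varinjlim (\coma{X_i})_{i\in I}\) in the category of bornological \(\dvr\)-modules (inductive-limit bornology on the colimit \(\dvr\)-module), and let \(\comb{X}\) be its separated quotient \(Y/\overline{\{0\}}\). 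There is a canonical bounded map \(X\to\comb{X}\): for each~\(i\), the map \(X_i\to\coma{X_i}\) is bounded, these are compatible with the transition maps, so they induce a bounded map \(X\to Y\), which we compose with \(Y\to\comb{X}\).

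First I would check that \(\comb{X}\) is complete: by the discussion preceding the proposition, the separated quotient of a bornological inductive limit of \(\dvgen\)-adically complete modules is complete (this is exactly the colimit in \(\borno\) of the diagram \((\coma{X_i})\) with the obvious maps), so \(\comb{X}\in\borno\). Next, the universal property. Let \(f\colon X\to Z\) be bounded with \(Z\) complete. For each \(i\), the restriction \(f|_{X_i}\colon X_i\to Z\) lands in some bounded \(\dvgen\)-adically complete \(\dvr\)-submodule \(Z'\subseteq Z\) by Proposition~\ref{prop:separated_complete}; since \(Z'\) is \(\dvgen\)-adically complete, \(f|_{X_i}\) extends uniquely to a \(\dvr\)-module map \(\coma{X_i}\to Z'\subseteq Z\) (functoriality of \(\dvgen\)-adic completion together with \(Z'=\coma{Z}'\)). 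These extensions are compatible with the transition maps \(\coma{X_i}\to\coma{X_j}\) because the original \(f|_{X_i}\) were, and each is bounded, so they assemble to a bounded map \(Y\to Z\). As \(Z\) is separated, this factors through \(Y/\overline{\{0\}}=\comb{X}\), giving a bounded map \(\comb{X}\to Z\) extending \(f\). Uniqueness: two bounded extensions agree on the image of \(X\), which is dense in \(\comb{X}\) in the sense that \(\comb{X}\) is the separated quotient of a module in which each element of \(\coma{X_i}\) is a limit of a sequence in \(X_i\); since both maps are bounded, hence continuous for convergent sequences, and \(Z\) is separated, they agree.

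Finally, compatibility with colimits. Given a diagram \((X^{(d)})_{d\in D}\) of bornological \(\dvr\)-modules with colimit \(C\), I would use that the completion is a left adjoint to the inclusion \(\borno\hookrightarrow\{\text{bornological }\dvr\text{-modules}\}\): this is precisely the content of the universal property just proved. Left adjoints preserve colimits, so \(\comb{C}\) is the colimit of \((\comb{X^{(d)}})\) computed \emph{in} \(\borno\). By the discussion preceding the proposition, that colimit is the separated quotient of the ordinary bornological colimit of the \(\comb{X^{(d)}}\). This gives the stated formula.

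I expect the main obstacle to be the bookkeeping in the universal property argument — specifically, verifying that the extensions \(\coma{X_i}\to Z\) are genuinely bounded and genuinely compatible across all transition maps, so that they glue to a bounded map out of the inductive-limit bornology; this requires care because boundedness on the inductive limit means boundedness of the restriction to each \(\coma{X_i}\), which in turn comes from landing in a single bounded \(\dvgen\)-adically complete submodule of~\(Z\) via Proposition~\ref{prop:separated_complete}. The density/uniqueness step also needs the observation that elements of \(\coma{X_i}\) are \(\dvgen\)-adic limits of sequences in \(X_i\), hence convergent sequences in the bornological sense in \(Y\) and in \(\comb{X}\).
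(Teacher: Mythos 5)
Your proposal is correct and follows essentially the same route as the paper: the paper verifies the universal property via a chain of natural bijections $\Hom(X,Y)\cong\varprojlim_i\varinjlim_j\Hom(X_i,Y_j)\cong\varprojlim_i\varinjlim_j\Hom(\coma{X_i},Y_j)\cong\Hom\bigl(\varinjlim_i\coma{X_i}/\overline{\{0\}},Y\bigr)$, which is exactly your extension-and-gluing argument written in ind-category language, and it likewise deduces the colimit statement from the completion being a left adjoint. No gaps.
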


\begin{proof}
  Let~\(Y\)
  be a complete bornological \(\dvr\)\nb-module.
  Then \(Y \cong \varinjlim {}(Y_j)_{j\in J}\)
  with \(\dvgen\)\nb-adically
  complete \(\dvr\)\nb-modules~\(Y_j\)
  by Proposition~\ref{prop:born_versus_ind_complete}.  Hence
  \begin{multline*}
    \Hom(X,Y)
    \cong \varprojlim_i \varinjlim_j \Hom(X_i,Y_j)
    \cong \varprojlim_i \varinjlim_j \Hom(\coma{X_i},Y_j)
    \\\cong \varprojlim_i \Hom(\coma{X_i},Y)
    \cong \Hom\bigl(\varinjlim_i \coma{X_i},Y\bigr)
    \cong \Hom\Bigl(\varinjlim_i \coma{X_i} \Bigm/ \overline{\{0\}},Y\Bigr).
  \end{multline*}
  Thus \(\varinjlim_i \coma{X_i} \Bigm/ \overline{\{0\}}\),
  the separated quotient of the inductive limit of
  \((\coma{X_i})_{i\in I}\),
  has the universal property that defines the completion.  The
  completion functor is defined as a left adjoint to the inclusion
  functor and therefore commutes with colimits, where the colimit in
  the category of complete bornological \(\dvr\)\nb-modules
  is the separated quotient of the usual colimit.
\end{proof}

The separated quotient in Proposition~\ref{prop:completion} is hard to
control.  It may be needed because the maps
\(\coma{X_i} \to \coma{X_j}\)
for \(i\le j\)
need not be injective, although the maps \(X_i\to X_j\)
are injective by construction:

\begin{example}
  \label{exa:non-injective_on_completion}
  Let \(\dvr=\mathbb{Z}_p\),
  \(\dvf=\Q_p\)
  for some prime~\(p\).
  Let \(X_0 = \mathbb{Z}_p[t]\)
  and let~\(X_1\)
  be the \(\Z_p\)\nb-submodule
  of \(\Q_p[t]\)
  generated by~\(X_0\)
  and by the polynomials
  \(f_m = p^{-m}(1+pt+(pt)^2+\dotsb + (pt)^{m-1})\)
  for \(m\in\N\).
  Let \(\coma{X_0}\)
  and~\(\coma{X_1}\)
  be the \(\dvgen\)\nb-adic
  completions of the \(\dvr\)\nb-modules
  \(X_0\)
  and~\(X_1\),
  respectively.  The series \(\sum_{i=0}^\infty (pt)^i\)
  converges towards a non-zero point in~\(\coma{X_0}\)
  that is mapped to~\(0\) in~\(\coma{X_1}\) because
  \[
  \sum_{i=0}^n (pt)^i = p^m f_m + \sum_{i=m+1}^n (pt)^i \in p^m X_1
  \]
  for all \(n\ge m\) and all \(m\in\N\).  Thus the map
  \(\coma{X_0}\to\coma{X_1}\) induced by the inclusion
  \(X_0\to X_1\) is not injective.
\end{example}

Hence the transition maps in the inductive system
\((\coma{X_i})_{i\in I}\)
in Proposition~\ref{prop:completion} need not be injective.  This is
why separated quotients may really be needed in
Proposition~\ref{prop:completion}.  There are examples of separated
bornological vector spaces over~\(\mathbb{R}\)
for which the completion is zero.  In particular, the canonical map
\(X\to\comb{X}\) may fail to be injective.  We expect such examples
also over~\(\dvr\), but we have not tried hard to find one.

There is a canonical map \(\comb{X}\to \coma{X}\)
by Proposition~\ref{prop:completion}.  This map may fail to be
injective and surjective: with the fine bornology,
any \(\dvr\)\nb-module~\(X\)
is bornologically complete so that \(\comb{X}=X\)
need not be \(\dvgen\)\nb-adically
separated or \(\dvgen\)\nb-adically complete.

Next we consider tensor products, following the recipe for
\(\mathbb{R}\)-vector spaces in~\cite{Hogbe-Nlend:Completions}.
Undecorated tensor products are taken over~\(\dvr\).  Let \(S\)
and~\(T\) be \(\dvr\)\nb-modules.  Then \(S\otimes T\) is the target
of the universal \(\dvr\)\nb-bilinear map on~\(S\times T\): any
\(\dvr\)\nb-bilinear map \(S\times T\to U\) for a
\(\dvr\)\nb-module~\(U\) extends uniquely to a \(\dvr\)\nb-bilinear
map \(S\otimes T\to U\).  The complete bornological tensor product
of two \(\dvr\)\nb-modules is defined by an analogous universal
property with respect to bounded bilinear maps.  A bilinear map
\(b\colon S\times T\to U\) between bornological modules is
\emph{bounded} if \(b(M\times N)\) is bounded whenever \(M\)
and~\(N\) are bounded in \(S\) and~\(T\).

\begin{definition}
  \label{def:complete_tensor}
  First let \(X\) and~\(Y\) be bornological \(\dvr\)\nb-modules.
  Their \emph{bornological tensor product} is a bornological
  \(\dvr\)\nb-module \(X\otimes Y\) with a bounded
  \(\dvr\)\nb-bilinear map \(b\colon X\times Y\to X\otimes Y\) that
  is universal in the sense that any bounded
  \(\dvr\)\nb-bilinear map \(X\times Y \to W\) to a bornological
  \(\dvr\)\nb-module~\(W\) factors uniquely through~\(b\).
  Secondly, let \(X\) and~\(Y\) be complete bornological
  \(\dvr\)\nb-modules.  Their \emph{complete bornological tensor
    product} is a complete bornological \(\dvr\)\nb-module
  \(X\hotimes Y\) with a bounded \(\dvr\)\nb-bilinear map \(b\colon
  X\times Y\to X\hotimes Y\) that is universal in the sense that any
  bounded \(\dvr\)\nb-bilinear map \(X\times Y \to W\) to a
  complete bornological \(\dvr\)\nb-module~\(W\) factors uniquely
  through~\(b\).
\end{definition}

Such tensor products exist and are described more concretely in the
following lemma; it also justifies using the same
notation~\(\otimes\) both for the \(\dvr\)\nb-module tensor product
and the bornological \(\dvr\)\nb-module tensor product.

\begin{lemma}
  \label{lem:complete_tensor}
  Let \(X\) and~\(Y\) be bornological \(\dvr\)\nb-modules.  Then
  their bornological tensor product \(X\otimes Y\) is the usual
  \(\dvr\)\nb-module tensor product equipped with the bornology that
  is generated by the images of \(S\otimes T\) in \(X\otimes Y\),
  where \(S\) and~\(T\) run through the bounded
  \(\dvr\)\nb-submodules in \(X\) and~\(Y\), respectively.

  Let \(X\) and~\(Y\) be complete bornological \(\dvr\)\nb-modules.
  Then their complete bornological tensor product \(X\hotimes Y\) is
  the bornological completion of the bornological tensor product
  \(X\otimes Y\).  Write \(X\cong \varinjlim (X_i, \varphi_{ij})\)
  and \(Y\cong \varinjlim (Y_l, \psi_{lm})\) for inductive systems
  of complete \(\dvr\)\nb-modules with injective maps
  \(\varphi_{ij}\) and~\(\psi_{lm}\) as in
  Proposition~\textup{\ref{prop:born_versus_ind_complete}}.  Then
  \(X\otimes Y\) is the colimit of the inductive system of bounded
  \(\dvr\)\nb-modules \((X_i\otimes Y_l, \varphi_{ij}\otimes
  \psi_{lm})\).  The complete bornological tensor product is the
  separated quotient of the colimit of the inductive system of
  bounded \(\dvr\)\nb-modules \((X_i\haotimes Y_l,
  \varphi_{ij}\haotimes \psi_{lm})\), where \(X_i \haotimes Y_l\)
  for two \(\dvr\)\nb-modules denotes the \(\dvgen\)\nb-adic
  completion of their tensor product.
\end{lemma}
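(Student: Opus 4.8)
The plan is to verify each of the three assertions in turn, building on the constructions of bornological colimits, completions, and the ind-description of complete bornological modules established earlier in this section.

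For the first assertion, I would check directly that the $\dvr$\nb-module tensor product $X\otimes Y$, equipped with the bornology generated by the images of $S\otimes T$ for $S,T$ bounded $\dvr$\nb-submodules of $X,Y$, satisfies the universal property of Definition~\ref{def:complete_tensor}. Boundedness of the canonical bilinear map $b\colon X\times Y\to X\otimes Y$ is immediate from the definition of this bornology. Given a bounded bilinear map $f\colon X\times Y\to W$, there is a unique $\dvr$\nb-linear factorization $\tilde f\colon X\otimes Y\to W$ on the level of $\dvr$\nb-modules; boundedness of $\tilde f$ follows because $\tilde f$ sends the image of $S\otimes T$ into $f(S\times T)$, which is bounded in $W$ by hypothesis, and these images generate the bornology. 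One should also note that this bornology is indeed a convex bornology, i.e.\ closed under passing to generated submodules, which holds because the image of $S\otimes T$ already consists of $\dvr$\nb-linear combinations.

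For the second assertion, the completion $\comb{X\otimes Y}$ has, by definition, the universal property that bounded bilinear maps into a \emph{complete} bornological module factor through $X\otimes Y\to\comb{X\otimes Y}$; composing the universal properties of $X\otimes Y$ and of the completion gives exactly the universal property of $X\hotimes Y$. For the explicit colimit descriptions, I would write $X\cong\varinjlim(X_i,\varphi_{ij})$ and $Y\cong\varinjlim(Y_l,\psi_{lm})$ as in Proposition~\ref{prop:born_versus_ind_complete}, with each $X_i$, $Y_l$ a $\dvgen$\nb-adically complete $\dvr$\nb-module and injective transition maps. Since the tensor product of $\dvr$\nb-modules commutes with filtered colimits, $X\otimes Y\cong\varinjlim(X_i\otimes Y_l)$ as $\dvr$\nb-modules, and one checks the bornologies match: a subset of $X\otimes Y$ is bounded iff it lies in the image of some $X_i\otimes Y_l$, which agrees with the description via images of $S\otimes T$ because the $X_i$ (resp.\ $Y_l$) are cofinal among bounded submodules. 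Then applying Proposition~\ref{prop:completion}, which says the completion of a bornological inductive limit of submodules is the separated quotient of the inductive limit of their $\dvgen$\nb-adic completions, identifies $X\hotimes Y$ with the separated quotient of $\varinjlim(X_i\haotimes Y_l)$, where $X_i\haotimes Y_l\defeq\coma{X_i\otimes Y_l}$. The compatibility of $\varphi_{ij}\haotimes\psi_{lm}$ with $\varphi_{ij}\otimes\psi_{lm}$ under $\dvgen$\nb-adic completion is functoriality of $(-)\hotimes(-)=\coma{(-)\otimes(-)}$ on $\dvr$\nb-modules.

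The main obstacle is the bookkeeping in the colimit step: one must be careful that $X\otimes Y\cong\varinjlim(X_i\otimes Y_l)$ as \emph{bornological} modules, not merely as $\dvr$\nb-modules, and that the index category $I\times J$ (which is directed) yields the same colimit as the a priori larger directed set of all bounded submodules of $X\otimes Y$; this is where cofinality of the $X_i\otimes Y_l$ among bounded submodules of $X\otimes Y$ is used, and it should be spelled out, since a bounded submodule of $X\otimes Y$ is contained in the image of $S\otimes T$ for some bounded $S\subseteq X$, $T\subseteq Y$, and $S$, $T$ are in turn contained in some $X_i$, $Y_l$. The rest — functoriality of $\dvgen$\nb-adic completion, commuting tensor products with filtered colimits, and invoking Proposition~\ref{prop:completion} — is routine.
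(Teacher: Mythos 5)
Your proof is correct and follows essentially the same route as the paper: the universal-property verification for \(X\otimes Y\) and for its completion is identical, and your treatment of the colimit description (tensor commutes with filtered colimits, then completion commutes with colimits) is a repackaging of the paper's direct argument with compatible families of bounded bilinear maps on \(X_i\times Y_l\), which extend to \(X_i\haotimes Y_l\) when the target is complete. The only point to phrase carefully is that the maps \(X_i\otimes Y_l\to X\otimes Y\) need not be injective, so the \(X_i\otimes Y_l\) are not bounded submodules of \(X\otimes Y\) and you should invoke the general ``completion commutes with colimits'' clause of Proposition~\ref{prop:completion} rather than its description via the system of bounded submodules; with that adjustment everything goes through.
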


\begin{proof}
  Let~\(Z\) be a bornological \(\dvr\)\nb-module.  A bilinear map
  \(b\colon X\times Y\to Z\) induces a linear map \(b_*\colon
  X\otimes Y\to Z\).  The map~\(b\) is bounded if and only if
  \(b(S,T)\subseteq Z\) is bounded for all bounded
  \(\dvr\)\nb-submodules \(S\subseteq X\), \(T\subseteq Y\).  This
  is equivalent to \(b_*(\iota(S\otimes T))\) being bounded for all
  such \(S,T\), where \(\iota(S\otimes T)\) denotes the image of
  \(S\otimes T\) in~\(X\otimes Y\).  Hence there is a natural
  bijection between bounded bilinear maps \(X\times Y\to Z\) and
  bounded linear maps \(X\otimes Y\to Z\), where we equip~\(X\otimes
  Y\) with the bornology generated by the \(\dvr\)\nb-submodules
  \(\iota(S\otimes T)\) for all bounded \(\dvr\)\nb-submodules
  \(S\subseteq X\), \(T\subseteq Y\).  Since the sets of bounded
  \(\dvr\)\nb-submodules \(S\subseteq X\), \(T\subseteq Y\) are
  directed, so is the resulting set of \(\dvr\)\nb-submodules
  \(\iota(S\otimes T) \subseteq X\otimes Y\).  Hence a subset of
  \(X\otimes Y\) is bounded if and only if it is contained in
  \(\iota(S\otimes T)\) for some bounded \(\dvr\)\nb-submodules
  \(S\subseteq X\), \(T\subseteq Y\).

  The bornological completion \(\comb{X\otimes Y}\) is a complete
  bornological \(\dvr\)\nb-module with the property that bounded
  linear maps \(\comb{X\otimes Y}\to Z\) for complete bornological
  \(\dvr\)\nb-modules~\(Z\) correspond bijectively to bounded linear
  maps \(X\otimes Y\to Z\).  Since the latter correspond bijectively
  to bounded bilinear maps \(X\times Y\to Z\), the completion
  \(\comb{X\otimes Y}\) has the correct universal property for a complete
  bornological tensor product.

  If \(X=\varinjlim X_i\), \(Y=\varinjlim Y_l\), then a bounded
  \(\dvr\)\nb-bilinear map \(b\colon X\times Y\to Z\) is equivalent
  to a family of bounded \(\dvr\)\nb-bilinear maps \(b_{il}\colon
  X_i\times Y_l \to Z\) that are compatible in the usual sense,
  \((b_{il})\in \varprojlim_{il} \Hom^{(2)}(X_i\times Y_l,Z)\),
  where~\(\Hom^{(2)}\) denotes the set of bounded bilinear maps.
  If~\(Z\) is complete, then such a family is equivalent to a family
  of bounded \(\dvr\)\nb-linear maps \((b_{il})_*\colon X_i\haotimes
  Y_l \to Z\) with the same compatibility, \((b_{il})_*\in
  \varprojlim_{il} \Hom(X_i\haotimes Y_l,Z)\).  Thus \(X\hotimes Y\)
  has the same universal property as the colimit of the inductive
  system \(X_i\haotimes Y_l\) in the category of complete
  bornological \(\dvr\)\nb-modules.  This is the separated quotient of
  the colimit.
\end{proof}

\begin{example}
  \label{ex:otimesK}
  The fraction field \(\dvf=\dvr[\dvgen^{-1}]\) is \(\varinjlim_\N
  \dvr\), where the transfer maps are multiplication by~\(\dvgen\).
  Hence if~\(M\) is a flat, complete bornological module and
  \(\{M_\lambda: \lambda\in\Lambda\}\) is the direct system of its
  complete bounded submodules, then
  \[
  M\otimes \dvf
  = \varinjlim_{(\lambda,j)\in\Lambda\times\N} M_\lambda \otimes (\dvgen^{-j}\dvr)
  = M\hotimes \dvf.
  \]
  More generally, if~\(M\)
  is a flat, complete bornological \(\dvr\)\nb-module
  and~\(N\)
  is any \(\dvr\)\nb-module
  with the fine bornology, then \(M\hotimes N=M\otimes N\).
  This holds because \(M_\lambda\otimes N_\nu\)
  is \(\dvgen\)\nb-adically
  complete if \(N_\nu\subseteq N\)
  is finitely generated and~\(M_\lambda\)
  is \(\dvgen\)\nb-adically complete.
\end{example}

If \(X\) and~\(Y\) are bornological \(\dvf\)\nb-vector spaces, then
\(X\otimes Y = X\otimes_\dvf Y\), and the complete bornological
tensor product \(X\hotimes Y\) is a bornological \(\dvf\)\nb-vector
space as well; thus we may define \(X\hotimes_\dvf Y\) to be
\(X\hotimes Y\).

Routine arguments show that the category of complete bornological
\(\dvr\)\nb-modules
with the tensor product~\(\hotimes\)
and with~\(\dvr\)
as tensor unit is a symmetric monoidal category.  Even more, this is a
closed symmetric monoidal category; the internal hom functor is
obtained by equipping the \(\dvr\)\nb-module
\(\Hom(X,Y)\)
of bounded \(\dvr\)\nb-linear
maps \(X\to Y\)
with the \emph{equibounded bornology}, where a set~\(S\)
of maps \(X\to Y\)
is \emph{equibounded} if for each bounded subset \(T\subseteq X\)
the set of \(s(t)\)
with \(s\in S\),
\(t\in T\)
is bounded in~\(Y\).
The \(\dvr\)\nb-module
\(\Hom(X,Y)\)
with this bornology is complete if~\(Y\)
is complete.  A bounded linear map \(X\to\Hom(Y,Z)\)
is equivalent to a bounded bilinear map \(X\times Y\to Z\),
which is then equivalent to a bounded linear map \(X\hotimes Y\to Z\)
by the universal property of the tensor product.  Thus the internal
\(\Hom\)
functor has the correct universal property for a closed monoidal
category.  We shall only use one consequence, which can also be proved
directly:

\begin{proposition}
  \label{pro:tensor_colimits}
  The tensor product functor on the category of bornological
  \(\dvr\)\nb-modules commutes with colimits in each variable.  The
  completed tensor product functor on the category of complete
  bornological \(\dvr\)\nb-modules commutes with
  \textup{(}separated\textup{)} colimits in each variable.
\end{proposition}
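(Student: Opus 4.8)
The plan is to deduce the Proposition from the principle that a left adjoint preserves all colimits.

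For the bornological tensor product, fix a bornological \(\dvr\)\nb-module~\(Y\). I claim that \(-\otimes Y\) admits a right adjoint, namely \(Z\mapsto\Hom(Y,Z)\) with the equibounded bornology. Indeed, the universal property in Definition~\ref{def:complete_tensor} identifies bounded linear maps \(X\otimes Y\to Z\) with bounded bilinear maps \(b\colon X\times Y\to Z\), and the assignment \(x\mapsto b(x,-)\) turns such a~\(b\) into a linear map \(X\to\Hom(Y,Z)\) which is bounded for the equibounded bornology precisely when \(b(S\times T)\) is bounded for all bounded \(S\subseteq X\) and \(T\subseteq Y\), that is, precisely when~\(b\) is bounded; naturality in~\(X\) and~\(Z\) is clear. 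One checks routinely that the equibounded bornology on \(\Hom(Y,Z)\) satisfies the bornology axioms, so \(\Hom(Y,-)\) is indeed an endofunctor of bornological \(\dvr\)\nb-modules. Hence \(-\otimes Y\) preserves all colimits of bornological \(\dvr\)\nb-modules, and by symmetry of~\(\otimes\) the tensor product commutes with colimits in either variable.

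For the complete bornological tensor product, the paragraph preceding the Proposition has already produced, for \(Z\) complete, the natural bijections between bounded maps \(X\hotimes Y\to Z\), bounded bilinear maps \(X\times Y\to Z\), and bounded maps \(X\to\Hom(Y,Z)\); moreover \(\Hom(Y,Z)\) is complete when~\(Z\) is. Thus, for fixed complete~\(Y\), the functor \(-\hotimes Y\) is left adjoint to \(Z\mapsto\Hom(Y,Z)\) on the category \(\borno\) and so preserves all colimits in \(\borno\); symmetry again gives both variables. Here one must keep in mind that, as recalled before Proposition~\ref{prop:completion}, a colimit in \(\borno\) is the \emph{separated quotient} of the colimit of the underlying bornological \(\dvr\)\nb-modules — this is the meaning of the parenthetical ``(separated)'' — so the assertion is that applying \(-\hotimes Y\) to the separated quotient of the plain colimit of the~\(Y_i\) yields the separated quotient of the plain colimit of the \(X\hotimes Y_i\).

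A more hands-on proof is also possible, and I would record it as a remark: by Lemma~\ref{lem:complete_tensor} the bornology on \(X\otimes Y\) is generated by the images of \(S\otimes T\); colimits of bornological \(\dvr\)\nb-modules are \(\dvr\)\nb-module colimits with the evident bornology; and the \(\dvr\)\nb-module tensor product commutes with \(\dvr\)\nb-module colimits. For the completed version one combines this with Proposition~\ref{prop:completion} (the completion functor commutes with colimits, and \(X\hotimes Y=\comb{X\otimes Y}\)). The step requiring real care in this hands-on approach — and the reason I would present the adjunction argument instead — is the interaction with the separated quotients that appear in colimits of complete modules: one must verify that tensoring with~\(X\) carries the bornological closure of~\(\{0\}\) into the bornological closure of~\(\{0\}\), so that passing to separated quotients commutes with \(X\hotimes(-)\). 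The tensor--hom adjunction sidesteps this point entirely.
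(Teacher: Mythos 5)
Your proof is correct and is essentially the paper's own argument: the paper also establishes the internal Hom with the equibounded bornology in the paragraph preceding the Proposition and then invokes the fact that a left adjoint preserves colimits. Your added remarks on the separated quotient and the hands-on alternative are fine but not needed beyond what the paper already records.
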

The adjective ``separated'' clarifies that a colimit in the
  category of complete bornological \(\dvr\)\nb-modules is the
  separated quotient of the usual colimit.
\begin{proof}
    Any functor with a right
  adjoint commutes with colimits, so this follows from the existence
  of an internal Hom functor.
\end{proof}

\begin{definition}
  \label{def:exactness}
  A bounded \(\dvr\)\nb-linear map \(\varphi\colon X\to Y\) is
  called a \emph{bornological quotient map} if any bounded
  subset~\(S\subseteq Y\) of~\(Y\) is \(\varphi(R)\) for some
  bounded subset~\(R\) of~\(X\).  Equivalently, the map
  \(X/\ker(\varphi) \to Y\) induced by~\(\varphi\) is a bornological
  isomorphism.  Any bornological quotient map is surjective.  An
  \emph{extension} of bornological \(\dvr\)\nb-modules is a diagram
  of bornological \(\dvr\)\nb-modules
  \begin{equation}
    \label{seq:exact}
    0 \longrightarrow X' \overset{f}\longrightarrow X
    \overset{g}\longrightarrow X'' \longrightarrow 0
  \end{equation}
  that is exact in the algebraic sense and such that \(X'\subseteq X\)
  has the subspace bornology and~\(g\) is a bornological quotient
  map.  Equivalently, \(g\) is a cokernel for~\(f\) and~\(f\) is a
  kernel for~\(g\).  A \emph{split extension} is an extension with a
  bounded \(\dvr\)\nb-linear section.
\end{definition}

\begin{lemma}
  \label{lem:exactness}
  Given a complete bornological \(\dvr\)\nb-module~\(Y\) and an
  extension~\eqref{seq:exact} of complete bornological
  \(\dvr\)\nb-modules, we have \(g\hotimes \id_Y = \coker(f\hotimes
  \id_Y)\); moreover, \(g\hotimes \id_Y\) is a bornological quotient
  map and \(\ker(g\hotimes \id_Y)\) is the bornological closure of
  the image of \(f\hotimes \id_Y\) in \(X\hotimes Y\).  The
  functor~\({-}\hotimes Y\) maps split extensions of complete
  bornological \(\dvr\)\nb-modules again to split extensions.
\end{lemma}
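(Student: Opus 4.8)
The plan is to reduce the first three assertions to the identity \(g\hotimes\id_Y=\coker(f\hotimes\id_Y)\) in the category~\(\borno\) of complete bornological \(\dvr\)\nb-modules, combined with a concrete description of cokernels there, and to treat the split case by a separate elementary diagram argument. Write \(I\) for the image of \(f\hotimes\id_Y\) in \(X\hotimes Y\), a \(\dvr\)\nb-submodule, and \(\overline{I}\) for its bornological closure. Since~\eqref{seq:exact} is an extension, \(g\) is a cokernel of~\(f\) in~\(\borno\), that is, the coequalizer of \(f\) and~\(0\), which is a colimit. By Proposition~\ref{pro:tensor_colimits} the functor \({-}\hotimes Y\) commutes with colimits in~\(\borno\); applying it to this coequalizer shows that \(g\hotimes\id_Y\) is a cokernel of \(f\hotimes\id_Y\) in~\(\borno\), which is the first assertion.

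To obtain the other two claims I would identify this cokernel explicitly. Set \(C\defeq(X\hotimes Y)/\overline{I}\) with the quotient bornology and let \(q\colon X\hotimes Y\to C\) be the quotient map. Then \(C\) is complete, because \(\overline{I}\) is bornologically closed in the complete module \(X\hotimes Y\) and a quotient of a complete bornological \(\dvr\)\nb-module by a bornologically closed submodule is complete, as recalled before Proposition~\ref{prop:completion}. Moreover \(q\) has the universal property of the cokernel in~\(\borno\): if \(h\colon X\hotimes Y\to Z\) is bounded, \(Z\) complete and hence separated, and \(h\circ(f\hotimes\id_Y)=0\), then \(h\) vanishes on~\(I\); since bounded \(\dvr\)\nb-linear maps send convergent sequences to convergent sequences and are therefore continuous for the bornological-closure topology, \(h\) maps \(\overline{I}\) into the bornological closure of \(\{0\}\) in~\(Z\), which is \(\{0\}\); so \(h\) vanishes on \(\overline{I}\) and factors uniquely through~\(C\) by the universal property of the quotient bornology. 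By uniqueness of cokernels there is an isomorphism \(\theta\colon X''\hotimes Y\iso C\) with \(\theta\circ(g\hotimes\id_Y)=q\). Since \(q\) is a bornological quotient map by construction (its domain modulo its kernel, with the quotient bornology, is \(C\) itself), so is \(g\hotimes\id_Y=\theta^{-1}\circ q\), which is the second assertion; and \(\ker(g\hotimes\id_Y)=\ker q=\overline{I}\), which is the third.

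For the last assertion, suppose~\eqref{seq:exact} splits via a bounded section \(s\colon X''\to X\) of~\(g\). Then \(\id_X-s\circ g\) takes values in \(\ker g=I\) by algebraic exactness, and since \(f\) is injective there is a unique \(\dvr\)\nb-linear \(r\colon X\to X'\) with \(f\circ r=\id_X-s\circ g\); as \(X'\) carries the subspace bornology via~\(f\), the map \(r\) is bounded and \(r\circ f=\id_{X'}\). Applying \({-}\hotimes Y\) yields bounded maps with \((g\hotimes\id_Y)\circ(s\hotimes\id_Y)=\id\) and \((r\hotimes\id_Y)\circ(f\hotimes\id_Y)=\id\). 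Hence \(e\defeq(f\hotimes\id_Y)\circ(r\hotimes\id_Y)\) is a bounded idempotent on \(X\hotimes Y\) whose image is~\(I\); being \(\ker(\id-e)\) with \(\id-e\) a bounded map into a separated module, this image is bornologically closed, so \(I=\overline{I}=\ker(g\hotimes\id_Y)\). Moreover \(f\hotimes\id_Y\) is injective, \(X'\hotimes Y\) carries the subspace bornology (recover a bounded set from its image by applying \(r\hotimes\id_Y\)), and \(g\hotimes\id_Y\) is a bornological quotient map because \(s\hotimes\id_Y\) lifts bounded sets. Thus \({-}\hotimes Y\) carries~\eqref{seq:exact} to a split extension.

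The only delicate point is the explicit description of cokernels in~\(\borno\): one must check that \(\ker(g\hotimes\id_Y)\) is the full bornological closure \(\overline{I}\), which can be strictly larger than~\(I\) (this is exactly the failure of \({-}\hotimes Y\) to be exact, as opposed to merely right exact up to bornological closure), and that \((X\hotimes Y)/\overline{I}\) is again complete. Everything else is formal.
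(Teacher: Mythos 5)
Your proof is correct and follows essentially the same route as the paper's: both deduce \(g\hotimes\id_Y=\coker(f\hotimes\id_Y)\) from Proposition~\ref{pro:tensor_colimits}, identify the cokernel in the complete category as the quotient by the bornological closure of the image of \(f\hotimes\id_Y\), and handle the split case via the direct-sum decomposition induced by the bounded section. You merely spell out the details the paper leaves implicit (the verification of the universal property of \((X\hotimes Y)/\overline{I}\), and the explicit idempotent in place of the paper's appeal to preservation of direct sums), and all of these verifications are sound.
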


\begin{proof}
  Since~\({-}\hotimes Y\) commutes with colimits by
  Proposition~\ref{pro:tensor_colimits}, it preserves cokernels and direct sums.
  The cokernel of \(f\hotimes \id_Y\) in the category of complete
  bornological \(\dvr\)\nb-modules is the quotient of \(X\hotimes
  Y\) by the bornological closure of the image of \(f\hotimes
  \id_Y\).  This gives the second statement.  The last statement
  follows from the fact that~\({-}\hotimes Y\) preserves direct sums: if the
  sequence~\eqref{seq:exact} admits a bounded \(\dvr\)\nb-linear
  section, then \(X\cong X'\oplus X''\).
\end{proof}

\begin{lemma}
  \label{lem:completion_quotient}
  Taking completions preserves bornological quotient maps.
\end{lemma}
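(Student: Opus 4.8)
The plan is to reduce the statement to the formal fact that the completion functor commutes with colimits (Proposition~\ref{prop:completion}), via the observation that every bornological quotient map is, up to isomorphism, a cokernel.

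So let \(\varphi\colon X\to Y\) be a bornological quotient map and set \(K\defeq\ker(\varphi)\), equipped with the subspace bornology. By Definition~\ref{def:exactness}, the induced map \(X/K\to Y\), with the quotient bornology on \(X/K\), is a bornological isomorphism; since being a bornological quotient map is visibly preserved under composition with bornological isomorphisms on either side, it suffices to treat the quotient map \(q\colon X\to X/K\), which is the cokernel of the inclusion \(\iota\colon K\hookrightarrow X\). I would then apply the completion functor: by functoriality it sends \(\iota\) to a bounded map \(\comb{\iota}\colon\comb{K}\to\comb{X}\), and since completion commutes with colimits (Proposition~\ref{prop:completion}) it identifies \(\comb{q}\colon\comb{X}\to\comb{X/K}\) with the projection onto the cokernel of \(\comb{\iota}\) computed in the category \(\borno\) of complete bornological \(\dvr\)\nb-modules. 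As recalled before Proposition~\ref{prop:completion}, a colimit in \(\borno\) is the separated quotient of the ordinary colimit, and a quotient of a complete bornological \(\dvr\)\nb-module by a bornologically closed submodule is again complete; hence this cokernel is the projection
\[
\comb{X}\longrightarrow \comb{X}\big/\overline{\comb{\iota}(\comb{K})}
\]
onto the quotient by the bornological closure of the image of \(\comb{\iota}\), equipped with the quotient bornology. By the very definition of the quotient bornology, every bounded subset of the target is the image of a bounded subset of \(\comb{X}\); thus this projection — hence \(\comb{\varphi}\) — is a bornological quotient map.

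I do not expect a genuine obstacle on this route: the argument is essentially formal once Proposition~\ref{prop:completion} is available, the only things to verify being routine bookkeeping (functoriality of completion on the arrow \(K\to X\), and that ``completion commutes with colimits'' applies to the coequalizer diagram computing the cokernel). Should one prefer a hands-on proof through the explicit construction of completions, I would instead write \(X=\varinjlim_{i} X_i\) over its bounded \(\dvr\)\nb-submodules, observe that the images \(\varphi(X_i)\) are cofinal among the bounded submodules of \(Y\) precisely because \(\varphi\) is a bornological quotient map, and then pass to \(\dvgen\)\nb-adic completions level by level. On that route the one substantive point — and the step I would expect to be the main obstacle — is that the \(\dvgen\)\nb-adic completion functor on \(\dvr\)\nb-modules preserves surjections although it need not be exact on infinitely generated modules; one proves this by a Mittag--Leffler argument applied to the inverse systems \(\ker(\varphi|_{X_i})/(\ker(\varphi|_{X_i})\cap\dvgen^m X_i)\), whose transition maps are surjective, after which the inductive-limit bornology and the separated quotient are straightforward to handle.
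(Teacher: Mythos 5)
Your proposal is correct, but your primary argument takes a genuinely different route from the paper's. The paper's proof is essentially the two-line version of your fallback: it invokes Proposition~\ref{prop:completion} to describe \(\comb{X}\) as the separated quotient of \(\varinjlim_i \coma{X_i}\) over the bounded \(\dvr\)\nb-submodules \(X_i\), notes that the images \(\varphi(X_i)\) are cofinal among bounded submodules of \(Y\), and reduces everything to the fact that \(\dvgen\)\nb-adic completion preserves surjections (your Mittag--Leffler remark on the inverse systems of kernels is exactly the standard justification of that fact, which the paper leaves implicit). Your main route instead factors \(\varphi\) through the cokernel of \(\ker\varphi\hookrightarrow X\) and uses that completion, being a left adjoint, commutes with colimits, so that \(\comb{\varphi}\) is identified with the projection \(\comb{X}\to\comb{X}\big/\overline{\comb{\iota}(\comb{K})}\), which is a bornological quotient map by the definition of the quotient bornology. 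This is sound, granting two facts the paper itself states and uses elsewhere (in the discussion preceding Proposition~\ref{prop:completion} and in Lemma~\ref{lem:exactness}): that the quotient of a complete bornological module by a bornologically closed submodule is again complete, and that the separated quotient of \(\comb{X}/\comb{\iota}(\comb{K})\) is the quotient by the bornological closure of the image. The categorical route buys uniformity (it is the same mechanism as in Lemma~\ref{lem:exactness} for \({-}\hotimes Y\)); the paper's route is shorter and stays closer to the concrete description of completions that the rest of the paper relies on.
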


\begin{proof}
  Proposition~\ref{prop:completion} describes bornological completions
  through the \(\dvgen\)\nb-adic
  completions of the bounded \(\dvr\)\nb-submodules.
  This reduces the assertion to the fact that \(\dvgen\)\nb-adic
  completions preserve surjections.
\end{proof}

%\numberwithin{theorem}{subsection}
\section{Bornological algebras}
\label{sec:borno}
\numberwithin{theorem}{subsection}
A \emph{bornological algebra}~\(R\) over~\(\dvr\) is an associative
\(\dvr\)\nb-algebra and a bornological \(\dvr\)\nb-module such that the
multiplication map is bounded.  That is, if \(S,T\subseteq R\) are
bounded subsets, then so is~\(S\cdot T\).  We shall
consider only bornological algebras with unit.  The unit map
\(\dvr\to R\) is automatically bounded.

We first define the spectral radius of a bounded
\(\dvr\)\nb-submodule in a bornological \(\dvf\)\nb-algebra.  Based
on this, we define the linear growth bornology on a bornological
\(\dvr\)\nb-algebra and a \(1\)\nb-parameter family of bornologies
on \(\ul{R}\defeq R\otimes \dvf\) for a
\(\dvr\)\nb-algebra~\(R\) and an ideal \(J\triqui R\) with \(\dvgen\in
J\).  These
bornologies yield bornological completions of \(R\) and~\(\ul{R}\).
If~\(R\) is finitely generated and commutative, then we identify the
completion of~\(R\) for the linear growth bornology with the
Monsky--Washnitzer completion of~\(R\).  The completions
of~\(\ul{R}\) for the bornologies associated to~\(J\) interpolate
between the Monsky--Washnitzer completions of the tube algebras (see Definition \ref{def:alpha-tube} and Proposition \ref{pro:linear_growth_on_tube})
of~\(R\) with respect to the powers~\(J^m\) of the ideal~\(J\).

\subsection{Spectral radius estimates}
\label{sec:spectral_radius}

The spectral radius of a bounded subset in a bornological algebra
over \(\R\) or~\(\C\) is defined in
\cite{Meyer:HLHA}*{Definition~3.4}.  We need some notation to carry
this definition over to algebras over the fraction field~\(\dvf\)
of~\(\dvr\).  To some extent, the definition also works for algebras
over~\(\dvr\).

Let~\(R\) be a \(\dvf\)\nb-algebra and let \(M\subseteq R\) be a
\(\dvr\)\nb-submodule.  Fix \(r\in\R_{>0}\).  Recall that we write
\(\pepsilon=\abs{\dvgen}\).  There is a smallest \(j\in\Z\) with
\(\pepsilon^j \le r\),
namely, \(\lceil \log_\pepsilon(r)\rceil\).  We
abbreviate
\[
r\star M \defeq
\dvgen^{\lceil \log_\pepsilon(r)\rceil} M.
\]
Let \(s\in \R_{\ge 0}\),
\(j\in\N\)
and \(N\subseteq R\)
a \(\dvr\)\nb-submodule.
We shall use the following elementary properties of the
operation~\(\star\):
\begin{equation}
  \label{eq:star_props}
  \left\{
    \begin{array}[c]{c}
      \dvgen\left((rs)\star M\right)\subseteq r\star(s\star M)\subseteq (rs)\star M\\
      (r\pepsilon^j)\star M=r\star(\dvgen^j\cdot M)\\
      (r\star M)(s\star N)=r\star(s\star MN)=s\star(r\star MN)\\
      r\le s\Rightarrow r\star M\subseteq s\star M.
    \end{array}
  \right.
\end{equation}

Let \(\sum_{n\in\N} r^{-n} \star M^n\) be the set of all finite sums of
elements in \(\bigcup_{n\in\N} r^{-n} \star M^n\).  This is a
\(\dvr\)\nb-submodule of~\(R\).

\begin{definition}
  \label{def:spectral_radius}
  Let~\(R\) be a bornological \(\dvf\)\nb-algebra and let
  \(M\subseteq R\) be a bounded \(\dvr\)\nb-submodule.  The
  \emph{spectral radius}~\(\varrho(M)\) is defined as the infimum of
  the set of all numbers \(r\in \R_{>0}\) for which \(\sum_{n\in\N}
  r^{-n} \star M^n\) is bounded.  It is~\(\infty\) if no such~\(r\)
  exists.

  Let~\(R\) be a \(\dvr\)\nb-algebra.  If \(r\ge 1\), then we may
  define \(\sum_{n\in\N} r^{-n} \star M^n\) exactly as above.  This
  suffices to give meaning to the assertion \(\varrho(M)\le
  \varrho\) if \(\varrho\ge 1\).
  If~\(R\) is torsion-free, we may define
  \(\varrho(M)\in\R_{\ge 0}\) for a bounded \(\dvr\)\nb-submodule
  \(M\subseteq R\) by viewing~\(M\) as a subset of \(\ul{R}\defeq
  R\otimes \dvf\) with the tensor product bornology, see
  Example~\ref{ex:otimesK}.
\end{definition}

\begin{lemma}
  \label{lem:spectral_radius_comparison}
  Let~\(R\) be a bornological \(\dvr\)\nb-algebra and \(M\subseteq
  R\) a bounded \(\dvr\)\nb-\hspace{0pt}submodule.  Let \(j,c\in\Z\),
  \(c\ge1\).  Then \(\varrho(\dvgen^j M^c) = \pepsilon^j
  \varrho(M)^c\).
\end{lemma}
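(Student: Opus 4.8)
The plan is to reduce the claim to the defining property of the spectral radius (Definition~\ref{def:spectral_radius}) and one short manipulation of the operation~\(\star\) from~\eqref{eq:star_props}. For a \(\dvr\)\nb-submodule~\(A\) of a bornological \(\dvf\)\nb-algebra, write \(\Sigma(A,r)\defeq\sum_{n\in\N}r^{-n}\star A^n\), so that \(\varrho(A)=\inf\{r>0:\Sigma(A,r)\text{ is bounded}\}\); the set of admissible radii is an up-set containing \((\varrho(A),\infty)\), by the last clause of~\eqref{eq:star_props}. Since the spectral radius of a bounded submodule of a torsion-free \(\dvr\)\nb-algebra is by definition computed inside~\(\ul R\), I may assume \(R\) is a bornological \(\dvf\)\nb-algebra. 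Then multiplication by~\(\dvgen\) is a bornological isomorphism (Definition~\ref{def:born_dvf}), and since \((q\pepsilon^k)\star A\) and \(q\star(\dvgen^k A)\) both equal \(\dvgen^{\lceil\log_\pepsilon q\rceil+k}A\), the second identity in~\eqref{eq:star_props} holds for all \(k\in\Z\).

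The first step I would carry out is the computation
\[
\Sigma(\dvgen^j M^c,r)=\sum_{n\in\N}r^{-n}\star\bigl(\dvgen^{jn}M^{cn}\bigr)=\sum_{n\in\N}\bigl(r^{-n}\pepsilon^{jn}\bigr)\star M^{cn}=\sum_{n\in\N}s^{-cn}\star M^{cn},
\]
valid for every \(r>0\), where \(s\defeq(\pepsilon^{-j}r)^{1/c}\); it uses \((\dvgen^jM^c)^n=\dvgen^{jn}M^{cn}\), the extended identity above, and \(r^{-n}\pepsilon^{jn}=s^{-cn}\). For the inequality ``\(\le\)'', take any \(r>\pepsilon^j\varrho(M)^c\), so that \(s>\varrho(M)\) and \(\Sigma(M,s)\) is bounded; since \(\{cn:n\in\N\}\subseteq\N\), the computation gives \(\Sigma(\dvgen^j M^c,r)=\sum_n s^{-cn}\star M^{cn}\subseteq\sum_m s^{-m}\star M^m=\Sigma(M,s)\), which is bounded, so \(\varrho(\dvgen^j M^c)\le r\); letting \(r\) decrease to \(\pepsilon^j\varrho(M)^c\) gives the inequality. (This half uses neither invertibility of~\(\dvgen\) nor finiteness of \(\varrho(M)\).)

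For the reverse inequality, I would suppose that \(r>0\) is such that \(\Sigma(\dvgen^j M^c,r)=\sum_n s^{-cn}\star M^{cn}\) is bounded and show that \(\Sigma(M,s)\) is then bounded; this forces \(\varrho(M)\le s\), i.e.\ \(\pepsilon^j\varrho(M)^c\le r\), and taking the infimum over such~\(r\) completes the proof. Write \(m=cn+i\) with \(0\le i<c\); as \(c\ge1\), this is a bijection between~\(\N\) and \(\N\times\{0,\dots,c-1\}\) and \(M^m=M^{cn}M^i\). Combining \((a\star A)(b\star B)=a\star(b\star AB)\) from~\eqref{eq:star_props} with \(a=s^{-cn}\), \(b=s^{-i}\), and then \(\dvgen\bigl((ab)\star C\bigr)\subseteq a\star(b\star C)\), yields
\[
\dvgen\bigl(s^{-m}\star M^m\bigr)\subseteq\bigl(s^{-cn}\star M^{cn}\bigr)\bigl(s^{-i}\star M^i\bigr).
\]
Summing over~\(m\) gives \(\dvgen\cdot\Sigma(M,s)\subseteq\Sigma(\dvgen^j M^c,r)\cdot C\), with \(C\defeq\sum_{i=0}^{c-1}s^{-i}\star M^i\) a finite sum of bounded sets (each \(M^i\) is bounded because \(M\) is and multiplication is bounded). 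Hence \(\dvgen\cdot\Sigma(M,s)\) is bounded, and since multiplication by~\(\dvgen\) has bounded inverse, \(\Sigma(M,s)\) is bounded.

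The main obstacle, and the reason the reverse inequality has to be carried out in~\(\ul R\), is that the \(\star\)\nb-calculus splits \(M^m=M^{cn}M^i\) only up to a factor of~\(\dvgen\): \eqref{eq:star_props} gives \(\dvgen\bigl((ab)\star C\bigr)\subseteq a\star(b\star C)\) but not \((ab)\star C\subseteq a\star(b\star C)\), and the stray~\(\dvgen\) can only be cancelled where multiplication by~\(\dvgen\) is a bornological isomorphism. Everything else is routine bookkeeping with~\eqref{eq:star_props}. One could instead combine the special cases \(c=1\) and \(j=0\), but the case \(j=0\), i.e.\ \(\varrho(M^c)=\varrho(M)^c\), already needs this same splitting argument, so nothing would be gained.
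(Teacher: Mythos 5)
Your proof is correct, and its core is the same as the paper's: both directions rest on writing $m=cn+i$ with $0\le i<c$ and absorbing the remainder into the auxiliary bounded module $\sum_{i=0}^{c-1}s^{-i}\star M^i$ (the paper's~$S$), together with the substitution $r\mapsto r\pepsilon^{-j}$ for the twist by~$\dvgen^j$. The one genuine difference is how the stray factor of~$\dvgen$ coming from $\dvgen\bigl((ab)\star C\bigr)\subseteq a\star(b\star C)$ is removed: you cancel it by inverting~$\dvgen$, which is why you must first pass to a $\dvf$\nb-algebra, whereas the paper stays inside the $\dvr$\nb-algebra by replacing~$r$ with an arbitrary $s>r$ and observing that $s^{-n/c}<\pepsilon\, r^{-n/c}$ for all but finitely many~$n$, so the extra~$\dvgen$ is absorbed into the strict decrease of the radius. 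This matters slightly beyond the statement itself: the paper reuses exactly this $\dvr$\nb-algebra version of the argument in the proof of Proposition~\ref{pro:spectral_radius_linear_growth}, where~$R$ may have torsion and division by~$\dvgen$ is unavailable; your variant yields those boundedness equivalences only after tensoring with~$\dvf$. For the lemma as literally stated your reduction is legitimate, since by Definition~\ref{def:spectral_radius} the spectral radius is a well-defined real number only when one can work in~$\ul{R}$ or in a $\dvf$\nb-algebra in the first place.
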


\begin{proof}
  We first treat the case \(j=0\).  Let
  \begin{gather*}
    S=(\dvr\cdot 1_R + r^{-1/c} \star M+r^{-2/c} \star M^2 + \dotsb
    + r^{-(c-1)/c} \star M^{c-1}),\\
    T=\sum_{i=0}^\infty r^{-i} \star (M^c)^i, \qquad
    U=\sum_{n=0}^\infty r^{-n/c} \star M^n.
  \end{gather*}
  Then \(T\subseteq S\cdot T\subseteq U\)
    by~\eqref{eq:star_props}.  So~\(T\) is bounded if~\(U\) is
    bounded.  Hence \(\varrho(M^c) \le \varrho(M)^c\).  Conversely,
    \eqref{eq:star_props} also gives \(\dvgen \cdot U \subseteq
    S\cdot T\).  Since~\(S\) is bounded anyway, \(\dvgen U\) is
    bounded if~\(T\) is bounded.  If \(s>r\) is arbitrary, then
    \(s^{-n/c} <\pepsilon r^{-n/c}\) for all but finitely
    many~\(n\).  Hence \(\sum_{n=0}^\infty s^{-n/c} \star M^n
    \subseteq \sum_{n=0}^N s^{-n/c} \star M^n + \dvgen\cdot U\) for
    some \(N\in\N\), and the first summand is bounded anyway.  So
    \(\sum_{n=0}^\infty s^{-n/c} \star M^n\) is bounded for all
    \(s>r\) if~\(T\) is bounded.  Hence \(\varrho(M^c) \ge
    \varrho(M)^c\) as well.
    Finally, the assertion extends to \(j\neq0\) because, by
    \eqref{eq:star_props}, \(\sum r^{-n}\star (\dvgen^j M^c)^n =
    \sum (r\pepsilon^{-j})^{-n} \star (M^c)^n\).
\end{proof}

\begin{proposition}
  \label{pro:spectral_radius_linear_growth}
  Let~\((R,\bdd)\) be a bornological \(\dvr\)\nb-algebra.  The
  following are equivalent:
  \begin{enumerate}
  \item \label{en:spectral_radius_linear_growth1} \(\varrho(M)\le
    1\) for all \(M\in\bdd\);
  \item \label{en:spectral_radius_linear_growth3}
    \(\sum_{j=0}^\infty \dvgen^j M^{c j+d}\) is bounded for all
    \(M\in\bdd\), \(c,d\in\N\);
  \item \label{en:spectral_radius_linear_growth2}
    \(\sum_{j=0}^\infty \dvgen^j M^{j+1}\) is bounded for all
    \(M\in\bdd\);
  \item \label{en:spectral_radius_linear_growth4} any bounded subset
    of~\(R\) is contained in a bounded \(\dvr\)\nb-submodule~\(M\)
    with \(\dvgen \cdot M^2 \subseteq M\).
  \end{enumerate}
\end{proposition}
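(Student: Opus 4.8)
The plan is to prove the equivalences in the cyclic order $(1)\Rightarrow(3)\Rightarrow(2)\Rightarrow(4)\Rightarrow(1)$, extracting the quantitative content of Definition~\ref{def:spectral_radius} and Lemma~\ref{lem:spectral_radius_comparison} at each arrow. Throughout I work inside $\ul R = R\otimes\dvf$ so that the operation $r\star M = \dvgen^{\lceil\log_\pepsilon r\rceil}M$ is available for every $r>0$; since $R$ is torsion-free-or-not irrelevant here because boundedness of a $\dvr$\nb-submodule of $R$ is the same whether tested in $R$ or in $\ul R$ when $r\ge 1$, and the submodules that appear in (2)--(4) only involve nonnegative powers of $\dvgen$. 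First I would record the translation: for $M\in\bdd$ and $r\ge 1$, writing $r=\pepsilon^{-a}$ with $a\ge 0$, we have $\sum_{n} r^{-n}\star M^n = \sum_n \dvgen^{\lceil an\rceil}M^n$, so $\varrho(M)\le 1$ means exactly that $\sum_n \dvgen^{\lceil an\rceil}M^n$ is bounded for every $a>0$. Choosing $a=1/c$ and discarding the ceilings (they only shrink the exponent, but $\dvgen^{\lfloor n/c\rfloor}M^n \subseteq \dvgen^{\lceil n/c\rceil}M^n\cdot\dvgen^{-1}$, and a cofinal reindexing $n = cj+d$) gives that each $\sum_{j}\dvgen^{j}M^{cj+d}$ is a subset of $\dvgen^{-1}\sum_n\dvgen^{\lceil n/c\rceil}M^n$ up to finitely many harmless terms; hence $(1)\Rightarrow(3)$. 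The implication $(3)\Rightarrow(2)$ is the special case $c=d=1$.

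For $(2)\Rightarrow(4)$ I would take a bounded subset, enlarge it by convexity to a bounded $\dvr$\nb-submodule $N$ containing $1_R$, and set $M \defeq \sum_{j=0}^\infty \dvgen^j N^{j+1}$, which is bounded by hypothesis (applied to $N$). Then $M$ is a $\dvr$\nb-submodule containing $N$ (the $j=0$ term) and hence containing the chosen bounded subset, and a direct multiplication shows $\dvgen M^2 \subseteq M$: a product of $\dvgen^{j}N^{j+1}$ and $\dvgen^{k}N^{k+1}$ times $\dvgen$ is $\dvgen^{j+k+1}N^{j+k+2}$, which is the $(j+k+1)$-st term of the defining sum for $M$. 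This is the only place where a small computation is needed, and it is routine once $M$ is written down correctly; the key trick is simply to take $M$ to be the very sum whose boundedness (2) asserts.

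Finally $(4)\Rightarrow(1)$: given $M_0\in\bdd$, choose by (4) a bounded $\dvr$\nb-submodule $M\supseteq M_0$ with $\dvgen M^2\subseteq M$. An easy induction on $n$ gives $\dvgen^{n-1}M^n\subseteq M$ for all $n\ge 1$ (from $\dvgen^{n-1}M^n = \dvgen\cdot(\dvgen^{n-2}M^{n-1})\cdot M \subseteq \dvgen M\cdot M = \dvgen M^2\subseteq M$), and a fortiori $M$ is unital-stable enough that $\sum_n \dvgen^{\lceil n/c\rceil}M^n$ is sandwiched, for each fixed $c\ge 1$, between a finite sum of bounded pieces and $\dvgen^{-c}M$: indeed for $n\ge c$ we have $\lceil n/c\rceil \ge n/c \ge (n-1)/c + 1/c$, and grouping $n$ in blocks of length $c$ the exponent grows at least as fast as $\lfloor (n-1)/c\rfloor$, so $\dvgen^{\lceil n/c\rceil}M^n \subseteq \dvgen^{-1}\dvgen^{\lfloor(n-1)/c\rfloor\cdot 1}\cdot(\dvgen^{n-1}M^n)$... more cleanly: $(M^c)$ satisfies $\dvgen(M^c)\subseteq \dvgen M^2\cdot M^{c-2}\subseteq M\cdot M^{c-2}\subseteq M^{c-1}\subseteq M^c$ when $M\ni 1$, wait — the honest route is to invoke Lemma~\ref{lem:spectral_radius_comparison}: since $\dvgen M^2\subseteq M$ we get $\varrho(\dvgen M^2)\le \varrho(M)$, i.e.\ $\pepsilon\,\varrho(M)^2\le\varrho(M)$, whence $\varrho(M)\le \pepsilon^{-1}\cdot$ nothing useful directly; so instead note $\dvgen M^2\subseteq M$ forces, for the bounded submodule $M$, that $\sum_j \dvgen^j M^{j+1}\subseteq M$ by the induction above, and then for any $r>1$, $r=\pepsilon^{-a}$, one checks $\sum_n \dvgen^{\lceil an\rceil}M^n$ is bounded by comparing $\lceil an\rceil$ with $\lfloor n/c\rfloor$ for $c = \lceil 1/a\rceil$ and using $\dvgen^{\lfloor n/c\rfloor}M^n\subseteq \dvgen^{-c}\cdot\dvgen^{n-1}M^n\subseteq\dvgen^{-c}M$. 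Thus $\varrho(M)\le r$ for every $r>1$, so $\varrho(M)\le 1$, and since $M_0\subseteq M$ monotonicity of $\varrho$ gives $\varrho(M_0)\le 1$.

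The main obstacle I anticipate is purely bookkeeping: the ceilings in $r\star M = \dvgen^{\lceil\log_\pepsilon r\rceil}M$ mean one cannot literally equate $\sum_n r^{-n}\star M^n$ with $\sum_n\dvgen^{n/c}M^n$, so each implication requires absorbing a bounded finite "head" of the series and a uniform single factor of $\dvgen^{\pm c}$; the properties collected in \eqref{eq:star_props}, together with the fact that any bounded subset sits inside a bounded submodule (convexity) and the monotonicity $M\subseteq M' \Rightarrow \varrho(M)\le\varrho(M')$, are exactly what make these absorptions legitimate, so no genuinely new idea is needed beyond Lemma~\ref{lem:spectral_radius_comparison} and the induction $\dvgen^{n-1}M^n\subseteq M$.
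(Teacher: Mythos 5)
Your arrows (1)\(\Rightarrow\)(3)\(\Rightarrow\)(2) and (2)\(\Rightarrow\)(4) are sound and essentially identical to the paper's (the submodule \(M=\sum_j\dvgen^jN^{j+1}\) with \(\dvgen M^2\subseteq M\) is exactly the computation~\eqref{eq:pi1-multiplicative}). The closing arrow (4)\(\Rightarrow\)(1) has a genuine gap. You apply (4) only to \(M_0\) itself, obtaining a single bounded \(M\supseteq M_0\) with \(\dvgen M^2\subseteq M\), and then try to deduce \(\varrho(M)\le 1\) from \(\dvgen^{n-1}M^n\subseteq M\). That inclusion only says that \(\sum_n r^{-n}\star M^n\) is bounded for \(r=\pepsilon^{-1}\), i.e.\ \(\varrho(M)\le\pepsilon^{-1}\) --- which you yourself notice is ``nothing useful directly.'' The inclusion you then write to bridge the gap, \(\dvgen^{\lfloor n/c\rfloor}M^n\subseteq\dvgen^{-c}\cdot\dvgen^{n-1}M^n\), is false for \(c\ge2\) and \(n\) large: since a \emph{larger} exponent of \(\dvgen\) gives a \emph{smaller} set, it would require \(\lfloor n/c\rfloor\ge n-1-c\), which fails once \(n\) exceeds roughly \((c+1)c/(c-1)\). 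The set \(\dvgen^{\lceil n/c\rceil}M^n\) that must be controlled is much larger than the set \(\dvgen^{n-1}M^n\) that you control, and no bookkeeping with ceilings closes that gap; a single submodule with \(\dvgen M^2\subseteq M\) genuinely only certifies spectral radius \(\le\pepsilon^{-1}\), not \(\le1\).

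The missing idea is to exploit that (4) applies to \emph{every} bounded subset, in particular to the powers \(M^c\), which are bounded because \(\bdd\) is an algebra bornology. This is what the paper does: given \(c,d\), choose by (4) a bounded \(\dvr\)\nb-submodule \(U\supseteq M^c\cup M^d\) with \(\dvgen U^2\subseteq U\); induction gives \(\dvgen^jM^{cj}U\subseteq U\), hence \(\dvgen^jM^{cj+d}\subseteq U\) for all \(j\), which is statement (3). To return from (3) to (1) (the direction your cyclic scheme then still needs, since you only proved (1)\(\Rightarrow\)(3)), note that boundedness of \(\sum_j\dvgen^j(M^c)^j\) says \(\varrho(M^c)\le\pepsilon^{-1}\), so \(\varrho(M)\le\pepsilon^{-1/c}\) for every \(c\) by Lemma~\ref{lem:spectral_radius_comparison}, whence \(\varrho(M)\le1\). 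With the arrow (4)\(\Rightarrow\)(3) repaired in this way and (3)\(\Rightarrow\)(1) added, your argument matches the paper's.
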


\begin{proof}
  If \(\sum r^{-n} \star M^n\) is bounded for some~\(r\in\R_{\ge0}\),
  then it is also bounded for all \(r' \ge r\).  Hence
  \(\varrho(M)\le1\) if and only \(\sum r^{-n} \star M^n\) is bounded for
  all~\(r\) of the form \(r=\pepsilon^{-1/c}\) with
  \(c\in\Z_{\ge1}\).  The proof of
  Lemma~\ref{lem:spectral_radius_comparison} shows that
  \(\sum_{n\in\N} \pepsilon^{n/c} \star M^n\) is bounded if and only if
  \(\sum_{j=1}^\infty \dvgen^j M^{c j}\) is.  Then
  \(\sum_{j=1}^\infty \dvgen^j M^{c j}\cdot M^d = \sum_{j=1}^\infty
  \dvgen^j M^{c j+d}\) is bounded as well.  So
  \ref{en:spectral_radius_linear_growth1}
  and~\ref{en:spectral_radius_linear_growth3} are equivalent.

  The implication
  \ref{en:spectral_radius_linear_growth3}\(\Rightarrow
  \)\ref{en:spectral_radius_linear_growth2} is trivial.  We prove
  \ref{en:spectral_radius_linear_growth2}\(\Rightarrow
  \)\ref{en:spectral_radius_linear_growth4}.  Let~\(S\) be bounded.
  It is contained in a bounded \(\dvr\)\nb-submodule~\(M\).  Then
  \(U\defeq \sum_{j\ge 0} \dvgen^j M^{j+1}\) is bounded
  by~\ref{en:spectral_radius_linear_growth2}.  We have \(S\subseteq
  M\subseteq U\) by construction, and \(\dvgen \cdot U\cdot U
  \subseteq U\) because
  \begin{equation}
    \label{eq:pi1-multiplicative}
    \dvgen \cdot \sum_{i=0}^\infty \dvgen^i M^{i+1}
    \cdot \sum_{j=0}^\infty \dvgen^j M^{j+1}
    = \sum_{i,j=0}^\infty \dvgen^{i+j+1} M^{i+j+2}
    = \sum_{j=1}^\infty \dvgen^j M^{j+1}\subseteq U.
  \end{equation}
  Finally, we prove
  \ref{en:spectral_radius_linear_growth4}\(\Rightarrow
  \)\ref{en:spectral_radius_linear_growth3}.  Let \(M \subseteq R\)
  and \(c,d\in\N\) be as in~\ref{en:spectral_radius_linear_growth3}.
  Condition~\ref{en:spectral_radius_linear_growth4} gives a bounded
  \(\dvr\)\nb-submodule \(U\subseteq R\) with \(M^c \cup M^d
  \subseteq U\) and \(\dvgen \cdot U\cdot U\subseteq U\).  Hence
  \(\dvgen M^c\cdot U \subseteq \dvgen U\cdot U \subseteq U\).  By
  induction, \(\dvgen^i M^{c i}\cdot U \subseteq U\) for all
  \(i\in\N\).  Thus \(\dvgen^i M^{c i+d} = \dvgen^i M^{ci} \cdot M^d
  \subseteq \dvgen^i M^{c i} U \subseteq U\) for all \(i\in\N\).  So
  \(\sum_{i\ge 0} \dvgen^i M^{c i +d}\) is bounded.
\end{proof}

Let \(R\)
be a flat \(\dvr\)-algebra
and let \(M\subseteq R\)
be a \(\dvr\)\nb-submodule.
Let \(R_M\subseteq R\)
be the subset of all \(x\in R\)
for which there is some \(l\in\N\)
with \(\dvgen^l x\in M\).
In other words, \(R_M=R\cap \ul{M}\);
this is the largest \(\dvr\)\nb-submodule
of~\(R\)
whose elements are absorbed by~\(M\).
There is a unique seminorm on~\(R_M\)
with unit ball~\(M\)
taking values in \(\abs{\dvf}\),
namely, the gauge seminorm~\(\norm{\hphantom{x}}_M\)
of~\eqref{eq:gauge}.  The submodule~\(M\)
satisfies \(\dvgen^m M^2 \subseteq M\)
if and only if \(R_M\cdot R_M\subseteq R_M\) and
\[
\norm{x \cdot y}_M \le \pepsilon^{-m}\cdot \norm{x}_M \norm{y}_M
\]
for all \(x,y\in R_M\).
>From this and~\eqref{ineq:gauge} we obtain that~\(M\)
is the unit ball of a seminormed \(\dvr\)\nb-subalgebra
of~\(R\)
if and only if it satisfies \(\dvgen^m M^2 \subseteq M\)
for some \(m\in\N\).

A complete bornological $\dvr$\nb-algebra~\(R\) is a \emph{Banach algebra} if
there is a norm \(\norm{\hphantom{x}}\) for which the multiplication
map is bounded and \((R,\norm{\hphantom{x}})\) is a bornological
Banach module (in the sense of Example~\ref{ex:banach_mods}) whose
closed balls generate the bornology.  A complete bornological
algebra is a \emph{local Banach algebra} if it is a filtered union
of bornological Banach subalgebras such that the inclusion maps are
norm-decreasing (see \cite{cmr}*{Definition~2.11}).

For example, a complete bornological \(\dvr\)\nb-algebra with \(\varrho(M)\le1\)
for all bounded \(\dvr\)\nb-sub\-modules~\(M\) is a local Banach
algebra.

\begin{remark}\label{rem:local_banach}  An argument similar to that of Proposition \ref{pro:spectral_radius_linear_growth} shows that if \((R,\bdd)\) is a bornological \(\dvf\)-algebra, then \(\varrho(M)<\infty\) for all
\(M\in\bdd\) if and only if the following condition holds:
\[
\forall M\in\bdd  \quad (\exists M\subseteq U\in\bdd,\ l\in\Z)\quad
\dvgen^{l}U^2\subseteq U.
\]
It follows from this and from the arguments above that a complete bornological
\(\dvf\)\nb-algebra is a local Banach algebra if and only if
\(\varrho(M)<\infty\) for all bounded \(\dvr\)\nb-submodules~\(M\)
(compare \cite{Meyer:HLHA}*{Theorem~3.10}).
\end{remark}

\begin{lemma}
  \label{lem:Neumann_series}
  If~\(R\) is a complete bornological \(\dvr\)\nb-algebra and
  \(M\subseteq R\) a bounded \(\dvr\)\nb-submodule with
  \(\varrho(M)<\pepsilon^{-1}\), then \(1-\dvgen z\) is
  invertible in~\(R\) for all \(z\in M\).  If~\(R\) is a complete
  bornological \(\dvf\)\nb-algebra and \(M\subseteq R\) a bounded
  \(\dvr\)\nb-submodule with \(\varrho(M)<1\), then \(1-z\) is
  invertible in~\(R\) for all \(z\in M\).
\end{lemma}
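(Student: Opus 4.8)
The plan is to prove the first assertion directly, by showing that the Neumann series $\sum_{n\in\N}(\dvgen z)^n$ is Cauchy, hence convergent, in the complete bornological $\dvr$\nb-algebra~$R$, and that its limit is a two-sided inverse of $1-\dvgen z$; the second assertion will then follow by a rescaling trick. The heart of the argument is the following estimate. Since $\varrho(M)<\pepsilon^{-1}$ and the set of $r$ for which $\sum_{n}r^{-n}\star M^n$ is bounded is upward closed by the monotonicity in~\eqref{eq:star_props}, we may fix $r$ with $1<r<\pepsilon^{-1}$ and $r>\varrho(M)$, so that $S\defeq\sum_{n\in\N}r^{-n}\star M^n$ is a bounded $\dvr$\nb-submodule of~$R$. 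Put $k_n\defeq\lceil\log_\pepsilon(r^{-n})\rceil=-\lfloor n\log_\pepsilon r\rfloor$, so that $\dvgen^{k_n}M^n=r^{-n}\star M^n\subseteq S$, and set $m_n\defeq n-k_n=n+\lfloor n\log_\pepsilon r\rfloor$. Because $r>1$ we have $k_n\ge0$; because $r<\pepsilon^{-1}$ we have $\log_\pepsilon r\in(-1,0)$, and then an elementary floor-function computation shows that $m_0=0$, that $m_{n+1}-m_n\in\{0,1\}$, and that $m_n\ge n(1+\log_\pepsilon r)-1\to\infty$. Thus $(m_n)$ is a nonnegative, non-decreasing sequence with $m_n\to\infty$, and for every $z\in M$ we obtain $(\dvgen z)^n=\dvgen^n z^n\in\dvgen^{n-k_n}(\dvgen^{k_n}M^n)\subseteq\dvgen^{m_n}S$.

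Next I would run the Neumann series argument. For the partial sums $t_N\defeq\sum_{n=0}^N(\dvgen z)^n$ and any $N>L$, the difference $t_N-t_L=\sum_{n=L+1}^N(\dvgen z)^n$ lies in $\dvgen^{m_{L+1}}S$, since each summand lies in $\dvgen^{m_n}S\subseteq\dvgen^{m_{L+1}}S$ and~$S$ is a submodule. Hence, with $\delta_l\defeq\dvgen^{m_{l+1}}\to0$ in the $\dvgen$\nb-adic topology, the sequence $(t_N)_{N\in\N}$ is $S$\nb-Cauchy in the sense of Definition~\ref{def:converge}, so by completeness of~$R$ it converges to some $x\in R$. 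Moreover $(\dvgen z)^{N+1}\in\dvgen^{m_{N+1}}S$ with $m_{N+1}\to\infty$ shows $(\dvgen z)^{N+1}\to0$. The identities $(1-\dvgen z)t_N=t_N(1-\dvgen z)=1-(\dvgen z)^{N+1}$ therefore converge to~$1$; on the other hand, boundedness of the multiplication implies that left and right multiplication by $1-\dvgen z$ preserve convergence of sequences, so $(1-\dvgen z)t_N\to(1-\dvgen z)x$ and $t_N(1-\dvgen z)\to x(1-\dvgen z)$. By uniqueness of limits in the separated module~$R$ we conclude $(1-\dvgen z)x=x(1-\dvgen z)=1$, so $1-\dvgen z$ is invertible.

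For the second assertion, let $R$ be a complete bornological $\dvf$\nb-algebra, $M\subseteq R$ a bounded $\dvr$\nb-submodule with $\varrho(M)<1$, and $z\in M$. Since $R$ is a bornological $\dvf$\nb-vector space (Definition~\ref{def:born_dvf}), multiplication by $\dvgen^{-1}$ is bounded, so $M'\defeq\dvgen^{-1}M$ is a bounded $\dvr$\nb-submodule with $\dvgen^{-1}z\in M'$, and Lemma~\ref{lem:spectral_radius_comparison} gives $\varrho(M')=\pepsilon^{-1}\varrho(M)<\pepsilon^{-1}$. Applying the first assertion to~$R$ (viewed as a complete bornological $\dvr$\nb-algebra) and~$M'$ shows that $1-z=1-\dvgen(\dvgen^{-1}z)$ is invertible.

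The step I expect to be the main obstacle is the bookkeeping in the key estimate: one must pick $r$ in the half-open range $(1,\pepsilon^{-1})$ and check that the exponents $m_n=n-\lceil\log_\pepsilon(r^{-n})\rceil$ are nonnegative, non-decreasing and unbounded — it is precisely here that the hypothesis $\varrho(M)<\pepsilon^{-1}$, rather than merely $\varrho(M)<\infty$, enters. A secondary point of care is that the bounded submodule~$S$ produced by the spectral radius need not be $\dvgen$\nb-adically complete, so the convergence of $(t_N)$ must be obtained from the definition of completeness of~$R$ and not by summing the series inside~$S$.
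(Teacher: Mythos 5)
Your proof is correct and follows essentially the same route as the paper's: choose $r$ strictly between $\varrho(M)$ and $\pepsilon^{-1}$, use boundedness of $S=\sum_n r^{-n}\star M^n$ to show the Neumann series $\sum(\dvgen z)^n$ is $S$\nb-Cauchy, invoke completeness to sum it to an inverse of $1-\dvgen z$, and deduce the $\dvf$\nb-algebra case by rescaling $M$ to $\dvgen^{-1}M$ via Lemma~\ref{lem:spectral_radius_comparison}. You merely spell out the floor/ceiling bookkeeping and the uniqueness-of-limits step that the paper leaves implicit.
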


\begin{proof}
  If \(\varrho(M)<\pepsilon^{-1}\), then there is~\(r\) with
  \(\pepsilon<r<\varrho(M)^{-1}\).  Then \(\sum r^n \star M^n\)
  is bounded, and \(\lim_{n\to\infty} (\dvgen z)^n=0\) in the
  \(\dvgen\)\nb-adic topology on \(\sum r^n \star M^n\).  So
  \(\sum_{n=0}^\infty (\dvgen z)^n\) is a bornological Cauchy
  series.  It converges because~\(R\) is complete.  The limit is an
  inverse for~\(1-\dvgen z\).  If~\(R\) is a \(\dvf\)\nb-algebra and
  \(\varrho(M)<1\), then the argument above for~\(\dvgen^{-1} M\)
  gives the second statement by
  Lemma~\ref{lem:spectral_radius_comparison}.
\end{proof}

Now we define certain weak completions of a bornological algebra
through spectral radius estimates.

\begin{definition}
  \label{def:linear_growth}
  Let \((R,\bdd)\)
  be a bornological \(\dvr\)\nb-algebra.
  The \emph{linear growth bornology} on~\(R\)
  is the smallest algebra bornology~\(\ling{\bdd}\)
  on~\(R\)
  that contains~\(\bdd\),
  such that all \(\dvr\)\nb-submodules
  \(M\in\ling{\bdd}\)
  satisfy \(\varrho(M;\ling{\bdd})\le1\).
  A subset of~\(R\)
  has \emph{linear growth} with respect to \(\bdd\)
  if it is in~\(\ling{\bdd}\).
  Let~\(\comling{R}\)
  be the bornological completion of \((R,\ling{\bdd})\).
\end{definition}

\begin{definition}
  \label{def:lambda_completion}
  Let \((R,\bdd)\)
  be a bornological \(\dvr\)\nb-algebra,
  \(J\)
  an ideal in $R$
  and \(\alpha\in[0,1]\).
  Let \(\ul{R} \defeq R\otimes \dvf\) and~\(\gral{\bdd}{J}{\alpha}\)
   the smallest \(\dvf\)\nb-vector
  space bornology on~\(\ul{R}\)
  such that
  \(\varrho(M; \gral{\bdd}{J}{\alpha}) \le \pepsilon^\alpha\)
  whenever \(M\subseteq J\)
  is a \(\dvr\)\nb-submodule
  with \(M\in \bdd\);
  here we
  abusively  denote by $M$ also its image in $\ul{R}$.
  Let \(\comg{R}{J}{\alpha}\)
  be the completion of \((\ul{R},\gral{\bdd}{J}{\alpha})\).

  We wrote \(\varrho(M;\ling{\bdd})\) and \(\varrho(M;
  \gral{\bdd}{J}{\alpha})\) here to emphasize the bornologies for
  which these spectral radii are computed.
\end{definition}

\begin{remark}
  \label{rem:BJ_contains_B}
  If $\dvgen^k\in J$ for some $k\in \N$, then any \(\dvf\)\nb-vector
  space bornology on~$\ul{R}$ that contains the $M\in\bdd$ with
  $M\subseteq J$ must contain all of~$\bdd$.  Thus we have
  $\gral{\bdd}{J}{\alpha}\supseteq\bdd$ in
  Definition~\ref{def:lambda_completion}.
\end{remark}

\begin{remark}
  The condition \(\varrho(M)\le 1\)
  in Definition~\ref{def:linear_growth} is the strongest spectral
  radius constraint that makes sense in bornological
  \(\dvr\)\nb-algebras.
  Even in~\(\ul{R}\),
  asking for \(\varrho(M)< 1\)
  is unreasonable if \(1_R\in M\)
  because then Lemma~\ref{lem:Neumann_series} implies that~\(0\)
  is invertible in the completion, forcing the completion to
  be~\(\{0\}\).
  If \(M\subseteq J\)
  for an ideal~\(J\),
  however, then it makes sense to require
  \(\varrho(M)\le \pepsilon^\alpha\)
  with \(\alpha\ge0\)
  as in Definition~\ref{def:lambda_completion}.  If
  \(\dvgen \in J\),
  \(\varrho(M)\le \pepsilon^\alpha\)
  with \(\alpha>1\)
  would once more imply the invertibility of
  \(0=1-\dvgen^{-1} \dvgen\)
  in the completion, which is unreasonable.  This is why we restrict
  to \(\alpha\le1\)
  and why the ideal \(J=R\)
  is not a good choice unless \(\alpha=0\),
  when we get the bornology on~\(\ul{R}\)
  induced by the linear growth bornology on~\(R\):
  \(\gral{\bdd}{R}{0} = \ul{\ling{\bdd}}\).
\end{remark}

% For a finitely generated, commutative \(\dvr\)\nb-algebra~\(R\),
% Theorems \ref{the:MW_completion} and~\ref{the:MW_tube} will identify
% \(\comling{R}\) and
% \(\comg{R}{J}{\alpha}\) for \(\alpha\in\Q\) with the
% Monsky--Washnitzer dagger completion~\(R^\updagger\) of~\(R\) and with
% \(\tub{R}{J}{\alpha}^\updagger \otimes \dvf\), respectively, where
% we use the Monsky--Washnitzer dagger completion of the
% \(\alpha\)\nb-tube algebra from Definition~\ref{def:alpha-tube}.

% Before we discuss the finitely generated, commutative case, we
% describe our bornologies more concretely and prove some basic
% properties.

\begin{lemma}
  \label{lem:linear_growth}
  Let~\((R,\bdd)\)
  be a bornological \(\dvr\)-algebra
  and \(T\subseteq R\).  The following are equivalent:
  \begin{enumerate}
  \item \label{en:linear_growth1} \(T\) has linear growth;
  \item \label{en:linear_growth2} \(T\) is contained in
    \(\sum_{i=0}^\infty \dvgen^i S^{i+1}\) for some \(S\in\bdd\);
  \item \label{en:linear_growth3} \(T\) is contained in
    \(\sum_{i=0}^\infty \dvgen^i S^{c i+d}\) for some \(S\in\bdd\),
    \(c,d\in\N\).
  \end{enumerate}
  In particular, $\ling{\bdd}$ is also the smallest
   algebra bornology containing~$\bdd$ such that all
    $\dvr$\nb-submodules $M \in \bdd$ satisfy
    \(\varrho(M;\ling{\bdd})\le1\).
\end{lemma}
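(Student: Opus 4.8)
The plan is to give the family appearing in~\ref{en:linear_growth2} explicitly, identify it with the family in~\ref{en:linear_growth3}, and then show that both coincide with the linear growth bornology; the concluding ``in particular'' will fall out of the two inclusions that prove \ref{en:linear_growth1}\(\Leftrightarrow\)\ref{en:linear_growth2}. Write \(\born_1\) for the family of all \(T\subseteq R\) with \(T\subseteq U(S)\defeq\sum_{i\ge 0}\dvgen^i S^{i+1}\) for some \(S\in\bdd\). By convexity one may always take the witness \(S\) to be a bounded \(\dvr\)\nb-submodule, so that \(U(S)\) is itself a \(\dvr\)\nb-submodule; then~\eqref{eq:pi1-multiplicative} gives \(\dvgen\,U(S)^2\subseteq U(S)\), whence \(\dvgen^j U(S)^{j+1}\subseteq U(S)\) for all \(j\in\N\) by an easy induction.

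The one combinatorial input I would record first is this: for \(S'\defeq\dvr\cdot 1_R+S+\dots+S^N\in\bdd\) one has \(S^m\subseteq (S')^{k+1}\) whenever \(0\le m\le N(k+1)\), because \(m\) copies of \(S\) fit into \(k+1\) factors of capacity \(N\), empty factors being filled with~\(1_R\). Taking \(N=c+d\) yields \(\dvgen^i S^{ci+d}\subseteq\dvgen^i(S')^{i+1}\) for all \(i\), hence \(\sum_i\dvgen^i S^{ci+d}\subseteq U(S')\), so \ref{en:linear_growth3}\(\Rightarrow\)\ref{en:linear_growth2}; the converse is the trivial case \(c=d=1\). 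Thus \(\born_1\) coincides with the family \(\born_3\) described by~\ref{en:linear_growth3}. The same device with \(N=2\) shows \(\born_1\) is closed under products, since \(U(S)\cdot U(S)\subseteq\sum_{k\ge 0}\dvgen^k S^{k+2}\subseteq U(S')\). Closure under subsets and finite unions, the fact that \(\born_1\) contains the \(\dvr\)\nb-submodule generated by any of its members, and \(\bdd\subseteq\born_1\) are immediate once one enlarges witnesses to a common bounded submodule and uses \(S\subseteq S^1\subseteq U(S)\); so \(\born_1\) is an algebra bornology containing~\(\bdd\).

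Next I would verify that every bounded \(\dvr\)\nb-submodule \(M\) of \((R,\born_1)\) satisfies \(\varrho(M;\born_1)\le 1\): writing \(M\subseteq U(S)\in\born_1\), the relation \(\dvgen^j U(S)^{j+1}\subseteq U(S)\) gives \(\sum_{j\ge 0}\dvgen^j M^{j+1}\subseteq U(S)\in\born_1\), so condition~\ref{en:spectral_radius_linear_growth2} of Proposition~\ref{pro:spectral_radius_linear_growth} holds for \((R,\born_1)\) and therefore so does~\ref{en:spectral_radius_linear_growth1}. Hence \(\born_1\) is one of the algebra bornologies of which \(\ling\bdd\) is, by Definition~\ref{def:linear_growth}, the smallest, and so \(\ling\bdd\subseteq\born_1\).

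The reverse inclusion and the ``in particular'' both come from a single observation: if \(\born\) is any algebra bornology on~\(R\) with \(\bdd\subseteq\born\) and \(\varrho(S;\born)\le 1\) for every bounded \(\dvr\)\nb-submodule \(S\in\bdd\), then \(\born_1\subseteq\born\). Indeed, for such an~\(S\) the choice \(r=\pepsilon^{-1}>1\ge\varrho(S;\born)\) and the definition of the spectral radius give that \(\sum_{n\ge 0}r^{-n}\star S^n\) lies in~\(\born\), and this set equals \(\sum_{n\ge 0}\dvgen^n S^n\) since \(\pepsilon^{n}\star S^n=\dvgen^{n}S^n\); then \(U(S)=S\cdot\sum_{n\ge 0}\dvgen^n S^n\in\born\) because \(\born\) is an algebra bornology, so any \(T\in\born_1\) lies in~\(\born\). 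Applying this to \(\born=\ling\bdd\) (which qualifies, as \(\varrho(M;\ling\bdd)\le1\) even for \emph{all} \(M\in\ling\bdd\)) yields \(\born_1\subseteq\ling\bdd\); thus \(\ling\bdd=\born_1=\born_3\), which is the asserted equivalence, and, together with the observation applied to arbitrary such~\(\born\), it exhibits \(\ling\bdd=\born_1\) as both a member and a lower bound of the algebra bornologies \(\born\supseteq\bdd\) with \(\varrho(M;\born)\le1\) for all \(M\in\bdd\) — that is, as the smallest such. I expect the only genuinely delicate points to be the combinatorial padding behind \ref{en:linear_growth3}\(\Rightarrow\)\ref{en:linear_growth2} and the multiplicativity of \(\born_1\); everything else is a repackaging of Proposition~\ref{pro:spectral_radius_linear_growth} and of the operation~\(\star\).
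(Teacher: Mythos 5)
Your proof is correct and follows essentially the same route as the paper: you identify the family in (2) as an algebra bornology containing \(\bdd\) in which every bounded \(\dvr\)\nb-submodule has spectral radius at most \(1\), and you pin it down as \(\ling{\bdd}\) by minimality on one side and by the definition of the spectral radius (via \(U(S)=S\cdot\sum_n\dvgen^nS^n\)) on the other. The only departures are minor: you prove (3)\(\Rightarrow\)(2) directly by the grouping/padding argument with \(S'=\dvr\cdot 1_R+S+\dotsb+S^{c+d}\) instead of routing it through (1) via Proposition~\ref{pro:spectral_radius_linear_growth}, and you spell out the ``in particular'' clause, which the paper's proof leaves implicit.
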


\begin{proof}
  The implication
  \ref{en:linear_growth2}\(\Rightarrow\)\ref{en:linear_growth3}
  is trivial, and
  \ref{en:linear_growth3}\(\Rightarrow\)\ref{en:linear_growth1}
  follows from
  \ref{en:spectral_radius_linear_growth3}\(\Rightarrow\)\ref{en:spectral_radius_linear_growth1}
  in Proposition~\ref{pro:spectral_radius_linear_growth}.  The subsets
  of~\(R\)
  as in~\ref{en:linear_growth2} form a bornology~\(\bdd'\)
  because
  \(\sum \dvgen^i S_1^{i+1} + \sum \dvgen^i S_2^{i+1} \subseteq \sum
  \dvgen^i (S_1+S_2)^{i+1}\)
  and \(S_1+S_2\)
  is bounded if \(S_1,S_2\)
  are.  We claim that \(S_1\cdot S_2\)
  is in~\(\bdd'\)
  as well, that is, \(\bdd'\)
  makes~\(R\)
  a bornological algebra.
  There are bounded \(\dvr\)\nb-submodules
  \(U_1,U_2\subseteq R\)
  with \(S_j \subseteq \sum \dvgen^i U_j^{i+1}\)
  for \(j=1,2\).
  A computation as in~\eqref{eq:pi1-multiplicative} gives
  \(S_1 \cdot S_2 \subseteq \sum \dvgen^i (U_1+U_2)^{i+2}\),
  so \(S_1\cdot S_2\in \bdd'\).
  The bornology~\(\bdd'\)
  contains~\(\bdd\).
  Equation~\eqref{eq:pi1-multiplicative} shows that any subset of the
  form \(U\defeq \sum_{i=0}^\infty \dvgen^i S^{i+1}\)
  satisfies \(\dvgen U^2 \subseteq U\).
  Hence \(\varrho(M;\bdd')\le 1\)
  for all \(M\in\bdd'\)
  by the equivalence of \ref{en:spectral_radius_linear_growth1}
  and~\ref{en:spectral_radius_linear_growth4} in
  Proposition~\ref{pro:spectral_radius_linear_growth}.  Since the
  linear growth bornology is the smallest algebra bornology
  containing~\(\bdd\)
  with this property,
  \ref{en:linear_growth1}\(\Rightarrow\)\ref{en:linear_growth2}.
\end{proof}

\begin{lemma}
  \label{lem:alpha_bornology}
  Let~\((R,\bdd)\) be a bornological algebra, \(J\triqui R\) an ideal
  with \(\dvgen^k\in J\) for some $k\in \N$, \(\alpha\in[0,1]\), and \(S\subseteq
  \ul{R}\).  We have \(S\in \gral{\bdd}{J}{\alpha}\) if and only if
  there are \(l\in\N\), \(r>\pepsilon^\alpha\), and a bounded
  \(\dvr\)\nb-submodule \(M\subseteq J\) such that
  \[
  S\subseteq \dvgen^{-l} \sum_{n=1}^\infty r^{-n} \star M^n.
  \]
\end{lemma}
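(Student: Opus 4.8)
The plan is to characterize $\gral{\bdd}{J}{\alpha}$ as the bornology generated by the sets $U_{l,r,M} := \dvgen^{-l}\sum_{n\ge 1} r^{-n}\star M^n$ where $l\in\N$, $r>\pepsilon^\alpha$, and $M\subseteq J$ is bounded, and then verify this family is already an algebra bornology with the required spectral-radius property, so that it must coincide with the smallest such bornology. The easy direction is that every $U_{l,r,M}$ must lie in $\gral{\bdd}{J}{\alpha}$: since $r>\pepsilon^\alpha \ge \varrho(M;\gral{\bdd}{J}{\alpha})$, the sum $\sum_{n\ge 1} r^{-n}\star M^n$ is bounded in $\gral{\bdd}{J}{\alpha}$ by Definition~\ref{def:spectral_radius}, and multiplying by $\dvgen^{-l}$ stays bounded because $\gral{\bdd}{J}{\alpha}$ is a $\dvf$-vector space bornology; hence any $S$ contained in some $U_{l,r,M}$ is bounded there. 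Note that Remark~\ref{rem:BJ_contains_B} already tells us $\bdd\subseteq\gral{\bdd}{J}{\alpha}$, so the condition $\dvgen^k\in J$ is used to know that the candidate family contains $\bdd$.

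First I would check that $\bdd'$, the family of subsets of $\ul R$ contained in some $U_{l,r,M}$, is a $\dvf$-vector space bornology. Closure under $\dvr$-submodule generation, finite unions, and subsets is routine; the unit ball $1_R\in U_{0,r,M}$ once $\dvgen^k\in M$ (rescale $M$), so $\bdd\subseteq\bdd'$; invertibility of multiplication by $\dvgen$ with bounded inverse is immediate from the $\dvgen^{-l}$ factor. For the sum of two generators $U_{l_1,r_1,M_1}$ and $U_{l_2,r_2,M_2}$, one takes $r=\min(r_1,r_2)$, $M=M_1+M_2$, $l=\max(l_1,l_2)$ and observes $U_{l_1,r_1,M_1}+U_{l_2,r_2,M_2}\subseteq U_{l,r,M}$ using the last clause of~\eqref{eq:star_props}. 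For closure under multiplication (the algebra axiom), given $x\in U_{l_1,r_1,M_1}$, $y\in U_{l_2,r_2,M_2}$ with $M_1,M_2\subseteq J$, set $M=M_1+M_2$, $r=\min(r_1,r_2)$; a term $r_1^{-a}\star M_1^a \cdot r_2^{-b}\star M_2^b$ lies in $r^{-(a+b)}\star M^{a+b}$ up to the $\star$-bookkeeping in~\eqref{eq:star_props}, and collecting the $\dvgen^{-l_1-l_2}$ in front shows $U_{l_1,r_1,M_1}\cdot U_{l_2,r_2,M_2}\subseteq \dvgen^{-l_1-l_2}\sum_{n\ge 1} r^{-n}\star M^n \subseteq U_{l_1+l_2,r,M}$.

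Next I would verify that $\bdd'$ has the defining spectral-radius property: for a bounded $\dvr$-submodule $M\subseteq J$, compute $\varrho(M;\bdd')\le\pepsilon^\alpha$. Fix any $s>\pepsilon^\alpha$; we must show $\sum_{n\ge 0} s^{-n}\star M^n$ lies in $\bdd'$. Picking $r$ with $\pepsilon^\alpha<r<s$, the set $U_{0,r,M}=\sum_{n\ge 1} r^{-n}\star M^n$ satisfies $\dvgen\cdot U_{0,r,M}^2\subseteq U_{0,r,M}$ up to a controlled power of $\dvgen$ — more precisely the same estimate as in the proof of Lemma~\ref{lem:spectral_radius_comparison} shows $\sum_{n\ge 0} s^{-n}\star M^n$ differs from a finite sum (which is bounded in $\bdd$, hence in $\bdd'$) by a subset of $\dvgen\cdot U_{0,r,M}$, which is $U_{1,r,M}\in\bdd'$. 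Since $\varrho(M;\bdd')\le\pepsilon^\alpha$ for all such $M$ and $\gral{\bdd}{J}{\alpha}$ is the \emph{smallest} $\dvf$-vector space bornology with this property that contains $\bdd$, we get $\gral{\bdd}{J}{\alpha}\subseteq\bdd'$, and combined with the easy inclusion $\bdd'\subseteq\gral{\bdd}{J}{\alpha}$ the two coincide.

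The main obstacle I anticipate is the $\star$-arithmetic: the operation $r\star M=\dvgen^{\lceil\log_\pepsilon r\rceil}M$ is only submultiplicative up to a factor of $\dvgen$ (first line of~\eqref{eq:star_props}), so in the multiplicativity step and in the passage from $r$ to $s>r$ one accumulates powers of $\dvgen$; the point is that each such slippage only increases the index $l$ (or is absorbed into the tail comparison $s^{-n}<\pepsilon\cdot r^{-n}$ valid for $n$ large), so it never escapes the family $\bdd'$. Keeping careful track that a \emph{single} pair $(l,r,M)$ works after these manipulations — rather than $l$ growing with $n$ — is exactly the content mirrored in the proof of Lemma~\ref{lem:spectral_radius_comparison}, and I would lean on that computation rather than redo it.
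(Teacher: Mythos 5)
Your proposal is correct and follows essentially the same route as the paper: identify the family of subsets dominated by some $\dvgen^{-l}\sum_{n\ge1}r^{-n}\star M^n$ as a $\dvf$\nobreakdash-vector space bornology containing $\bdd$, and observe that a bornology satisfies $\varrho(M)\le\pepsilon^\alpha$ for all bounded $M\subseteq J$ exactly when it contains all these generating sets, so this family is the smallest such bornology. The only superfluous step is your verification that the family is an \emph{algebra} bornology; Definition~\ref{def:lambda_completion} only asks for a vector-space bornology, and boundedness of multiplication is established separately in Proposition~\ref{pro:lambda_local_Banach}.
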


\begin{proof}
  Let~\(\bdd'\)
  be the family of subsets described in the statement of the lemma.
  This is a $\dvf$\nb-vector space bornology on~$\ul{R}$.
  Notice that $\bdd'\supseteq\bdd$ by Remark~\ref{rem:BJ_contains_B}.
  Let~\(\bdd''\)
  be some \(\dvf\)\nb-vector
  space bornology on~\(\ul{R}\)
  and let \(M\subseteq J\)
  be a bounded \(\dvr\)\nb-submodule.
  Then \(\varrho(M;\bdd'')\le \pepsilon^\alpha\)
  if and only if \(\sum r^{-n} \star M^n\in \bdd''\)
  for all \(r>\pepsilon^\alpha\),
  if and only if \(\dvgen^{-l} \sum r^{-n} \star M^n\in \bdd''\)
  for all \(r>\pepsilon^\alpha\),
  \(l\in\N\).
  Thus~\(\bdd'\)
  is the smallest \(\dvf\)\nb-vector
  space bornology on~\(\ul{R}\)
  with \(\varrho(M)\le \pepsilon^\alpha\)
  for all bounded \(\dvr\)\nb-submodules
  \(M\subseteq J\).  That is, \(\bdd' = \gral{\bdd}{J}{\alpha}\).
\end{proof}

\begin{lemma}
  \label{lem:dagger_local_Banach}
  Let~\(R\)
  be a bornological \(\dvr\)\nb-algebra.
  The multiplication on~\(R\)
  is bounded as a map \(\ling{R} \times \ling{R} \to \ling{R}\)
  and extends uniquely to a bornological algebra structure
  on~\(\comling{R}\).
  We have \(\varrho(M)\le 1\)
  for all bounded \(\dvr\)\nb-submodules
  \(M\subseteq\comling{R}\)
  and so~\(\comling{R}\) is a local Banach algebra.
\end{lemma}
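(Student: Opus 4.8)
The plan is to prove the three assertions in sequence, leaning on the explicit description of the linear growth bornology in Lemma~\ref{lem:linear_growth}. That multiplication is bounded as a map \(\ling R\times\ling R\to\ling R\) is essentially built into Definition~\ref{def:linear_growth} and made explicit by that lemma: the \(\dvr\)\nb-submodules \(U_S\defeq\sum_{i\ge0}\dvgen^i S^{i+1}\) for \(S\in\bdd\) are cofinal among the bounded subsets of \((R,\ling\bdd)\), the computation~\eqref{eq:pi1-multiplicative} shows that each satisfies \(\dvgen U_S^2\subseteq U_S\), and the variant of that computation for \(U_{S_1}\cdot U_{S_2}\) carried out in the proof of Lemma~\ref{lem:linear_growth} shows that a product of two sets of linear growth again has linear growth. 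Hence \(\ling\bdd\) is an algebra bornology, which is the first claim.

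For the extension to \(\comling R\) I would argue as for any bornological algebra. Composing the multiplication with the canonical map \(\ling R\to\comling R\) gives a bounded \(\dvr\)\nb-bilinear map \(\ling R\times\ling R\to\comling R\), hence a bounded linear map \(\ling R\otimes\ling R\to\comling R\); since completion is compatible with the completed tensor product --- combine Lemma~\ref{lem:complete_tensor} with Propositions~\ref{prop:completion} and~\ref{pro:tensor_colimits} to identify \(\widehat{\ling R\otimes\ling R}\) with \(\comling R\hotimes\comling R\) --- and \(\comling R\) is complete, this extends to a bounded map \(\comling R\hotimes\comling R\to\comling R\), that is, to a bounded bilinear multiplication on \(\comling R\). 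The image of \(\ling R\) in \(\comling R\) is dense (every point of \(\comling R\) is a limit of a convergent sequence coming from \(\ling R\), by the construction of the completion in Proposition~\ref{prop:completion}), and the multiplication of \(\comling R\), being bounded, is jointly sequentially continuous; so associativity, the unit axiom, and commutativity when \(R\) is commutative pass from \(\ling R\) to \(\comling R\), and this is the unique bornological algebra structure for which \(\ling R\to\comling R\) is a homomorphism.

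Finally, for the spectral radius bound I use the equivalence \ref{en:spectral_radius_linear_growth4}\(\Leftrightarrow\)\ref{en:spectral_radius_linear_growth1} of Proposition~\ref{pro:spectral_radius_linear_growth}: it is enough to show that every bounded subset of \(\comling R\) lies in a bounded \(\dvr\)\nb-submodule \(N\) with \(\dvgen N^2\subseteq N\). Let \(W_S\subseteq\comling R\) be the image of the \(\dvgen\)\nb-adic completion \(\coma{U_S}\). By Proposition~\ref{prop:completion} and the cofinality of the \(U_S\) in \(\ling\bdd\), the \(W_S\) are bounded and cofinal among the bounded \(\dvr\)\nb-submodules of \(\comling R\). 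Since \(\coma{U_S}\) is \(\dvgen\)\nb-adically complete, each \(\xi\in W_S\) is a limit in \(\comling R\) of a \(\dvgen\)\nb-adically Cauchy sequence \((x_n)\) in \(U_S\); for \(\xi=\lim x_n\) and \(\eta=\lim y_n\) with \(x_n,y_n\in U_S\) we have \(\dvgen x_n y_n\in U_S\) by \(\dvgen U_S^2\subseteq U_S\), and \(\dvgen x_n y_n\to\dvgen\xi\eta\) by joint sequential continuity, whence \(\dvgen\xi\eta\in W_S\) and \(\dvgen W_S^2\subseteq W_S\). Thus \(\varrho(M)\le1\) for every bounded \(\dvr\)\nb-submodule \(M\subseteq\comling R\), and since \(\comling R\) is complete it is a local Banach algebra, as observed just before Remark~\ref{rem:local_banach}.

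The extension step is routine bornological formalism. The one point that genuinely needs care is the last step of the third paragraph: that passing to the \(\dvgen\)\nb-adic completions \(\coma{U_S}\) does not destroy the inclusion \(\dvgen U_S^2\subseteq U_S\). This is the main obstacle, and it is overcome precisely by the continuity of the multiplication used above.
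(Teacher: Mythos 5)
Your proposal is correct and follows essentially the same route as the paper: boundedness of multiplication from the explicit description of the linear growth bornology, extension to the completion by the universal property of completions/completed tensor products, and the spectral radius bound via condition \ref{en:spectral_radius_linear_growth4} of Proposition~\ref{pro:spectral_radius_linear_growth} applied to the images of the \(\dvgen\)\nb-adic completions of submodules \(U\) with \(\dvgen U^2\subseteq U\). Your final paragraph in fact supplies the small verification (that \(\dvgen\coma{U}\cdot\coma{U}\subseteq\coma{U}\) survives completion) that the paper asserts without proof.
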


\begin{proof}
The multiplication \(\ling{R} \times \ling{R} \to \ling{R}\)
  is bounded by our definition of the linear growth bornology.  Like
  any bounded bilinear map, it extends uniquely to the completions.
  It remains associative and unital on~\(\comling{R}\)
  by the universal property of completions.  So~\(\comling{R}\)
  is a complete bornological \(\dvr\)\nb-algebra.
  Any bounded \(\dvr\)\nb-submodule~\(M\)
  of~\(\comling{R}\)
  is contained in the image of the \(\dvgen\)\nb-adic
  completion~\(\coma{U}\)
  of~\(U\)
  in~\(\comling{R}\)
  for some \(\dvr\)\nb-submodule~\(U\)
  of~\(R\)
  of linear growth.  Since \(\varrho(U;\ling{R})\le1\),
  there is a \(\dvr\)\nb-submodule
  \(U'\subseteq R\)
  of linear growth with \(U\subseteq U'\)
  and \(\dvgen U'\cdot U' \subseteq U'\).
  The image of~\(\coma{U'}\)
  in the completion satisfies
  \(\dvgen \coma{U'}\cdot \coma{U'} \subseteq \coma{U'}\).
  So \(\varrho(M) \le 1\)
  by Proposition~\ref{pro:spectral_radius_linear_growth}.
\end{proof}

\begin{proposition}
  \label{pro:lambda_local_Banach}
  Let~\(R\)
  be a bornological \(\dvr\)\nb-algebra,
  \(J\triqui R\)
  with \(\dvgen^k\in J\) for some $k\in \N$,
  and \(\alpha\in[0,1]\).
  The multiplication on~\(\ul{R}\)
  is bounded for the bornology~\(\gral{\bdd}{J}{\alpha}\),
  and~\(\comg{R}{J}{\alpha}\) is a local Banach
  \(\dvf\)\nb-algebra.
\end{proposition}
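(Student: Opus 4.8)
The plan is to mirror the proof of Lemma~\ref{lem:dagger_local_Banach}, replacing the linear growth bornology by~\(\gral{\bdd}{J}{\alpha}\) and using its explicit description from Lemma~\ref{lem:alpha_bornology} (this is where the hypothesis \(\dvgen^k\in J\) enters) together with the \(\star\)\nb-calculus of~\eqref{eq:star_props}. Since by Lemma~\ref{lem:alpha_bornology} a subset of~\(\ul{R}\) lies in \(\gral{\bdd}{J}{\alpha}\) if and only if it is contained in some \(\dvgen^{-l}\sum_{n\ge1}r^{-n}\star M^n\) with \(l\in\N\), \(r>\pepsilon^\alpha\) and \(M\subseteq J\) a bounded \(\dvr\)\nb-submodule, in order to prove that the multiplication on~\(\ul{R}\) is bounded for \(\gral{\bdd}{J}{\alpha}\) it suffices to bound the product of two such sets. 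First I would take \(S_j\subseteq\dvgen^{-l_j}\sum_{n\ge1}r_j^{-n}\star M_j^n\) for \(j=1,2\), put \(M\defeq M_1+M_2\subseteq J\) and \(r\defeq\min\{r_1,r_2\}>\pepsilon^\alpha\), and combine the identity \((r_1^{-n}\star M_1^n)(r_2^{-m}\star M_2^m)=r_1^{-n}\star(r_2^{-m}\star M_1^nM_2^m)\) from~\eqref{eq:star_props} with the inclusions \(M_1^nM_2^m\subseteq M^{n+m}\), \(r\star(s\star P)\subseteq(rs)\star P\), and monotonicity in the scalar (using \(r_1,r_2\ge r\)) to get \(S_1\cdot S_2\subseteq\dvgen^{-(l_1+l_2)}\sum_{k\ge1}r^{-k}\star M^k\), which lies in \(\gral{\bdd}{J}{\alpha}\) by Lemma~\ref{lem:alpha_bornology}. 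Exactly as in Lemma~\ref{lem:dagger_local_Banach}, this bounded bilinear multiplication then extends uniquely to the completion, stays associative and unital by the universal property, and since \(\gral{\bdd}{J}{\alpha}\) is a \(\dvf\)\nb-vector space bornology, so is the bornology of \(\comg{R}{J}{\alpha}\); hence \(\comg{R}{J}{\alpha}\) is a complete bornological \(\dvf\)\nb-algebra.

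For the local Banach property I would invoke Remark~\ref{rem:local_banach}: it is enough to show that every bounded \(\dvr\)\nb-submodule \(N\subseteq\comg{R}{J}{\alpha}\) is contained in a bounded \(\dvr\)\nb-submodule \(U\) with \(\dvgen^lU^2\subseteq U\) for some \(l\in\N\). By Propositions~\ref{prop:born_versus_ind_complete} and~\ref{prop:completion}, such an~\(N\) lies in the image of the \(\dvgen\)\nb-adic completion \(\coma{U_0}\) of a generating bounded submodule \(U_0=\dvgen^{-l}V\), where \(V=\sum_{n\ge1}r^{-n}\star M^n\) with \(r>\pepsilon^\alpha\) and \(M\subseteq J\) bounded. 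The key observation is that~\(V\) is multiplicatively closed: \((r^{-n}\star M^n)(r^{-m}\star M^m)=r^{-n}\star(r^{-m}\star M^{n+m})\subseteq r^{-(n+m)}\star M^{n+m}\) by~\eqref{eq:star_props}, hence \(V^2\subseteq\sum_{k\ge2}r^{-k}\star M^k\subseteq V\), so that \(\dvgen^lU_0^2=\dvgen^{-l}V^2\subseteq\dvgen^{-l}V=U_0\). As in the proof of Lemma~\ref{lem:dagger_local_Banach}, the bilinear map \((x,y)\mapsto\dvgen^lxy\) on~\(U_0\) is \(\dvgen\)\nb-adically continuous, hence extends to \(\coma{U_0}\times\coma{U_0}\to\coma{U_0}\), so the image \(\bar U_0\) of \(\coma{U_0}\) in \(\comg{R}{J}{\alpha}\) is a bounded \(\dvr\)\nb-submodule containing~\(N\) with \(\dvgen^l\bar U_0^2\subseteq\bar U_0\). (Equivalently, \(\varrho(U_0)=\pepsilon^{-l}\varrho(V)\le\pepsilon^{-l}<\infty\) using Lemma~\ref{lem:spectral_radius_comparison} and \(V^2\subseteq V\).) By Remark~\ref{rem:local_banach} this yields \(\varrho(N)<\infty\), and therefore \(\comg{R}{J}{\alpha}\) is a local Banach \(\dvf\)\nb-algebra.

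Almost everything above is routine bookkeeping with~\eqref{eq:star_props}, so I do not anticipate a real obstacle. The one point that deserves some care is that the relation \(\dvgen^lU_0^2\subseteq U_0\) survives \(\dvgen\)\nb-adic completion, i.e.\ that \(\bar U_0\) itself (and not only \(U_0\)) satisfies the corresponding inequality; but this is essentially verbatim the argument already used in Lemma~\ref{lem:dagger_local_Banach}, and it is harmless here because the \(\star\)\nb-products occurring in the two estimates collapse with no loss of powers of~\(\dvgen\), keeping the constants under control.
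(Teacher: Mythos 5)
Your proof is correct and follows essentially the same route as the paper: both reduce boundedness of multiplication to the closure of the generating sets \(\dvgen^{-l}\sum_{n\ge1}r^{-n}\star M^n\) under products via Lemma~\ref{lem:alpha_bornology} and \eqref{eq:star_props}, and both establish the local Banach property by observing \(U\cdot U\subseteq\dvgen^{-l}U\) for such a generator, passing this to \(\coma{U}\), and invoking Remark~\ref{rem:local_banach}. Your write-up merely spells out the \(\star\)-bookkeeping (choice of \(M=M_1+M_2\), \(r=\min\{r_1,r_2\}\)) that the paper leaves implicit.
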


\begin{proof}
  Products of subsets of the form \(\dvgen^{-l} \sum_{n=1}^\infty
  r^{-n} \star M^n\) as in Lemma~\ref{lem:alpha_bornology} are again
  contained in a subset of this form
  by~\eqref{eq:star_props}.  Thus the multiplication
  on~\(\ul{R}\) is bounded for the
  bornology~\(\gral{\bdd}{J}{\alpha}\).  Hence it extends
  to~\(\comg{R}{J}{\alpha}\).  Any bounded subset
  of~\(\comg{R}{J}{\alpha}\) is contained in the image of the
  \(\dvgen\)\nb-adic completion of \(U\defeq \dvgen^{-l}
  \sum_{n=1}^\infty r^{-n} \star M^n\) for some \(l\in\N\) and some
  bounded \(\dvr\)\nb-submodule \(M\subseteq J\).  We have \(U\cdot
  U \subseteq \dvgen^{-l} U\).  Hence \(\coma{U}\cdot \coma{U}
    \subseteq \dvgen^{-l} \coma{U}\).  So the gauge seminorm
    associated with~\(\dvgen^l \coma{U}\) is submultiplicative.
  Thus~\(\comg{R}{J}{\alpha}\) is a local Banach
  \(\dvf\)\nb-algebra, see Remark \ref{rem:local_banach}.
\end{proof}

\begin{example}
  \label{exa:alpha-completion_JpiR}
  Let \(J=\dvgen R\)
  and assume that any bounded \(M\subseteq J\)
  is contained in \(\dvgen M_0\)
  for a bounded \(\dvr\)\nb-submodule
  \(M_0\subseteq R\).
  Let \(\alpha\in[0,1]\).
  The condition \(\varrho(M) \le \pepsilon^\alpha\)
  for bounded \(M\subseteq J\)
  is equivalent to \(\varrho(M_0) \le \pepsilon^{\alpha-1}\)
  for bounded \(M_0\subseteq R\)
  by Lemma~\ref{lem:spectral_radius_comparison}.  This condition for
  all~\(M_0\)
  implies \(\varrho(M_0) \le \pepsilon^{(\alpha-1)/c}\)
  for all \(c\in\N_{\ge1}\)
  because \(\varrho(M_0)^c = \varrho(M_0^c)\le \pepsilon^{\alpha-1}\)
  as well.  So \(\varrho(M_0) \le1\).
  Hence \(\gral{\bdd}{\dvgen R}{\alpha} = \ul{\ling{\bdd}}\)
  and
  \(\comg{R}{\dvgen R}{\alpha} = \ul{\comling{R}} =\dvf \otimes
  \comling{R}\).
\end{example}

The following two universal properties are immediate from
  Lemma~\ref{lem:linear_growth} and from the definition of the respective bornologies.

\begin{proposition}
  \label{pro:dagger_universal}
  Let \(R\)
  and~\(S\)
  be bornological \(\dvr\)\nb-algebras.
  Assume that~\(S\)
  is complete and that \(\varrho(M)\le 1\)
  for all bounded \(\dvr\)\nb-submodules~\(M\)
  in~\(S\).
  Any bounded homomorphism from~\(R\)
  to~\(S\)
  extends uniquely to a bounded homomorphism \(\comling{R} \to S\).
  If \(\varrho(M)\le1\)
  for all bounded \(\dvr\)\nb-submodules~\(M\)
  in~\(R\), then \(R=\ling{R}\).
\end{proposition}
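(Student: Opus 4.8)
The plan is to prove the first assertion in two moves: first show that every bounded homomorphism \(\varphi\colon(R,\bdd)\to S\) is automatically bounded for the linear growth bornology on~\(R\), and then invoke the universal property of the bornological completion. For the first move I would introduce the \emph{pullback bornology}
\[
  \bdd^\varphi\defeq\{T\subseteq R:\varphi(T)\text{ is bounded in }S\}.
\]
Since \(\varphi\) is a \(\dvr\)\nb-algebra homomorphism it commutes with finite unions, with passage to the generated \(\dvr\)\nb-submodule, and with products of subsets, and \(\varphi(T_1\cdot T_2)=\varphi(T_1)\cdot\varphi(T_2)\); together with boundedness of the multiplication on~\(S\) this makes \(\bdd^\varphi\) an algebra bornology on~\(R\), and boundedness of~\(\varphi\) gives \(\bdd\subseteq\bdd^\varphi\).

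The heart of the argument is that every \(\dvr\)\nb-submodule \(M\in\bdd^\varphi\) has \(\varrho(M;\bdd^\varphi)\le1\). Here I would use that \(\varphi\) commutes with the operation~\(\star\) and with the finite sums of subsets appearing in Definition~\ref{def:spectral_radius}, so that \(\varphi\bigl(\sum_{n\in\N}r^{-n}\star M^n\bigr)=\sum_{n\in\N}r^{-n}\star\varphi(M)^n\) for each \(r>1\). Since \(\varphi(M)\) is a bounded \(\dvr\)\nb-submodule of~\(S\), the hypothesis gives \(\varrho(\varphi(M))\le1\), so the right-hand side is bounded in~\(S\) for every \(r>1\); hence \(\sum_{n\in\N}r^{-n}\star M^n\in\bdd^\varphi\) for all \(r>1\), which is precisely \(\varrho(M;\bdd^\varphi)\le1\). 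By the last sentence of Lemma~\ref{lem:linear_growth} it suffices to have this for \(M\in\bdd\), which we do; thus \(\bdd^\varphi\) is an algebra bornology containing~\(\bdd\) whose bounded submodules all have spectral radius \(\le1\), and by minimality of \(\ling{\bdd}\) among such bornologies we get \(\ling{\bdd}\subseteq\bdd^\varphi\). In other words \(\varphi\colon(R,\ling{\bdd})\to S\) is bounded.

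Since~\(S\) is complete, the universal property of the completion (Definition~\ref{def:completion}) produces a unique bounded \(\dvr\)\nb-module map \(\psi\colon\comling{R}\to S\) extending~\(\varphi\); as this uniqueness already holds among module maps, the extending homomorphism, once shown to exist, is automatically unique. It then remains to check that~\(\psi\) is multiplicative. For this I would invoke the explicit description of the completion in Proposition~\ref{prop:completion}: every element of~\(\comling{R}\) is a limit of a convergent sequence coming from the image of~\(R\). Since~\(\psi\) and the multiplications on~\(\comling{R}\) and~\(S\) are bounded they preserve convergence, limits in the separated module~\(S\) are unique, and \(\psi\) agrees with~\(\varphi\) on~\(R\); so for \(x,y\in\comling{R}\) approximated by sequences \(x_n,y_n\) from~\(R\) one gets \(\psi(x y)=\lim\varphi(x_n y_n)=\lim\varphi(x_n)\varphi(y_n)=\psi(x)\psi(y)\). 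Finally the last sentence of the Proposition is immediate: if \(\varrho(M)\le1\) for all bounded \(\dvr\)\nb-submodules of~\(R\), then~\(\bdd\) is itself an algebra bornology containing~\(\bdd\) all of whose bounded submodules have spectral radius~\(\le1\), so minimality forces \(\ling{\bdd}\subseteq\bdd\), whence \(\ling{R}=R\).

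The one mildly delicate point I anticipate is the multiplicativity of~\(\psi\): one must verify that the product \(x_n y_n\) of two convergent sequences from~\(R\) is again convergent in~\(\comling{R}\) (to~\(x y\)), with a single bounded \(\dvr\)\nb-submodule controlling the convergence, which is a short but not wholly mechanical manipulation with the null sequences of scalars in Definition~\ref{def:converge}. Everything else reduces to bookkeeping with~\(\star\) and to universal properties already in place.
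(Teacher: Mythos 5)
Your proof is correct and fills in exactly the argument the paper leaves implicit: the authors declare the proposition ``immediate from Lemma~\ref{lem:linear_growth} and from the definition of the respective bornologies,'' and your pullback-bornology verification that \(\ling{\bdd}\subseteq\bdd^\varphi\) (equivalently, that \(\varphi\) maps each set \(\sum_i\dvgen^i S^{i+1}\), \(S\in\bdd\), into a bounded subset of~\(S\)), followed by the universal property of the completion, is precisely that argument made explicit. The multiplicativity of the extension, which you rightly flag as the one delicate point, can alternatively be obtained as in Lemma~\ref{lem:dagger_local_Banach} by extending the bounded bilinear multiplication map to the completions via the universal property of \(\hotimes\), which sidesteps the manipulation with convergent sequences.
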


\begin{proposition}
  \label{pro:lambda_universal}
  Let \(R\) be a bornological \(\dvr\)\nb-algebra, \(J\triqui R\)
  with \(\dvgen^k\in J\) for some $k\in \N$, and \(\alpha\in[0,1]\).  Let~\(S\) be a
  complete bornological \(\dvf\)\nb-algebra.  A bounded
  \(\dvr\)\nb-algebra homomorphism \(\varphi\colon R\to S\) extends
  to a bounded \(\dvf\)\nb-algebra homomorphism
  \(\comg{R}{J}{\alpha}\to S\) if and only if
  \(\varrho(\varphi(M)) \le \pepsilon^\alpha\) for all bounded
  \(\dvr\)\nb-submodules \(M\subseteq J\).
\end{proposition}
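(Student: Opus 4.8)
The plan is to reduce the statement to the universal property of bornological completions (Definition~\ref{def:completion}). Since~\(S\) is complete, a bounded \(\dvf\)\nb-algebra homomorphism \(\ul\varphi\colon(\ul{R},\gral{\bdd}{J}{\alpha})\to S\) extends uniquely to a bounded \(\dvf\)\nb-algebra homomorphism \(\comg{R}{J}{\alpha}\to S\); that the extension is again a homomorphism follows from the uniqueness clauses in the universal properties of the completion and of the completed tensor product. Conversely, any bounded homomorphism \(\comg{R}{J}{\alpha}\to S\) restricts along the canonical map \(\ul R\to\comg{R}{J}{\alpha}\) to a bounded \(\dvf\)\nb-algebra homomorphism \(\ul R\to S\). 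Finally, since~\(S\) is a \(\dvf\)\nb-vector space, the given map \(\varphi\colon R\to S\) has a unique algebraic extension to a \(\dvf\)\nb-algebra homomorphism \(\ul\varphi\colon\ul R=R\otimes\dvf\to S\). So everything reduces to the assertion that \(\ul\varphi\) is \emph{bounded} for the bornology \(\gral{\bdd}{J}{\alpha}\) on~\(\ul R\) if and only if \(\varrho(\varphi(M))\le\pepsilon^\alpha\) for all bounded \(\dvr\)\nb-submodules \(M\subseteq J\).

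The elementary fact underlying both directions is that a bounded homomorphism does not increase spectral radius: if \(h\colon A\to B\) is a bounded homomorphism of bornological algebras and \(M\subseteq A\) is a bounded \(\dvr\)\nb-submodule, then \(h\bigl(\sum_n r^{-n}\star M^n\bigr)=\sum_n r^{-n}\star h(M)^n\) by~\eqref{eq:star_props}, so the right-hand side is bounded in~\(B\) whenever the left-hand side is bounded in~\(A\); hence \(\varrho(h(M);B)\le\varrho(M;A)\). This yields the ``only if'' direction immediately: if \(\ul\varphi\) is bounded and \(M\subseteq J\) is a bounded \(\dvr\)\nb-submodule, then \(\varrho(M;\gral{\bdd}{J}{\alpha})\le\pepsilon^\alpha\) by the very definition of \(\gral{\bdd}{J}{\alpha}\) (Definition~\ref{def:lambda_completion}), and therefore \(\varrho(\varphi(M);S)=\varrho(\ul\varphi(M);S)\le\pepsilon^\alpha\). (Possible non-injectivity of \(\ul R\to\comg{R}{J}{\alpha}\) is irrelevant here, since only images of bounded sets enter.)

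For the ``if'' direction I would invoke the explicit description of \(\gral{\bdd}{J}{\alpha}\) provided by Lemma~\ref{lem:alpha_bornology}, which applies because \(\dvgen^k\in J\): every bounded subset of \((\ul R,\gral{\bdd}{J}{\alpha})\) is contained in \(\dvgen^{-l}\sum_{n=1}^\infty r^{-n}\star M^n\) for some \(l\in\N\), some \(r>\pepsilon^\alpha\), and some bounded \(\dvr\)\nb-submodule \(M\subseteq J\). Applying the homomorphism~\(\ul\varphi\) and using~\eqref{eq:star_props}, the image of this set lies in \(\dvgen^{-l}\sum_{n=1}^\infty r^{-n}\star\varphi(M)^n\). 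The hypothesis gives \(\varrho(\varphi(M);S)\le\pepsilon^\alpha<r\), so \(\sum_n r^{-n}\star\varphi(M)^n\) is bounded in~\(S\) by Definition~\ref{def:spectral_radius}; and since~\(S\) is a \(\dvf\)\nb-algebra, multiplying a bounded subset by~\(\dvgen^{-l}\) keeps it bounded. Hence \(\ul\varphi\) maps bounded sets to bounded sets, so it is bounded, and the reduction in the first paragraph completes the argument.

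I do not expect a genuine obstacle; as the surrounding text signals, the proposition is essentially a formal consequence of Lemma~\ref{lem:alpha_bornology} and the definitions. The only thing requiring a little care is the bookkeeping in the ``only if'' direction — recording once and for all that the defining spectral-radius bounds of \(\gral{\bdd}{J}{\alpha}\) are preserved along \emph{any} bounded homomorphism (in particular along \(\ul\varphi\)) — together with the discipline of phrasing everything in terms of \(\ul\varphi\) rather than~\(\varphi\), so that the hypotheses ``\(S\) complete'' and ``\(S\) a \(\dvf\)\nb-algebra'' are used precisely where they are needed: to extend to \(\ul R\) and to \(\comg{R}{J}{\alpha}\), and to absorb the scalar \(\dvgen^{-l}\).
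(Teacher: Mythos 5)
Your proof is correct and follows exactly the route the paper intends: the paper states that this proposition is immediate from the explicit description of the bornology (Lemma~\ref{lem:alpha_bornology}) together with the universal property of the completion, and your argument is precisely the careful filling-in of that claim, including the observation that bounded homomorphisms do not increase spectral radii. No gaps.
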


\begin{proposition}
  \label{pro:dagger_functorial}
  A bounded unital algebra homomorphism \(\varphi\colon R\to S\)
  induces a bounded unital algebra homomorphism
  \(\comling{\varphi}\colon \comling{R} \to \comling{S}\).
  If~\(\varphi\)
  is a bornological quotient map, then so is~\(\comling{\varphi}\).
\end{proposition}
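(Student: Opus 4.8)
The plan is to identify $\comling{\varphi}$ with the completion of $\varphi$ regarded as a map between the linear growth bornologies, and then to read off the quotient-map assertion from Lemma~\ref{lem:completion_quotient}.

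First I would check that $\varphi$ is bounded as a map $\ling{R}\to\ling{S}$. By Lemma~\ref{lem:linear_growth} every bounded subset of $\ling{R}$ lies inside some $\sum_{i=0}^\infty\dvgen^iS^{i+1}$ with $S$ a bounded $\dvr$\nb-submodule of $R$ (we may take $S$ to be a submodule by the convexity axiom). Since $\varphi$ is a unital algebra homomorphism, $\varphi\bigl(\sum_{i=0}^\infty\dvgen^iS^{i+1}\bigr)=\sum_{i=0}^\infty\dvgen^i\varphi(S)^{i+1}$, and $\varphi(S)$ is again a bounded $\dvr$\nb-submodule because $\varphi$ is $\dvr$\nb-linear and bounded; so this set has linear growth in $S$ by Lemma~\ref{lem:linear_growth}. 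Hence $\varphi\colon\ling{R}\to\ling{S}$ is bounded, and it remains a unital algebra homomorphism. Applying the completion functor yields a bounded $\dvr$\nb-module map $\comling{\varphi}\colon\comling{R}\to\comling{S}$, which is again unital and multiplicative by the universal property of the completion, since the multiplications on $\comling{R}$ and $\comling{S}$ extend those on $\ling{R}$ and $\ling{S}$ by Lemma~\ref{lem:dagger_local_Banach}. By uniqueness of extensions this coincides with the homomorphism produced by Proposition~\ref{pro:dagger_universal} applied to the composite $R\to S\to\comling{S}$. This proves the first assertion.

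Now suppose $\varphi\colon R\to S$ is a bornological quotient map; I claim $\varphi\colon\ling{R}\to\ling{S}$ is one as well. Let $T\subseteq\ling{S}$ be bounded and pick, via Lemma~\ref{lem:linear_growth}, a bounded $\dvr$\nb-submodule $S_0\subseteq S$ with $T\subseteq\sum_{i=0}^\infty\dvgen^iS_0^{i+1}$. Because $\varphi$ is a bornological quotient map, $S_0=\varphi(R_0')$ for some bounded subset $R_0'\subseteq R$; replacing $R_0'$ by the $\dvr$\nb-submodule it generates (still bounded, by convexity), whose image is the submodule generated by $S_0$, i.e.\ $S_0$ itself, we may assume $R_0\defeq R_0'$ is a bounded $\dvr$\nb-submodule with $\varphi(R_0)=S_0$. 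Then $U\defeq\sum_{i=0}^\infty\dvgen^iR_0^{i+1}$ has linear growth in $R$ and, by the identity used above, $\varphi(U)=\sum_{i=0}^\infty\dvgen^iS_0^{i+1}\supseteq T$. Since $T\subseteq\varphi(U)$ we get $T=\varphi\bigl(\varphi^{-1}(T)\cap U\bigr)$, and $\varphi^{-1}(T)\cap U$ is a bounded subset of $\ling{R}$, being contained in the bounded set $U$. Thus every bounded subset of $\ling{S}$ is the image of a bounded subset of $\ling{R}$, so $\varphi\colon\ling{R}\to\ling{S}$ is a bornological quotient map. By Lemma~\ref{lem:completion_quotient}, its completion $\comling{\varphi}\colon\comling{R}\to\comling{S}$ is a bornological quotient map too.

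The argument has no real obstacle; the only point worth watching is that the identity $\varphi\bigl(\sum_i\dvgen^iS^{i+1}\bigr)=\sum_i\dvgen^i\varphi(S)^{i+1}$ genuinely uses that $\varphi$ is \emph{multiplicative}, not merely linear, so that the linear growth bornology on the target is exactly the image of the one on the source. Everything else is formal, relying on Lemma~\ref{lem:linear_growth} for the explicit shape of linear growth subsets and on Lemma~\ref{lem:completion_quotient} for the passage to completions.
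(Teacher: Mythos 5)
Your proof is correct and follows essentially the same route as the paper: the first assertion via Proposition~\ref{pro:dagger_universal} (equivalently, the boundedness of $\ling{\varphi}$, which you verify directly from Lemma~\ref{lem:linear_growth}), and the quotient-map assertion from the concrete description of linear-growth subsets in Lemma~\ref{lem:linear_growth} combined with Lemma~\ref{lem:completion_quotient}. The extra details you supply (multiplicativity giving $\varphi(\sum\dvgen^iS^{i+1})=\sum\dvgen^i\varphi(S)^{i+1}$, and lifting $S_0$ to a bounded submodule $R_0$) are exactly what the paper leaves implicit.
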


\begin{proof}
  The first assertion follows from
    Proposition~\ref{pro:dagger_universal}.
  If~\(\varphi\)
  is a bornological quotient map, then so is
  \(\ling{\varphi}\colon \ling{R} \to \ling{S}\)
  by the concrete description of the linear growth bornology in
  Lemma~\ref{lem:linear_growth}.  Hence so is~\(\comling{\varphi}\)
  because taking completions preserves bornological quotient maps by
  Lemma~\ref{lem:completion_quotient}.
\end{proof}

\begin{proposition}
  \label{pro:lambda_functorial}
  Let \(R\)
  and~\(S\)
  be bornological \(\dvr\)\nb-algebras.
  Let \(I\triqui R\)
  and \(J\triqui S\)
  be ideals containing~\(\dvgen^k\) for some $k\in \N$.
  Let \(\alpha,\beta\in[0,1]\)
  satisfy \(\alpha\le\beta\).
  A bounded unital algebra homomorphism \(\varphi\colon R\to S\)
  with \(\varphi(I)\subseteq J\)
  extends uniquely to a bounded unital algebra homomorphism
  \(\comg{R}{I}{\alpha} \to \comg{S}{J}{\beta}\).
 This extension is a bornological quotient map if
    \(\varphi|_I\colon I \to J\)
  is a bornological quotient map and \(\alpha=\beta\).
\end{proposition}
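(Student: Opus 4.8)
The plan is to follow the pattern of the proof of Proposition~\ref{pro:dagger_functorial}: deduce existence and uniqueness of the extension from the universal property of Proposition~\ref{pro:lambda_universal}, and deduce the bornological quotient map statement from the explicit description of the bornologies in Lemma~\ref{lem:alpha_bornology} together with Lemma~\ref{lem:completion_quotient}. The only genuine extra work compared with Proposition~\ref{pro:dagger_functorial} is a spectral radius estimate that uses $\alpha\le\beta$, and the observation that the induced map on underlying $\dvf$\nb-vector spaces is a bornological quotient map.

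For the first assertion, consider the composite $\psi\colon R\to S\to\ul{S}\to\comg{S}{J}{\beta}$ of~$\varphi$ with the canonical maps into the completion; this is a bounded unital $\dvr$\nb-algebra homomorphism into the complete bornological $\dvf$\nb-algebra $\comg{S}{J}{\beta}$ (a local Banach algebra by Proposition~\ref{pro:lambda_local_Banach}). To apply Proposition~\ref{pro:lambda_universal} with the ideal~$I$ and parameter~$\alpha$, I would check that $\varrho(\psi(M))\le\pepsilon^{\alpha}$ for every bounded $\dvr$\nb-submodule $M\subseteq I$. Since $\varphi$ is bounded and $\varphi(I)\subseteq J$, the image $\varphi(M)$ is a bounded $\dvr$\nb-submodule of~$J$, so the defining property of $\gral{\bdd}{J}{\beta}$ gives $\varrho(\varphi(M);\gral{\bdd}{J}{\beta})\le\pepsilon^{\beta}$; thus $\sum_{n\in\N}r^{-n}\star\varphi(M)^n$ is bounded in $\gral{\bdd}{J}{\beta}$ for every $r>\pepsilon^{\beta}$, and its image under the bounded algebra homomorphism $\ul{S}\to\comg{S}{J}{\beta}$ is $\sum_{n\in\N}r^{-n}\star\psi(M)^n$, hence bounded. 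Therefore $\varrho(\psi(M))\le\pepsilon^{\beta}\le\pepsilon^{\alpha}$, the last inequality because $0<\pepsilon<1$ and $\alpha\le\beta$. Proposition~\ref{pro:lambda_universal} then produces a bounded unital $\dvf$\nb-algebra homomorphism $\bar\varphi\colon\comg{R}{I}{\alpha}\to\comg{S}{J}{\beta}$ extending~$\psi$, hence~$\varphi$. Any such extension is $\dvf$\nb-linear and so restricts on~$\ul{R}$ to $\varphi\otimes\id_\dvf$, which is determined by~$\varphi$; uniqueness of~$\bar\varphi$ then follows from the universal property of the completion $\ul{R}\to\comg{R}{I}{\alpha}$. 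Unitality is inherited since~$\varphi$ is unital.

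For the quotient map statement assume $\alpha=\beta$ and that $\varphi|_I\colon I\to J$ is a bornological quotient map. By its construction, $\bar\varphi$ is the completion of the bounded map $\varphi\otimes\id_\dvf\colon(\ul{R},\gral{\bdd}{I}{\alpha})\to(\ul{S},\gral{\bdd}{J}{\alpha})$, so by Lemma~\ref{lem:completion_quotient} it suffices to show that $\varphi\otimes\id_\dvf$ is a bornological quotient map. Being $\dvf$\nb-linear and multiplicative, $\varphi\otimes\id_\dvf$ commutes with the operation~$\star$, with forming powers and sums of subsets, and with multiplication by powers of~$\dvgen$, so for all $l\in\N$, $r>\pepsilon^{\alpha}$ and every $\dvr$\nb-submodule $M\subseteq I$ one has the \emph{exact} equality $(\varphi\otimes\id_\dvf)\bigl(\dvgen^{-l}\sum_{n\ge1}r^{-n}\star M^n\bigr)=\dvgen^{-l}\sum_{n\ge1}r^{-n}\star\varphi(M)^n$. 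Now let $B\subseteq\ul{S}$ be bounded for $\gral{\bdd}{J}{\alpha}$. By Lemma~\ref{lem:alpha_bornology} there are $l\in\N$, $r>\pepsilon^{\alpha}$ and a bounded $\dvr$\nb-submodule $N\subseteq J$ with $B\subseteq\tilde N\defeq\dvgen^{-l}\sum_{n\ge1}r^{-n}\star N^n$. Since $\varphi|_I$ is a bornological quotient map, $N=\varphi(M_0)$ for some bounded $M_0\subseteq I$, so the $\dvr$\nb-submodule $M\subseteq I$ generated by~$M_0$ is bounded with $\varphi(M)=N$. With $\tilde M\defeq\dvgen^{-l}\sum_{n\ge1}r^{-n}\star M^n\in\gral{\bdd}{I}{\alpha}$ the displayed identity gives $(\varphi\otimes\id_\dvf)(\tilde M)=\tilde N\supseteq B$, so $B'\defeq(\varphi\otimes\id_\dvf)^{-1}(B)\cap\tilde M$ is a bounded subset of~$\ul{R}$ with $(\varphi\otimes\id_\dvf)(B')=B$. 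Hence $\varphi\otimes\id_\dvf$, and therefore~$\bar\varphi$, is a bornological quotient map.

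I expect the main obstacle to be the exactness built into Definition~\ref{def:exactness}: a bornological quotient map must realise each bounded subset of the target \emph{exactly} as the image of a bounded subset of the source, not merely up to inclusion, which is why one cannot stop at $B\subseteq\tilde N=(\varphi\otimes\id_\dvf)(\tilde M)$ but must pass to $B'=(\varphi\otimes\id_\dvf)^{-1}(B)\cap\tilde M$. Moreover the hypothesis $\alpha=\beta$ is exactly what makes the generating bounded subsets of $\gral{\bdd}{I}{\alpha}$ and $\gral{\bdd}{J}{\alpha}$ correspond to one another under~$\varphi$ (for $\alpha<\beta$ the admissible radii~$r$ differ and this correspondence breaks down). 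The remaining ingredients—that $\varphi\otimes\id_\dvf$ commutes with~$\star$, powers, sums and the shifts~$\dvgen^{-l}$, and the spectral radius estimate in the existence part—are routine.
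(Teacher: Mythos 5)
Your proof is correct and follows essentially the same route as the paper's: existence and uniqueness via the universal property of Proposition~\ref{pro:lambda_universal} together with the estimate \(\varrho(\varphi(M))\le\pepsilon^\beta\le\pepsilon^\alpha\), and the quotient-map statement via the explicit description of the bornologies in Lemma~\ref{lem:alpha_bornology} followed by Lemma~\ref{lem:completion_quotient}. The only difference is that you make explicit a step the paper leaves implicit, namely upgrading \(B\subseteq(\varphi\otimes\id_\dvf)(\tilde M)\) to an exact equality \(B=(\varphi\otimes\id_\dvf)(B')\) by intersecting the preimage of \(B\) with the bounded set \(\tilde M\), as Definition~\ref{def:exactness} requires.
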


\begin{proof}
  If \(M\subseteq I\)
  is bounded, then \(\varphi(M)\)
  is bounded and contained in~\(J\).
  Thus \(\varrho(\varphi(M))\le \pepsilon^\beta \le \pepsilon^\alpha\)
  in \(\comg{S}{J}{\beta}\).
  This verifies the criterion in
  Proposition~\ref{pro:lambda_universal} for~\(\varphi\)
  to extend uniquely to~\(\comg{R}{I}{\alpha}\).
  If~\(\varphi|_I\)
  is a bornological quotient map, then for any bounded
  \(\dvr\)\nb-submodule
  \(M\subseteq J\)
  there is a bounded \(\dvr\)\nb-submodule
  \(N\subseteq I\)
  with \(\varphi(N)=M\).
  Hence the map \(\ul{R}\to\ul{S}\)
  induced by~\(\varphi\)
  maps \(\dvgen^{-l}\sum_{n=1}^\infty r^{-n} \star N^n\)
  onto \(\dvgen^{-l}\sum_{n=1}^\infty r^{-n} \star M^n\).
  Thus it is a bornological quotient map from
  \((\ul{R},\gral{\bdd}{I}{\alpha})\)
  to \((\ul{S},\gral{\bdd}{J}{\alpha})\) by
  Lemma~\ref{lem:alpha_bornology}.
  By Lemma~\ref{lem:completion_quotient},
  being a bornological quotient map is preserved by the
  bornological completion.
\end{proof}

Next we are going to rewrite the completions \(\comg{R}{J}{\alpha}\)
using linear growth completions of tube algebras.

\begin{definition}
  \label{def:alpha-tube}
  Let \(J\) be an ideal in $R$ and let
  \(\alpha\in[0,1]\).  The \emph{\(\alpha\)\nb-tube algebra} of~\(R\)
  around~\(J\) is
  \begin{equation}
    \label{eq:alpha-tube}
    \tub{R}{J}{\alpha} \defeq
    \sum_{n=0}^\infty {}\pepsilon^{-\alpha n} \star J^n\subseteq \ul{R},
  \end{equation}
  where the \(0\)th summand is \(J^0 \defeq R\).  This is a
  \(\dvr\)\nb-algebra.  We equip it with the bornology generated by
  the bounded submodules of \(J^0=R\) and by \(\dvr\)\nb-submodules
  of the form~\(\pepsilon^{-\alpha n} \star M^n\) for \(n\in\N_{\ge1}\)
  and a bounded \(\dvr\)\nb-submodule \(M\subseteq J\).
\end{definition}

Almost by definition, a subset of~\(\tub{R}{J}{\alpha}\) is bounded
if and only if it is contained in \(M_0 + \sum_{n=1}^N
\pepsilon^{-\alpha n} \star M^n\) for some \(N\in\N\) and some
bounded \(\dvr\)\nb-submodules \(M_0\subseteq R\), \(M\subseteq J\).
The multiplication on~\(\tub{R}{J}{\alpha}\) is bounded
by~\eqref{eq:star_props}.  So it is a bornological
\(\dvr\)\nb-algebra.  The inclusion map \(\tub{R}{J}{\alpha} \to
\ul{R}\) extends to an isomorphism \(\tub{R}{J}{\alpha} \otimes \dvf
\cong \ul{R}\) of bornological \(\dvf\)\nb-algebras.

\begin{lemma}
  \label{lem:tube_fine}
  If~\(R\) carries the fine bornology, then so
  does~\(\tub{R}{J}{\alpha}\).
\end{lemma}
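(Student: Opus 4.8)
The statement amounts to showing that the bornology on \(\tub{R}{J}{\alpha}\) described in Definition~\ref{def:alpha-tube} coincides with the fine bornology once \(R\) carries the fine bornology. One inclusion is automatic and needs no hypothesis: the fine bornology is contained in \emph{every} bornology, because finite subsets are bounded and the \(\dvr\)\nb-submodule generated by a bounded subset is bounded, so every finitely generated \(\dvr\)\nb-submodule (and hence every subset of one) is bounded. Thus the only thing to prove is the reverse inclusion: every subset of \(\tub{R}{J}{\alpha}\) that is bounded for the tube bornology is contained in a finitely generated \(\dvr\)\nb-submodule of \(\tub{R}{J}{\alpha}\).

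The plan is to start from the explicit description of bounded subsets recorded just after Definition~\ref{def:alpha-tube}: any such subset is contained in \(M_0 + \sum_{n=1}^{N} \pepsilon^{-\alpha n}\star M^n\) for some \(N\in\N\) and bounded \(\dvr\)\nb-submodules \(M_0\subseteq R\), \(M\subseteq J\). Since \(R\) carries the fine bornology, \(M_0\) and \(M\) are bounded \(\dvr\)\nb-submodules that are contained in finitely generated ones; because \(\dvr\) is a discrete valuation ring, hence Noetherian, \(M_0\) and \(M\) are themselves finitely generated over \(\dvr\). Now \(\pepsilon^{-\alpha n}\star M^n = \dvgen^{\lceil -\alpha n\rceil}M^n\) is obtained from \(M^n\) by multiplying with a fixed scalar in \(\dvf\), and \(M^n\) is finitely generated over \(\dvr\) since it is generated by the finitely many \(n\)\nb-fold products of a finite generating set of \(M\). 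Hence \(F\defeq M_0 + \sum_{n=1}^{N}\pepsilon^{-\alpha n}\star M^n\) is a finite sum of finitely generated \(\dvr\)\nb-submodules, so it is finitely generated; and by construction \(F\subseteq\tub{R}{J}{\alpha}\) because \(M_0\subseteq J^0 = R\) and \(\pepsilon^{-\alpha n}\star M^n\subseteq\pepsilon^{-\alpha n}\star J^n\). Therefore the given tube-bounded subset lies in a finitely generated \(\dvr\)\nb-submodule of \(\tub{R}{J}{\alpha}\), i.e.\ it is bounded in the fine bornology, which completes the argument.

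There is no deep obstacle here; the proof is essentially bookkeeping. The two places that warrant care are: first, one uses Noetherianity of \(\dvr\) to pass from ``contained in a finitely generated submodule'' (the definition of the fine bornology) to ``is a finitely generated submodule''; and second, one must make sure the finitely generated module \(F\) one produces actually sits inside \(\tub{R}{J}{\alpha}\) and not merely inside \(\ul{R}\) — this is why one keeps \(M\) inside the ideal \(J\) when forming the powers \(M^n\), rather than enlarging \(M\) to some finitely generated submodule of \(R\) that straddles \(J\).
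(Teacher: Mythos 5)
Your proof is correct and is essentially the paper's argument: the paper's entire proof is the observation that if \(M_0\) and \(M\) are finitely generated, then so is \(M_0 + \sum_{n=1}^N \pepsilon^{-\alpha n}\star M^n\). Your additional remarks (using Noetherianity of \(\dvr\) to replace ``contained in a finitely generated submodule'' by ``finitely generated,'' and keeping \(M\) inside \(J\)) are sound and merely make explicit what the paper leaves implicit.
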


\begin{proof}
  If $M_0$ and~$M$ are finitely generated, then so is
  \(M_0 + \sum_{n=1}^N \pepsilon^{-\alpha n} \star M^n\).
\end{proof}

\begin{proposition}
  \label{pro:linear_growth_on_tube}
  Let \(J\triqui R\) be an ideal with \(\dvgen \in J\) and let
  \(\alpha\in[0,1]\).  There is an isomorphism of bornological
  \(\dvf\)\nb-algebras
  \[
  \ling{\tub{R}{J}{\alpha}} \otimes \dvf
  \cong (\ul{R},\gral{\bdd}{J}{\alpha}).
  \]
  It induces an isomorphism of the completions
  \(\comling{\tub{R}{J}{\alpha}} \otimes \dvf \cong
  \comg{R}{J}{\alpha}\).
\end{proposition}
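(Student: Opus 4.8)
The plan is to reduce everything to a comparison of two bornologies on the single \(\dvf\)\nb-algebra \(\ul R = \tub{R}{J}{\alpha}\otimes\dvf\). Both \(\ling{\tub{R}{J}{\alpha}}\otimes\dvf\) and \((\ul R,\gral{\bdd}{J}{\alpha})\) have \(\ul R\) as underlying \(\dvf\)\nb-algebra (for the first one via the isomorphism \(\tub{R}{J}{\alpha}\otimes\dvf\cong\ul R\) recorded after Definition~\ref{def:alpha-tube}), so it suffices to show that the tensor-product bornology arising from the linear growth bornology on the \(\dvr\)\nb-algebra \(\tub{R}{J}{\alpha}\) coincides with \(\gral{\bdd}{J}{\alpha}\); the identity map of \(\ul R\) is then automatically an algebra isomorphism, and applying the completion functor gives \(\comling{\tub{R}{J}{\alpha}}\otimes\dvf\cong\comg{R}{J}{\alpha}\) once one knows that completion commutes with \({-}\otimes\dvf\). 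The latter holds because \({-}\otimes\dvf\) is the filtered colimit \(\varinjlim(\dvgen\colon{-}\to{-})\) (Example~\ref{ex:otimesK} and Proposition~\ref{pro:tensor_colimits}), completion commutes with colimits (Proposition~\ref{prop:completion}), and the relevant separated quotient is trivial since \(\comling{\tub{R}{J}{\alpha}}\) is a local Banach, hence flat, \(\dvr\)\nb-algebra (Lemma~\ref{lem:dagger_local_Banach}), so that \(\comling{\tub{R}{J}{\alpha}}\otimes\dvf\) is already complete by Example~\ref{ex:otimesK}. I prove the two inclusions of bornologies separately, using Lemma~\ref{lem:linear_growth} and the description of bounded subsets of \(\tub{R}{J}{\alpha}\) following Definition~\ref{def:alpha-tube}, and Lemma~\ref{lem:alpha_bornology} (available since \(\dvgen\in J\)).

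For \(\gral{\bdd}{J}{\alpha}\subseteq\ling{\tub{R}{J}{\alpha}}\otimes\dvf\) it suffices, by the minimality in Definition~\ref{def:lambda_completion}, to check that every bounded \(\dvr\)\nb-submodule \(M\subseteq J\) satisfies \(\varrho(M)\le\pepsilon^\alpha\) in \(\ling{\tub{R}{J}{\alpha}}\otimes\dvf\), that is, that \(\sum_{n}r^{-n}\star M^n\) is bounded there for every \(r>\pepsilon^\alpha\). Given such an \(r\), choose \(N\) so large that \(\pepsilon^{\alpha-2/N}\le r\). The submodule \(S\defeq\pepsilon^{-\alpha N}\star M^N\) is bounded in \(\tub{R}{J}{\alpha}\), so \(\sum_{i\ge0}\dvgen^i S^{i+1}\) has linear growth by Lemma~\ref{lem:linear_growth}, hence is bounded in \(\ling{\tub{R}{J}{\alpha}}\otimes\dvf\). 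Iterating the inclusion \((r_1\star A)(r_2\star B)\supseteq\dvgen\bigl((r_1r_2)\star AB\bigr)\) from~\eqref{eq:star_props} gives \(S^{i+1}\supseteq\dvgen^i\bigl(\pepsilon^{-\alpha N(i+1)}\star M^{N(i+1)}\bigr)\), whence \(\dvgen^i S^{i+1}\supseteq\pepsilon^{(2/N-\alpha)n-2}\star M^n\) with \(n=N(i+1)\); by the choice of \(N\), a fixed power \(\dvgen^{-l}\) makes \(r^{-n}\star M^n\subseteq\dvgen^{-l}\dvgen^iS^{i+1}\) for all \(n\) that are positive multiples of \(N\). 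The remaining indices are absorbed by writing \(n=qN+s\) with \(0\le s<N\) and multiplying by the \(\tub{R}{J}{\alpha}\)\nb-bounded submodule \(\sum_{s=0}^{N-1}M^s\). Thus \(\sum_n r^{-n}\star M^n\) is contained in a bounded subset of \(\ling{\tub{R}{J}{\alpha}}\otimes\dvf\).

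For the reverse inclusion it suffices, by Lemma~\ref{lem:linear_growth} and since \(\gral{\bdd}{J}{\alpha}\) is already a \(\dvf\)\nb-vector space bornology, to show that \(\sum_{i\ge0}\dvgen^i S^{i+1}\) is bounded in \(\gral{\bdd}{J}{\alpha}\) for every bounded \(S\subseteq\tub{R}{J}{\alpha}\). Such an \(S\) is contained in \(M_0\cdot\bigl(\sum_{k=0}^N\pepsilon^{-k\alpha}\star M^k\bigr)\) for a bounded \(\dvr\)\nb-submodule \(M_0\subseteq R\) with \(1\in M_0\) and \(M\subseteq M_0\), and a bounded \(\dvr\)\nb-submodule \(M\subseteq J\). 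Expanding powers, \(\dvgen^iS^{i+1}\) splits into the ``linear growth part'' \(\sum_i\dvgen^i M_0^{i+1}\) and terms \(\pepsilon^{i-K\alpha}\star\bigl(M_0^{i+1}M^K\bigr)\) with \(1\le K\le N(i+1)\). Setting \(M_1\defeq M+\dvgen M_0\subseteq J\), so that \(\dvgen\in M_1\), \(M\subseteq M_1\) and \(M_0\subseteq\dvgen^{-1}M_1\), and using repeatedly that \(1\in M_0\) allows one to lower the exponent in \(M_0^a\subseteq M_0^b\) for \(a\le b\), each of these pieces is seen to lie in \(\dvgen^{-l}\sum_m r^{-m}\star M_1^m\) for a suitable \(r>\pepsilon^\alpha\) and a fixed \(l\), which is bounded in \(\gral{\bdd}{J}{\alpha}\) by Lemma~\ref{lem:alpha_bornology}.

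The main obstacle is the last step, and within it the contribution of the summand \(J^0=R\) of \(\tub{R}{J}{\alpha}\): the bounded submodules \(M_0\subseteq R\) not contained in \(J\). The devices that make this work are that \(1\in M_0\) lets one replace a power \(M_0^n\) by a lower power at no cost (so that a growing module \(\dvgen^iM_0^{i+1}\) can be matched against a term \(r^{-m}\star M_1^m\) with \(m\ll i\)), that \(\dvgen\in J\) lets one pass between \(R\) and \(J\) at the price of a single factor of \(\dvgen\) (this is the estimate underlying Example~\ref{exa:alpha-completion_JpiR}), and that choosing \(N\) large enough to beat a prescribed \(r>\pepsilon^\alpha\) is exactly what reconciles the two spectral-radius conditions \(\varrho\le1\) on \(\tub{R}{J}{\alpha}\) and \(\varrho\le\pepsilon^\alpha\) on \(J\). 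Keeping precise track of the \(\star\)\nb-exponents through these estimates is where the real work lies; everything else is formal.
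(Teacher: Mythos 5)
Your argument is correct, and for the hard inclusion \(\ling{\tub{R}{J}{\alpha}}\otimes\dvf\subseteq\gral{\bdd}{J}{\alpha}\) it takes a genuinely different, and simpler, route than the paper. The paper characterizes the left-hand bornology by the boundedness of \(S_{r,N,M_0,M}=\sum_n r^n\star\bigl(M_0+\sum_{i\le N}\pepsilon^{-\alpha i}\star M^i\bigr)^n\) for \emph{every} \(r<1\); for \(r\) close to \(1\) the naive substitution \(M_0\subseteq\dvgen^{-1}(M+\dvgen M_0)\) loses one power of \(\dvgen\) per factor with nothing to compensate it, which is exactly what forces the paper's grouping module \(M'=M_0^{\ell-1}MM_0^{\ell-1}+\dvgen M_0^\ell\) and the bookkeeping around \eqref{condi_s}. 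You instead invoke Lemma~\ref{lem:linear_growth} to reduce to the single generator \(\sum_i\dvgen^i S^{i+1}\) (the case \(r=\pepsilon\)), where each factor of \(S\) arrives with a compensating \(\dvgen\), so the cheap module \(M_1=M+\dvgen M_0\) suffices; this buys a much shorter combinatorial argument at the cost of leaning harder on Lemma~\ref{lem:linear_growth}, and it also works verbatim in the noncommutative case provided you read \(M_0^{i+1}M^K\) as an arbitrary interleaving (the estimate is order-independent once everything is pushed into powers of the single module \(M_1\)). The one step you assert rather than verify is the final exponent count, and you should record the choice of \(r\) that makes it close: take \(r=\pepsilon^\beta\) with \(\max\{0,\alpha-1/N\}\le\beta<\alpha\). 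A term of \(\dvgen^iS^{i+1}\) with \(j\) factors from \(M_0\) and \(K\) factors from the \(\pepsilon^{-k\alpha}\star M^k\) lies, up to bounded rounding from \(\star\), in \(\dvgen^{\,i-j-\lfloor K\alpha\rfloor}M_1^{j+K}\), and the containment in \(\dvgen^{-l}\bigl(r^{-(j+K)}\star M_1^{j+K}\bigr)\) amounts to \(l\ge 1-i+j(1-\beta)+K(\alpha-\beta)\); the right-hand side is linear in \((j,K)\) on the region \(0\le j\le i+1\), \(i+1-j\le K\le N(i+1-j)\), so it is maximized at a corner, where it is at most \(2\) precisely because \(N(\alpha-\beta)\le1\) and \(\beta>0\) (respectively \(r>1\) when \(\alpha=0\)). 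With that choice fixed, your remaining steps — the easy inclusion via \(S=\pepsilon^{-\alpha N}\star M^N\) and Lemma~\ref{lem:alpha_bornology}, and the passage to completions using flatness of \(\comling{\tub{R}{J}{\alpha}}\) so that no separated quotient intervenes — are all sound.
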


\begin{proof}
  By Lemma \ref{lem:linear_growth}, the bornology on \(\ling{\tub{R}{J}{\alpha}} \otimes \dvf\)
  is the smallest \(\dvf\)\nb-vector space bornology where all
  bounded subsets of \(\tub{R}{J}{\alpha}\) have spectral radius at
  most~\(1\).  Equivalently, if \(r<1\), \(N\in\N\) and
  \(M_0\subseteq R\), \(M\subseteq J\) are bounded
  \(\dvr\)\nb-submodules, then
  \[
  S_{r,N,M_0,M} \defeq
  \sum_{n=1}^\infty r^n \star \biggl(M_0 + \sum_{i=1}^N
  \pepsilon^{-\alpha i} \star M^i\biggr)^n
  \]
  is bounded.  This implies that
  \(\pepsilon^{-\lfloor \alpha n\rfloor} \varrho(M)^n =
  \varrho(\pepsilon^{-\alpha n} \star M^n) \le 1\)
  if \(n\in\N_{\ge1}\)
  and \(M\subseteq J\)
  is bounded.  This is equivalent to
  \(\varrho(M)\le \pepsilon^\alpha\)
  for all bounded \(M\subseteq J\)
  by Lemma~\ref{lem:spectral_radius_comparison}.  Therefore, all
  subsets in~\(\gral{\bdd}{J}{\alpha}\)
  are bounded in \(\ling{\tub{R}{J}{\alpha}} \otimes \dvf\).

  It remains to prove that for $r,N,M_0$ and $M$ as above,
  $S\defeq S_{r,N,M_0,M}\in \gral{\bdd}{J}{\alpha}$.  We may and do
  assume that $M_0\owns 1$.  Let \(\ell\in\N_{\ge1}\).
  Consider the following $\dvr$-submodule of $R$
  \[
  M'\defeq M_0^{\ell-1}MM_0^{\ell-1}+\dvgen M_0^\ell
  \]
  Since \(\dvgen\in J\)
  and \(M\subseteq J\)
  and~\(J\)
  is an ideal and $1\in M_0$, $M'$ is a bounded \(\dvr\)\nb-submodule
  in~\(J\)
  which contains \(M_0^iM M_0^j\)
  for all $0\le i,j\le \ell-1$.  Hence
  \(\varrho(M') \le \pepsilon^\alpha\)
  in the bornology~\(\gral{\bdd}{J}{\alpha}\).  Thus
  \[
  T_{s,M'}\defeq\sum_{n=1}^\infty s^n \pepsilon^{-\alpha n}  \star (M')^n
  \]
  is in \(\gral{\bdd}{J}{\alpha}\)
  for all \(s<1\).
  We shall show that $S\subseteq  T_{s,M'}$ if
  \begin{equation}
    \label{condi_s}
    r^{\ell-1}<\pepsilon^{1-\alpha},\quad 1>s\ge\max\{r^{\ell-1}/\pepsilon^{1-\alpha}, r^{1/N}\}.
  \end{equation}
  We may choose~\(\ell\) so that this holds.

  An element of~\(S\)
  is a \(\dvr\)\nb-linear
  combination of products \(\dvgen^h x_1 \dotsm x_n\)
  with \(x_1,\dotsc,x_n\in M_0 \cup \bigcup_{i=1}^N M^i\);
  here~\(h\)
  is the sum of \(\lceil n\cdot \log(r)/\log \pepsilon\rceil\)
  and one summand \(- \lfloor \alpha i\rfloor\)
  for each factor \(x_j\in M^i\).
  In most cases, we may group these factors~\(x_i\)
  together so that \(x_1\dotsm x_n = x'_1\dotsm x'_{n'}\),
  where each~\(x'_j\)
  consists either of~\(\ell\)
  factors in~\(M_0\)
  or of one factor in~\(M^i\)
  surrounded by at most \(\ell-1\)
  factors in~\(M_0\)
  on each side.  The only exception is the case where all
  factors~\(x_i\)
  are from~\(M_0\);
  then we allow~\(x'_1\)
  to be a product of less than~\(\ell\)
  factors in~\(M_0\).
  By construction, each factor~\(x'_j\)
  belongs to \(M_0^\ell\subseteq \dvgen^{-1} M'\)
  or to \((M')^i\).
  For factors~\(x'_j\)
  in \(M_0^\ell\subseteq \dvgen^{-1} M'\),
  we put~\(\dvgen^{-1}\)
  into the scalar factor~\(\dvgen^h\);
  and we split factors~\(x'_j\)
  in \((M')^i\)
  into \(i\)~factors
  in~\(M'\).
  This gives
  \(\dvgen^h x_1 \dotsm x_n = \dvgen^{h-b} x''_1\dotsm x''_{n''}\)
  with \(x''_j\in M'\)
  for all~\(j\), where~\(b\) is the number of factors~\(x'_j\) in~\(M_0^\ell\).
  We claim that
  \(\abs{\dvgen^{h-b}} = \pepsilon^{h-b} \le s^{n''}
  \pepsilon^{-\alpha n''}\),
  so that
  \(\dvgen^h x_1 \dotsm x_n \in  \sum_{m=1}^\infty s^m
  \pepsilon^{-\alpha m} \star (M')^m\) as desired.

  We have \(n''-b=\sum_{x_j\in M^i} i\).  Hence
  \begin{multline*}
    h-b
    = \lceil n\cdot \log(r)/\log \pepsilon\rceil
    - \sum_{x_j\in M^i} \lfloor \alpha i \rfloor - b
    \\\ge n\cdot \log(r)/\log \pepsilon - b - \alpha
    \sum_{x_j\in M^i} i
    = n\cdot \log(r)/\log \pepsilon - b - \alpha (n'' - b).
  \end{multline*}
  Each of the \(b\)~factors
  \(x'_j \in M_0^\ell\)
  comes from~\(\ell\)
  consecutive factors \(x_i\in M_0\),
  and each factor \(x'_j\in (M')^i\)
  contains at least one factor \(x_m\in M^i\).
  Hence \(n' - b \le n - \ell b\).
  The \(n'-b\)
  factors \(x'_j\in (M')^i\)
  each produce \(i\le N\)
  factors in~\(M'\).
  Therefore, \(n''-b \le N\cdot (n'-b) \le N\cdot (n-\ell b)\)
  or, equivalently, \(n \ge (\ell-N^{-1})\cdot b + n''/N\).
  Using this and \(s\ge r^{1/N}\), we estimate
  \begin{align*}
    h-b &\ge
    (\ell-N^{-1}) b \log(r)/\log \pepsilon
    + n'' \log(r^{1/N})/\log \pepsilon
    - b(1-\alpha) - \alpha n''
    \\ &\ge - \alpha n'' + n'' \log(s)/\log \pepsilon
    + (\ell-N^{-1}) b \log(r)/\log \pepsilon
    - b(1-\alpha).
  \end{align*}
  The first two summands are exactly what we need for our estimate.
  And the sum of the other two is non-negative because
  \(r^{\ell - N^{-1}} {}<{} \pepsilon^{1-\alpha}\)
  by~\eqref{condi_s}.  The exceptional summand where all factors are
  in~\(M_0\)
  belongs to
  \(r^n M_0^n \subseteq r^n \dvgen^{-\lceil n/\ell\rceil} \star
  (M')^{\lceil n/\ell\rceil}\).
  Since \(n \ge \ell \lceil n/\ell\rceil - 1\),
  this is contained in
  \(r^{(\ell-1) m} \dvgen^{-m} \star (M')^m \subseteq
  \pepsilon^{-\alpha m} \star (M')^m\) by~\eqref{condi_s} with $m = \lceil n/\ell\rceil$.
\end{proof}

\begin{example}
  \label{exa:alpha_tube}
  Let \(\alpha=1\).
  Then \(\tub{R}{J}{1} = \sum_{n=0}^\infty \dvgen^{-n} J^n\).
  If \(\dvgen \in J\),
  we have
  \(R\subseteq \dvgen^{-1} J \subseteq \dvgen^{-2} J^2 \subseteq
  \dotsb\).
  So \(\tub{R}{J}{1}\)
  is the union of the increasing chain of \(\dvr\)\nb-submodules
  \(\dvgen^{-n} J^n \subseteq \ul{R}\).
  A subset of \(\tub{R}{J}{1}\)
  is bounded if and only if it is contained in \(\dvgen^{-n} M^n\)
  for some bounded \(\dvr\)\nb-submodule
  \(M\subseteq J\).
  In this case, the proof of
  Proposition~\ref{pro:linear_growth_on_tube} is much easier.

  The case \(\alpha=1/m\) for some \(m\in\N_{\ge1}\) is also
  somewhat easier, at least under a mild hypothesis.  We have the
  following identity of rings:
  \[
  \tub{R}{J}{1/m} = \sum \pepsilon^{n/m} \star J^n
  = \sum \dvgen^{-n} J^{m n} = \tub{R}{J^m}{1}.
  \]
  The two bornologies are the same as well if we assume that every
  bounded subset of~$J^m$ is contained in~$M^m$ for some bounded
  submodule $M\subseteq J$; in other words, if the map $J^{\otimes m}\to
  J^m$ is a bornological quotient map for all~$m$.  This hypothesis is
  satisfied, for example, when~$R$ carries the fine bornology.
  Let \(\alpha>0\) be rational, \(\alpha=i/m\) with
  \(i,m\in\N_{\ge1}\), \(\gcd(i,m)=1\).  Then
  \begin{multline*}
    \tub{R}{J}{i/m}
    = \sum_{\ell=0}^\infty \dvgen^{-\ell} J^{\lceil \ell m/i\rceil}
    \\= (R + \dvgen^{-1} J^{\lceil m/i\rceil} + \dvgen^{-2}
    J^{\lceil 2 m/i\rceil} + \dotsb + \dvgen^{-(i-1)} J^{\lceil (i-1) m/i\rceil})\cdot
    \sum_{\ell=0}^\infty (\dvgen^{-i} J^m)^\ell
    \\= \sum_{\ell=0}^\infty (R + \dvgen^{-1} J^{\lceil m/i\rceil} + \dvgen^{-2}
    J^{\lceil 2 m/i\rceil} + \dotsb + \dvgen^{-i} J^{\lceil i m/i\rceil})^\ell.
  \end{multline*}
  If \(\alpha=0\), then \(\tub{R}{J}{0}=R\).
\end{example}

\begin{remark}
  \label{rem:tube_fgc}
  Let~\(R\)
  be commutative and finitely generated and equipped with the fine
  bornology.  Let \(\dvgen\in J\triqui R\)
  and \(\alpha\in[0,1]\).
  Then~\(\tub{R}{J}{\alpha}\)
  also carries the fine bornology by Lemma~\ref{lem:tube_fine}.  The
  tube algebra~\(\tub{R}{J}{1}\)
  is described in Example~\ref{exa:alpha_tube}.
  It is closely related to the rings of functions on tubes used in the construction of rigid cohomology, see Lemma~\ref{lem:covering-of-tube}.
%  It is the usual tube
%  algebra of~\(R\)
%  with respect to~\(J\)
%  used in rigid cohomology, see Example~\ref{ex:tube}.
  Example~\ref{exa:alpha_tube} also identifies
  \(\tub{R}{J}{1/m} \cong \tub{R}{J^m}{1}\),
  which is the usual tube algebra of~\(R\) with respect to~\(J^m\).
\end{remark}

\begin{lemma}
  \label{lem:tube_fg}
  Let~\(R\) be commutative and finitely generated and equipped with
  the fine bornology.  Let \(\dvgen\in J\triqui R\) and
  \(\alpha\in[0,1]\).  If \(\alpha\in\Q\), then the tube algebra
  \(\tub{R}{J}{\alpha}\) is finitely generated and in particular Noetherian.
\end{lemma}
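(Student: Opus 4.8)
The plan is to read the finite generation off the explicit presentation of \(\tub{R}{J}{\alpha}\) as a subring of~\(\ul{R}\) obtained in Example~\ref{exa:alpha_tube}, and then to conclude Noetherianity by the Hilbert basis theorem.

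First recall that~\(\dvr\), being a discrete valuation ring, is Noetherian, so by the Hilbert basis theorem the finitely generated \(\dvr\)\nb-algebra~\(R\) is Noetherian; in particular every ideal of~\(R\), and hence every power of~\(J\), is a finitely generated \(R\)\nb-module. The case \(\alpha=0\) is immediate, since \(\tub{R}{J}{0}=R\) is finitely generated over~\(\dvr\) by hypothesis. So assume \(\alpha\in\Q\cap(0,1]\) and write \(\alpha=i/m\) with \(i,m\in\N_{\ge1}\) and \(\gcd(i,m)=1\); note \(i\le m\) because \(\alpha\le1\). By Example~\ref{exa:alpha_tube} there is an identity of rings
\[
\tub{R}{J}{i/m}=\sum_{\ell=0}^\infty L^\ell,\qquad
L\defeq\sum_{j=0}^i\dvgen^{-j}J^{\lceil jm/i\rceil}\subseteq\ul{R}
\]
(with the convention \(J^0\defeq R\)). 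Since \(1\in R\subseteq L\), the \(L^\ell\) form an increasing chain, so \(\sum_\ell L^\ell=\bigcup_\ell L^\ell\) is exactly the \(R\)\nb-subalgebra of~\(\ul{R}\) generated by~\(L\).

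Now I claim~\(L\) is a finitely generated module over (the image of)~\(R\): each summand \(\dvgen^{-j}J^{\lceil jm/i\rceil}\subseteq\ul{R}\) is a homomorphic image of the finitely generated \(R\)\nb-module \(J^{\lceil jm/i\rceil}\) under \(x\mapsto\dvgen^{-j}x\), and a finite sum of finitely generated \(R\)\nb-modules is finitely generated. Choosing \(R\)\nb-module generators \(g_1,\dotsc,g_N\) of~\(L\) yields \(\tub{R}{J}{i/m}=R[g_1,\dotsc,g_N]\), a finitely generated \(R\)\nb-algebra, and hence --- since~\(R\) is a finitely generated \(\dvr\)\nb-algebra --- a finitely generated \(\dvr\)\nb-algebra. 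Applying the Hilbert basis theorem once more, a finitely generated algebra over the Noetherian ring~\(\dvr\) is Noetherian, which proves the remaining assertion.

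There is no real obstacle in this argument: the only thing that has to be in place is the ring presentation from Example~\ref{exa:alpha_tube}, which is available precisely because \(\alpha=i/m\) is rational with \(i\le m\), so that the reindexing of \(\tub{R}{J}{i/m}=\sum_{n\ge0}\dvgen^{-\lfloor\alpha n\rfloor}J^n\) by the powers of~\(\dvgen\) is legitimate; everything after that is routine commutative algebra.
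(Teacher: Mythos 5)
Your proof is correct and follows essentially the same route as the paper's: both read off from Example~\ref{exa:alpha_tube} that \(\tub{R}{J}{\alpha}\) is the subalgebra of~\(\ul{R}\) generated by \(R+\sum_{j=1}^{i}\dvgen^{-j}J^{\lceil jm/i\rceil}\) and then use Noetherianity of~\(R\) to replace the powers of~\(J\) by finite generating sets, concluding Noetherianity of the tube algebra by the Hilbert basis theorem.
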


\begin{proof}
  If~\(\alpha\)
  is rational, \(\alpha=m/i\),
  then Example~\ref{exa:alpha_tube} shows that~\(\tub{R}{J}{\alpha}\)
  is generated as an algebra by
  \(R + \dvgen^{-1} J^{\lceil m/i\rceil} + \dvgen^{-2} J^{\lceil 2
    m/i\rceil} + \dotsb + \dvgen^{-i} J^{\lceil i m/i\rceil}\).
  Since~\(R\)
  is Noetherian, there are finite generating sets \(S_j\subseteq J^j\)
  for the ideals \(J^j\triqui R\)
  for \(1\le j\le m\),
  and~\(R\)
  is generated as an algebra by a finite set of generators~\(S_0\).
  Then the finite subset
  \(\bigcup_{l=0}^{i-1} \dvgen^{-l} S_{\lceil l m/i\rceil}\)
  generates~\(\tub{R}{J}{\alpha}\) as an algebra.
\end{proof}

In contrast, we claim that \(\tub{R}{J}{\alpha}\) for
irrational~\(\alpha\) is usually not finitely generated, except in
trivial cases.  If \(\alpha'\le \alpha\), then \(\tub{R}{J}{\alpha'}
\subseteq \tub{R}{J}{\alpha}\).  Any element of \(\tub{R}{J}{\alpha}\)
belongs to \(\tub{R}{J}{\alpha'}\) for some \(\alpha'\in \Q\) with
\(\alpha'\le \alpha\).  Thus \(\tub{R}{J}{\alpha}\) is the increasing
union of its subalgebras \(\tub{R}{J}{\alpha'}\) for \(\alpha'\in \Q\)
with \(\alpha'\le \alpha\).  However, except in trivial cases such as
\(J=\dvgen R\) or if this holds up to torsion, we
have \(\tub{R}{J}{\alpha'} \neq \tub{R}{J}{\alpha}\) if
\(\alpha'<\alpha\).

\begin{proposition}
  \label{pro:tensor_dagger}
  Let \(R\) and~\(S\) be bornological algebras.  Then
  \(\ling{(R\otimes S)} = \ling{R} \otimes \ling{S}\)
  and hence \(\comling{(R\otimes S)} \cong
  \comling{R} \hotimes \comling{S}\).
\end{proposition}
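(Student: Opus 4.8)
The equality \(\ling{(R\otimes S)} = \ling{R} \otimes \ling{S}\) is an identity of bornological algebras whose common underlying \(\dvr\)\nb-module is \(R\otimes_\dvr S\) and whose multiplications agree tautologically, so only the two bornologies have to be compared, and I would do this by matching cofinal families of bounded \(\dvr\)\nb-submodules. For bounded \(\dvr\)\nb-submodules \(S_0\subseteq R\), \(T_0\subseteq S\), write \(S_0\otimes T_0\) also for the \(\dvr\)\nb-submodule of \(R\otimes S\) generated by the elementary tensors \(s\otimes t\) (\(s\in S_0\), \(t\in T_0\)), and similarly for \(S_0^{k}\otimes T_0^{m}\). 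We may assume \(1_R\in S_0\), \(1_S\in T_0\), since enlarging a bounded submodule by the unit keeps it bounded; then \((S_0\otimes T_0)^{k}=S_0^{k}\otimes T_0^{k}\) in \(R\otimes S\), and the powers \(S_0^{0}\subseteq S_0^{1}\subseteq\dotsb\), \(T_0^{0}\subseteq T_0^{1}\subseteq\dotsb\) are increasing. By Lemma~\ref{lem:complete_tensor} the bounded subsets of \(R\otimes S\) are cofinally the \(S_0\otimes T_0\), so by Lemma~\ref{lem:linear_growth} the bounded \(\dvr\)\nb-submodules of \(\ling{(R\otimes S)}\) are cofinally
\[
P(S_0,T_0) \defeq \sum_{i\ge0}\dvgen^{i}\,\bigl(S_0^{i+1}\otimes T_0^{i+1}\bigr).
\]
On the other hand, applying Lemma~\ref{lem:linear_growth} to \(\ling{R}\) and \(\ling{S}\) and then Lemma~\ref{lem:complete_tensor} to \(\ling{R}\otimes\ling{S}\), the bounded \(\dvr\)\nb-submodules of \(\ling{R}\otimes\ling{S}\) are cofinally the \(A\otimes B\) with \(A=\sum_{i}\dvgen^{i}S_0^{i+1}\), \(B=\sum_{j}\dvgen^{j}T_0^{j+1}\); distributing the tensor product over the sums of submodules, this equals
\[
Q(S_0,T_0) \defeq \sum_{i,j\ge0}\dvgen^{i+j}\,\bigl(S_0^{i+1}\otimes T_0^{j+1}\bigr).
\]

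For \(\ling{R}\otimes\ling{S}\subseteq\ling{(R\otimes S)}\) I would check \(Q(S_0,T_0)\subseteq P(S_0,T_0)\): given \(i,j\), put \(m=\max(i,j)\); since the powers are increasing, \(S_0^{i+1}\otimes T_0^{j+1}\subseteq S_0^{m+1}\otimes T_0^{m+1}\), and \(i+j\ge m\) gives \(\dvgen^{i+j}(S_0^{i+1}\otimes T_0^{j+1})\subseteq\dvgen^{m}(S_0^{m+1}\otimes T_0^{m+1})\subseteq P(S_0,T_0)\); summing over \((i,j)\) yields \(Q\subseteq P\).

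For the reverse inclusion \(\ling{(R\otimes S)}\subseteq\ling{R}\otimes\ling{S}\) I would check \(P(S_0,T_0)\subseteq Q(S_0^{2},T_0^{2})\), noting that \(S_0^{2}\subseteq R\) and \(T_0^{2}\subseteq S\) are again bounded because \(R,S\) are bornological algebras. Fix \(i\), let \(s\in S_0^{i+1}\), \(t\in T_0^{i+1}\), and put \(p=q=\ceil{(i+1)/2}-1\); one checks \(p+q\le i\) and \(i+1\le 2(p+1)\), so \(s\in S_0^{i+1}\subseteq (S_0^{2})^{p+1}\) and \(\dvgen^{\,i-p-q}t\in T_0^{i+1}\subseteq (T_0^{2})^{q+1}\), whence
\[
\dvgen^{i}(s\otimes t)=\dvgen^{p+q}\bigl(s\otimes\dvgen^{\,i-p-q}t\bigr)\in\dvgen^{p+q}\bigl((S_0^{2})^{p+1}\otimes(T_0^{2})^{q+1}\bigr)\subseteq Q(S_0^{2},T_0^{2}).
\]
As \(P(S_0,T_0)\) is generated by such elements, \(P(S_0,T_0)\subseteq Q(S_0^{2},T_0^{2})\), which completes the first equality. (Alternatively, since \(\ling{(R\otimes S)}\) is the smallest algebra bornology extending that of \(R\otimes S\) all of whose bounded \(\dvr\)\nb-submodules have spectral radius \(\le 1\), and since \(\ling{R}\otimes\ling{S}\) extends the tensor product bornology, it suffices to verify via Proposition~\ref{pro:spectral_radius_linear_growth} that every bounded \(A\otimes B\) lies in some \(U\) with \(\dvgen U^{2}\subseteq U\); this follows by splitting, in each power \((A\otimes B)^{n}=A^{n}\otimes B^{n}\), the available power of \(\dvgen\) evenly across the two tensor legs and invoking \(\varrho(A;\ling{R})\le 1\), \(\varrho(B;\ling{S})\le 1\).)

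For the last assertion, the first equality identifies \(\comling{(R\otimes S)}\) with the completion of the bornological tensor product \(\ling{R}\otimes\ling{S}\) of the bornological modules \(\ling{R},\ling{S}\); since \(\comb{\ling{R}}=\comling{R}\) and \(\comb{\ling{S}}=\comling{S}\), it remains to see that \(\comb{(V\otimes W)}\cong\comb{V}\hotimes\comb{W}\) for all bornological \(\dvr\)\nb-modules \(V,W\). Using the internal Hom functor, the fact that \(\Hom(-,Z)\) is complete whenever \(Z\) is, the universal property of completions, and Lemma~\ref{lem:complete_tensor} (which gives \(\comb{V}\hotimes\comb{W}=\comb{(\comb{V}\otimes\comb{W})}\)), one obtains for every complete bornological \(\dvr\)\nb-module \(Z\) natural isomorphisms
\[
\Hom\bigl(\comb{(V\otimes W)},Z\bigr)\cong\Hom\bigl(V,\Hom(W,Z)\bigr)\cong\Hom\bigl(\comb{V},\Hom(W,Z)\bigr)\cong\Hom\bigl(\comb{W},\Hom(\comb{V},Z)\bigr)\cong\Hom\bigl(\comb{V}\hotimes\comb{W},Z\bigr),
\]
and Yoneda yields the claim; the isomorphism respects the multiplications by uniqueness of the extension of a bounded bilinear map to the completions. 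The main obstacle is the reverse inclusion in the first equality: the single factor \(\dvgen^{i}\) attached to \(W^{i+1}=S_0^{i+1}\otimes T_0^{i+1}\) in the linear growth bornology of \(R\otimes S\) must be distributed between the two tensor legs so that each leg lands in a bounded submodule of the respective linear growth completion, and this is exactly why one is forced to pass from \(S_0\) to \(S_0^{2}\).
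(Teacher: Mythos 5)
Your proof is correct and follows essentially the same route as the paper's: both arguments compare the cofinal families $\sum_i\dvgen^i(S_0\otimes T_0)^{i+1}$ and $\bigl(\sum_i\dvgen^iS_0^{i+1}\bigr)\otimes\bigl(\sum_j\dvgen^jT_0^{j+1}\bigr)$, using $\max\{i,j\}$ for one inclusion and splitting the power of $\dvgen$ across the two tensor legs for the other (your passage from $S_0$ to $S_0^2$ just makes explicit the reindexing the paper leaves implicit in its inclusion $\sum\dvgen^{2i}(T\otimes U)^{i+1}\subseteq A\otimes B$), and both then deduce the statement about completions from Lemma~\ref{lem:complete_tensor} and the universal property of the completion.
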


\begin{proof}
  A \(\dvr\)\nb-submodule of \(R\otimes S\) is bounded if and only
  if it is contained in the image of \(T\otimes U\) for bounded
  \(\dvr\)\nb-submodules \(T\subseteq R\), \(U\subseteq S\); we may
  assume \(1_R\in T\), \(1_S\in U\).  Then
  \begin{multline*}
    \sum \dvgen^{2 i} (T\otimes U)^{i+1}
    \subseteq \Bigl(\sum \dvgen^i T^{i+1}\Bigr)
    \otimes \Bigl(\sum \dvgen^j U^{j+1}\Bigr)
    \\\subseteq \sum \dvgen^{i+j} T^{\max\{i,j\}+1}\otimes U^{\max\{i,j\}+1}
    \subseteq \sum \dvgen^m (T\otimes U)^{m+1}.
  \end{multline*}
  By Lemma~\ref{lem:linear_growth}, this shows that a subset is
  bounded in \(\ling{R} \otimes \ling{S}\)
  if and only if it is bounded in \(\ling{(R\otimes S)}\).
  The isomorphism
  \(\comling{(R\otimes S)} \cong \comling{R} \hotimes \comling{S}\)
  follows from
  \(\ling{(R\otimes S)} = \ling{R} \otimes \ling{S}\)
  by taking completions on both sides, see
  Lemma~\ref{lem:complete_tensor}.
\end{proof}

\begin{proposition}
  \label{pro:lambda_tensor_dagger}
  Let \(R\)
  and~\(S\)
  be commutative bornological $\dvr$-algebras.  Let \(I\triqui R\)
  and \(J\triqui S\)
  be ideals containing~\(\dvgen\).
  Let \(\alpha\in[0,1]\).
  Let \(I+J\)
  denote
  the sum of the images of  \(I\otimes S\) and  \(R\otimes J\)
  in~\(R\otimes S\);
  this is an ideal.  Assume that any bounded \(\dvr\)\nb-submodule
  of \(I+J\)
  is the sum of the images of \(M_I\otimes M_S\) and of \(M_R\otimes M_J\)
  for bounded \(\dvr\)\nb-submodules
  \(M_I \subseteq I\),
  \(M_S \subseteq S\),
  \(M_R \subseteq R\), and \(M_J \subseteq J\).  Then
  \[
  \comg{R}{I}{\alpha} \hotimes
  \comg{S}{J}{\alpha} \cong
  \comg{R\otimes S}{I+J}{\alpha}.
  \]
\end{proposition}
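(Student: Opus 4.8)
The plan is to identify both sides through their universal properties, in the spirit of Propositions~\ref{pro:dagger_universal}--\ref{pro:lambda_functorial}. Since $R$ and~$S$ are commutative, $T\defeq\comg{R}{I}{\alpha}\hotimes\comg{S}{J}{\alpha}$ is a commutative complete bornological $\dvf$\nb-algebra, and by the universal property of the complete bornological tensor product (Definition~\ref{def:complete_tensor}) a bounded $\dvf$\nb-algebra homomorphism out of it into a complete bornological $\dvf$\nb-algebra is the same as a pair of bounded $\dvf$\nb-algebra homomorphisms out of the two factors with commuting ranges. One direction of the isomorphism is then immediate and needs no hypothesis: the maps $R\to R\otimes S\to\comg{R\otimes S}{I+J}{\alpha}$ and $S\to R\otimes S\to\comg{R\otimes S}{I+J}{\alpha}$ send a bounded $\dvr$\nb-submodule of~$I$, resp.~of~$J$, into the image of a bounded $\dvr$\nb-submodule of~$I+J$, so Proposition~\ref{pro:lambda_universal} (or~\ref{pro:lambda_functorial}) extends them to bounded $\dvf$\nb-algebra homomorphisms $\comg{R}{I}{\alpha}\to\comg{R\otimes S}{I+J}{\alpha}$ and $\comg{S}{J}{\alpha}\to\comg{R\otimes S}{I+J}{\alpha}$; their ranges commute, so we obtain $\Phi\colon T\to\comg{R\otimes S}{I+J}{\alpha}$.

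The substance of the proof is the reverse map~$\Psi$. First I would record an elementary but crucial fact: in $(\ul R,\gral{\bdd}{I}{\alpha})$ \emph{every} bounded $\dvr$\nb-submodule $M\subseteq R$ satisfies $\varrho(M)\le1$ — indeed $\dvgen M^c\subseteq I$ for every $c\ge1$ because $\dvgen\in I$, so $\varrho(\dvgen M^c)\le\pepsilon^\alpha$ by definition of $\gral{\bdd}{I}{\alpha}$, whence $\varrho(M)^c=\varrho(M^c)\le\pepsilon^{\alpha-1}$ by Lemma~\ref{lem:spectral_radius_comparison}, and $c\to\infty$ forces $\varrho(M)\le1$; the same holds for bounded $\dvr$\nb-submodules of~$S$ in $(\ul S,\gral{\bdd}{J}{\alpha})$. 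The canonical maps combine to a bounded unital $\dvr$\nb-algebra homomorphism $\phi\colon R\otimes S\to T$, $x\otimes y\mapsto\iota_R(x)\,\iota_S(y)$, where $\iota_R\colon R\to\comg{R}{I}{\alpha}\to T$ and $\iota_S\colon S\to\comg{S}{J}{\alpha}\to T$; by Proposition~\ref{pro:lambda_universal} it extends to $\Psi\colon\comg{R\otimes S}{I+J}{\alpha}\to T$ provided $\varrho(\phi(M))\le\pepsilon^\alpha$ for every bounded $\dvr$\nb-submodule $M\subseteq I+J$. This is where the hypothesis enters: such an~$M$ lies in $\mathrm{im}(M_I\otimes M_S)+\mathrm{im}(M_R\otimes M_J)$ for bounded $M_I\subseteq I$, $M_J\subseteq J$, $M_R\subseteq R$, $M_S\subseteq S$, so $\phi(M)\subseteq\iota_R(M_I)\iota_S(M_S)+\iota_R(M_R)\iota_S(M_J)$, where $\iota_R(M_I)$ and $\iota_S(M_J)$ have spectral radius $\le\pepsilon^\alpha$ in~$T$ (images of bounded submodules of~$I$, resp.~$J$, and bounded homomorphisms do not increase spectral radii), while $\iota_S(M_S)$ and $\iota_R(M_R)$ have spectral radius $\le1$ by the fact just recorded. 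It then remains to prove two elementary facts about a commutative bornological $\dvf$\nb-algebra: $\varrho(AB)\le\rho$ whenever $\varrho(A)\le\rho$ and $\varrho(B)\le1$, and $\varrho(E+F)\le\rho$ whenever $\varrho(E),\varrho(F)\le\rho$. Both follow from~\eqref{eq:star_props}: for the first, given $r>\rho$ choose $\rho<s<r$ and set $t=r/s>1$; then $r^{-n}\star(AB)^n\subseteq\dvgen^{-1}(s^{-n}\star A^n)(t^{-n}\star B^n)$, and summing over~$n$ yields a subset of $\dvgen^{-1}\bigl(\sum_n s^{-n}\star A^n\bigr)\bigl(\sum_m t^{-m}\star B^m\bigr)$, which is bounded because $s>\varrho(A)$ and $t>\varrho(B)$, so $\varrho(AB)\le r$ for all $r>\rho$; the second is the analogous computation using $(E+F)^n=\sum_{k+l=n}E^kF^l$. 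Hence $\varrho(\phi(M))\le\pepsilon^\alpha$ and $\Psi$ exists. The main obstacle is precisely this verification that $\phi$ meets the hypothesis of Proposition~\ref{pro:lambda_universal}; everything else is formal.

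Finally I would check that $\Phi$ and $\Psi$ are mutually inverse. Both composites are bounded $\dvf$\nb-algebra endomorphisms that restrict to the identity on the image of $R\otimes S$ in $\comg{R\otimes S}{I+J}{\alpha}$, respectively on the images of $\comg{R}{I}{\alpha}$ and $\comg{S}{J}{\alpha}$ in~$T$; the uniqueness of the extension in Proposition~\ref{pro:lambda_universal} and the universal property of~$\hotimes$ then force both composites to be the identity, which gives the asserted isomorphism, and it is visibly natural. The case $\alpha=0$ fits the same scheme; there the hypothesis is in fact superfluous, since $\comg{R}{I}{0}=\ul{\comling{R}}$ and the isomorphism follows directly from Proposition~\ref{pro:tensor_dagger} together with $\comling{R}\otimes\dvf=\comling{R}\hotimes\dvf$ (Example~\ref{ex:otimesK}).
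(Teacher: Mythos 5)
Your proof is correct, and it uses the same basic ingredients as the paper's (the universal property of Proposition~\ref{pro:lambda_universal}, the calculus of~\eqref{eq:star_props}, Lemma~\ref{lem:spectral_radius_comparison}, and the trick of Example~\ref{exa:alpha-completion_JpiR}), but it packages them differently. The paper argues by Yoneda: for every complete bornological algebra~\(A\) it identifies bounded homomorphisms \(\comg{R\otimes S}{I+J}{\alpha}\to A\) with pairs \((\varphi^R,\varphi^S)\) having commuting images, the crux being a single monolithic \(\star\)-computation proving that \(\varrho(\varphi(M))\le\pepsilon^\alpha\) for all bounded \(M\subseteq I+J\) is \emph{equivalent} to the separate conditions on \(\varphi^R|_I\) and \(\varphi^S|_J\); the converse direction of that equivalence uses \(1\in M_R\), \(1\in M_S\), and the substitution \(M_I=\dvgen M_R^m\). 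You instead build the two maps \(\Phi\) and \(\Psi\) explicitly and check they are mutually inverse by uniqueness of extensions, which lets you use only one direction of the equivalence for each map; the analytic core is then cleanly modularized into the observation that every bounded \(M_R\subseteq R\) already has \(\varrho(M_R)\le 1\) in \(\gral{\bdd}{I}{\alpha}\), plus two reusable estimates for commuting bounded submodules, \(\varrho(AB)\le\varrho(A)\varrho(B)\) and \(\varrho(E+F)\le\max\{\varrho(E),\varrho(F)\}\) (up to the harmless \(\dvgen^{-1}\) rounding factors, which your argument handles correctly). What your version buys is better readability and two general-purpose lemmas; what the paper's version buys is the natural bijection of Hom-sets for all test objects at once, so that naturality of the isomorphism is automatic rather than checked at the end. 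Both are complete proofs.
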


\begin{proof}
  We show that
  \(\comg{R}{I}{\alpha} \hotimes
  \comg{S}{J}{\alpha}\)
  has the universal property of
  \(T\defeq \comg{R\otimes S}{I+J}{\alpha}\)
  in Proposition~\ref{pro:lambda_universal}.  So let~\(A\)
  be a complete bornological algebra.  Bounded unital homomorphisms
  \(T\to A\)
  are in natural bijection with bounded unital homomorphisms
  \(\varphi\colon R\otimes S\to A\)
  such that \(\varrho(\varphi(M)) \le \pepsilon^\alpha\)
  for all bounded \(\dvr\)\nb-submodules
  \(M\subseteq I+J\).
  The bounded unital homomorphism~\(\varphi\)
  corresponds to a pair of bounded unital homomorphisms
  \(\varphi^R\colon R\to A\)
  and \(\varphi^S\colon S\to A\)
  with commuting images through
  \(\varphi(r\otimes s) = \varphi^R(r)\cdot \varphi^S(s)\)
  for all \(r\in R\),
  \(s\in S\).
  By our assumption, any bounded \(\dvr\)\nb-submodule
  \(M\subseteq I+J\)
  is contained in the sum of the images of \(M_I\otimes M_S\) and \(M_R\otimes M_J\)
  for bounded \(\dvr\)\nb-submodules
  \(M_I \subseteq I\),
  \(M_S \subseteq S\),
  \(M_R \subseteq R\), and \(M_J \subseteq J\).
  Conversely, the sum of the images of \(M_I\otimes M_S\) and \(M_R\otimes M_J\) as above
  is clearly a bounded \(\dvr\)\nb-submodule
  of~\(I+J\) for the tensor product bornology.
  Hence \(\varrho(\varphi(M)) \le \pepsilon^\alpha\)
  for all~\(M\)
  as above if and only if the same spectral radius estimate holds for
  \(\varphi^R(M_I)\cdot \varphi^S(M_S)+ \varphi^R(M_R)\cdot
  \varphi^S(M_J)\)
  for all bounded \(\dvr\)\nb-submodules
  \(M_I \subseteq I\),
  \(M_S \subseteq S\),
  \(M_R \subseteq R\),
  and \(M_J \subseteq J\).
  Since \(\varphi^R\) and~\(\varphi^S\) have commuting images,
  \[
  \bigl(\varphi^R(M_I)\cdot \varphi^S(M_S)
  + \varphi^R(M_R)\cdot \varphi^S(M_J)\bigr)^n
  = \sum_{l=0}^n \varphi^R(M_I^l M_R^{n-l}) \cdot \varphi^S(M_J^{n-l} M_S^l),
  \]
  where \(M_I^0\defeq M_R^0 \defeq \dvr\cdot 1_R\)
  and \(M_J^0\defeq M_S^0\defeq \dvr\cdot 1_S\).  Let \(\beta,\gamma>0\), then
  \begin{multline*}
    \sum_{n=0}^\infty
    (\beta\gamma)^n\star \bigl(\varphi^R(M_I)\cdot \varphi^S(M_S)
    + \varphi^R(M_R)\cdot \varphi^S(M_J)\bigr)^n
    \\=
    \sum_{n,m=0}^\infty
    (\beta\gamma)^{n+m}\star \varphi^R(M_I^n) \cdot \varphi^S(M_J^m)\cdot
    \varphi^R(M_R^m) \cdot \varphi^S(M_S^n).
  \end{multline*}
  The modules
  \((\beta\gamma)^{n+m}\star
  \varphi^R(M_I^n) \cdot \varphi^S(M_J^m)\cdot
  \varphi^R(M_R^m) \cdot \varphi^S(M_S^n)\)
  and
  \((\beta^n\star \varphi^R(M_I^n)) \cdot
  (\gamma^m \star \varphi^R(M_R^m))
  \cdot (\beta^m\star\varphi^S(M_J^m))
  \cdot (\gamma^n\star \varphi^S(M_S^n))\)
  differ at most by four factors of~\(\dvgen\)
  due to rounding errors when replacing \(\beta^n\),
  \(\beta^m\),
  \(\gamma^n\), \(\gamma^m\) and \((\beta\gamma)^{n+m}\)
  by powers of~\(\pepsilon\), see~\eqref{eq:star_props}.
  For the boundedness of the sum, this is irrelevant.  We may assume
  that \(1\in M_R\)
  and \(1\in M_S\).
  Then \(1\in\varphi^R(M_R^n)\)
  and \(1\in\varphi^S(M_S^n)\)
  for all \(n\in\N\).
  Therefore, if the sum above is bounded for all
  \(\beta<\pepsilon^\alpha\) and \(\gamma<1\),
  then both \(\sum_{n=0}^\infty \beta^n\star \varphi^R(M_I^n)\)
  and \(\sum_{m=0}^\infty \beta^m\star \varphi^S(M_J^m)\)
  are bounded for all such~\(\beta\).
  Conversely, assume that the latter are bounded for all such
  \(\beta\),
  \(M_I\)
  and~\(M_J\).
  Here we may also take \(M_I = \dvgen M_R^m\)
  and \(M_J = \dvgen M_S^m\)
  for any \(m\in\N\).
  By Lemma~\ref{lem:spectral_radius_comparison}, this implies that
  \(\sum_{n=0}^\infty \gamma^n\star \varphi^R(M_R^n)\)
  and \(\sum_{m=0}^\infty \gamma^m\star \varphi^S(M_S^m)\)
  are bounded for any \(\gamma<1\).  Thus
  \[
  \sum_{n,m=0}^\infty
  (\beta^n\star \varphi^R(M_I^n)) \cdot
  (\gamma^m \star \varphi^R(M_R^m))
  \cdot (\beta^m\star\varphi^S(M_J^m))
  \cdot (\gamma^n\star \varphi^S(M_S^n))
  \]
  is bounded if \(\beta<\pepsilon^\alpha\) and \(\gamma<1\).
  By the argument above, this is equivalent to
  \[
  \sum_{n=0}^\infty
  (\beta \gamma)^n\star \bigl(\varphi^R(M_I)\cdot \varphi^S(M_S)
  + \varphi^R(M_R)\cdot \varphi^S(M_J)\bigr)^n
  \]
  being bounded.  Since \(\gamma<1\)
  is arbitrary, we see that~\(\varphi\)
  satisfies the condition in Proposition~\ref{pro:lambda_universal}
  that characterizes when it extends to a homomorphism on~\(T\)
  if and only if both \(\varphi^R\)
  and~\(\varphi^S\)
  satisfy the corresponding condition to extend to
  \(\comg{R}{I}{\alpha}\)
  and \(\comg{S}{J}{\alpha}\),
  respectively.  These unique extensions still commute when they
  exist.  Thus the pairs~\((\varphi^R,\varphi^S)\)
  as above are in natural bijection with bounded unital homomorphisms
  \(\comg{R}{I}{\alpha} \hotimes \comg{S}{J}{\alpha}\to A\).
  Putting things together, we have got a natural bijection between
  bounded unital homomorphisms
  \(\comg{R}{I}{\alpha} \hotimes \comg{S}{J}{\alpha}\to A\)
  and \(\comg{R\otimes S}{I+J}{\alpha} \to A\).
\end{proof}

\begin{corollary}
  \label{cor:lambda_tensor_dagger}
  Let \(R\)
  and~\(S\)
  be commutative bornological algebras.  Let \(I\triqui R\)
  be an ideal containing~\(\dvgen\).
  Let \(\alpha\in[0,1]\).
  Let~\(I'\)
  denote the image of \(I\otimes S\)
  in~\(R\otimes S\);
  this is an ideal.  Assume that any bounded \(\dvr\)\nb-submodule
  of \(I'\)
  is the image of \(M_I\otimes M_S\)
  for bounded \(\dvr\)\nb-submodules
  \(M_I \subseteq I\) and \(M_S \subseteq S\).  Then
  \[
  \comg{R}{I}{\alpha} \hotimes
  \comling{S} \cong
  \comg{R\otimes S}{I'}{\alpha}.
  \]
\end{corollary}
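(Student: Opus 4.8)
The plan is to reduce the corollary to Proposition~\ref{pro:lambda_tensor_dagger}, applied to the ideals $I\triqui R$ and $\dvgen S\triqui S$, both of which contain~$\dvgen$. With $J\defeq\dvgen S$, the ideal $I+J$ appearing in Proposition~\ref{pro:lambda_tensor_dagger} --- the sum of the images of $I\otimes S$ and $R\otimes\dvgen S$ in $R\otimes S$ --- is exactly~$I'$: the image of $R\otimes\dvgen S$ equals $\dvgen(R\otimes S)$, and since $\dvgen\in I$ this lies already in~$I'$, so the second summand is absorbed. In particular $\dvgen\in I'$, so $\comg{R\otimes S}{I'}{\alpha}$ makes sense. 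The decomposition hypothesis of Proposition~\ref{pro:lambda_tensor_dagger} for bounded $\dvr$\nb-submodules of $I+J=I'$ is implied by our hypothesis on~$I'$: an image of $M_I\otimes M_S$ is in particular a sum of the images of $M_I\otimes M_S$ and of $M_R\otimes M_J$ with $M_R,M_J$ trivial. Thus Proposition~\ref{pro:lambda_tensor_dagger} yields a natural isomorphism
\[
  \comg{R}{I}{\alpha}\hotimes\comg{S}{\dvgen S}{\alpha}\cong\comg{R\otimes S}{I'}{\alpha}.
\]

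Next I would rewrite the left-hand factor $\comg{S}{\dvgen S}{\alpha}$. By Example~\ref{exa:alpha-completion_JpiR}, applied to~$S$ with the ideal~$\dvgen S$, we have $\gral{\bdd}{\dvgen S}{\alpha}=\ul{\ling{\bdd}}$ and hence $\comg{S}{\dvgen S}{\alpha}=\ul{\comling{S}}=\dvf\otimes\comling{S}$; this uses the side condition of that example that every bounded $\dvr$\nb-submodule of~$\dvgen S$ lie in $\dvgen M_0$ for some bounded $M_0\subseteq S$, which holds in the situations where the corollary is applied, for instance when~$S$ carries the fine bornology. Since~$\comling{S}$ is a local Banach algebra by Lemma~\ref{lem:dagger_local_Banach}, it is torsion\nb-free, so Example~\ref{ex:otimesK} lets us read $\dvf\otimes\comling{S}$ as the completed tensor product $\dvf\hotimes\comling{S}$.

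Finally, $\comg{R}{I}{\alpha}$ is a complete bornological $\dvf$\nb-algebra, so $\comg{R}{I}{\alpha}\hotimes\dvf\cong\comg{R}{I}{\alpha}$; using this together with associativity of~$\hotimes$ gives
\[
  \comg{R}{I}{\alpha}\hotimes\comg{S}{\dvgen S}{\alpha}
  \cong\comg{R}{I}{\alpha}\hotimes\dvf\hotimes\comling{S}
  \cong\comg{R}{I}{\alpha}\hotimes\comling{S}.
\]
Composing with the isomorphism from the first paragraph proves the corollary.

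There is no real obstacle here; the work is all bookkeeping. The points that deserve a moment's attention are: checking that our hypothesis on~$I'$ indeed feeds Proposition~\ref{pro:lambda_tensor_dagger} in its stated (apparently stronger) form; verifying the small side condition on $\dvgen S\subseteq S$ needed to apply Example~\ref{exa:alpha-completion_JpiR}; and keeping track, in the last step, of whether completed tensor products are formed over~$\dvr$ or over~$\dvf$.
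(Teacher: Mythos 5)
Your proof is correct and follows the same route as the paper's own (two-line) proof: specialize Proposition~\ref{pro:lambda_tensor_dagger} to \(J=\dvgen S\) and identify \(\comg{S}{\dvgen S}{\alpha}\) with \(\dvf\otimes\comling{S}\) via Example~\ref{exa:alpha-completion_JpiR}, absorbing the extra \(\dvf\) factor into the \(\dvf\)\nb-algebra \(\comg{R}{I}{\alpha}\). The side condition of Example~\ref{exa:alpha-completion_JpiR} that you rightly flag is indeed left implicit in the corollary's statement; the paper addresses this kind of issue only in the remark following the corollary, by restricting to the fine bornology in all applications.
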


\begin{proof}
  Apply Proposition~\ref{pro:lambda_tensor_dagger} in the special case
  \(J=\dvgen\cdot S\)
  and use
  \(\comg{S}{\dvgen S}{\alpha} =
  \comling{S}\)
  for all \(\alpha\in[0,1]\)
  by Example~\ref{exa:alpha-completion_JpiR}.
\end{proof}

We shall only apply Propositions \ref{pro:tensor_dagger}
and~\ref{pro:lambda_tensor_dagger} and
Corollary~\ref{cor:lambda_tensor_dagger} when \(R\)
and~\(S\)
carry the fine bornology.  Then the technical assumptions in
Proposition~\ref{pro:lambda_tensor_dagger} and
Corollary~\ref{cor:lambda_tensor_dagger} about bounded
\(\dvr\)\nb-submodules of \(I+J\) or~\(I'\) hold automatically.

\begin{remark}
  \label{rem:lambda-completion_flat}
  If~\(R\) is not flat over~\(\dvr\), then the canonical map
  \(R\to\ul{R}\) is not injective: its kernel is the
  \(\dvr\)\nb-torsion submodule~\(\tau R\).  Let \(R'\defeq R/\tau R\).
  Then \(\ul{R'}=\ul{R}\).  If \(J\triqui R\) is an ideal, let
  \(J'\triqui R'\) be the image in~\(R'\).  Then
  \(\gral{\bdd}{J}{\alpha} = \gral{\bdd}{J'}{\alpha}\).  So it
  suffices to study the bornology~\(\gral{\bdd}{J}{\alpha}\) and the
  resulting completion of~\(\ul{R}\) if~\(R\) is flat over~\(\dvr\).
\end{remark}

\subsection{Dagger completions and linear growth bornologies}
\label{subsec:dag}

Let~\(R\) be a finitely generated, commutative \(\dvr\)\nb-algebra.
Monsky--Washnitzer~\cite{mw} define the \emph{weak
  completion}~\(R^\updagger\) of~\(R\) as the subset of the
\(\dvgen\)\nb-adic completion
\[
\coma{R} = \varprojlim\limits_{\scriptstyle S} R/\dvgen^j R
\]
consisting of elements~\(z\) having representations
\begin{equation}
  \label{MW}
  z = \sum_{j=0}^\infty \dvgen^j w_j
\end{equation}
with \(w_j \in M^{\kappa_j}\),
where \(M\owns 1\)
is a finitely generated \(\dvr\)\nb-submodule
of~\(R\)
and \(\kappa_j \le c(j+1)\)
for some constant \(c>0\)
depending on~\(z\).
We equip~\(R^\updagger\)
with the following bornology: call a subset~\(S\)
bounded if all its elements are of the form
\(z = \sum_{j=0}^\infty \dvgen^j w_j\)
as in~\eqref{MW} with \(w_j \in M^{\kappa_j}\)
and \(\kappa_j \le c(j+1)\)
for a fixed finitely generated \(\dvr\)\nb-submodule
\(M\subseteq R\) and a fixed \(c>0\).

\begin{theorem}
  \label{the:MW_completion}
  Let~\(R\) be a finitely generated, commutative
  \(\dvr\)\nb-algebra, equipped with the fine bornology.  The
  canonical map \(R\to R^\updagger\) extends uniquely to an
  isomorphism of bornological algebras
  from~\(\comling{R}\) onto~\(R^\updagger\).
\end{theorem}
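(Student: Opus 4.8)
The plan is to match two explicit descriptions of the underlying complete bornological module; the universal property of $\comling{R}$ enters to produce the comparison map, and the isomorphism comes out of an identification of inductive systems.

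First I would verify that $R^\updagger$ is a complete bornological $\dvr$\nb-algebra with $\varrho(M)\le 1$ for every bounded $\dvr$\nb-submodule $M$. For a finitely generated $\dvr$\nb-submodule $M\ni1$ of $R$ and $c>0$, let $S_{M,c}\subseteq R^\updagger$ be the bounded submodule of the $z=\sum_j\dvgen^j w_j$ with $w_j\in M^{\kappa_j}$, $\kappa_j\le c(j+1)$; these are cofinal among the bounded submodules, and $S_{M,c}\subseteq S_{M^{\lceil c\rceil},1}$. Collecting factors in a product of $i+1$ elements of $S_{M,c}$ gives $\dvgen^i S_{M,c}^{i+1}\subseteq S_{M,c}$, so $\sum_i\dvgen^i S_{M,c}^{i+1}\subseteq S_{M,c}$ is bounded and $\varrho(S_{M,c})\le1$ by Proposition \ref{pro:spectral_radius_linear_growth}; a term-rearrangement estimate shows that $S_{M,1}$\nb-Cauchy sequences converge inside $S_{M^2,1}$, whence $R^\updagger$ is complete. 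Since $R\to R^\updagger$ is bounded (the fine bornology goes into the $S_{M,1}$), Proposition \ref{pro:dagger_universal} yields a unique bounded algebra homomorphism $\Phi\colon\comling{R}\to R^\updagger$ extending it.

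Next I would describe the inductive systems on both sides and compare bornologies. By Lemma \ref{lem:linear_growth}, after replacing a finitely generated $M\ni1$ by $N\defeq M^c$, the submodules $U_N\defeq\sum_{i\ge0}\dvgen^i N^{i+1}$ (for finitely generated $N\ni1$) are cofinal among the bounded $\dvr$\nb-submodules of $(R,\ling{\bdd})$, so by Proposition \ref{prop:completion} $\comling{R}$ is the separated quotient of $\varinjlim_N\coma{U_N}$. The key computation is that the natural map $\coma{U_N}\to\coma{R}$ has image exactly $S_{N,1}$: writing an element of $\coma{U_N}$ as a double series $\sum_{k,i}\dvgen^{k+i}w_{k,i}$ with $w_{k,i}\in N^{i+1}$ and re-indexing by $n=k+i$ lands it in $S_{N,1}$; conversely, given $z=\sum_j\dvgen^j w_j$ with $w_j\in N^{\kappa_j}$, $\kappa_j\le j+1$, one writes $\dvgen^j w_j=\dvgen^{\kappa_j-1}\bigl(\dvgen^{j-\kappa_j+1}w_j\bigr)$, collects the contributions with a given value of $\kappa_j-1$, and uses that each $N^{i+1}$ is finitely generated and $\dvgen$\nb-adically complete to see that $z=\sum_i\dvgen^i v_i$ with $v_i\in N^{i+1}$, which lies in the image. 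As the $S_{N,1}$ are cofinal among the bounded submodules of $R^\updagger$, this already shows that $\Phi$ is surjective and a bornological quotient map.

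The remaining step, and the one I expect to be the main obstacle, is injectivity of $\Phi$, equivalently injectivity of each $\coma{U_N}\to\coma{R}$. After reducing to the case that $R$ is flat over $\dvr$ (so $R\hookrightarrow\coma{R}$), this says precisely that $U_N$ is $\dvgen$\nb-adically closed in $R$, i.e.\ that the $\dvgen$\nb-adic filtration of $R$ induces the $\dvgen$\nb-adic topology on $U_N$. This is where finiteness is essential: $W\defeq\dvr[\dvgen N]$ is a finitely generated, hence Noetherian, $\dvr$\nb-subalgebra of $R$ with $\dvgen\in W$, and $U_N=N\cdot W$ is a finitely generated $W$\nb-module, so an Artin--Rees argument comparing the relevant $\dvgen$\nb-adic filtrations gives the desired comparison of topologies. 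Granting this, each $\coma{U_N}\to\coma{R}$ is an isomorphism onto $S_{N,1}$, so $\varinjlim_N\coma{U_N}$ already embeds into the separated module $\coma{R}$; no separated quotient is needed, and $\comling{R}=\varinjlim_N\coma{U_N}\cong\varinjlim_N S_{N,1}=R^\updagger$ as bornological algebras by Proposition \ref{prop:born_versus_ind_complete}, the isomorphism being exactly $\Phi$. Everything outside the Artin--Rees step is bookkeeping with the explicit descriptions above.
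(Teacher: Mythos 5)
Your strategy---construct \(\Phi\colon\comling{R}\to R^\updagger\) from the universal property and then match the inductive systems \((\coma{U_N})\) and \((S_{N,1})\) inside \(\coma{R}\)---is genuinely different from the paper's, which computes \(\comling{P}\) explicitly as power series only for the free polynomial algebra \(P\) (Lemma~\ref{lem:polynomial_dagger_injective}) and then handles \(R=P/J\) by a universal-property argument, using Fulton's theorem that \(P^\updagger\) is Noetherian and Krull's Intersection Theorem to see that \(JP^\updagger\) is \(\dvgen\)\nb-adically closed (Remark~\ref{rem:separated}, Lemma~\ref{lem:onto}, Proposition~\ref{pro:dagger_for_quotient}). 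Unfortunately, your argument breaks at exactly the step you identify as the main obstacle. The claim that the \(\dvgen\)\nb-adic filtration of \(R\) induces the \(\dvgen\)\nb-adic topology on \(U_N\) is false already for \(R=\dvr[x]\), \(N=\dvr+\dvr x\): here \(U_N=\{\sum a_kx^k : \nu(a_k)\ge k-1\}\), and \(\dvgen^{k-1}x^k\) lies in \(\dvgen^{m'}R\cap U_N\) for every \(k>m'\) but not in \(\dvgen U_N\); hence \(\dvgen^{m'}R\cap U_N\not\subseteq\dvgen^m U_N\) for any \(m\ge1\) and any \(m'\). No Artin--Rees argument can deliver a filtration comparison that does not hold, and its hypotheses are not met anyway: \(R\) is not a finitely generated module over \(W=\dvr[\dvgen N]\) (e.g.\ \(\dvr[x]\) over \(\dvr[\dvgen x]\)). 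Injectivity of \(\coma{U_N}\to\coma{R}\) is strictly weaker than your filtration statement and does hold for the polynomial algebra, but only via the explicit power-series description of \(\coma{P_c}\); for a general quotient \(R=P/J\) this injectivity is precisely the delicate point that the paper's Noetherian machinery is designed to bypass, so it cannot simply be asserted.

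There is a second, repairable, error in the surjectivity step: the image of \(\coma{U_N}\) in \(\coma{R}\) is \emph{not} all of \(S_{N,1}\). For \(P=\dvr[x]\) the image consists of the series with \(\nu(b_k)+1\ge k\) \emph{and} \(\nu(b_k)+1-k\to\infty\), whereas \(S_{N,1}\) imposes only the first condition; e.g.\ \(\sum_k\dvgen^{k-1}x^k\in S_{N,1}\) is not in the image of \(\coma{U_N}\). Your re-indexing of \(z=\sum_j\dvgen^jw_j\) as \(\sum_i\dvgen^iv_i\) with \(v_i\in N^{i+1}\) does not exhibit \(z\) as an element of \(\coma{U_N}\), because the resulting series is not \(\dvgen\)\nb-adically Cauchy in \(U_N\): the term \(\dvgen^iv_i\) lies in \(U_N\) but not in \(\dvgen^{i'}U_N\) for large \(i'\). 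What is true, and suffices, is cofinality of the images, \(\coma{U_N}\subseteq S_{N,1}\subseteq\coma{U_{N^2}}\) inside \(\coma{R}\) (for the polynomial algebra this is exactly the paper's computation using \(1/c'-1/c<0\)). With that correction the surjectivity half of your argument goes through, but the injectivity half still needs a completely different proof.
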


The proof of Theorem~\ref{the:MW_completion} will be finished by
Proposition~\ref{pro:dagger_for_quotient}.  We first treat the case
of the full polynomial algebra and then reduce the general case to
it.

\begin{lemma}
  \label{lem:polynomial_dagger_injective}
  Let \(P = \dvr[x_1,\dotsc,x_n]\).
  Then the canonical map \(\comling{P} \to \coma{P}\)
  is an isomorphism of bornological algebras onto the Monsky--Washnitzer
  completion~\(P^\updagger\) of~\(P\).
\end{lemma}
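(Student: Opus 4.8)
The plan is to realize \(\comling{P}\) explicitly as a subring of the \(\dvgen\)\nb-adic completion
\(\coma{P}=\varprojlim_m(\dvr/\dvgen^m)[x_1,\dots,x_n]=\bigl\{\sum_{\alpha\in\N^n}a_\alpha x^\alpha : a_\alpha\in\dvr,\ \abs{a_\alpha}\to 0\bigr\}\),
and then to compare it coefficientwise with the Monsky--Washnitzer description of \(P^\updagger\). First I would pin down the linear growth bornology on \(P\). Let \(M\subseteq P\) be the \(\dvr\)\nb-span of \(1,x_1,\dots,x_n\); then \(M^d=\bigoplus_{\abs\alpha\le d}\dvr\, x^\alpha\) is the module of polynomials of degree \(\le d\), and every finitely generated \(\dvr\)\nb-submodule of \(P\) lies in some \(M^d\). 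For \(c\in\N_{\ge1}\) set \(U_c\defeq\sum_{i\ge0}\dvgen^i M^{c(i+1)}\); expanding in the monomial basis gives
\[
U_c=\bigoplus_{\alpha\in\N^n}\dvgen^{e_c(\alpha)}\dvr\, x^\alpha,
\qquad e_c(\alpha)\defeq\max\bigl\{0,\ \ceil{\abs\alpha/c}-1\bigr\}.
\]
Each \(U_c\) has linear growth by Lemma~\ref{lem:linear_growth}, and conversely every \(T\subseteq P\) of linear growth lies in some \(\sum_i\dvgen^i S^{ci+d}\) with \(S\) finitely generated by Lemma~\ref{lem:linear_growth}, hence, enlarging \(S\) to a suitable \(M^e\), in some \(U_c\). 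So the \(U_c\) are cofinal among the \(\dvr\)\nb-submodules of \((P,\ling{\bdd})\), and by Proposition~\ref{prop:completion} the completion \(\comling{P}\) is the separated quotient of \(\varinjlim_c\coma{U_c}\).

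Next I would compute \(\coma{U_c}\) from the displayed decomposition. Since \(\dvr\) is \(\dvgen\)\nb-adically complete, one reads off that \(\coma{U_c}\) is the set of families \((a_\alpha)_\alpha\) with \(a_\alpha\in\dvr\), \(\nu(a_\alpha)\ge e_c(\alpha)\) for all \(\alpha\), and \(\nu(a_\alpha)-e_c(\alpha)\to\infty\), and that the canonical map \(\coma{U_c}\to\coma{P}\) sends this family to \(\sum_\alpha a_\alpha x^\alpha\). This map is manifestly injective, and for \(c\le c'\) the transition map \(\coma{U_c}\to\coma{U_{c'}}\) is just the inclusion of the corresponding sets of power series (as \(e_c\ge e_{c'}\), both defining conditions are inherited). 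Consequently \(\varinjlim_c\coma{U_c}=\bigcup_c\coma{U_c}\), viewed inside \(\coma{P}\), is already bornologically separated — a convergent sequence lies in some \(\coma{U_c}\), which is \(\dvgen\)\nb-adically separated, so its limit is unique — whence the separated quotient in Proposition~\ref{prop:completion} does nothing and \(\comling{P}=\bigcup_c\coma{U_c}\) as a subring of \(\coma{P}\).

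It remains to identify this union with \(P^\updagger\). Collecting the coefficients of a representation \(z=\sum_j\dvgen^j w_j\) as in~\eqref{MW} — where, replacing the auxiliary submodule in~\eqref{MW} by a power of \(M\), one may take \(w_j\) of degree \(\le c(j+1)\) — shows that \(P^\updagger=\bigcup_c V_c\) inside \(\coma{P}\), with \(V_c\defeq\{\sum_\alpha a_\alpha x^\alpha\in\coma{P}:\nu(a_\alpha)\ge e_c(\alpha)\}\), and that the bounded subsets of \(P^\updagger\) are precisely the subsets of the \(V_c\). Now \(\coma{U_c}\subseteq V_c\) trivially, while \(\nu(a_\alpha)\ge e_c(\alpha)\) forces \(\nu(a_\alpha)-e_{c'}(\alpha)\to\infty\) for every \(c'>c\), since \(e_c(\alpha)-e_{c'}(\alpha)\) grows linearly in \(\abs\alpha\); hence \(V_c\subseteq\coma{U_{c'}}\). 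Therefore \(\bigcup_c\coma{U_c}=\bigcup_c V_c=P^\updagger\) as subsets of \(\coma{P}\), the families \(\{\coma{U_c}\}\) and \(\{V_c\}\) are mutually cofinal so the two bornologies agree, and both carry the algebra structure inherited from \(\coma{P}\); this is the asserted isomorphism of bornological algebras \(\comling{P}\iso P^\updagger\).

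The step I expect to be the main obstacle is the explicit description of \(\coma{U_c}\), i.e.\ checking from the direct sum decomposition of \(U_c\) that its \(\dvgen\)\nb-adic completion really does inject into \(\coma{P}\) (equivalently, that the transition maps in the inductive system \((\coma{U_c})_c\) are injective, so that the separated quotient in Proposition~\ref{prop:completion} is superfluous). Once the coefficientwise picture of \(\coma{U_c}\) is in place, the comparison with the Monsky--Washnitzer ring \(P^\updagger\) and its bornology is routine bookkeeping with valuations.
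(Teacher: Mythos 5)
Your proposal is correct and follows essentially the same route as the paper's proof: the same cofinal family \(U_c=\sum_i\dvgen^i M^{c(i+1)}\) (the paper's \(P_c\)), the same coefficientwise description of \(\coma{U_c}\) inside \(\coma{P}\), and the same observation that passing from \(c\) to \(c'>c\) absorbs the decay condition, so that the union coincides with the Monsky--Washnitzer ring and its bornology. The only cosmetic difference is that you unwind the representation \eqref{MW} by hand where the paper cites \cite{mw}*{Theorem 2.3}, and you spell out explicitly why the transition maps are injective so that no separated quotient is needed.
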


\begin{proof}
  Let \(S_0 = \dvr 1 + \dvr x_1 + \dotsb + \dvr x_n\) be the obvious
  generating \(\dvr\)\nb-submodule for~\(P\) containing~\(1\).  Any
  finitely generated \(\dvr\)\nb-submodule of~\(P\) is contained
  in~\(S_0^c\) for some \(c\in\N_{\ge1}\).  By
  Lemma~\ref{lem:linear_growth}, a subset of~\(P\) has linear growth
  if and only if it is contained in \(P_c \defeq \sum_{j=0}^\infty
  \dvgen^j S_0^{c(j+1)}\) for some \(c\in\N_{\ge1}\).  This is the
  set of polynomials \(\sum b_\alpha x^\alpha\), \(b_\alpha\in
  \dvr\), with \(\nu(b_\alpha)+1 \ge \abs{\alpha}/c\).

  Since~\(\dvr\) is \(\dvgen\)\nb-adically complete, the
  \(\dvgen\)\nb-adic completion~\(\coma{P_c}\) of~\(P_c\) is
  the set of all formal power series \(\sum b_\alpha x^\alpha\) with
  \(b_\alpha\in \dvr\), \(\nu(b_\alpha)+1 \ge \abs{\alpha}/c\) for
  all~\(\alpha\) and \(\lim_{\abs{\alpha}\to\infty} \nu(b_\alpha)+1
  - \abs{\alpha}/c = +\infty\).  In particular, $\coma{P_c}$ is
  contained in the $\dvgen$-adic completion~$\widehat{P}$.  Since
  \(\ling{P} = \varinjlim P_c\),
  Proposition~\ref{prop:completion} implies
  \(\comling{P} = \varinjlim \coma{P_c}\).  This is
  contained in~\(\coma{P}\).

  If $c'>c$ then $1/c'-1/c<0$.  So if the series $f=\sum b_\alpha x^\alpha$ satisfies \(\nu(b_\alpha)+1 \ge \abs{\alpha}/c\), then $f\in \coma{P_{c'}}$ because
  \[
  0\le \nu(b_\alpha)+1-\abs{\alpha}/c=\nu(b_\alpha)+1-\abs{\alpha}/c'+\abs{\alpha}(1/c'-1/c).
  \]
	
  Thus \(\varinjlim \coma{P_c}
  \subseteq \coma{P}\) is equal to the set of all formal power
  series \(\sum b_\alpha x^\alpha \in \coma{P}\) such that
  \begin{equation}
    \label{converpc}
    (\exists\delta>0) (\forall \alpha)\quad \nu(b_\alpha)+1 \ge \delta \abs{\alpha}.
  \end{equation}
  A subset
  of~\(\comling{P}\) is bounded if and only if there
  is one~\(\delta\) that works for all its elements.  We may
  replace the condition~\eqref{converpc} by
  \[
  \liminf_{\abs{\alpha}\to\infty} \frac{\nu(b_\alpha)}{\abs{\alpha}}>0.
  \]
  This gives the Monsky--Washnitzer completion~\(P^\updagger\) with
  the bornology specified above, compare \cite{mw}*{Theorem 2.3}.
\end{proof}

\begin{remark}
  \label{rem:separated}
  Let~\(R\) be a finitely generated, commutative \(\dvr\)\nb-algebra.
  Then~\(R^\updagger\) is Noetherian by~\cite{ful}.  The ideal \(\dvgen
  R^\updagger\) is contained in the Jacobson radical because any
  element of the form \(1-\dvgen z\) with \(z\in R^\updagger\) has an
  inverse given by \(\sum_{j=0}^{\infty} \dvgen^j z^j\in
  R^\updagger\).  Krull's Intersection Theorem implies that any
  finitely generated \(R^\updagger\)\nb-module is
  \(\dvgen\)\nb-adically separated (see
  \cite{Atiyah-Macdonald:Commutative}*{Corollary 10.19}).  In
  particular, \(R^\updagger/J\cdot R^\updagger\) is
  \(\dvgen\)\nb-adically separated for any ideal~\(J\) in~\(R\).
  Equivalently, \(J\cdot R^\updagger\) is \(\dvgen\)\nb-adically
  closed in~\(R^\updagger\).
\end{remark}

\begin{lemma}
  \label{lem:onto}
  Let~\(R\) be a finitely generated, commutative \(\dvr\)\nb-algebra and
  let \(J\triqui R\) be an ideal.  The natural map
  \(R^\updagger/J R^\updagger\to (R/J)^\updagger\) is an isomorphism.
\end{lemma}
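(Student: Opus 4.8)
\emph{Proof proposal.} Write $A\defeq R/J$ and let $p\colon R\to A$ be the quotient map. First I would observe that the $\dvgen$\nb-adic completion $\coma p\colon \coma R\to\coma A$ carries $R^\updagger$ into $A^\updagger$, boundedly for the Monsky--Washnitzer bornologies: it sends a representation $z=\sum_{j\ge0}\dvgen^j w_j$ with $w_j\in M^{\kappa_j}$, $\kappa_j\le c(j+1)$ and $1\in M$ finitely generated to $\sum_{j\ge0}\dvgen^j p(w_j)$, with $p(w_j)\in p(M)^{\kappa_j}$ and $p(M)\ni1$ finitely generated. The resulting homomorphism $p^\updagger\colon R^\updagger\to A^\updagger$ satisfies $p^\updagger(a z)=p(a)p^\updagger(z)=0$ for $a\in J$, hence factors through a bounded homomorphism $\phi\colon R^\updagger/JR^\updagger\to A^\updagger$. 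The goal is to prove that $\phi$ is bijective; as it is a $\dvr$\nb-algebra homomorphism this yields the isomorphism, and the surjectivity step will in addition exhibit $p^\updagger$, and hence $\phi$, as a bornological quotient map, so that $\phi$ is even a bornological isomorphism.

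For surjectivity, given $z=\sum_{j\ge0}\dvgen^j\bar w_j\in A^\updagger$ with $\bar w_j\in\bar M^{\kappa_j}$, $\kappa_j\le c(j+1)$, and $\bar M\ni1$ a finitely generated $\dvr$\nb-submodule of $A$, I would pick a finitely generated $\dvr$\nb-submodule $M\subseteq R$ with $1\in M$ and $p(M)\supseteq\bar M$ (lifting a finite generating set), lift each $\bar w_j$ to $w_j\in M^{\kappa_j}$, and note that $\hat z\defeq\sum_{j\ge0}\dvgen^j w_j$ converges in $\coma R$ to an element of $R^\updagger$ with $p^\updagger(\hat z)=z$. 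Since a bounded subset of $A^\updagger$ lies, by definition, in the set of elements admitting such a representation for a single pair $(\bar M,c)$, a single lift $M$ handles all of it, so $p^\updagger$ is a bornological quotient map and $\phi$ is onto.

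Injectivity is the crux. I would reduce it to the $\dvgen$\nb-adic completions, using two facts: (i) by Remark~\ref{rem:separated}, $R^\updagger$ is Noetherian, so $R^\updagger/JR^\updagger$ is $\dvgen$\nb-adically separated, and likewise $A^\updagger$ is $\dvgen$\nb-adically separated (Remark~\ref{rem:separated} applied to the finitely generated algebra $A$); (ii) the canonical map $R/\dvgen^m R\to R^\updagger/\dvgen^m R^\updagger$ is an isomorphism for every $m$ --- it is onto because $\sum_{j\ge0}\dvgen^j w_j\equiv\sum_{j<m}\dvgen^j w_j\pmod{\dvgen^m R^\updagger}$, and injective because $R\cap\dvgen^m R^\updagger\subseteq R\cap\dvgen^m\coma R=\dvgen^m R$ (the last equality holding since $R\to\coma R\to\coma R/\dvgen^m\coma R$ is the canonical map $R\to R/\dvgen^m R$). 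Dividing out the image of $J$ gives, compatibly in $m$, isomorphisms
\[
R^\updagger/(JR^\updagger+\dvgen^m R^\updagger)\;\cong\;R/(J+\dvgen^m R)\;=\;A/\dvgen^m A\;\cong\;A^\updagger/\dvgen^m A^\updagger
\]
under which $\phi$ induces the identity, so $\phi$ induces an isomorphism $\coma{(R^\updagger/JR^\updagger)}\xrightarrow{\ \sim\ }\coma{(A^\updagger)}$ on $\dvgen$\nb-adic completions. Because both sides are $\dvgen$\nb-adically separated, they embed into these completions, and the composite $R^\updagger/JR^\updagger\hookrightarrow\coma{(R^\updagger/JR^\updagger)}\xrightarrow{\ \sim\ }\coma{(A^\updagger)}$, which equals $R^\updagger/JR^\updagger\xrightarrow{\phi}A^\updagger\hookrightarrow\coma{(A^\updagger)}$, is injective; hence $\phi$ is injective, and therefore an isomorphism.

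The main obstacle is the injectivity: one must show $\ker(p^\updagger)$ is exactly $JR^\updagger$ and not the a priori larger $R^\updagger\cap J\coma R$. The device above avoids analysing that intersection directly by passing to $\dvgen$\nb-adic completions, where the separatedness in Remark~\ref{rem:separated} --- resting on the Noetherianness of $R^\updagger$ (\cite{ful}) and Krull's intersection theorem --- does the work; this also sidesteps the bookkeeping that $\dvgen$\nb-torsion in $R$ or in $A$ would otherwise force.
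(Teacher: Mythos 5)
Your proof is correct, and it reaches the conclusion by a genuinely different route than the paper. The paper's argument is a three\nb-line appeal to universal properties: by Remark~\ref{rem:separated}, \(R^\updagger/JR^\updagger\) is \(\dvgen\)\nb-adically separated, hence (by Monsky--Washnitzer's Theorem~1.3, that a separated quotient of a weakly complete finitely generated algebra is again weakly complete) it is itself weakly complete, so the map \(R/J\to R^\updagger/JR^\updagger\) extends over \((R/J)^\updagger\) and provides the inverse. You instead verify bijectivity directly: surjectivity by lifting Monsky--Washnitzer representations termwise (which, as you note, gives the stronger statement that the map is a bornological quotient map), and injectivity by checking that \(\phi\) is an isomorphism modulo every \(\dvgen^m\) --- using that \(R\) is Noetherian so that \(R/\dvgen^m R\cong R^\updagger/\dvgen^m R^\updagger\cong\coma{R}/\dvgen^m\coma{R}\) --- and then invoking \(\dvgen\)\nb-adic separatedness of both sides to pass from the completions back down. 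Both proofs rest on the same essential input, namely the separatedness of \(R^\updagger/JR^\updagger\) coming from Noetherianness of \(R^\updagger\) and Krull's intersection theorem; what your version buys is independence from the Monsky--Washnitzer quotient theorem (the one nonelementary citation in the paper's proof), at the cost of the explicit mod\nb-\(\dvgen^m\) bookkeeping. All the individual steps check out: the reindexing \(\sum_{j\ge m}\dvgen^jw_j=\dvgen^m\sum_i\dvgen^iw_{i+m}\) with the new growth constant \(c(m+1)\), the identity \(R\cap\dvgen^m\coma{R}=\dvgen^mR\) (valid here because \(R\) is Noetherian), and the identification of \(\varprojlim_m R^\updagger/(JR^\updagger+\dvgen^mR^\updagger)\) with the \(\dvgen\)\nb-adic completion of \(R^\updagger/JR^\updagger\).
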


\begin{proof}
  By Remark~\ref{rem:separated}, \( R^\updagger/J R^\updagger\)
  is \(\dvgen\)\nb-adically separated.  As a quotient of a weakly
  complete algebra, it is then weakly complete (see~\cite{mw}*{Theorem~1.3}).
 Hence the natural map $R/J \to R^{\updagger}/J R^{\updagger}$ extends to a map $(R/J)^{\updagger} \to R^{\updagger}/J R^{\updagger}$. This map is inverse to the map in the statement of the lemma.
\end{proof}

\begin{proposition}
  \label{pro:dagger_for_quotient}
  Let \(P = \dvr[x_1,\dotsc,x_n]\), let \(J\triqui P\) be an
  ideal, and \(R = P/J\).  The bornological closure of~\(J\) in~\(P^\updagger\), the closure of~\(J\) in the
  \(\dvgen\)\nb-adic topology on~\(P^\updagger\), and the
  ideal~\(J\cdot P^\updagger\) generated by~\(J\) in~\(P^\updagger\) are
  all the same.  The resulting quotient \(P^\updagger/ (J\cdot
  P^\updagger)\) is isomorphic to both~\(R^\updagger\) and
  \(\comling{R}\).  Thus
  \(R^\updagger\cong\comling{R}\).
\end{proposition}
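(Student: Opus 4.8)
The plan is to identify both $R^\updagger$ and $\comling{R}$ with the quotient $P^\updagger/\overline{J}$, where $\overline{J}$ denotes the bornological closure in $P^\updagger=\comling{P}$ (Lemma~\ref{lem:polynomial_dagger_injective}) of the image of~$J$, and then to show that $\overline{J}$ agrees with the $\dvgen$-adic closure of~$J$ in $P^\updagger$ and with $J\cdot P^\updagger$. First I would note that $\overline{J}$ is an ideal: it is a $\dvr$-submodule because bornological closures of submodules are submodules, and it is stable under multiplication by any fixed $a\in P^\updagger$ because that operation is bounded, hence continuous, and therefore carries $\overline{J}$ into $\overline{aJ}\subseteq\overline{J}$. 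Consequently $P^\updagger/\overline{J}$ is a complete bornological $\dvr$-algebra, and $\varrho(M)\le1$ for all its bounded $\dvr$-submodules, since this holds for $P^\updagger$ by Lemma~\ref{lem:dagger_local_Banach} and is inherited along the bornological quotient map $P^\updagger\to P^\updagger/\overline{J}$.

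Next I would show $\comling{R}\cong P^\updagger/\overline{J}$ by comparing universal properties. Let $S$ be any complete bornological $\dvr$-algebra with $\varrho(M)\le1$ for all bounded $\dvr$-submodules $M\subseteq S$. Since $R=P/J$ carries the fine bornology and every algebra homomorphism out of a fine-bornology algebra is automatically bounded (Example~\ref{ex:fine}), bounded homomorphisms $R\to S$ are precisely the algebra homomorphisms $P\to S$ killing~$J$. By the universal property of dagger completions (Proposition~\ref{pro:dagger_universal}) applied first to~$P$ and then to~$R$, these correspond bijectively and naturally to bounded homomorphisms $P^\updagger\to S$ killing the image of~$J$; and since the kernel of such a homomorphism is a bornologically closed ideal (using that $S$ is separated), killing~$J$ is the same as killing~$\overline{J}$. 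Thus bounded homomorphisms $R\to S$ correspond naturally to bounded homomorphisms $P^\updagger/\overline{J}\to S$, and the Yoneda lemma in the category of complete bornological $\dvr$-algebras with $\varrho\le1$ gives $\comling{R}\cong P^\updagger/\overline{J}$, compatibly with the canonical maps from~$R$.

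Now I would establish the chain of inclusions
\[
J\cdot P^\updagger\ \subseteq\ \overline{J}\ \subseteq\ (\text{$\dvgen$-adic closure of }J\text{ in }P^\updagger)\ \subseteq\ J\cdot P^\updagger .
\]
The first inclusion is immediate because $\overline{J}$ is an ideal containing~$J$. For the second, I would check that a $\dvgen$-adically closed subset of $P^\updagger$ is bornologically closed: a bornologically convergent sequence $x_n\to x$ satisfies $x_n-x\in\delta_n S$ for some bounded $\dvr$-submodule $S$ and elements $\delta_n\in\dvr$ with $\nu(\delta_n)\to\infty$, and since $S\subseteq P^\updagger\subseteq\coma{P}$ this forces $x_n\to x$ in the $\dvgen$-adic topology induced from $\coma{P}$; hence the $\dvgen$-adic closure of~$J$, being $\dvgen$-adically closed, is bornologically closed and contains~$\overline{J}$. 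The third inclusion is exactly Remark~\ref{rem:separated}: the Noetherianness of $P^\updagger$ (via Krull's Intersection Theorem) makes $J\cdot P^\updagger$ $\dvgen$-adically closed. So the three ideals coincide.

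Finally, applying Lemma~\ref{lem:onto} with~$P$ in the role of~$R$ gives $R^\updagger=(P/J)^\updagger\cong P^\updagger/(J\cdot P^\updagger)$, compatibly with the maps from~$R$; combined with the previous two paragraphs (which give $\comling{R}\cong P^\updagger/\overline{J}=P^\updagger/(J\cdot P^\updagger)$) this yields the isomorphism $\comling{R}\cong R^\updagger$ intertwining $R\to\comling{R}$ and $R\to R^\updagger$ that is asserted in Theorem~\ref{the:MW_completion}; its uniqueness is just the universal property of the completion $R\to\comling{R}$, since $R^\updagger$ is complete with $\varrho\le1$. I expect the only real subtlety to be the clean bookkeeping of the universal-property identification of $\comling{R}$ together with the verification that bornological convergence in $P^\updagger$ implies $\dvgen$-adic convergence; the genuinely nontrivial analytic input, namely that $J\cdot P^\updagger$ is $\dvgen$-adically closed, has already been isolated in Remark~\ref{rem:separated}.
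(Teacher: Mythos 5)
Your proposal is correct and follows essentially the same route as the paper: the same chain of inclusions \(J\cdot P^\updagger\subseteq\overline{J}\subseteq(\text{\(\dvgen\)-adic closure of }J)\subseteq J\cdot P^\updagger\) using Remark~\ref{rem:separated}, the identification \(P^\updagger/J\cdot P^\updagger\cong R^\updagger\) via Lemma~\ref{lem:onto}, and the universal-property comparison of \(\comling{R}\) with \(P^\updagger/\overline{J}\) via Proposition~\ref{pro:dagger_universal}. The only point to make explicit is that your inclusion \(aJ\subseteq\overline{J}\) for \(a\in P^\updagger\) (needed for \(\overline{aJ}\subseteq\overline{J}\)) rests on every element of \(P^\updagger\) being a bornological limit of elements of a bounded subset of \(P\), which is exactly the observation with which the paper opens its proof.
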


\begin{proof}
  The multiplication map in~\(P^\updagger\) is bounded and every element of $P^\updagger$ is in the $\dvgen$-adic closure of a bounded subset of $P$ (with respect to the bornology defined at the beginning of Section \ref{subsec:dag}).  Thus \(J\cdot P^\updagger\)
  is contained in the bornological closure of \(J=J\cdot P\).  This is
  further contained in the \(\dvgen\)\nb-adic closure of~\(J\)
  because any bornologically convergent sequence converges in the
  \(\dvgen\)\nb-adic topology on~\(P^\updagger\) and so a
  \(\dvgen\)\nb-adically closed subset is bornologically closed.
  Remark~\ref{rem:separated} applied to~\(P\) shows that \(J\cdot
  P^\updagger\) is \(\dvgen\)\nb-adically closed
  in~\(P^\updagger\).  Thus the bornological and the
  \(\dvgen\)\nb-adic closures of~\(J\) are both equal to~\(J\cdot
  P^\updagger\).  Lemma~\ref{lem:onto} applied to~\(P\) identifies
  \(P^\updagger / J\cdot P^\updagger\) with~\(R^\updagger\).

  Let~\(T\) be a complete bornological \(\dvr\)\nb-algebra with
  \(\varrho(M)\le1\) for all bounded \(M\subseteq T\).  A
  homomorphism \(\comling{R} \to T\) is equivalent to
  a homomorphism \(R\to T\) by
  Proposition~\ref{pro:dagger_universal}.  This is equivalent to a
  homomorphism \(P\to T\) that vanishes on~\(J\).  By the universal
  property of~\(\comling{P}\) in
  Proposition~\ref{pro:dagger_universal}, this is equivalent to a
  homomorphism \(\comling{P} \to T\) that vanishes on
  the image of~\(J\).  Any such homomorphism vanishes on the
  bornological closure of~\(J\) and then descends to a homomorphism
  on the quotient of~\(\comling{P}\) by this
  bornological closure.  This quotient is complete, and
  \(\varrho(M)\le1\) for all bounded \(\dvr\)\nb-submodules in this
  quotient.  Thus it has the universal property that
  characterizes~\(\comling{R}\), that is, it is
  naturally isomorphic to~\(\comling{R}\).  Since
  \(\comling{P} = P^\updagger\) by
  Lemma~\ref{lem:polynomial_dagger_injective} and since the
  bornological and \(\dvgen\)\nb-adic closures of~\(J\) are the
  same, we get \(R^\updagger\cong\comling{R}\).
\end{proof}

This finishes the proof of Theorem~\ref{the:MW_completion}.  This
theorem allows us to apply results about linear growth completions
such as Propositions \ref{pro:dagger_functorial}
and~\ref{pro:tensor_dagger} to dagger completions of finitely
generated, commutative \(\dvr\)\nb-algebras.

Now we turn to the completions~\(\comg{R}{J}{ \alpha}\)
for \(\alpha\in \Q\cap[0,1]\).  We use the tube algebra
\(\tub{R}{J}{\alpha}\) introduced in Definition~\ref{def:alpha-tube}
and its Monsky--Washnitzer completion \(\tub{R}{J}{\alpha}^\updagger\).

\begin{theorem}
  \label{the:MW_tube}
  Let~\(R\) be a finitely generated, commutative
  \(\dvr\)\nb-algebra, let \(J\triqui R\) be an ideal with \(\dvgen
  \in J\), and \(\alpha\in\Q\cap[0,1]\).  The
  completion~\(\comg{R}{J}{\alpha}\) is
  \(\dvf\otimes \tub{R}{J}{\alpha}^\updagger\).
\end{theorem}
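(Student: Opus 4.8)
The plan is to reduce the statement to the already-proven identification of linear-growth completions with Monsky--Washnitzer completions, i.e.\ Theorem~\ref{the:MW_completion}, together with the comparison of $\comg{R}{J}{\alpha}$ with the linear-growth completion of the tube algebra in Proposition~\ref{pro:linear_growth_on_tube}. First I would observe that, since $R$ is finitely generated and commutative and carries the fine bornology, Lemma~\ref{lem:tube_fine} shows that the $\alpha$\nb-tube algebra $\tub{R}{J}{\alpha}$ also carries the fine bornology; it is commutative because it is a $\dvr$\nb-subalgebra of $\ul R = R\otimes\dvf$. Since $\alpha$ is rational and $\dvgen\in J$, Lemma~\ref{lem:tube_fg} shows moreover that $\tub{R}{J}{\alpha}$ is finitely generated (indeed Noetherian). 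Thus $\tub{R}{J}{\alpha}$ satisfies exactly the hypotheses of Theorem~\ref{the:MW_completion}, which therefore yields an isomorphism of bornological algebras $\comling{\tub{R}{J}{\alpha}} \cong \tub{R}{J}{\alpha}^\updagger$.

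Next I would invoke Proposition~\ref{pro:linear_growth_on_tube}, whose hypothesis $\dvgen\in J$ is in force, to obtain an isomorphism of bornological $\dvf$\nb-algebras $\comling{\tub{R}{J}{\alpha}}\otimes\dvf \cong \comg{R}{J}{\alpha}$. Combining this with the previous isomorphism (tensored with $\dvf$) gives
\[
  \comg{R}{J}{\alpha} \;\cong\; \comling{\tub{R}{J}{\alpha}}\otimes\dvf \;\cong\; \tub{R}{J}{\alpha}^\updagger\otimes\dvf \;=\; \dvf\otimes\tub{R}{J}{\alpha}^\updagger,
\]
where in the middle step one uses that $\comling{\tub{R}{J}{\alpha}}$ is flat (a filtered union of torsion-free, $\dvgen$\nb-adically complete submodules) and complete, so that $-\otimes\dvf$ agrees with the completed tensor product $-\hotimes\dvf$ by Example~\ref{ex:otimesK} and is functorial for the bornological algebra isomorphism of the first step.

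I do not expect a genuine obstacle here: the theorem is essentially the concatenation of Proposition~\ref{pro:linear_growth_on_tube}, Theorem~\ref{the:MW_completion}, and Lemmas~\ref{lem:tube_fine} and~\ref{lem:tube_fg}. The only point that needs a word of care is the verification of the hypotheses of Theorem~\ref{the:MW_completion} for $\tub{R}{J}{\alpha}$ --- namely finite generation --- which is precisely why the statement is restricted to rational~$\alpha$. If $R$ fails to be flat over $\dvr$ there is nothing extra to do: $\tub{R}{J}{\alpha}$ is torsion-free as a $\dvr$\nb-submodule of $\ul R$, and by Remark~\ref{rem:lambda-completion_flat} neither $\ul R$ nor the bornology $\gral\bdd J\alpha$ changes if we replace $R$ by $R/\tau R$, so the argument goes through verbatim.
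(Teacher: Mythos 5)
Your proposal is correct and follows exactly the paper's own argument: apply Lemmas~\ref{lem:tube_fine} and~\ref{lem:tube_fg} to see that the tube algebra is finitely generated, commutative, and carries the fine bornology, then combine Theorem~\ref{the:MW_completion} for $\tub{R}{J}{\alpha}$ with Proposition~\ref{pro:linear_growth_on_tube}. The extra remarks on flatness and the torsion case are harmless but not needed beyond what the cited results already provide.
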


\begin{proof}
  Since \(\alpha\in\Q\), the tube algebra~\(\tub{R}{J}{\alpha}\) is
  again commutative and finitely generated and carries the fine
  bornology by Lemmas \ref{lem:tube_fg} and~\ref{lem:tube_fine}.
  Proposition~\ref{pro:linear_growth_on_tube} and
  Theorem~\ref{the:MW_completion} for the tube algebra identify
  \[
  \comg{R}{J}{\alpha}
  \cong \dvf\otimes \comling{\tub{R}{J}{\alpha}}
  \cong \dvf\otimes \tub{R}{J}{\alpha}^\updagger.\qedhere
  \]
\end{proof}

\section{Homological algebra for completions
  of commutative algebras}
\label{sec:HH_dagger}
\subsection{Hochschild homology.}
The Hochschild homology of the commutative algebras of functions on
smooth algebraic varieties is described by the
Hochschild--Kostant--Rosenberg Theorem. Connes proves an analogous
result for algebras of smooth functions on smooth manifolds, with
Hochschild homology defined using completed tensor products.  We shall
carry over Connes' result to the completions \(\HH_*(\comling{R})\)
and \(\HH_*(\comg{R}{J}{\alpha})\).
Our computation uses purely algebraic tools and hence describes
\(\HH_*(R^\updagger)\)
and \(\HH_*(\comg{R}{J}{\alpha})\)
only as \(\dvr\)\nb-modules, without their canonical bornologies.

\begin{definition}
  \label{def:HH}
  Let~\(A\) be a complete bornological \(\dvr\)\nb-algebra.  The
  \emph{Hochschild chain complex} \((C(A),b)\) consists of the
  complete bornological \(\dvr\)\nb-modules \(C_n(A) \defeq
  A^{\hotimes n+1}\) for \(n\ge0\) with the boundary maps \(b\colon
  C_n(A) \to C_{n-1}(A)\),
  \begin{multline*}
    b(a_0 \otimes \dotsb \otimes a_n) \defeq
    \sum_{i=0}^{n-1} (-1)^i a_0 \otimes \dotsb \otimes a_{i-1} \otimes
    (a_i\cdot a_{i+1}) \otimes a_{i+2} \otimes\dotsb \otimes a_n
    \\+ (-1)^n a_n a_0\otimes a_1 \otimes \dotsb \otimes a_{n-1}.
  \end{multline*}
  The \emph{Hochschild homology} \(\HH_*(A)\) is the %\(i\)th
  homology of~\((C(A),b)\).
\end{definition}
If no bornology on a \(\dvr\)\nb-algebra~\(A\)
  is specified, then we give it the fine bornology.  Then
  \(A^{\otimes n} = A^{\hotimes n}\),
  and so the bornological Hochschild chain complex of~\(A\)
  as a bornological algebra is the usual, purely algebraic one.

There are many equivalent chain complexes that compute periodic cyclic
homology, and all of them would work for our purposes.  To be
definite, we choose the total complex of the cyclic bicomplex.

\begin{definition}[\cite{mix}]
  A \emph{mixed complex} in an additive category~\(\cC\) is a
  \(\Z\)\nb-graded object~\(M\) of~\(\cC\)
  together with homogeneous maps \(b\colon M\to M[-1]\) and
  \(B\colon M\to M[+1]\) such that \(b^2=B^2=bB+Bb=0\); here \([\pm1]\)
  denotes a degree shift, so \(b\)
  and~\(B\)
  have degree \(-1\)
  and~\(+1\),
  respectively.  The \emph{Hochschild complex} of~\(M\) is the chain
  complex \((M,b)\).  We will only consider mixed complexes that are
  nonnegatively graded in the sense that \(C_n=0\) for \(n<0\).

  Given a nonnegatively graded mixed complex~\((M,b,B)\)
  in a complete, additive category~\(\cC\),
  let~\(CM\)
  be the \(\Z/2\)\nb-graded
  chain complex that is~\(\prod_n M_n\)
  graded by parity of~\(n\)
  as a \(\Z/2\)\nb-graded
  object of~\(\cC\),
  with the boundary map~\(b+B\).
  The homology of~\(CM\)
  is called the \emph{periodic cyclic homology} of~\(M\).

  Let~\(R\)
  be a unital, complete bornological \(\dvr\)\nb-algebra.
  Then \((C(R),b,B)\)
  with Connes' boundary map~\(B\)
  is a mixed complex.  Its periodic cyclic homology is called the
  \emph{periodic cyclic homology} of~\(R\) and denoted \(\HP_*(R)\).
\end{definition}

Explicitly, \(\HP_*(R)\)
is the homology of the \(\Z/2\)\nb-graded
chain complex \(CC(R) \defeq CC^\even(R) \oplus CC^\odd(R)\)
with the boundary map \(b+B\), where
\[
CC^\even(R) \defeq \prod_{j=0}^\infty R^{\hotimes 2j},\qquad
CC^\odd(R) \defeq \prod_{j=0}^\infty R^{\hotimes 2j+1},
\]
and \(B \defeq (1-t)sN\) as in~\cite{lq} with
\begin{align*}
  N(a_0\otimes \dotsb \otimes a_n)
  &\defeq \sum_{i=0}^n
    (-1)^n a_i \otimes a_{i+1} \otimes \dotsb \otimes a_n \otimes a_0
    \otimes \dotsb \otimes a_{i-1},\\
  s(a_0\otimes \dotsb \otimes a_n)
  &\defeq 1\otimes a_0 \otimes \dotsb \otimes a_n,\\
  (1-t)(1\otimes a_0 \otimes \dotsb \otimes a_n)
  &= 1\otimes a_0 \otimes \dotsb \otimes a_n +
    (-1)^n a_n \otimes 1\otimes a_0 \otimes \dotsb \otimes a_{n-1}.
\end{align*}

\begin{example}\label{ex:periodification}
	Let $(\Omega,d)$ be a nonnegatively graded cochain complex.  Consider the mixed complex $M(\Omega)\defeq (\Omega,0,d)$ with
	zero boundary map $\Omega_*\to \Omega_{*-1}$.  The periodic
        cyclic homology of $M(\Omega)$ is the homology of~$\Omega$
        made periodic; we have
	\[
	\HP_j(M(\Omega))=\prod_{n\ge 0} H^{2n+j}(\Omega,d)\qquad (j=0,1).
	\]
	\end{example}
After these general definitions, we describe these homology theories
for the complete bornological algebras introduced above.  Let~\(R\)
be a finitely generated, commutative, torsion-free
\(\dvr\)\nb-algebra.
Let \(J\triqui R\)
be an ideal with \(\dvgen\in J\).
Let \(\alpha\in[0,1]\cap \Q\).
We shall describe the Hochschild homology of the bornological
\(\dvr\)\nb-algebras
\(\comling{R}\)
and~\(\comg{R}{J}{\alpha}\).

Hochschild homology is a special case of the derived tensor product
functor~\(\Tor\)
on the category of complete bornological bimodules.  Hence we need
some homological properties of the dagger completion~\(R^\updagger\),
which is identified with \(\comling{R}\)
in Theorem~\ref{the:MW_completion}.  Intermediate results in our proof
suggest that \(R \to R^\updagger\)
and \(R\to \comg{R}{J}{\alpha}\)
for \(\alpha\in\Q\)
behave like the localisations of Taylor~\cite{Taylor:General_fun}.

\begin{lemma}
  \label{lem:flat2}
  Let~\(R\) be a finitely generated, commutative \(\dvr\)\nb-algebra.
  Then the canonical homomorphism \(R\to R^\updagger\) is flat and the map to the $\dvgen$-adic completion $R^{\updagger} \to \coma{R}$ is faithfully flat.
\end{lemma}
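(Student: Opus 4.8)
The plan is to factor $R\to R^\updagger$ through the $\dvgen$\nb-adic completion $\coma R=\varprojlim_m R/\dvgen^m R$ and to invoke the flatness of completions of Noetherian rings along ideals. The key preliminary point is that $\coma R$ is \emph{also} the $\dvgen$\nb-adic completion of~$R^\updagger$; equivalently, $R\hookrightarrow R^\updagger$ induces isomorphisms $R/\dvgen^m R\iso R^\updagger/\dvgen^m R^\updagger$ for all~$m$. Surjectivity holds because every element of $R^\updagger\subseteq\coma R$ is a $\dvgen$\nb-adic limit of elements of~$R$. For injectivity, write $z\in R^\updagger$ as $z=\sum_{j\ge0}\dvgen^j w_j$ with $w_j\in M^{\kappa_j}$, $M\ni 1$ a finitely generated $\dvr$\nb-submodule of~$R$, and $\kappa_j\le c(j+1)$; if $z\in\dvgen^m\coma R$, then the finite partial sum $\sum_{j<m}\dvgen^j w_j$ maps into $\dvgen^m\coma R$ as well, hence lies in the preimage of $\dvgen^m\coma R$ under $R\to\coma R$, which is $\dvgen^m R$ because $\coma R/\dvgen^m\coma R\cong R/\dvgen^m R$ for the $\dvgen$\nb-adic completion of any ring. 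Writing $\sum_{j<m}\dvgen^j w_j=\dvgen^m u$ with $u\in R$ gives $z=\dvgen^m\bigl(u+\sum_{i\ge0}\dvgen^i w_{i+m}\bigr)$, and the series on the right again represents an element of~$R^\updagger$ since $\kappa_{i+m}\le c(m+1)(i+1)$. (One may alternatively cite~\cite{mw}.)

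By Remark~\ref{rem:separated}, $R^\updagger$ is Noetherian and $\dvgen R^\updagger$ lies in its Jacobson radical. Hence $R^\updagger\to\coma R$, being the $\dvgen$\nb-adic completion of the Noetherian ring~$R^\updagger$, is flat, and it is faithfully flat because the completed ideal $\dvgen R^\updagger$ lies in the Jacobson radical (see~\cite{Atiyah-Macdonald:Commutative}*{Chapter~10}); this is the second assertion. Likewise $R$, being finitely generated over the Noetherian ring~$\dvr$, is Noetherian, so $R\to\coma R$ is flat as the $\dvgen$\nb-adic completion of a Noetherian ring.

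It remains to deduce that $R\to R^\updagger$ is flat, which is now formal. Let $M\hookrightarrow N$ be an injection of $R$\nb-modules. Applying the exact functor ${-}\otimes_{R^\updagger}\coma R$ to $M\otimes_R R^\updagger\to N\otimes_R R^\updagger$ produces $M\otimes_R\coma R\to N\otimes_R\coma R$, which is injective by flatness of $R\to\coma R$; hence the kernel of $M\otimes_R R^\updagger\to N\otimes_R R^\updagger$ becomes zero after ${-}\otimes_{R^\updagger}\coma R$, and therefore vanishes by faithful flatness of $R^\updagger\to\coma R$. So $R\to R^\updagger$ is flat.

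The one ingredient above that is neither a citation of standard commutative algebra nor of Remark~\ref{rem:separated} is the identification $R/\dvgen^m R\cong R^\updagger/\dvgen^m R^\updagger$, that is, recognizing $\coma R$ as the completion of~$R^\updagger$ rather than of~$R$ alone; I expect the modest series bookkeeping there, together with the elementary fact that the preimage of $\dvgen^m\coma R$ in~$R$ is~$\dvgen^m R$, to be the only place that requires any care, everything else being formal once this is in hand.
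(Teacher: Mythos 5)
Your proof is correct and follows essentially the same route as the paper's: factor $R\to R^\updagger$ through $\coma R$, use Noetherianity of $R$ and $R^\updagger$ to get flatness of the completion maps, use that $\dvgen R^\updagger$ lies in the Jacobson radical to upgrade $R^\updagger\to\coma R$ to faithfully flat, and descend. The only difference is that you explicitly verify that $\coma R$ is also the $\dvgen$\nb-adic completion of $R^\updagger$ and spell out the faithfully flat descent step, both of which the paper leaves implicit.
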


\begin{proof}
  Since \(R\) and~\(R^\updagger\) are Noetherian, the maps to their
  \(\dvgen\)\nb-adic completion \(R \to \coma{R}\) and \(R^\updagger \to
  \coma{R}\) are flat (see
  \cite{Atiyah-Macdonald:Commutative}*{Proposition 10.14}).  The
  ideal \(\dvgen R^\updagger\) is contained in the Jacobson radical
  of~\(R^\updagger\) by Remark~\ref{rem:separated}.  Hence the map
  \(R^\updagger \to \coma{R}\) is  faithfully flat.  This implies
  that \(R\to R^\updagger\) is flat.
\end{proof}

\begin{lemma}
  \label{lem:flat}
  Let \(f\colon R\to S\) be a homomorphism of commutative
  \(\dvr\)\nb-algebras of finite type.  If~\(f\) is flat, then so is
  \(f^\updagger\colon R^\updagger\to S^\updagger\).
\end{lemma}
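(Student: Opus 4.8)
The plan is to reduce flatness of $f^\updagger\colon R^\updagger\to S^\updagger$ to the flatness of the induced map $\coma R\to\coma S$ on $\dvgen$\nb-adic completions, where one can invoke standard commutative algebra, and then to descend this along the faithfully flat maps to the $\dvgen$\nb-adic completions furnished by Lemma~\ref{lem:flat2}.

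First I would isolate the descent principle to be used: if $A\to B\to C$ are ring homomorphisms with $B\to C$ faithfully flat and the composite $A\to C$ flat, then $A\to B$ is flat. Indeed, for an injection $N\hookrightarrow M$ of $A$\nb-modules, applying $-\otimes_B C$ to $N\otimes_A B\to M\otimes_A B$ gives $N\otimes_A C\to M\otimes_A C$, which is injective since $A\to C$ is flat; as $-\otimes_B C$ reflects injectivity by faithful flatness, $N\otimes_A B\to M\otimes_A B$ is injective. I would apply this with $A=R^\updagger$, $B=S^\updagger$, $C=\coma S$. Here $R$ and $S$ are Noetherian, and since $\dvr\to R\to S$ with $S$ of finite type over $\dvr$, the map $R\to S$ is of finite type; hence $R^\updagger$ and $S^\updagger$ are Noetherian by~\cite{ful} (compare Remark~\ref{rem:separated}), and $S^\updagger\to\coma S$ is faithfully flat by Lemma~\ref{lem:flat2} applied to~$S$. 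Thus it remains to prove that $R^\updagger\to\coma S$ is flat.

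Since $R^\updagger\to\coma R$ is faithfully flat by Lemma~\ref{lem:flat2}, it suffices to show that the induced map $\coma R\to\coma S$ is flat, and I would obtain this by factoring it as $\coma R\to S\otimes_R\coma R\to\coma S$. The first map is flat, being a base change of the flat map $R\to S$. For the second map, $S$ is of finite type over $R$, so $S\otimes_R\coma R$ is of finite type over the Noetherian ring $\coma R$ and hence Noetherian; using $\coma R/\dvgen^m\coma R\cong R/\dvgen^m R$ one checks $(S\otimes_R\coma R)/\dvgen^m(S\otimes_R\coma R)\cong S/\dvgen^m S$ for all~$m$, so that $\coma S$ is precisely the $\dvgen$\nb-adic completion of $S\otimes_R\coma R$; completion of a Noetherian ring with respect to an ideal is flat over it by~\cite{Atiyah-Macdonald:Commutative}*{Proposition~10.14}. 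Hence $\coma R\to\coma S$, and therefore $R^\updagger\to\coma R\to\coma S$, are flat, and the descent principle gives that $f^\updagger$ is flat.

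The main thing to be careful about — and the reason one cannot simply apply the local criterion of flatness modulo~$\dvgen$ — is that $S^\updagger$ need not be module-finite over $R^\updagger$. Passing to the $\dvgen$\nb-adic completions is what circumvents this: there all the algebras in sight are governed by the classical statements for Noetherian rings, and the faithful flatness of $R^\updagger\to\coma R$ and $S^\updagger\to\coma S$, which rests on $\dvgen R^\updagger$ and $\dvgen S^\updagger$ lying in the respective Jacobson radicals (Remark~\ref{rem:separated}), transports the conclusion back down to $R^\updagger\to S^\updagger$.
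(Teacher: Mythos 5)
Your proof is correct, and the outer reduction is the same as the paper's: both arguments reduce to showing that $\coma{R}\to\coma{S}$ is flat and then descend along the faithfully flat maps $R^\updagger\to\coma{R}$ and $S^\updagger\to\coma{S}$ of Lemma~\ref{lem:flat2}. Where you diverge is in the proof of that core step. The paper invokes Bourbaki's local flatness criterion: it checks that $\coma{S}$ is ``id\'ealement s\'epar\'e pour $\dvgen R$'' (using that $\coma{R}$ and $\coma{S}$ are Noetherian, so $I\otimes_{\coma{R}}\coma{S}$ is finitely generated and hence $\dvgen$\nb-adically separated) and that $\coma{R}/\dvgen^n\coma{R}\to\coma{S}/\dvgen^n\coma{S}$ is flat for all $n$. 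You instead factor $\coma{R}\to\coma{S}$ as $\coma{R}\to S\otimes_R\coma{R}\to\coma{S}$, observe that the first map is a base change of the flat map $R\to S$, and identify $\coma{S}$ with the $\dvgen$\nb-adic completion of the Noetherian ring $S\otimes_R\coma{R}$ via $(S\otimes_R\coma{R})/\dvgen^m\cong S/\dvgen^m S$, so that the second map is flat by \cite{Atiyah-Macdonald:Commutative}*{Proposition~10.14} --- the same fact the paper already uses in Lemma~\ref{lem:flat2}. Your route is somewhat more elementary and self-contained, using only base change and flatness of Noetherian adic completion; the local criterion used in the paper is the more general tool (it would still apply if one only knew flatness of $R/\dvgen^n R\to S/\dvgen^n S$ for all $n$ rather than flatness of $R\to S$ itself), but for the statement at hand both arguments are complete.
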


\begin{proof}
It is enough to prove that $\coma{f}\colon \coma{R} \to \coma{S}$ is flat. Indeed, if this is the case, then the composition $R^{\updagger} \xrightarrow{f^{\updagger}} S^{\updagger} \to \coma{S}$, which equals the composition $R^{\updagger} \to \coma{R} \xrightarrow{\coma{f}} \coma{S}$, is flat. Since $S^{\updagger} \to \coma{S}$ is faithfully flat by Lemma~\ref{lem:flat2} this implies that $f^{\updagger}$ is flat.

To see that $\coma{f}$ is flat, we use a flatness criterion of Bourbaki \cite{BourbakiAC}*{Ch.~III, \S5, Thm.~1}.
Let $I$ be any ideal of $\coma{R}$. Since $\coma{R}$ is Noetherian, $I$ is finitely generated. Hence $I \otimes_{\coma{R}} \coma{S}$ is a finitely generated $\coma{S}$-module. Since also $\coma{S}$ is Noetherian, $I \otimes_{\coma{R}} \coma{S}$ is $\dvgen$-adically separated. This means that $\coma{S}$ as an $\coma{R}$-module is `id\'ealement s\'epar\'e pour $\dvgen R$' in the sense of Bourbaki.
Since $R \to S$ is flat, also $\coma{R} / \dvgen^{n}\coma{R} \to \coma{S} / \dvgen^{n}\coma{S}$ is flat for every $n$. Now the flatness criterion implies that $\coma{S}$ is flat as an $\coma{R}$-module.
\end{proof}

\begin{lemma}
  \label{lem:dagmod}
  Let \(R\) be a finitely generated, commutative \(\dvr\)\nb-algebra
  and \(M\) a finitely generated module over~\(R\).
  Then \(R^\updagger\otimes_R M\) is a complete
  bornological \(\dvr\)\nb-module, in the bornology induced by
    the bornology on~$R^\updagger$, introduced in
    Section~\textup{\ref{subsec:dag}}, and the fine bornology
    on~$M$.
\end{lemma}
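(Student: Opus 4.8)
The plan is to realise $R^\updagger\otimes_R M$ as the quotient of a finitely generated free $R^\updagger$\nb-module by a bornologically closed $\dvr$\nb-submodule, and then to use that quotients of complete bornological $\dvr$\nb-modules by bornologically closed submodules are again complete.

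First I would pick a presentation. As $\dvr$ is Noetherian and $R$ is finitely generated over~$\dvr$, the ring~$R$ is Noetherian, so~$M$ admits a finite presentation $R^a\xrightarrow{A}R^b\to M\to 0$ of $R$\nb-modules. Applying the right exact functor $R^\updagger\otimes_R{-}$ gives an exact sequence $(R^\updagger)^a\xrightarrow{A}(R^\updagger)^b\to R^\updagger\otimes_R M\to 0$, so $R^\updagger\otimes_R M\cong (R^\updagger)^b/N$ with $N\defeq A\bigl((R^\updagger)^a\bigr)$. Next I would match the bornologies. The free module $(R^\updagger)^b$ with the direct sum bornology is complete, being a finite direct sum of complete bornological $\dvr$\nb-modules. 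Unwinding the tensor product bornology of Lemma~\ref{lem:complete_tensor} and using that the tensor product functor commutes with colimits (Proposition~\ref{pro:tensor_colimits}): the surjection $R^b\to M$ of $\dvr$\nb-modules with the fine bornology is a bornological quotient map, hence so are $R^\updagger\otimes_\dvr R^b\to R^\updagger\otimes_\dvr M$ and, after passing to the $\otimes_R$ quotients, $(R^\updagger)^b\to R^\updagger\otimes_R M$; thus the bornology on $R^\updagger\otimes_R M$ appearing in the statement is exactly the quotient bornology of $(R^\updagger)^b/N$.

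The heart of the matter is then to show that~$N$ is bornologically closed in $(R^\updagger)^b$. Since $R^\updagger$ is Noetherian (Remark~\ref{rem:separated}), the $R^\updagger$\nb-module $R^\updagger\otimes_R M=(R^\updagger)^b/N$ is finitely generated, hence $\dvgen$\nb-adically separated by Remark~\ref{rem:separated}; equivalently, $N$ is $\dvgen$\nb-adically closed in $(R^\updagger)^b$. By Proposition~\ref{prop:separated_complete} a bornologically convergent sequence in any bornological $\dvr$\nb-module lies in a bounded submodule in which it converges $\dvgen$\nb-adically, hence converges $\dvgen$\nb-adically in the ambient module as well; so a $\dvgen$\nb-adically closed submodule is bornologically closed. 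Therefore~$N$ is bornologically closed in the complete module $(R^\updagger)^b$, and the quotient $(R^\updagger)^b/N=R^\updagger\otimes_R M$ is complete.

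I do not anticipate a real obstacle: the substantive content is entirely carried by the Noetherianity of~$R^\updagger$ together with the Krull Intersection Theorem consequence recorded in Remark~\ref{rem:separated}. The only point needing care is the bornological bookkeeping in the second step — checking that the bornology on $R^\updagger\otimes_R M$ induced from the bornology on $R^\updagger$ and the fine bornology on~$M$ really is the quotient bornology coming from a finite free presentation, so that completeness of $(R^\updagger)^b/N$ answers the question posed.
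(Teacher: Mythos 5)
Your proof is correct, but it takes a genuinely different route from the paper's. The paper forms the square-zero extension $M\rtimes R$, which is again a finitely generated commutative $\dvr$\nb-algebra, computes $(M\rtimes R)^\updagger \cong R^\updagger\otimes_R(M\rtimes R)$ via Lemma~\ref{lem:onto}, and then exhibits $R^\updagger\otimes_R M$ as the \emph{kernel} of the bounded homomorphism $(M\rtimes R)^\updagger\to R^\updagger$, i.e., as a bornologically closed submodule of a complete module. You instead exhibit $R^\updagger\otimes_R M$ as a \emph{quotient} $(R^\updagger)^b/N$ of a complete module by a bornologically closed submodule, using a finite presentation of~$M$ and right exactness of the tensor product. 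Both arguments rest on the same essential input --- Noetherianity of $R^\updagger$ and the Krull-intersection consequence recorded in Remark~\ref{rem:separated}, namely that finitely generated $R^\updagger$\nb-modules are $\dvgen$\nb-adically separated --- together with the observation that $\dvgen$\nb-adically closed submodules are bornologically closed. Your route is pure module-level homological algebra and makes the bornology on $R^\updagger\otimes_R M$ explicit as a quotient bornology; the paper's route buys, as a byproduct, the identification of the dagger completion of the algebra $M\rtimes R$ with $R^\updagger\otimes_R(M\rtimes R)$, in the spirit of the surrounding flatness results. The bornological bookkeeping you flag does go through: since~$M$ carries the fine bornology and is generated over~$R$ by $x_1,\dotsc,x_b$, the image of $S\otimes T$ for bounded $S\subseteq R^\updagger$ and a finitely generated $\dvr$\nb-submodule $T\subseteq M$ is contained in the image of $(S\cdot R_0)^b$ under $(R^\updagger)^b\to R^\updagger\otimes_R M$ for a suitable finitely generated $\dvr$\nb-submodule $R_0\subseteq R$, and conversely, so the induced bornology is indeed the quotient bornology.
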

\begin{proof}
The symmetric bimodule structure on~\(M\) gives an algebra
    structure on \(M\oplus R\) so that~\(M\) is an ideal with
    \(M^2=0\) and the projection to~\(R\) is an algebra homomorphism
    with kernel~\(M\).
We denote this algebra by~\(M\rtimes R\).
Since M is finitely generated we may write it as $M=\bigoplus_{i=1}^n Rx_i/L$, so
$$M\rtimes R=
R[x_1,\dots, x_n]/(\langle x_1,\dots,x_n\rangle^2 +\langle L\rangle ).$$
If S is of finite type and $J$ is an ideal in $S$, then $(S/J)^\updagger=S^\updagger/JS^\updagger$ by Lemma \ref{lem:onto}. Applying this to the identity above, and using that $(R[X]/\langle X\rangle^2)^\updagger=R^\updagger[X]/\langle X\rangle^2$, we get
\begin{multline*}(M\rtimes R)^\updagger=
 R^\updagger[x_1,\dots,x_n]/(\langle x_1,\dots,x_n\rangle^2+\langle L\rangle)   \\  =
R^\updagger\otimes_RR[x_1,\dots, x_n]/(\langle x_1,\dots,x_n\rangle^2+\langle L\rangle)=
R^\updagger\otimes_R(M\rtimes R). \end{multline*}
Finally, $R^\updagger \otimes_R M$ is the kernel of the map $R^\updagger\otimes_R(M\rtimes R) \to R^\updagger$.
Since $R^\updagger$ is bornologically separated, this kernel is bornologically closed and inherits bornological completeness from $(M\rtimes R)^\updagger$.
\end{proof}
Let~\(R\) be a commutative bornological \(\dvr\)\nb-algebra of finite
type equipped with the fine bornology.
The Hochschild complex \(C(R)\) is the chain complex associated to a cyclic object in the category of $\dvr$\nb-modules.
The face and degeneracy maps as well as the (unsigned)
cyclic permutations of this cyclic object are in fact $\dvr$\nb-algebra maps.

All algebra maps are bounded for the linear growth bornology associated to the fine bornology.
Hence~\(\ling{C(R)}\) is a cyclic object in the category of bornological
commutative \(\dvr\)\nb-algebras.  The bornological
completion of~\(\ling{C(R)}\) is the Monsky--Washnitzer completion
\(C(R)^\updagger\) by Theorem~\ref{the:MW_completion}.
Proposition~\ref{pro:tensor_dagger} implies
\begin{equation}
  \label{eq:cyclic_module_dagger}
  C_n(R)^\updagger
  \cong \comling{C_n(R)}
  = \comling{(R^{\otimes n+1})}
  \cong (\comling{R})^{\hotimes n+1}
  = C_n(\comling{R})
  \cong C_n(R^\updagger)
\end{equation}
by the definition of~\(C_n\) for complete bornological
\(\dvr\)\nb-algebras such as~\(R^\updagger\).

The \emph{bar resolution} of a complete bornological algebra $R$ is the complex
$(B_n(R)=R^{\comb{\otimes}n+2})_{n\ge 0}$ with boundary map
\[
    b'(a_0 \otimes \dotsb \otimes a_{n+1}) \defeq
    \sum_{i=0}^{n} (-1)^i a_0 \otimes \dotsb \otimes a_{i-1} \otimes
    (a_i\cdot a_{i+1}) \otimes a_{i+2} \otimes\dotsb \otimes a_{n+1}.
\]
The bar resolution is an $R$-bimodule resolution of $R$. It admits a bounded contraction $s: B_n \to B_{n+1}$ defined by $s(a_0 \otimes \dotsb \otimes a_{n+1}) = 1\otimes a_0 \otimes \dotsb \otimes a_{n+1}$. One has $b's +sb'=$ id. Thus $B_n(R)$ even is a split exact resolution.
Write $R^{e} \defeq R \otimes_{\dvr} R$.

\begin{proposition}
  \label{prop:hh2}
  Let~\(R\)
  be a finitely generated, torsion-free, commutative
  \(\dvr\)\nb-algebra.
  \begin{enumerate}
    \item[(a)]
  Both \((R^e)^\updagger \otimes_{R^e} B(R)\)
  and \(B(R^\updagger)\)
  are flat \((R^e)^\updagger\)\nb-module
  resolutions of~\(R^\updagger\).
  The natural map \(C(R) \to C(R^\updagger)\) induces a quasi-isomorphism
  \begin{equation}
    \label{map:hh2}
    R^\updagger \otimes_R (C(R),b) \to (C(R^\updagger),b).
  \end{equation}
  The natural map \(\HH(R)\to \HH(R^\updagger)\)
  induces an isomorphism
  \begin{equation}
    \label{eq:Hochschild_dagger}
    R^\updagger \otimes_R \HH_*(R) \cong \HH_*(R^\updagger).
  \end{equation}
  \item[(b)] Let~\(R\) be equipped with the fine
  bornology, let \(J\triqui R\) be an ideal with \(\dvgen\in J\), and
  \(\alpha\in [0,1]\) rational.  The canonical map is an isomorphism
  \[
  \comg{R}{J}{\alpha} \otimes_{\ul{R}} \HH_*(\ul{R})
  \cong \HH_*(\comg{R}{J}{\alpha}).
  \]
  \end{enumerate}
\end{proposition}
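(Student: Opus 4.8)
The plan is to prove~(a) by the standard identification of Hochschild homology with $\Tor$ over the enveloping algebra, using the bar resolution together with the flatness results of Lemmas~\ref{lem:flat2} and~\ref{lem:flat}, and then to deduce~(b) from~(a) applied to the tube algebra by tensoring with~$\dvf$.

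\emph{Setup and the two resolutions for~(a).} Since~$R$ is finitely generated and commutative, so is $R^e=R\otimes_\dvr R$, and Proposition~\ref{pro:tensor_dagger} together with Theorem~\ref{the:MW_completion} gives $(R^e)^\updagger\cong R^\updagger\hotimes R^\updagger=(R^\updagger)^e$. The bar resolution $B(R)$ is a resolution of~$R$ by $R^e$\nb-modules which are flat over~$R^e$ (indeed $B_n(R)=R^e\otimes_\dvr R^{\otimes n}$ and $R^{\otimes n}$ is $\dvr$\nb-flat because~$R$ is torsion-free), and it carries a bounded $\dvr$\nb-linear contracting homotopy~$s$ for its augmentation to~$R$. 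I would first check the two resolution statements. For $(R^e)^\updagger\otimes_{R^e}B(R)$: the map $R^e\to(R^e)^\updagger$ is flat by Lemma~\ref{lem:flat2}, so base change preserves exactness of the augmented complex and flatness of each term, and the augmentation becomes $(R^e)^\updagger\otimes_{R^e}R\cong R^\updagger$ by Lemma~\ref{lem:onto}. For $B(R^\updagger)$: arguing as for~\eqref{eq:cyclic_module_dagger} one has $B_n(R^\updagger)=(R^\updagger)^{\hotimes n+2}\cong B_n(R)^\updagger$; the bounded homotopy~$s$ extends to $s^\updagger$ on $B(R^\updagger)$ with $b's^\updagger+s^\updagger b'=\id$ (uniqueness of bounded extensions), so $B(R^\updagger)\to R^\updagger$ is split exact; and each $B_n(R)=R^e\otimes_\dvr R^{\otimes n}$ is a flat $R^e$\nb-algebra of finite type, so $B_n(R^\updagger)=B_n(R)^\updagger$ is flat over $(R^e)^\updagger$ by Lemma~\ref{lem:flat}.

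\emph{Comparison and collapse.} The natural map $(R^e)^\updagger\otimes_{R^e}B(R)\to B(R^\updagger)$ is an $(R^e)^\updagger$\nb-linear chain map lifting $\id_{R^\updagger}$, hence a quasi-isomorphism of flat $(R^e)^\updagger$\nb-resolutions of~$R^\updagger$. Collapsing by $R^\updagger\otimes_{(R^e)^\updagger}{-}$, the left-hand side becomes $R^\updagger\otimes_{R^e}B(R)=R^\updagger\otimes_R C(R)$, which therefore computes $\Tor^{(R^e)^\updagger}_*(R^\updagger,R^\updagger)$; the right-hand side becomes $C(R^\updagger)$ once one identifies the collapse of the completed bar resolution with the bornological Hochschild complex. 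Since completion commutes with colimits (Proposition~\ref{prop:completion}) and $\hotimes$ commutes with colimits in each variable (Proposition~\ref{pro:tensor_colimits}), and $C_n(R^\updagger)=C_n(R)^\updagger$, one gets $C(R^\updagger)=R^\updagger\hotimes_{(R^e)^\updagger}B(R^\updagger)$. This yields the quasi-isomorphism $R^\updagger\otimes_R C(R)\to C(R^\updagger)$. Finally $R\to R^\updagger$ is flat (Lemma~\ref{lem:flat2}), so $H_*\bigl(R^\updagger\otimes_R C(R)\bigr)\cong R^\updagger\otimes_R\HH_*(R)$, which together with the quasi-isomorphism gives the isomorphism~\eqref{eq:Hochschild_dagger} and the first two assertions of~(a).

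\emph{Deduction of~(b).} By Remark~\ref{rem:lambda-completion_flat} we may assume that~$R$ is torsion-free; then the tube algebra $T\defeq\tub{R}{J}{\alpha}$ is finitely generated (Lemma~\ref{lem:tube_fg}), torsion-free and commutative, with $\ul{T}=\ul{R}$. Applying part~(a) to~$T$ and tensoring the resulting isomorphism $T^\updagger\otimes_T\HH_*(T)\cong\HH_*(T^\updagger)$ with~$\dvf$ over~$\dvr$ does the job: $\dvf\otimes T^\updagger=\comg{R}{J}{\alpha}$ by Theorem~\ref{the:MW_tube}, the functor $\dvf\otimes{-}$ is exact and commutes with the formation of $C({-})$ and hence with $\HH_*$, and $\ul{\HH_*(T)}=\HH_*(\ul{T})=\HH_*(\ul{R})$; tensoring the source of the isomorphism then produces $\comg{R}{J}{\alpha}\otimes_{\ul{R}}\HH_*(\ul{R})$.

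\emph{Main obstacle.} The delicate point is the identification, inside the collapse, of $C(R^\updagger)$ — defined through the completed tensor products~$\hotimes$ — with the algebraic $\Tor^{(R^e)^\updagger}_*(R^\updagger,R^\updagger)$, i.e.\ the coincidence of bornological and ordinary Hochschild homology for~$R^\updagger$. The completed tensor product is not the algebraic one, so one must exploit that $B(R^\updagger)$ is bornologically split and that $R^\updagger$, $(R^e)^\updagger$ and the bar terms are Noetherian, so that no separated-quotient pathology occurs and the relevant completed and algebraic tensor products agree on the modules in sight.
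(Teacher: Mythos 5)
Your proposal is correct and follows essentially the same route as the paper: compare the two flat $(R^e)^\updagger$\nb-module resolutions $(R^e)^\updagger\otimes_{R^e}B(R)$ and $B(R^\updagger)$ using Lemmas~\ref{lem:flat2}, \ref{lem:flat} and~\ref{lem:onto}, collapse with $R^\updagger\otimes_{(R^e)^\updagger}{-}$, and deduce~(b) by applying~(a) to the tube algebra and tensoring with~$\dvf$. The ``main obstacle'' you flag is exactly the step the paper settles by computing $R^\updagger\otimes_{(R^e)^\updagger}B_n(R^\updagger)\cong B_n(R^\updagger)/(\ker\mu)B_n(R^\updagger)$ and invoking Lemma~\ref{lem:onto} to identify this with $\bigl(B_n(R)/B_n(R)\ker\mu\bigr)^\updagger\cong C_n(R)^\updagger\cong C_n(R^\updagger)$.
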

\begin{proof}
  (a) The bar resolution \(B(R)\)
  is a resolution of~\(R\)
  by flat \(R^e\)\nb-modules
  because~\(R\)
  is torsion-free.  The inclusion \(R^e\to (R^e)^\updagger\)
  is flat by Lemma~\ref{lem:flat2}.  Hence
  \((R^e)^\updagger\otimes_{R^e} B(R)\)
  is a resolution of \((R^e)^\updagger \otimes_{R^e} R\).
  The multiplication map \(\mu\colon R\otimes R\to R\)
  is a surjective algebra homomorphism, so \(R\cong R^e / \ker\mu\).
  Hence
  \((R^e)^\updagger \otimes_{R^e} R \cong (R^e)^\updagger \otimes_{R^e}
  (R^e/\ker \mu) \cong (R^e)^\updagger/ (R^e)^\updagger\cdot (\ker \mu)\).
  This is isomorphic to~\(R^\updagger\)
  by Lemma~\ref{lem:onto}.  The resolution
  \((R^e)^\updagger\otimes_{R^e} B(R)\)
  of~\(R^\updagger\)
  consists of flat \((R^e)^\updagger\)\nb-modules
  because
  \(Q \otimes_{(R^e)^\updagger} ((R^e)^\updagger \otimes_{R^e} B_n(R))
  \cong Q \otimes_{R^e} B_n(R) \cong Q\otimes R^{\otimes n}\)
  for any \((R^e)^\updagger\)-module~\(Q\),
  and~\(R^{\otimes n}\) is a flat \(\dvr\)\nb-module.

  Equip \(B_n(R) = R^{\otimes n+2}\) with the \(\dvr\)\nb-algebra
  structure as a tensor product of copies of~\(R\).  Then
  \[
  R^e \to B_n(R),\qquad
  b_1 \otimes b_2
  \mapsto b_1 \otimes 1 \otimes 1 \otimes \dotsb \otimes 1 \otimes
  b_2,
  \]
  is a ring homomorphism.  It is flat because~\(R\) is a flat
  \(\dvr\)\nb-module.  Then the induced homomorphism \((R^e)^\updagger
  \to B_n(R)^\updagger\) is also flat by Lemma~\ref{lem:flat}.
  Proposition~\ref{pro:tensor_dagger} and
  Theorem~\ref{the:MW_completion} imply \(B_n(R)^\updagger \cong
  (R^\updagger)^{\hotimes n+2} = B_n(R^\updagger)\).  Thus
  \(B(R^\updagger)\) is a resolution of~\(R^\updagger\) by flat
  \((R^e)^\updagger\)\nb-modules (even admitting a bounded contraction).

  So far, we have shown that both \((R^e)^\updagger \otimes_{R^e}
  B(R)\) and~\(B(R^\updagger)\) are resolutions of~\(R^\updagger\) by
  flat \((R^e)^\updagger\)\nb-modules.  They are related by an obvious
  chain map
  \begin{equation}
    \label{eq:bar_compare}
    (R^e)^\updagger \otimes_{R^e} B_n(R)
    \cong (R^e)^\updagger \otimes R^{\otimes n} \to
    (R^e)^\updagger \hotimes (R^\updagger)^{\hotimes n}
    = B_n(R^\updagger).
  \end{equation}
  If~\(Q\) is an \((R^e)^\updagger\)-module, then both
  \(Q\otimes_{(R^e)^\updagger} ((R^e)^\updagger \otimes_{R^e} B(R))
  \cong Q \otimes_{R^e} B(R)\) and \(Q\otimes_{(R^e)^\updagger}
  B(R^\updagger)\) compute the derived functors
  \(\Tor^{(R^e)^\updagger}_n(Q,R^\updagger)\).  Hence the chain map \(Q
  \otimes_{R^e} B(R) \to Q\otimes_{(R^e)^\updagger} B(R^\updagger)\)
  induced by~\eqref{eq:bar_compare} is a quasi-isomorphism.

  We claim that this quasi-isomorphism specializes
  to~\eqref{map:hh2} if \(Q=R^\updagger\).  We have
  \[
  R^\updagger \otimes_{(R^e)^\updagger} \bigl((R^e)^\updagger \otimes_{R^e} B(R)\bigr)
  \cong R^\updagger \otimes_{R^e} B(R)
  \cong R^\updagger \otimes_R (R\otimes_{R^e} B(R))
  \cong R^\updagger \otimes_R C(R).
  \]
  It remains to identify
  \(R^\updagger \otimes_{(R^e)^\updagger} B_n(R^\updagger)\)
   with~\(C_n(R^\updagger)\). Here we take
  an incomplete tensor product of two completions.
  This often causes trouble (compare
  \cite{Meyer:Homological_algebra_Schwartz}*{Section~3}), but not
  here.  We have identified
  \(R^\updagger \cong (R^e)^\updagger \otimes_{R^e} R\) above.  Hence
  \begin{align*}
    R^\updagger \otimes_{(R^e)^\updagger} B_n(R^\updagger)
    &\cong R \otimes_{R^e} (R^e)^\updagger \otimes_{(R^e)^\updagger} B_n(R^\updagger)
      \cong R \otimes_{R^e} B_n(R^\updagger)
    \\ &\cong B_n(R^\updagger)/(\ker\mu)\cdot B_n(R^\updagger),
  \end{align*}
  where \(\ker\mu\triqui R^e\)
  is the kernel of the multiplication homomorphism
  \(\mu\colon R^e\to R\).
  We have
  \((\ker\mu)\cdot B_n(R^\updagger) = (B_n(R)\cdot \ker\mu)\cdot
  B_n(R^\updagger)\)
  for the ideal \(B_n(R)\cdot \ker\mu \triqui B_n(R)\)
  generated by~\(\ker \mu\)
  in the finitely generated commutative algebra~\(B_n(R)\).
  Lemma~\ref{lem:onto} identifies
  \(B_n(R^\updagger)/(B_n(R)\cdot \ker\mu)\cdot B_n(R^\updagger)\)
  with the dagger completion of the quotient
  \(B_n(R)/B_n(R)\cdot\ker\mu \cong C_n(R)\).
  Thus
  \(R^\updagger \otimes_{(R^e)^\updagger} B_n(R^\updagger) \cong
  C_n(R)^\updagger \cong C_n(R^\updagger)\).
  This finishes the proof of~\eqref{map:hh2}.

The homology on the left in~\eqref{map:hh2} is
  \(R^\updagger \otimes_R H_*(C(R),b) = R^\updagger \otimes_R \HH_*(R)\)
  because~\(R^\updagger\)
  is a flat \(R\)\nb-module
  by Lemma~\ref{lem:flat2}.  The homology on the right
  in~\eqref{map:hh2} is~\(\HH_*(R^\updagger)\)
  by definition.  Thus the quasi-isomorphism~\eqref{map:hh2} implies
  the isomorphism~\eqref{eq:Hochschild_dagger}.

  (b) Let
  \(T\defeq \tub{R}{J}{\alpha}\) be the \(\alpha\)\nb-tube algebra
  for \(J\triqui R\).  This is a torsion-free, commutative
  \(\dvr\)\nb-algebra. It is finitely generated
    because~$\alpha$ is rational.  It carries the fine bornology by
  Lemma~\ref{lem:tube_fine}.
  Proposition~\ref{pro:linear_growth_on_tube} gives an isomorphism
  \(\comg{R}{J}{\alpha} \cong \comling{T} \otimes \dvf\).
  The functor \(R\mapsto C(R)\) commutes with \({-} \otimes \dvf\).
  These facts and~\eqref{eq:cyclic_module_dagger} give
  \[
  C_n(\comg{R}{J}{\alpha})
  \cong C_n(\comling{T}\otimes \dvf)
  \cong C_n(\comling{T})\otimes \dvf
  \cong \comling{C_n(T)}\otimes \dvf.
  \]
  Equation~\eqref{eq:Hochschild_dagger} says that
  \[
  \comling{T} \otimes_T \HH_*(T) \cong \HH_*(\comling{T}).
  \]
  Taking Hochschild homology commutes with tensoring with~\(\dvf\),
  that is,
  \[
  \HH_*(X) \otimes \dvf \cong \HH_*(X \otimes \dvf)
  \]
  for \(X=R,T,\comling{T}\).  By construction of~\(T\), \(\ul{T}
  \defeq \dvf\otimes T\) and \(\ul{R}\defeq \dvf\otimes
  R\) are isomorphic.  Thus
  \begin{multline*}
    \phantom{{}\cong{}}
    \comg{R}{J}{\alpha} \otimes_{\ul{R}} \HH_*(\ul{R})
    \cong \comling{T} \otimes \dvf \otimes_{\ul{T}} \HH_*(\ul{T})
    \cong \comling{T} \otimes \dvf \otimes_T (\HH_*(T) \otimes \dvf)
    \\\cong \comling{T} \otimes_T \HH_*(T) \otimes \dvf
    \cong \HH_*(\comling{T}) \otimes \dvf
    \cong \HH_*(\comling{T} \otimes \dvf)
    \cong \HH_*(\comg{R}{J}{\alpha}),
  \end{multline*}
  where the first and last step use \(\comg{R}{J}{\alpha} \cong \dvf
  \otimes \comling{T}\).
\end{proof}

\begin{definition}
  \label{def:dR}
  Let~$R$ be a commutative $\dvr$-algebra. Then
  \(\Omega_{\ul{R}}^*\) denotes the dg-algebra of K\"ahler
  differential forms for~\(\ul{R}\) (over the ground field~\(\dvf\))
  and \((\Omega_{\ul{R}}^*,d)\) is the de Rham complex
  for~\(\ul{R}\).  The de Rham complex for $\ul{R^\updagger}$ is
    \((\ul{R^\updagger}\otimes_{\ul{R}}\Omega_{\ul{R}}^*,d)\).  For
    an ideal $J$ in $R$ and $\alpha \in [0,1]$, the de Rham complex
    for $\comg{R}{J}{\alpha}$ is
    \((\comg{R}{J}{\alpha}\otimes_{\ul{R}}\,\Omega_{\ul{R}}^*,d)\).
\end{definition}

\begin{remark}
  \label{rem:dR}
  For finitely generated~$R$ and rational~$\alpha$,
  Lemma~\ref{lem:dagmod} (in combination with
  Proposition~\ref{pro:linear_growth_on_tube}) shows that
  $\underline{R}^\updagger\otimes_{\ul{R}}\Omega_{\ul{R}}^*$ and
  $\comg{R}{J}{\alpha}\otimes_{\ul{R}}\Omega_{\ul{R}}^*$ are the
  $\dagger$- and $J,\alpha$-completions of $\Omega_{R}^*$ tensored by $K$,
  respectively. This justifies our definition of the de Rham
  complexes for $\underline{R}^\updagger$ and $\comg{R}{J}{\alpha}$ above.
\end{remark}
We combine Corollary~\ref{prop:hh2}(b) with the
Hochschild--Kostant--Rosenberg Theorem for~\(\ul{R}\):

\begin{theorem}
  \label{the:HKR_dagger}
  Let~\(R\)
  be a finitely generated commutative \(\dvr\)\nb-algebra
  with the fine bornology, \(J\triqui R\)
  an ideal with \(\dvgen\in J\), and \(\alpha\in [0,1]\) rational.
  If \(\ul{R}\defeq R\otimes \dvf\)
  is smooth over~\(\dvf\),
  then the antisymmetrisation map
  \(\Omega_{\ul{R}}^* \to \HH_*(\ul{R})\) induces an isomorphism
  \[
  \comg{R}{J}{\alpha} \otimes_{\ul{R}} \Omega_{\ul{R}}^*
  \cong \HH_*(\comg{R}{J}{\alpha}).
  \]
\end{theorem}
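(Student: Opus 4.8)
The plan is to deduce the statement by combining Proposition~\ref{prop:hh2}(b) with the classical Hochschild--Kostant--Rosenberg Theorem for the smooth \(\dvf\)\nb-algebra~\(\ul{R}\). First, since the completion \(\comg{R}{J}{\alpha}\), the algebra~\(\ul{R}\), and the complex \(\comg{R}{J}{\alpha}\otimes_{\ul{R}}\Omega_{\ul{R}}^*\) depend on~\(R\) only through \(\ul{R}=R\otimes\dvf\) and the image of~\(J\) in~\(\ul{R}\), Remark~\ref{rem:lambda-completion_flat} lets us replace~\(R\) by \(R/\tau R\); so we may and do assume~\(R\) is torsion-free. Then Proposition~\ref{prop:hh2}(b) applies and provides a canonical isomorphism of graded \(\comg{R}{J}{\alpha}\)\nb-modules \(\comg{R}{J}{\alpha}\otimes_{\ul{R}}\HH_*(\ul{R}) \iso \HH_*(\comg{R}{J}{\alpha})\), induced by the bounded algebra homomorphism \(\ul{R}\to\comg{R}{J}{\alpha}\).

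Next I would feed in the Hochschild--Kostant--Rosenberg Theorem. Since \(\ul{R}\) is a finitely generated smooth algebra over the field~\(\dvf\), which here has characteristic zero, the module \(\Omega_{\ul{R}}^1\) and hence each exterior power \(\Omega_{\ul{R}}^n\) is finitely generated and projective over~\(\ul{R}\), and the antisymmetrisation map \(\varepsilon\colon\Omega_{\ul{R}}^*\to\HH_*(\ul{R})\) is an isomorphism of graded \(\ul{R}\)\nb-modules. Applying the functor \(\comg{R}{J}{\alpha}\otimes_{\ul{R}}({-})\) to~\(\varepsilon\) and composing with the isomorphism of Proposition~\ref{prop:hh2}(b) produces the desired isomorphism \(\comg{R}{J}{\alpha}\otimes_{\ul{R}}\Omega_{\ul{R}}^* \iso \HH_*(\comg{R}{J}{\alpha})\). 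By construction this composite is precisely the map induced by antisymmetrisation: the chain map \(C(\ul{R})\to C(\comg{R}{J}{\alpha})\) underlying Proposition~\ref{prop:hh2}(b) comes from the algebra homomorphism \(\ul{R}\to\comg{R}{J}{\alpha}\), and the antisymmetrisation map is natural in the algebra, so the relevant square of chain maps commutes.

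I do not expect a genuine obstacle at this stage. The substantive work --- that the canonical map \(\comg{R}{J}{\alpha}\otimes_{\ul{R}}\HH_*(\ul{R})\to\HH_*(\comg{R}{J}{\alpha})\) is an isomorphism, which rests on the flatness results for Monsky--Washnitzer completions of the finitely generated tube algebras \(\tub{R}{J}{\alpha}\) --- has already been carried out in Proposition~\ref{prop:hh2}, and the HKR Theorem is classical. The only points needing a moment's care are the torsion-free reduction via Remark~\ref{rem:lambda-completion_flat} and the naturality of antisymmetrisation used to identify the composite with the map in the statement. It is also worth recording that the theorem shows each \(\comg{R}{J}{\alpha}\otimes_{\ul{R}}\Omega_{\ul{R}}^n\) to be a finitely generated projective \(\comg{R}{J}{\alpha}\)\nb-module, and that, by Remark~\ref{rem:dR}, this agrees with the degree-\(n\) term of the de~Rham complex of \(\comg{R}{J}{\alpha}\) from Definition~\ref{def:dR}, which is what links this computation to rigid cohomology in the following sections.
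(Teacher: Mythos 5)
Your proposal is correct and follows essentially the same route as the paper: the classical Hochschild--Kostant--Rosenberg isomorphism for the smooth algebra \(\ul{R}\) combined with Proposition~\ref{prop:hh2}(b), using \(\ul{R}\)-linearity/naturality of antisymmetrisation to identify the composite with the stated map. Your explicit reduction to the torsion-free case via Remark~\ref{rem:lambda-completion_flat} addresses a hypothesis (torsion-freeness in Proposition~\ref{prop:hh2}) that the paper's proof leaves implicit, but this does not change the argument.
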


\begin{proof}
  The usual Hochschild--Kostant--Rosenberg Theorem for smooth algebras
  over fields says that the antisymmetrisation map
  \(\Omega_{\ul{R}}^* \to \HH_*(\ul{R})\)
  is an isomorphism (see \cite{lod}*{Theorem 3.4.4}).  This map is
  \(\ul{R}\)\nb-linear,
  so it induces an isomorphism
  \(\comg{R}{J}{\alpha} \otimes_{\ul{R}} \Omega_{\ul{R}}^*
  \cong \comg{R}{J}{\alpha} \otimes_{\ul{R}}
  \HH_*(\ul{R})\).
  This is isomorphic to \(\HH_*(\comg{R}{J}{\alpha})\)
  by Corollary~\ref{prop:hh2}(b).
\end{proof}

In particular, if \(J=\dvgen R\), then \(\comg{R}{J}{\alpha} \cong
R^\updagger \otimes \dvf\) by
Example~\ref{exa:alpha-completion_JpiR}.  Theorem
  \ref{the:HKR_dagger} shows that the natural map from the mixed
  complex $(C(\ul{R^\updagger} ),b,B)$ to the mixed complex
  $(\ul{R^\updagger } \otimes_{\ul{R}} \Omega_{\ul{R}},0,d)$
  (mapping $a_0\otimes \ldots \otimes a_n$ to
  $(\nicefrac1{n!})a_0da_1\ldots da_n$, see \cite{lod}*{Proposition
    1.3.15}) is an isomorphism on `Hochschild' homology (that is,
  the $b$-homology). But then it also is an isomorphism on the
  associated cyclic homology of the mixed complexes (see
  \cite{lod}*{Proposition 2.5.15}).  So our result specializes to an
  isomorphism
\begin{equation}
  \label{eq:HP_Rdagger}
  \HP_j(R^\updagger \otimes\dvf)
  \cong \bigoplus_{n\ge 0} H_{2n+j}(R^\updagger \otimes_R (\Omega_{\ul{R}},d)).
\end{equation}
If~\(R\)
is smooth, then the right-hand side is, by definition, the
Monsky--Washnitzer homology of the quotient \(A\defeq R/J\) made periodic.

\subsection{A short proof of the Feigin--Tsygan Theorem}
The Feigin--Tsygan Theorem establishes an important property of cyclic homology: The periodic cyclic homology of the coordinate ring $B$ of an affine algebraic variety in characteristic 0 gives exactly Grothendieck's infinitesimal cohomology of that variety.

The proofs of Proposition \ref{prop:hh2}(a) and Theorem \ref{the:HKR_dagger} can be adapted to give a short proof of the Feigin--Tsygan Theorem. Actually, the proofs substantially simplify in this case since we can replace some of the preparatory lemmas by well-known easy facts from commutative algebra.

In this section we always assume that $P$ is a finitely generated commutative algebra over a field $\dvf$ of characteristic 0. If $J$ is an ideal in $P$, we denote by $\overline{P_J}$ the $J$-adic completion. In the following proof we will consider tensor powers $P^{\otimes n}$ (over $\dvf$) and we will denote by $J_n$ the kernel of the natural map $P^{\otimes n}\to (P/J)^{\otimes n}$.

We denote by $B_n(\overline{P_J})$ the $J_{n+2}$-adic completion of $B_n(P)$, by $C_n(\overline{P_J})$ the $J_{n+1}$-adic completion of $C_n(P)$ and by $\HH(\overline{P_J})$ the homology of $C(\overline{P_J})$. These are the usual conventions and the analogue of the $\dagger$-versions in Section \ref{sec:HH_dagger}. Again $B_n(\overline{P_J})$ is a resolution of $\overline{P_J}$ by $\overline{P_J}$-bimodules, admitting the continuous contraction $s$.
\bprop
Let $P$ and $J$ be as above and denote by $P^e \defeq P\otimes P$ the enveloping algebra. Then
\begin{enumerate}
  \item [(a)] $\overline{P^e_{J_2}} \otimes_{P^e}B(P)$ is a flat $\overline{P^e_{J_2}}$-module resolution of $\overline{P_J}$.
  \item [(b)] $B(\overline{P_J})$ is a flat $\overline{P^e_{J_2}}$-module resolution of $\overline{P_J}$.
  \item [(c)] One has $\overline{P_J}\otimes_{\overline{P^e_{J_2}}}B(\overline{P_J}) \cong C(\overline{P_J})$.
\end{enumerate}
As a consequence the natural map $C(P)\to C(\overline{P_J})$ induces a quasi-isomorphism
\bgl  \overline{P_J}\otimes_P (C(P),b) \,\to\,(C(\overline{P_J}),b)\egl
and the natural map $\HH_*(P)\to \HH_*(\overline{P_J})$ induces an isomorphism
\bgl  \overline{P_J}\otimes_P \HH_*(P)\cong \HH_*(\overline{P_J})\egl
\eprop
\bproof
(a) Since $P^e$ is Noetherian, $P^e\to \overline{P^e_{J_2}}$ is flat and $\overline{P^e_{J_2}}\otimes_{P^e}P=\overline{P_J}$ (see \cite{Atiyah-Macdonald:Commutative}*{10.14} and \cite{Atiyah-Macdonald:Commutative}*{10.13}). Thus $\overline{P^e_{J_2}}\otimes_{P^e}B(P)$ is a resolution of $\overline{P_J}$. Moreover, $\overline{P^e_{J_2}}\otimes_{P^e}B_n(P)= \overline{P^e_{J_2}}\otimes_\dvf P^{\otimes n}$ is a flat (even free) $\overline{P^e_{J_2}}$-module.

(b) As already mentioned above, the existence of the continuous contraction~$s$ shows that $B(\overline{P_J})$ is a resolution of $\overline{P_J}$.
Identifying the algebra $P^e\otimes P^{\otimes n}$ with $P\otimes P^{\otimes n}\otimes P=B_n(P)$, we can consider $B_n(\overline{P_J})$ as the completion of the Noetherian algebra $\overline{P^e_{J_2}}\otimes P^{\otimes n}$. Therefore, the maps $\overline{P^e_{J_2}} \to \overline{P^e_{J_2}}\otimes P^{\otimes n}\to B_n(\overline{P_J})$ are flat.

(c) Let $I$ be the kernel of the algebra map $B_n(P) = P\otimes
P^{\otimes n}\otimes P \to P\otimes P^{\otimes n} = C_n(P)$ induced
by the homomorphism $P^e\to P$. Since $B_n(P)$ is Noetherian,
we have $I\cdot\overline{B_n(P)}_{J_{n+2}}=
\overline{I}_{J_{n+2}}$ and
$\overline{B_n(P)}_{J_{n+2}}/\overline{I}_{J_{n+2}} =
(\overline{B_n(P)/I})_{J_{n+2}}$. Finally, we have
\[
C_n(\overline{P_J}) \defeq \overline{C_n(P)}_{J_{n+1}} =
\overline{B_n(P)}_{J_{n+2}}/I\cdot\overline{B_n(P)}_{J_{n+2}}=
P\otimes_{P^e}\overline{B_n(P)}_{J_{n+2}},
\]
whence $\overline{P_J}\otimes_{\overline{P^e_{J_2}}}\overline{B_n(P)}_{J_{n+2}} = P\otimes_{P^e}\overline{P^e_{J_2}}\otimes_{\overline{P^e_{J_2}}}\overline{B_n(P)}_{J_{n+2}} = P\otimes_{P^e}\overline{B_n(P)}_{J_{n+2}}=C_n(\overline{P_J})$.

Now, since $\overline{P_J}\otimes_P (C(P),b)$ and $(C(\overline{P_J}),b)$ both compute the same Tor-functor, we get the asserted quasi-isomorphism.
\eproof
Repeating now verbatim the (short) proof of Theorem~\ref{the:HKR_dagger} we obtain
\btheo Let $P$ be a smooth commutative algebra over a field $\dvf$ of characteristic 0 and $J$ an ideal in $P$. Then
\[
\overline{P_J}\otimes_P\Omega^*_P\cong \HH_*(\overline{P_J})
\]
\etheo
As an immediate consequence we get the Feigin--Tsygan Theorem
\btheo
Let $P$ and $J$ be as above. Then
\[
\HP_*(P/J) = \HP_*(\overline{P_J}) = HdR_*(\overline{P_J})
\]
Here, $HdR_*(\overline{P_J})$ stands for the homology of the complex $\left(\overline{P_J}\otimes_P\Omega^*_P,\, d\right)$ made $\Z/2$-periodic.
\etheo
The proof of the second equality in the displayed formula follows
again by repeating the discussion after
Theorem~\ref{the:HKR_dagger}.  In addition, the first equality
follows from the invariance of~$\HP_*$ under nilpotent extensions.
Alternatively, it follows directly from the definition of~$\HP_*$ in
the description of cyclic homology in~\cite{cqhc}.

\begin{remark} Since $\Omega^*_P$ is a finitely generated module over the Noetherian algebra $P$, we see that $\overline{P_J}\otimes_P\Omega^*_P$ is its completion. Thus $HdR_*(\overline{P_J})$ describes the infinitesimal cohomology of $P/J$ in the sense of Grothendieck, see \cite{coco}*{Theorem 6.1}.
\end{remark}

\subsection{Homotopy invariance}
\label{sec:homotopy_invariance}
Let \(R\) and~\(S\)
be complete bornological \(\dvr\)\nb-\hspace{0pt}algebras
and let \(f_0,f_1\colon R\to S\)
be bounded unital algebra homomorphisms.
\begin{definition}A (dagger-continuous)
\emph{homotopy between \(f_0\)
  and~\(f_1\)}
is a bounded unital algebra homomorphism
\(F\colon R\to S\hotimes \dvr[x]^\updagger\)
with \((\id_S\hotimes \ev_t)\circ F = f_t\)
for \(t=0,1\).\end{definition}
As we shall see, if such a homotopy exists, then  \(f_0\)
and~\(f_1\)
induce the same map in periodic cyclic homology,
\(\HP_*(f_0)=\HP_*(f_1)\).
Put in a nutshell, periodic cyclic homology is invariant under
dagger-continuous homotopies.  This is well known for polynomial
homotopies, that is, \(F\colon R\to S\otimes \dvr[x]\).
The proof of polynomial homotopy invariance shows, in fact, that a
polynomial homotopy induces a bounded chain homotopy between the maps
\(CC(f_0)\)
and \(CC(f_1)\).
Making this chain homotopy explicit, it can be checked then that the
same formulas still define a bounded chain homotopy between the maps
\(CC(f_0)\)
and \(CC(f_1)\)
when we start from a homotopy that is only dagger-continuous.  We
shall outline another proof that exhibits more clearly why
dagger-continuous homotopies work here (in contrast, continuous
homotopies in the usual sense do not work).

Equation~\eqref{eq:HP_Rdagger} implies easily that the
evaluation map at~\(0\) induces an isomorphism \(\HP_*(\dvf \otimes
\dvr[x]^\updagger) \cong \dvf\) if~$\dvf$ has characteristic~$0$.
We shall need a stronger statement:

\begin{lemma}
  \label{lem:HP_dagger_1-variable}
  Assume that~$\dvf$ has characteristic~$0$.
  The kernel of the chain map from
  \(CC(\dvf\otimes\dvr[x]^\updagger)\)
  to~\(\dvf\)
  with zero boundary map induced by evaluation at~\(0\)
  on \(C_0(\dvf\otimes\dvr[x]^\updagger)\)
  is contractible through a bounded contracting homotopy.
  Similarly, the kernel of the chain map from
  $\left((\dvf\otimes\dvr[x]^\updagger)\otimes_{\dvf[x]}
    \Omega_{\dvf[x]}^*, d\right)$ to~$\dvf$ is contractible
  through a bounded chain homotopy.
\end{lemma}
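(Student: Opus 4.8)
Write $A\defeq\dvf\otimes\dvr[x]^\updagger$. By Lemma~\ref{lem:polynomial_dagger_injective}, $A$ is the algebra of overconvergent power series $\sum_{n\ge0}a_nx^n$ with $a_n\in\dvf$ and $\liminf_n \nu(a_n)/n>0$, and a subset is bounded exactly when a single $\delta>0$ and a single $c$ with $\nu(a_n)+1\ge\delta n-c$ for all $n$ work for all its elements. The plan is to settle the de Rham statement first, by exhibiting an explicit bounded contracting homotopy, and then to deduce the cyclic statement from it using the Hochschild--Kostant--Rosenberg comparison already available in Theorem~\ref{the:HKR_dagger}. For the de Rham part: $\Omega^*_{\dvf[x]}=\dvf[x]\oplus\dvf[x]\,dx$, so $A\otimes_{\dvf[x]}\Omega^*_{\dvf[x]}$ is the two-term complex $A\xrightarrow{\,d\,}A\,dx$, and the kernel of its augmentation (evaluation at~$0$ in degree~$0$) is $\{f\in A:f(0)=0\}\xrightarrow{\,d\,}A\,dx$. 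A contracting homotopy is the antiderivative $h\bigl((\sum_n a_nx^n)\,dx\bigr)\defeq\sum_n\tfrac{a_n}{n+1}x^{n+1}$, which is defined because $\dvf$ has characteristic~$0$ and satisfies $d\circ h=\id$ and $h\circ d=\id$ by the fundamental theorem of calculus. The only point to check is that $h$ is bounded, and this is precisely where dagger-continuity enters: since $\nu(n+1)=O(\log n)$ (and vanishes identically if $\resf$ has characteristic~$0$), for every $\delta'<\delta$ one has $\nu\bigl(a_n/(n+1)\bigr)+1\ge\delta'(n+1)-c'$ with $c'$ depending only on $\delta,\delta'$, so $h$ sends bounded sets to bounded sets. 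On the Tate algebra $\dvf\langle x\rangle$ the same operator is not even well defined, since $a_n/(n+1)$ need not tend to~$0$; this is why \emph{continuous} homotopies do not suffice here.

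For the cyclic part, consider the canonical projection of mixed complexes $\mu\colon(C(A),b,B)\to(A\otimes_{\dvf[x]}\Omega^*_{\dvf[x]},0,d)$, $a_0\otimes\dots\otimes a_n\mapsto\tfrac1{n!}a_0\,da_1\dotsm da_n$; it is bounded (indeed $\mu_n=0$ for $n\ge2$, so no $n!$ occurs), strict, and it intertwines the two evaluations at~$0$ in degree~$0$. I will show that $\mu$ is a \emph{bounded homotopy equivalence of Hochschild complexes}. By Proposition~\ref{pro:tensor_dagger} the enveloping algebra $A\hotimes A$ is the two-variable overconvergent algebra $\dvf\otimes\dvr[x,y]^\updagger$, in which $x-y$ is a non-zerodivisor with bornologically closed image (division by $x-y$ is bounded on overconvergent series, again by the estimate $\nu(n)=O(\log n)$); hence
\[
0\longrightarrow A\hotimes A\xrightarrow{\,\cdot(x-y)\,}A\hotimes A\longrightarrow A\longrightarrow0
\]
is a bornological free resolution of the diagonal bimodule. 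In one variable the comparison maps between this Koszul resolution and the bar resolution $B(A)$ are built only from the multiplication maps and the divided difference $b\mapsto\frac{b(x)-b(y)}{x-y}$, and the standard homotopy exhibiting the two composites as homotopic to the identities can likewise be chosen bounded, because $B(A)$ is a free $(A\hotimes A)$-resolution carrying the bounded contraction~$s$ and, crucially, no denominators occur; all boundedness estimates again reduce to $\nu(n)=O(\log n)$. Since a homotopy equivalence of complexes of $(A\hotimes A)$-modules is preserved by the additive functor $A\hotimes_{A\hotimes A}(-)$, which turns the Koszul resolution into $\bigl(A\xrightarrow{\,0\,}A\bigr)\cong\Omega^*_{\dvf[x]}\otimes_{\dvf[x]}A$ and identifies the induced map with~$\mu$ up to bounded homotopy (both are chain maps inducing the $\mathsf{HKR}$ isomorphism on $\HH_*(A)$, which forces them to be boundedly homotopic as the Hochschild complex is boundedly homotopy equivalent to its homology), the claim follows.

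To pass from Hochschild to $CC$, note that $\operatorname{Cone}(\mu)$ is a mixed complex whose underlying Hochschild complex is boundedly contractible, say via a bounded operator $s$ of degree $+1$; since $B$ also has degree~$+1$, the series $\sigma\defeq\sum_{k\ge0}(-sB)^ks$ is, in each total degree, a \emph{finite} sum, hence a bounded operator on $CC(\operatorname{Cone}(\mu))=\operatorname{Cone}(CC(\mu))$ with $(b+B)\sigma+\sigma(b+B)=\id$ (this is the usual perturbation-lemma telescoping, using $bs+sb=\id$ and $bB+Bb=0$). Thus $CC(\mu)$ is a bounded homotopy equivalence, and since it intertwines the augmentations to~$\dvf$, it restricts to a bounded homotopy equivalence between $\ker\bigl(CC(A)\to\dvf\bigr)$ and $\ker\bigl(CC(A\otimes_{\dvf[x]}\Omega^*_{\dvf[x]})\to\dvf\bigr)$; by Example~\ref{ex:periodification} the latter is exactly the two-term complex contracted in the de Rham step. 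Contractibility being preserved under bounded homotopy equivalence, $\ker\bigl(CC(A)\to\dvf\bigr)$ is boundedly contractible as well. I expect the main obstacle to be the bornological bookkeeping in this last step: one must verify that both the Koszul--bar comparison homotopy and the series~$\sigma$ genuinely assemble into bounded operators on the \emph{product} totalisations rather than merely degreewise — the one-variable situation is what makes this routine, as all the maps in play are honest polynomial (divided-difference) operators. Alternatively, one may transport~$h$ through a bounded homotopy inverse of $CC(\mu)$ to obtain an explicit bounded contracting homotopy of $\ker\bigl(CC(A)\to\dvf\bigr)$, which is closer in spirit to the remark preceding the lemma.
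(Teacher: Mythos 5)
Your proposal is correct and follows essentially the same route as the paper's proof: the two-term Koszul resolution $0\to A\hotimes A\xrightarrow{\cdot(x-y)}A\hotimes A\to A\to 0$ with the key observation that division by $x-y$ is bounded, comparison with the bar resolution via projectivity for extensions with bounded sections, a perturbation-lemma telescoping to pass from the Hochschild to the cyclic complex, and the explicit bounded antiderivative $\sum a_nx^n\mapsto\sum\frac{a_n}{n+1}x^{n+1}$ for the final contraction. (One small misattribution: the boundedness of the divided difference $\frac{b(x)-b(y)}{x-y}$ comes from the fact that no denominators occur at all, not from the estimate $\nu(n)=O(\log n)$, which is needed only for the antiderivative; this does not affect the argument.)
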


\begin{proof}
  Let \(R\defeq \dvr[x]\)
  and identify \(R\otimes R\cong \dvr[x,y]\).  The map
  \[
  R\otimes R \to R\otimes R,\qquad
  f\mapsto (x-y)\cdot f,
  \]
  is an injective \(R\)\nb-bimodule
  homomorphism, and its cokernel is isomorphic to~\(R\)
  with the usual bimodule structure through the map
  \(R\otimes R\to R\),
  \(f\mapsto f(x,x)\).
  The chain complex $0 \to R \otimes R \to R \otimes R \to R \to 0$
  remains exact when we tensor with
  \(\dvr[x,y]^\updagger\).  Even more,
  \begin{equation}
    \label{eq:bimodule_resolution_one-variable}
    0 \to \dvr[x,y]^\updagger \to \dvr[x,y]^\updagger \to \dvr[x]^\updagger
    \to 0
  \end{equation}
  is a short exact sequence of bornological left
  \(\dvr[x]^\updagger\)\nb-modules
  with a bounded linear section.  The non-trivial (but easy)
  observation is that division by \(x-y\)
  is a bounded linear map from the kernel of the map
  \(\dvr[x,y]^\updagger \to \dvr[x]^\updagger\)
  to \(\dvr[x,y]^\updagger\).
  The bar resolution of~\(\dvr[x]^\updagger\)
  is a resolution with bounded linear section as well, and the free
  bornological bimodules in it are projective with respect to
  \(\dvr[x]^\updagger\)\nb-bimodule
  extensions with a bounded linear section.
  Hence~\eqref{eq:bimodule_resolution_one-variable} is homotopy
  equivalent as a bornological bimodule resolution to the bar
  resolution of~\(\dvr[x]^\updagger\).
  This implies a bounded homotopy equivalence between
  \((C(\dvr[x]^\updagger),b)\)
  and the chain complex with \(\dvr[x]^\updagger\)
  in degrees \(0\)
  and~\(1\)
  and~\(0\)
  in other degrees and with the zero boundary map.  This homotopy
  equivalence intertwines Connes' boundary operator~\(B\)
  and the differential
  \(d\colon \dvr[x]^\updagger\to \dvr[x]^\updagger\),
  \(f\mapsto f'\).
  By a variant of Kassel's perturbation lemma, this implies a chain
  homotopy equivalence between \((CC(\dvr[x]^\updagger),b+B)\)
  and \(d\colon \dvr[x]^\updagger\to \dvr[x]^\updagger\).
  All this remains true after tensoring with~\(\dvf\).

  The absolute integration map
  \(i\colon \sum a_n x^n \mapsto \sum \frac{a_n}{n+1} x^{n+1}\)
  is a bounded linear map on~\(\dvf\otimes \dvr[x]^\updagger\).
  It satisfies \(d\circ i = \id\)
  and \(i\circ d = \id - P_0\),
  where~\(P_0\) is defined by \(P_0(x^n) = \delta_{n,0} x^n\).  Hence
  \(d\colon \dvf\otimes \dvr[x]^\updagger\to \dvf\otimes \dvr[x]^\updagger\) is homotopy
  equivalent to~\(\dvf\) concentrated in degree~\(0\).
\end{proof}

\begin{proposition}
  \label{pro:dagger-continuous_homotopy_HP}
  Let \(P\) and~\(Q\) be complete bornological \(\dvf\)\nb-algebras
  and let \(f_0,f_1\colon P\rightrightarrows Q\) be bounded unital
  homomorphisms.  Assume that
  there is a dagger-continuous homotopy between
  \(f_0\) and~\(f_1\) and  that~\(\dvf\) has characteristic~\(0\).
  \begin{enumerate}
  \item[\textup{(a)}] Assume that $R,S$ are finitely generated
    $\dvr$-algebras, $J\lhd R$ and $I \lhd S$ are ideals, $\alpha
    \in [0,1]$ is rational and that $P=\comg{R}{J}{\alpha}$,
    $Q=\comg{S}{I}{\alpha}$.  Then the maps induced by $f_0, f_1$
    between the de Rham complexes
    \((\comg{R}{J}{\alpha}\otimes_{\ul{R}}\,\Omega_{\ul{R}}^*,\,d)\)
    and
    \((\comg{S}{I}{\alpha}\otimes_{\ul{S}}\,\Omega_{\ul{S}}^*,\,d)\)
    are homotopic with a bounded \(\dvf\)\nb-linear chain homotopy
    \textup{(}for the bornology explained in
    Remark~\textup{\ref{rem:dR}}).
  \item[\textup{(b)}]
    For general complete bornological \(\dvf\)\nb-algebras $P$
    and~$Q$, there is a bounded \(\dvr\)\nb-linear chain
      homotopy between the induced chain maps
    \[
    CC(f_0),CC(f_1)\colon (CC(P),B+b) \rightrightarrows (CC(Q),B+b).
    \]
    Hence \(\HP_*(f_0) = \HP_*(f_1)\).
  \end{enumerate}
\end{proposition}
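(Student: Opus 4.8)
The plan is to mimic the standard proof of polynomial homotopy invariance of $\HP$, replacing the polynomial algebra by $\dvr[x]^\updagger$, so that the only genuinely new input is Lemma~\ref{lem:HP_dagger_1-variable}. Write $D \defeq \dvf\otimes\dvr[x]^\updagger$ and let $F\colon P\to Q\hotimes\dvr[x]^\updagger$ be the given dagger-continuous homotopy, so that $(\id_Q\hotimes\ev_t)\circ F = f_t$ for $t=0,1$. Since $Q$ is a $\dvf$-algebra and $\dvr[x]^\updagger$ is flat over~$\dvr$, base change identifies $Q\hotimes\dvr[x]^\updagger$ with $Q\hotimes_\dvf D$; in the situation of part~(a), Corollary~\ref{cor:lambda_tensor_dagger} identifies it further with $\comg{S\otimes\dvr[x]}{I'}{\alpha}$, where $I'$ is the image of $I\otimes\dvr[x]$, so that its de~Rham complex is defined and $F$ induces a chain map on de~Rham complexes by the functoriality implicit in Definition~\ref{def:dR} and Remark~\ref{rem:dR}.

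The core step is valid for both $\mathcal C = CC$ and $\mathcal C$ the completed de~Rham complex functor: I claim that $\mathcal C(\ev_0),\mathcal C(\ev_1)\colon\mathcal C(D)\to\mathcal C(\dvf)$ are chain homotopic through a bounded homotopy. Because $\ev_0$ and $\ev_1$ are unital, the difference $\mathcal C(\ev_0) - \mathcal C(\ev_1)$ vanishes on the copy of~$\dvf$ spanned by the unit of $\mathcal C(D)$ in degree~$0$, and this copy is a subcomplex complementary to the kernel of the augmentation $\mathcal C(D)\to\dvf$ appearing in Lemma~\ref{lem:HP_dagger_1-variable}. Hence $\mathcal C(\ev_0) - \mathcal C(\ev_1)$ factors through that kernel, which is contractible through a bounded homotopy~$h$; composing $\mathcal C(\ev_0) - \mathcal C(\ev_1)$ with~$h$ yields the desired bounded null-homotopy. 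The point — and the reason mere continuity of the homotopy would not suffice — is precisely that the contracting homotopy underlying Lemma~\ref{lem:HP_dagger_1-variable} is built from the absolute integration $\sum a_n x^n\mapsto\sum\frac{a_n}{n+1}x^{n+1}$, which is bounded on $\dvr[x]^\updagger$ because $\dvf$ has characteristic~$0$.

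It then remains to transport this bounded null-homotopy through ${-}\hotimes_\dvf\mathcal C(D)$ and precompose with the map induced by~$F$. For part~(a) this is immediate: by the K\"unneth isomorphism for K\"ahler differentials, together with the fact (Remark~\ref{rem:dR}) that these completions commute with tensoring by the finitely generated module $\Omega^*$, the de~Rham complex of $Q\hotimes\dvr[x]^\updagger$ is the completed tensor product of the de~Rham complex of~$Q$ with $E\defeq(\dvf\otimes\dvr[x]^\updagger)\otimes_{\dvf[x]}\Omega^*_{\dvf[x]}$; tensoring the homotopy above by the identity on the de~Rham complex of~$Q$ and precomposing with the chain map induced by~$F$ then gives the required bounded $\dvf$-linear chain homotopy between the maps induced by $f_0$ and~$f_1$. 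For part~(b) one invokes the Eilenberg--Zilber/K\"unneth theorem for the cyclic bicomplex: the shuffle and Alexander--Whitney maps provide a natural bounded chain homotopy equivalence between $CC(Q)\hotimes_\dvf CC(D)$ and $CC(Q\hotimes_\dvf D)$ under which $\id_{CC(Q)}\hotimes CC(\ev_t)$ corresponds to $CC(\id_Q\hotimes\ev_t)$. Since conjugating a bounded null-homotopy by a bounded chain homotopy equivalence again produces a bounded null-homotopy, this shows that $CC(\id_Q\hotimes\ev_0) - CC(\id_Q\hotimes\ev_1)$ is a boundary for the differential $b+B$ via a bounded operator; precomposing with $CC(F)$ gives the bounded $\dvr$-linear chain homotopy between $CC(f_0)$ and $CC(f_1)$, and hence $\HP_*(f_0) = \HP_*(f_1)$.

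The step I expect to be the main obstacle is the bookkeeping in part~(b): one must check that the classical Eilenberg--Zilber equivalence for cyclic homology carries over to complete bornological algebras with all constituent maps and homotopies bounded and natural in the algebra variable, and that the infinite products defining $CC$ are compatible with the completed tensor product~$\hotimes$. This is routine, since the shuffle map, the Alexander--Whitney map and Connes' operator~$B$ are all given by explicit finite formulas, but it needs to be stated with some care. Part~(a), by contrast, requires only the trivial de~Rham K\"unneth isomorphism, and in both parts the sole analytic ingredient beyond formal manipulations is Lemma~\ref{lem:HP_dagger_1-variable}.
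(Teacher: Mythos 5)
Your proposal is correct and follows essentially the same route as the paper: both parts rest on Lemma~\ref{lem:HP_dagger_1-variable} together with a K\"unneth decomposition — the completed antisymmetric tensor product of de~Rham complexes in part~(a), and an Eilenberg--Zilber/K\"unneth equivalence $CC(Q\hotimes D)\simeq CC(Q)\hotimes CC(D)$ for the cyclic bicomplex in the symmetric monoidal category of complete bornological $\dvf$\nb-vector spaces in part~(b). The only difference is that where you propose to verify the latter equivalence by hand via bounded shuffle and Alexander--Whitney maps, the paper simply cites \cite{Meyer:HLHA}*{Theorem~4.74}, so that bookkeeping step you flag as the main obstacle is discharged by reference rather than redone.
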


\begin{proof}
(a) The de Rham complex for $\comg{S}{I}{\alpha} \hotimes\dvr [x]^\updagger$ is $(\comg{S}{I}{\alpha}\otimes_{\ul{S}}\Omega_{\ul{S}}^* ) \bigwedge (\dvr [x]^\updagger \otimes_{\dvr[x]} \Omega_{\dvf[x]}^*)$, where $\bigwedge$ stands for the completed antisymmetric tensor product. The assertion then follows from Lemma \ref{lem:HP_dagger_1-variable}.

  (b) We are working in the symmetric monoidal category of complete
  bornological \(\dvf\)\nb-vector spaces with the tensor
  product~\(\hotimes\).  The definitions of Hochschild and periodic
  cyclic homology above are the standard ones for algebras in this
  symmetric monoidal category.  Thus by \cite{Meyer:HLHA}*{Theorem 4.74} (or by \cite{pus})
  \[
  (CC(Q\hotimes (\dvf \hotimes \dvr[x]^\updagger)), B+b) \simeq
  (CC(Q), B+b) \hotimes (CC(\dvf \hotimes \dvr[x]^\updagger), B+b),
  \]
  where~\(\simeq\) means a bounded homotopy equivalence as chain
  complexes of complete bornological vector spaces.  Now
  Lemma~\ref{lem:HP_dagger_1-variable} says that \((CC(\dvf \hotimes
  \dvr[x]^\updagger), B+b)\simeq \dvf\) concentrated in degree~\(0\),
  so that
  \[
  (CC(Q\hotimes (\dvf \hotimes \dvr[x]^\updagger)), B+b) \simeq
  (CC(Q), B+b).
  \]
  Thus evaluation at \(0\) and at~\(1\)
  induce chain homotopic maps \(CC(Q\hotimes \dvr[x]^\updagger) \to
  CC(Q)\) and \(CC(f_0)\) and~\(CC(f_1)\) are chain
  homotopic as asserted.  Then the induced maps \(\HP_*(f_0)\) and
  \(\HP_*(f_1)\) on homology must be equal.
\end{proof}

\section{Natural chain complexes for commutative algebras over the
  residue field}
\label{sec:homology_residue}
\numberwithin{theorem}{section}

We are going to associate some natural chain complexes to
commutative algebras over the residue field \(\resf\defeq\dvr/\dvgen
\dvr\).  For two of them, we show in Section~\ref{sec:compare_rigid}
that they compute rigid cohomology.  From now on, we will always
  assume that~$\dvf$ has characteristic 0.

First we fix some \(\alpha\in[0,1]\).
Let~\(A\)
be a commutative \(\resf\)\nb-algebra.
We may view~\(A\)
as a \(\dvr\)\nb-algebra
with \(\dvgen\cdot A=0\).
Let \(S\subseteq A\)
be a set of generators for~\(A\).
Let~$\dvr[S]$ be the free commutative algebra on the set~$S$.
The inclusion map \(S\to A\)
defines a unital homomorphism \(p\colon \dvr[S]\to A\)
because~\(A\)
is commutative.  This is surjective because~\(S\)
generates~\(A\)
by assumption.  Equip~\(\dvr[S]\)
with the fine bornology.  Let \(J\defeq \ker p\);
this contains~\(\dvgen\)
because \(\dvgen\cdot A=0\).
We shall be interested in the completion
\(\comg{\dvr[S]}{J}{\alpha}\),
which is a complete bornological \(\dvr\)\nb-algebra,
and in the chain complex \((CC(\comg{\dvr[S]}{J}{\alpha}),B+b)\)
that computes its periodic cyclic homology.  To obtain a natural
chain complex, we prefer a natural choice for the set of generators,
namely, \(S\defeq A\). This may be infinite even if~\(A\)
is finitely generated, but this shall not bother us: it is the price
to pay for a natural construction.  Let
\[
\chain^\alpha(A) \defeq (CC(\comg{\dvr[A]}{J}{\alpha}),B+b)
\]
be the chain complex constructed above for the generating set
\(S=A\).
We show that this chain complex is natural.  A unital algebra
homomorphism \(f\colon A_1\to A_2\)
induces a bounded unital algebra homomorphism
\(\dvr[f]\colon \dvr[A_1] \to \dvr[A_2]\).
It is compatible with the two projections
\(p_i\colon \dvr[A_i]\to A_i\)
in the following sense: \(p_2\circ \dvr[f]= f\circ p_1\).
Hence~\(\dvr[f]\)
maps the kernel \(J_1\defeq \ker p_1\)
into the kernel \(J_2\defeq \ker p_2\).
By assumption, it extends uniquely to a bounded unital algebra
homomorphism
\(\comg{\dvr[A_1]}{J_1}{\alpha} \to
\comg{\dvr[A_2]}{J_2}{\alpha}\),
which in turn induces a bounded chain map
\[
\chain^\alpha(f)\colon
\chain^\alpha(A_1)\to\chain^\alpha(A_2)
\]
Thus we have constructed a functor~\(\chain^\alpha\)
from the category of commutative \(\resf\)\nb-algebras
to the category of chain complexes of bornological \(\dvf\)\nb-vector
spaces.  We may forget the bornology on~\(\chain^\alpha\)
because we shall not use it.  Let \(\homo^\alpha(A)\) be the
homology of the chain complex~\(\chain^\alpha(A)\).  This is also
functorial for unital algebra homomorphisms.

The identity map on~\(\dvr[A]\)
extends uniquely to a bounded algebra homomorphism
\(\comg{\dvr[A]}{J}{\alpha} \to
\comg{\dvr[A]}{J}{\beta}\)
for \(\alpha,\beta\in[0,1]\)
with \(\alpha\le \beta\).  This induces a chain map
\[
\sigma_{\alpha\beta}\colon \chain^\alpha(A)\to\chain^\beta(A),
\]
which in turn induces a \(\dvf\)\nb-linear map
\[
\sigma_{\alpha\beta}\colon \homo^\alpha(A)\to\homo^\beta(A).
\]

Exactly the same considerations apply to the $\alpha$\nb-versions of
the de Rham complexes associated to~$A$.  We set
\[
\chaindR^\alpha(A) \defeq
(\comg{\dvr[A]}{J}{\alpha} \otimes_{\underline{\dvr[A]}} \Omega^*_{\underline{\dvr[A]}},d)
\]
and write $\homdR^\alpha (A)$ for the homology of this
\(\N\)\nb-graded complex.  Again, for \(\alpha\le \beta\), we
obtain a natural chain map
\[
\sigma_{\alpha\beta}\colon \chaindR^\alpha(A)\to\chaindR^\beta(A).
\]

We shall be most interested in the \emph{homotopy limits} of the
projective systems of complexes that we obtain that way.  To be
specific let us start again with the projective system of cyclic
complexes defined by the maps
\[
\sigma_{\nicefrac1{(m+1)},\nicefrac1m}\colon \chain^{\nicefrac1{(m+1)}}(A)\to \chain^{\nicefrac1m}(A).
\]
The homotopy limit is defined as the mapping cone of the bounded
chain map
\begin{equation}
  \label{eq:hoprojlim_definition}
  \prod_{m=1}^\infty \chain^{\nicefrac1m}(A) \to
  \prod_{m=1}^\infty \chain^{\nicefrac1m}(A),\qquad
  (x_m) \mapsto (x_m - \sigma_{\nicefrac1{(m+1)},\nicefrac1m}(x_{m+1})),
\end{equation}
shifted by~$1$.

\begin{definition}
  \label{def:ccrig}
  We denote the chain complexes obtained as homotopy limits of the
  projective systems
  \[
  \left(\chain^{\nicefrac1m}(A)\right),\qquad
  \left(\chaindR^{\nicefrac1m}(A)\right)
  \]
  by \(\chain^\rig(A)\) and \(\chaindR^\rig(A)\), respectively.
  Their homology is denoted by \(\homo^\rig(A)\) and
  \(\homdR^\rig(A)\).
\end{definition}

The long exact sequence for the homology of a mapping cone implies a
short exact sequence
\[
0 \to \mathop{\lim\nolimits^1}_{m}
\homo^{\nicefrac1m}_{*+1}(A) \to
\homo^\rig_*(A) \to
\lim_{m} \homo^{\nicefrac1m}_*(A) \to 0.
\]
Clearly, \(\chain^\rig\), \(\homo^\rig\) and
\(\chaindR^\rig\), \(\homdR^\rig\) inherit functoriality for unital
\(\resf\)\nb-algebra homomorphisms from the corresponding
$\alpha$\nb-versions.

\begin{remark}
  \label{rem:rig_through_pro-algebra}
  By definition, \(\homo^\alpha_*(A) =
  \HP_*(\comg{\dvr[A]}{J}{\alpha})\) is the periodic cyclic homology
  of a certain complete bornological \(\dvf\)\nb-algebra.  Thus
  \(\homo^\rig_*(A)\) is exactly the periodic cyclic homology, as
  defined in~\cite{corval}, of the projective system
  of complete bornological \(\dvf\)\nb-algebras
  \[
  \left(\comg{\dvr[A]}{J}{\nicefrac1m}\right),\qquad
  m\in\N_{\ge1}.
  \]
\end{remark}

The free presentation~\(\dvr[A]\) of~\(A\) used to
define~\(\chain^\alpha(A)\) is natural but possibly very large.  For
computations, we want to use smaller generating sets.  These give
homotopy equivalent chain complexes:

\begin{proposition}
  \label{pro:free_presentation_independent}
  Let~\(A\) be a \(\resf\)\nb-algebra and let \(S\subseteq A\) be a
  generating set.  Let \(\alpha\in[0,1]\) and let \(J^S\triqui
  \dvr[S]\) be the kernel of the canonical homomorphism \(p^S\colon
  \dvr[S]\to A\).

  There are bounded maps \(f\colon \comg{\dvr[S]}{J^S}{\alpha} \to
  \comg{\dvr[A]}{J}{\alpha}\) and \(g\colon
  \comg{\dvr[A]}{J}{\alpha} \to \comg{\dvr[S]}{J^S}{\alpha}\), such
  that $f\circ g$ and $g\circ f$ are homotopic to the identity
  through a dagger-continuous homotopy.
\end{proposition}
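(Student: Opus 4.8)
The plan is to realise the two composites as completions of explicit algebra maps between free presentations of~$A$, and then to exhibit a single ``straight-line'' dagger-continuous homotopy between $\id$ and $f\circ g$.

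First I would construct $f$ and $g$. Because $S\subseteq A$, the subalgebra of $\dvr[A]$ generated by~$S$ is exactly~$\dvr[S]$; write $\iota\colon\dvr[S]\hookrightarrow\dvr[A]$ for the inclusion and $p\colon\dvr[A]\to A$ for the canonical presentation with $\ker p=J$, so that $p\circ\iota=p^S$ and hence $\iota(J^S)\subseteq J$. All these algebras carry the fine bornology, and both $J^S$ and~$J$ contain~$\dvgen$ (because $A$ is an $\resf$\nobreakdash-algebra), so Proposition~\ref{pro:lambda_functorial} extends~$\iota$ to the bounded unital homomorphism~$f$. For~$g$, choose for every $a\in A$ some $\widetilde a\in\dvr[S]$ with $p^S(\widetilde a)=a$, taking $\widetilde a=a$ when $a\in S$; by the freeness of~$\dvr[A]$ the assignment $a\mapsto\widetilde a$ defines a bounded unital homomorphism $\psi'\colon\dvr[A]\to\dvr[S]$ with $p^S\circ\psi'=p$, whence $\psi'(J)\subseteq J^S$, and Proposition~\ref{pro:lambda_functorial} extends it to the bounded unital homomorphism~$g$. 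Since $\psi'\circ\iota=\id_{\dvr[S]}$ on generators, $g\circ f=\id$; so it remains to homotope $f\circ g$ to the identity. Put $\psi\defeq\iota\circ\psi'\colon\dvr[A]\to\dvr[A]$; then $p\circ\psi=p$, and $f\circ g$ is the unique extension of~$\psi$ to~$\comg{\dvr[A]}{J}{\alpha}$.

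Next I would build the homotopy. Set $\dvr[A][x]\defeq\dvr[A]\otimes\dvr[x]$ with the fine bornology and let $\widetilde\psi\colon\dvr[A]\to\dvr[A][x]$ be the unital homomorphism with $\widetilde\psi(a)=(1-x)\,a+x\,\psi(a)$ on generators~$a$. Let $\widetilde J\triqui\dvr[A][x]$ be the ideal generated by~$J$; it equals the kernel of $p\otimes\id\colon\dvr[A][x]\to A[x]$, and it contains~$\dvgen$. Using $p\circ\psi=p$, a check on generators shows that $(p\otimes\id)\circ\widetilde\psi$ equals~$p$ followed by the inclusion $A\hookrightarrow A[x]$, so $\widetilde\psi(J)\subseteq\widetilde J$. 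Hence, by Proposition~\ref{pro:lambda_functorial}, $\widetilde\psi$ extends to a bounded unital homomorphism $\comg{\dvr[A]}{J}{\alpha}\to\comg{\dvr[A][x]}{\widetilde J}{\alpha}$. Finally, Corollary~\ref{cor:lambda_tensor_dagger} applied with $R=\dvr[A]$, $I=J$, $S=\dvr[x]$ (its technical hypothesis holds automatically, all bornologies being fine), combined with $\comling{\dvr[x]}\cong\dvr[x]^\updagger$ from Theorem~\ref{the:MW_completion}, identifies the target with $\comg{\dvr[A]}{J}{\alpha}\hotimes\dvr[x]^\updagger$, compatibly with the canonical maps. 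This produces a bounded unital homomorphism $F\colon\comg{\dvr[A]}{J}{\alpha}\to\comg{\dvr[A]}{J}{\alpha}\hotimes\dvr[x]^\updagger$.

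It then remains to read off the endpoints: on a generator~$a$ the composite of~$\widetilde\psi$ with the canonical map into $\comg{\dvr[A]}{J}{\alpha}\hotimes\dvr[x]^\updagger$ sends~$a$ to $j(a)\otimes 1+\bigl(j(\psi(a))-j(a)\bigr)\otimes x$, where $j$ denotes the canonical map; thus $\id\hotimes\ev_0$ and $\id\hotimes\ev_1$ turn it into $j(a)$ and $j(\psi(a))$ respectively, and by the uniqueness in Proposition~\ref{pro:lambda_universal} we get $(\id\hotimes\ev_0)\circ F=\id$ and $(\id\hotimes\ev_1)\circ F=f\circ g$. Hence $F$ is a dagger-continuous homotopy from $\id$ to $f\circ g$, and together with $g\circ f=\id$ this proves the proposition. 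The one step carrying real content is the inclusion $\widetilde\psi(J)\subseteq\widetilde J$ — i.e.\ that the straight-line homotopy respects the ideals defining the completions, which is exactly where $p\circ\psi=p$ enters; once this is in place, the recognition of the target of~$F$ as $\comg{\dvr[A]}{J}{\alpha}\hotimes\dvr[x]^\updagger$, which is what makes~$F$ a \emph{dagger-continuous} homotopy, is a formal consequence of Corollary~\ref{cor:lambda_tensor_dagger}.
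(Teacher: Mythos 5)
Your proposal is correct and follows essentially the same route as the paper's proof: $f$ from the inclusion $\dvr[S]\hookrightarrow\dvr[A]$, $g$ from a choice of lifts with $g\circ f=\id$ on the nose, and the straight-line homotopy $a\mapsto(1-x)a+x\,fg(a)$, which respects the ideals because $p\circ fg=p$ and extends to the completions via Proposition~\ref{pro:lambda_functorial} together with Corollary~\ref{cor:lambda_tensor_dagger} and Theorem~\ref{the:MW_completion}. The extra detail you supply on $\widetilde\psi(J)\subseteq\widetilde J$ and on reading off the endpoints via the uniqueness in Proposition~\ref{pro:lambda_universal} is exactly what the paper leaves implicit.
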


\begin{proof}
  The inclusion map \(S\to A\) induces a unital homomorphism
  \(f\colon \dvr[S]\to\dvr[A]\).  It maps the kernel~\(J^S\) of
  \(p^S\colon \dvr[S]\to A\) into the kernel~\(J\) of \(p\colon
  \dvr[A]\to A\) because \(p\circ f=p^S\).  Hence it extends
  uniquely to a bounded unital algebra homomorphism \(f\colon
  \comg{\dvr[S]}{J^S}{\alpha} \to
  \comg{\dvr[A]}{J}{\alpha}\) by
  Proposition~\ref{pro:lambda_functorial}.  Since~\(S\)
  generates~\(A\), the homomorphism \(p^S\colon \dvr[S]\to A\) is
  surjective.  For each \(a\in A\), choose some \(g(a)\in \dvr[S]\)
  with \(p^S(g(a))=a\); we may assume \(g(a)=a\) for all \(a\in S\).
  These choices define a unital homomorphism \(g\colon
  \dvr[A]\to\dvr[S]\) because~\(\dvr[S]\) is commutative.  By
  construction, \(p^S\circ g(a)=p(a)\) for all \(a\in A\).  This
  implies \(p^S\circ g=p\) and hence \(g(J) \subseteq J^S\).
  Hence~\(g\) extends uniquely to a bounded unital algebra
  homomorphism \(g\colon \comg{\dvr[A]}{J}{\alpha}
  \to \comg{\dvr[S]}{J^S}{\alpha}\) by
  Proposition~\ref{pro:lambda_functorial}.  We have \(g\circ f =
  \id_{\dvr[S]}\) because \(g(a)=a\) for all \(a\in S\).

  The homomorphism \(f\circ g\colon \dvr[A]\to\dvr[A]\) is homotopic
  to the identity map through the homotopy \(H\colon
  \dvr[A]\to\dvr[A,t]\) defined by \(H(a) \defeq t\cdot a+ (1-t)
  fg(a)\) for all \(a\in A\).  Since \(p\circ fg = p\), \(H\) maps
  \(J\defeq \ker(p)\triqui \dvr[A]\) into \(J\otimes \dvr[t]\), the
  kernel of \(p\otimes \id_{\dvr[t]} \colon \dvr[A,t] =
  \dvr[A]\otimes\dvr[t] \to A\otimes \dvr[t]\).  Now
  Corollary~\ref{cor:lambda_tensor_dagger} and
  Proposition~\ref{pro:lambda_functorial} show that~\(H\) extends
  uniquely to a bounded unital algebra homomorphism
  \(\comg{\dvr[A]}{J}{\alpha} \to
  \comg{\dvr[A]}{J}{\alpha} \hotimes
  \comling{\dvr[t]}\).  Here we may identify
  \(\comling{\dvr[t]} \cong \dvr[t]^\updagger\) by
  Theorem~\ref{the:MW_completion}.  Thus~\(H\) is a
  dagger-continuous homotopy between \(f\circ g\) and the identity
  map.
\end{proof}

\begin{corollary}
  \label{cor:anypres}
  Let $A$ and~$S$ be as in
  Proposition~\textup{\ref{pro:free_presentation_independent}}.
  Then \(\chain^\alpha(A)\) is naturally chain homotopy equivalent
  to \(CC(\comg{\dvr[S]}{J^S}{\alpha},B+b)\).  And
  \(\chain^\rig(A)\) is naturally chain homotopy equivalent to the
  homotopy limit of the chain complexes
  \(CC(\comg{\dvr[S]}{J^S}{\alpha},B+b)\) for \(\alpha=1/m\),
  \(m\to\infty\).
  Analogous statements hold for the de Rham complexes
  \(\chaindR^\alpha(A)\) and \(\chaindR^\rig(A)\).
\end{corollary}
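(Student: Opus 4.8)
The plan is to combine Proposition~\ref{pro:free_presentation_independent} with Proposition~\ref{pro:dagger-continuous_homotopy_HP} and then pass to homotopy limits. Proposition~\ref{pro:free_presentation_independent} supplies bounded unital homomorphisms $f\colon \comg{\dvr[S]}{J^S}{\alpha}\to\comg{\dvr[A]}{J}{\alpha}$ and $g\colon \comg{\dvr[A]}{J}{\alpha}\to\comg{\dvr[S]}{J^S}{\alpha}$ with $g\circ f=\id$ and with $f\circ g$ dagger-continuously homotopic to the identity; moreover $f$ is the unique extension of the algebra homomorphism $\dvr[S]\to\dvr[A]$ induced by $S\hookrightarrow A$. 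Recall that $\chain^\alpha(A)=CC(\comg{\dvr[A]}{J}{\alpha},B+b)$ by definition. Applying the functor $CC({-},B+b)$ to $f$ and $g$ gives bounded chain maps $CC(f)$ and $CC(g)$ between $CC(\comg{\dvr[S]}{J^S}{\alpha},B+b)$ and $\chain^\alpha(A)$. Since $CC(g)\circ CC(f)=CC(g\circ f)=\id$ and $CC(f)\circ CC(g)=CC(f\circ g)$ is chain homotopic to the identity by Proposition~\ref{pro:dagger-continuous_homotopy_HP}(b) (applied with $P=Q=\comg{\dvr[A]}{J}{\alpha}$ and the dagger-continuous homotopy of Proposition~\ref{pro:free_presentation_independent}), the maps $CC(f)$ and $CC(g)$ are mutually inverse chain homotopy equivalences. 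The homotopy class of $CC(g)$ does not depend on the auxiliary lift $g$: two lifts are joined by the dagger-continuous homotopy $a\mapsto t\,g(a)+(1-t)\,g'(a)$, which extends to the completions by Corollary~\ref{cor:lambda_tensor_dagger} and Proposition~\ref{pro:lambda_functorial}, so Proposition~\ref{pro:dagger-continuous_homotopy_HP}(b) again applies; combined with the functoriality of $CC(f)$ in $\resf$\nobreakdash-algebra maps respecting the chosen generating sets, this gives the asserted naturality.

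For $\chain^\rig$, run the above for every $\alpha=1/m$, $m\in\N_{\ge1}$. The maps $f_m$ are all extensions of the one homomorphism $\dvr[S]\to\dvr[A]$, so they commute \emph{strictly} with the tower maps $\sigma_{1/(m+1),1/m}$: in each square both composites extend the same homomorphism $\dvr[S]\to\comg{\dvr[A]}{J}{1/m}$, and such an extension is unique. Hence $(CC(f_m))_m$ is a morphism of towers of chain complexes which is a chain homotopy equivalence in every level, so $\prod_m CC(f_m)$ is a chain homotopy equivalence between $\prod_m CC(\comg{\dvr[S]}{J^S}{1/m},B+b)$ and $\prod_m\chain^{1/m}(A)$ intertwining the respective telescope maps as in~\eqref{eq:hoprojlim_definition}. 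Taking mapping cones (shifted by~$1$) yields a morphism of the distinguished triangles that define the two homotopy limits in the homotopy category of chain complexes; since two of its three vertical arrows are isomorphisms there, so is the third. Thus $\chain^\rig(A)$ is chain homotopy equivalent, naturally in $A$, to the homotopy limit of the $CC(\comg{\dvr[S]}{J^S}{1/m},B+b)$.

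The de Rham versions follow by the same scheme with Proposition~\ref{pro:dagger-continuous_homotopy_HP}(a) in place of part~(b). By base change of Kähler differentials, $f$ and $g$ induce bounded chain maps between $\chaindR^\alpha(A)$ and $\comg{\dvr[S]}{J^S}{\alpha}\otimes_{\ul{\dvr[S]}}\Omega^*_{\ul{\dvr[S]}}$, and the homotopy $H\colon\dvr[A]\to\dvr[A,t]$ from the proof of Proposition~\ref{pro:free_presentation_independent} provides the required bounded chain homotopy once one knows that the de Rham complex of $\comg{\dvr[A]}{J}{\alpha}\hotimes\dvr[t]^\updagger$ splits as the completed antisymmetric tensor product $\bigl(\comg{\dvr[A]}{J}{\alpha}\otimes_{\ul{\dvr[A]}}\Omega^*_{\ul{\dvr[A]}}\bigr)\wedge\bigl(\dvr[t]^\updagger\otimes_{\dvr[t]}\Omega^*_{\dvf[t]}\bigr)$ with the second factor contractible relative to evaluation at $0$. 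The contractibility is the second assertion of Lemma~\ref{lem:HP_dagger_1-variable}, and the splitting follows from $\Omega^*_{\dvf[A]}$ being a free $\dvf[A]$\nobreakdash-module together with Corollary~\ref{cor:lambda_tensor_dagger} and the identification $\comling{\dvr[t]}\cong\dvr[t]^\updagger$ of Theorem~\ref{the:MW_completion}. With the splitting in hand, the homotopy-limit statement for $\chaindR^\rig$ follows from the triangulated argument of the previous paragraph.

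The one point that requires genuine care — and the main obstacle — is precisely this last splitting: Proposition~\ref{pro:dagger-continuous_homotopy_HP}(a) and Remark~\ref{rem:dR} are phrased for finitely generated algebras, whereas $\dvr[A]$ is in general infinitely generated. One therefore has to check that the Künneth isomorphism for Kähler differentials of completed tensor products persists in the free (but possibly infinitely generated) case, which is routine exactly because freeness makes every tensor product and completion in sight split off the $dt$\nobreakdash-direction cleanly; everything else in the argument is formal manipulation with the universal properties already established.
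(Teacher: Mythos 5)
Your proof is correct and follows the same route as the paper: combine Proposition~\ref{pro:free_presentation_independent} with Proposition~\ref{pro:dagger-continuous_homotopy_HP}, observe that the maps $f$ and $g$ (being extensions of a single homomorphism $\dvr[S]\to\dvr[A]$ and its chosen lift) are compatible across the tower $\alpha=1/m$, and pass to homotopy limits via the mapping-cone description. The one place where you go beyond the paper --- checking that the de Rham half of Proposition~\ref{pro:dagger-continuous_homotopy_HP} really applies to the possibly infinitely generated free algebras $\dvr[A]$ and $\dvr[S]$, a point the paper's ``the proof for the de Rham version is the same'' silently elides --- is legitimate, and your resolution via freeness of $\Omega^*_{\underline{\dvr[A]}}$ is the right one.
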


\begin{proof}
  The first assertion follows from
  Proposition~\ref{pro:free_presentation_independent} and
  Proposition~\ref{pro:dagger-continuous_homotopy_HP}.(b).  The maps
  \(CC(f)\) and \(CC(g)\) that we get from
    Proposition~\ref{pro:free_presentation_independent} and the
  chain homotopy equivalence are compatible for different~\(\alpha\)
  and hence induce a chain homotopy equivalence between the homotopy
  projective limits, giving the second statement.  The proof for
  the de Rham version is the same.
\end{proof}

Roughly speaking, we get chain complexes homotopy equivalent
to \(\chain^\alpha(A)\) and~\(\chain^\rig(A)\) when we replace
\(p\colon \dvr[A] \to A\) by any presentation of~\(A\) by a free
commutative algebra over~\(\dvr\): a surjective homomorphism
\(\dvr[S] \to A\) is equivalent to a map \(S\to A\) whose image
generates~\(A\).
Proposition~\ref{pro:free_presentation_independent} remains true if
the map \(S\to A\) is not injective.

\begin{theorem}
  \label{the:quasi}
  Let~$A$ be a finitely generated $\resf$\nb-algebra.  The complexes
  \(\chain^\rig(A)\) and~\(\chaindR^\rig(A)\) made
    \(2\)\nb-periodic are quasi-isomorphic.
\end{theorem}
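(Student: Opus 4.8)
The plan is to reduce to a \emph{finite} presentation of~$A$, apply the bornological Hochschild--Kostant--Rosenberg theorem (Theorem~\ref{the:HKR_dagger}) together with the mixed-complex formalism to each algebra in the relevant projective system, and then pass to homotopy limits.

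\emph{Step 1: reduction to a finite presentation.} Since $A$ is finitely generated over~$\resf$, choose a \emph{finite} generating set $S\subseteq A$ and put $R\defeq\dvr[S]$ and $J\defeq J^S=\ker(p^S\colon \dvr[S]\to A)$. Then $R$ is a finitely generated commutative $\dvr$-algebra, $\dvgen\in J$ because $\dvgen A=0$, and $\ul{R}=\dvf[S]$ is a polynomial ring in finitely many variables, hence smooth over~$\dvf$. By Corollary~\ref{cor:anypres}, $\chain^\rig(A)$ is naturally chain homotopy equivalent to $\holim_m CC(\comg{R}{J}{1/m},B+b)$ and $\chaindR^\rig(A)$ is naturally chain homotopy equivalent to $\holim_m \chaindR^{1/m}_S(A)$, where $\chaindR^{1/m}_S(A)\defeq \comg{R}{J}{1/m}\otimes_{\ul{R}}\Omega^*_{\ul{R}}$. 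It therefore suffices to compare these two projective systems.

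\emph{Step 2: HKR at each level.} Fix $m\ge 1$. Exactly as in the discussion following Theorem~\ref{the:HKR_dagger} (which treats the case $J=\dvgen R$), there is a natural map of mixed complexes
\[
\mu_m\colon \bigl(C(\comg{R}{J}{1/m}),\,b,\,B\bigr)\longrightarrow \bigl(\comg{R}{J}{1/m}\otimes_{\ul{R}}\Omega^*_{\ul{R}},\,0,\,d\bigr),\qquad a_0\otimes\dotsb\otimes a_n\longmapsto \nicefrac{1}{n!}\;a_0\,da_1\dotsm da_n
\]
(the completed antisymmetrisation map; see \cite{lod}*{Proposition~1.3.15} and Remark~\ref{rem:dR}), which carries the Hochschild boundary to~$0$ and Connes' operator~$B$ to the de Rham differential~$d$. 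Since $\ul{R}$ is smooth over~$\dvf$ and $1/m$ is rational, Theorem~\ref{the:HKR_dagger} asserts precisely that $\mu_m$ is a quasi-isomorphism on Hochschild (that is, $b$-)homology. By \cite{lod}*{Proposition~2.5.15}, a map of mixed complexes that is a quasi-isomorphism on Hochschild homology induces a quasi-isomorphism on the associated periodic cyclic complexes. Hence $\mu_m$ yields a quasi-isomorphism from $CC(\comg{R}{J}{1/m},B+b)$ to the $2$-periodification of $\chaindR^{1/m}_S(A)$, i.e.\ to the $\Z/2$-graded complex $\bigl(\prod_{n\ge 0}\comg{R}{J}{1/m}\otimes_{\ul{R}}\Omega^{2n+\bullet}_{\ul{R}},\,d\bigr)$ in the sense of Example~\ref{ex:periodification}. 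The only point to verify here is that the reasoning of that discussion is insensitive to replacing $\dvgen R$ by an arbitrary ideal containing~$\dvgen$, which it is.

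\emph{Step 3: passage to homotopy limits.} The maps $\mu_m$ are compatible with the transition maps $\sigma_{1/(m+1),1/m}$ of both projective systems, since all of these are induced by the algebra homomorphisms $\comg{R}{J}{1/(m+1)}\to\comg{R}{J}{1/m}$ together with functorial constructions. A homotopy limit is the shifted mapping cone of an endomorphism of a countable product; over the field~$\dvf$ products of quasi-isomorphisms are quasi-isomorphisms, and a chain map of mapping cones that is a quasi-isomorphism on both source and target is itself a quasi-isomorphism (long exact sequence of the cone and the five lemma). Hence the $\mu_m$ assemble to a quasi-isomorphism $\holim_m CC(\comg{R}{J}{1/m},B+b)\to\holim_m\bigl(2\text{-periodification of }\chaindR^{1/m}_S(A)\bigr)$. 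Finally, $2$-periodification is a product over~$n$ of degree shifts and so commutes with the homotopy limit over~$m$; therefore the target is the $2$-periodification of $\holim_m\chaindR^{1/m}_S(A)$. Combining with Step~1, the source is $\chain^\rig(A)$ and the target is the $2$-periodification of $\chaindR^\rig(A)$, both up to chain homotopy, which proves the theorem. The one genuinely substantive ingredient is Step~2 --- not merely the identification of $\HH_*(\comg{R}{J}{1/m})$ with Kähler forms, but the fact that it is realised by a \emph{mixed-complex} map sending~$B$ to the de Rham~$d$, so that \cite{lod}*{Proposition~2.5.15} upgrades the Hochschild quasi-isomorphism to a periodic-cyclic one; this is precisely the content already extracted after Theorem~\ref{the:HKR_dagger}. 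The reduction in Step~1 and the commutation of periodification with homotopy limits in Step~3 are routine.
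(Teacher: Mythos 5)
Your proof is correct and follows essentially the same route as the paper: reduce to a finite generating set via Corollary~\ref{cor:anypres}, apply the levelwise identification of the cyclic complex with the $2$\nb-periodified de~Rham complex coming from Theorem~\ref{the:HKR_dagger} and the mixed-complex argument of \cite{lod}*{Proposition~2.5.15}, and pass to homotopy limits. If anything, you are slightly more careful than the paper, which cites~\eqref{eq:HP_Rdagger} (stated only for $J=\dvgen R$) where the general-$J$ version of that discussion is what is actually needed — a point you flag and resolve explicitly.
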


\begin{proof}
  By Corollary~\ref{cor:anypres}, the complexes
  \(\chain^\rig(A)\) and~\(\chaindR^\rig(A)\) are quasi-isomorphic
  to the homotopy limits of the complexes
  \[
  CC(\comg{\dvr[S]}{J^S}{1/m},B+b)\quad\text{and}\quad
  (\comg{\dvr[S]}{J^S}{\nicefrac1m} \otimes \Omega^*_{\dvr[S]},d),
  \]
  respectively, for a set~$S$ of generators.  We may assume that~$S$
  is finite.  Then, by~\eqref{eq:HP_Rdagger}
  \(CC(\comg{\dvr[S]}{J^S}{1/m},B+b)\) is quasi-isomorphic to the
  de~Rham complex \((\comg{\dvr[S]}{J^S}{\nicefrac1m} \otimes
  \Omega^*_{\dvr[S]},d)\) made \(2\)\nb-periodic.  Homotopy
  invariance (Proposition
  \ref{pro:dagger-continuous_homotopy_HP}.(a)) shows that the
  de~Rham complex for $\comg{\dvr[S]}{J^S}{1/m}$ is homotopy
  equivalent to the one for $\comg{\dvr[A]}{J}{1/m}$.  The homotopy
  limit respects quasi-isomorphisms.  This implies the assertion.
\end{proof}

Our construction is invariant under polynomial homotopies of the
underlying $\resf$\nb-algebras:

\begin{proposition}
  \label{pro:homo_homotopy-invariant}
  Let \(A_1,A_2\) be finitely generated commutative
  \(\resf\)\nb-algebras and \(\alpha\in[0,1]\) rational.
  Let \(f_0,f_1\colon A_1\rightrightarrows A_2\) be unital
  homomorphisms that are polynomially homotopic, that is, there is
  a unital homomorphism \(F\colon A_1 \to A_2[t]\) with \(\ev_t\circ
  F = f_t\) for \(t=0,1\).  Then the induced bounded chain maps
  \(\chain^\alpha(f_0)\) and~\(\chain^\alpha(f_1)\) are chain homotopic.
  So are \(\chain^\rig(f_0)\) and~\(\chain^\rig(f_1)\).  Thus
  \(\homo^\alpha(f_0)= \homo^\alpha(f_1)\) and \(\homo^\rig(f_0)=
  \homo^\rig(f_1)\), and similarly for~\(\homdR\).
\end{proposition}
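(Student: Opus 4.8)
The plan is to lift the polynomial homotopy $F\colon A_1\to A_2[t]$ to a \emph{dagger-continuous} homotopy between the bornological algebras that define $\chain^\alpha(f_0)$ and $\chain^\alpha(f_1)$, and then to quote Proposition~\ref{pro:dagger-continuous_homotopy_HP}. Write $p_i\colon\dvr[A_i]\to A_i$ for the canonical surjection, $J_i\defeq\ker p_i$ (so $\dvgen\in J_i$), and let $\phi_0,\phi_1\colon\comg{\dvr[A_1]}{J_1}{\alpha}\to\comg{\dvr[A_2]}{J_2}{\alpha}$ be the bounded unital homomorphisms induced by $\dvr[f_0]$ and $\dvr[f_1]$; by construction $\chain^\alpha(f_t)$ is the chain map induced by $\phi_t$, and $\chaindR^\alpha(f_t)$ is the chain map on de~Rham complexes induced by $\phi_t$. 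By Proposition~\ref{pro:dagger-continuous_homotopy_HP}(b) it is enough to produce a bounded unital algebra homomorphism $H\colon\comg{\dvr[A_1]}{J_1}{\alpha}\to\comg{\dvr[A_2]}{J_2}{\alpha}\hotimes\dvr[t]^\updagger$ with $(\id\hotimes\ev_t)\circ H=\phi_t$ for $t\in\{0,1\}$: this yields a bounded chain homotopy between $\chain^\alpha(f_0)$ and $\chain^\alpha(f_1)$, whence $\homo^\alpha(f_0)=\homo^\alpha(f_1)$.

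To build $H$, I would first lift $F$ along $p_2\otimes\id_{\dvr[t]}\colon\dvr[A_2][t]\to A_2[t]$. Since $\dvr[A_1]$ is free on the set $A_1$, a choice of lift $\widetilde F(a)\in\dvr[A_2][t]$ of $F(a)$ for every $a\in A_1$ defines a $\dvr$\nb-algebra homomorphism $\widetilde F\colon\dvr[A_1]\to\dvr[A_2][t]$ with $(p_2\otimes\id)\circ\widetilde F=F\circ p_1$, hence $\widetilde F(J_1)\subseteq\ker(p_2\otimes\id)=J_2\dvr[A_2][t]$. Such a lift does not in general recover $\dvr[f_0],\dvr[f_1]$ under $\ev_0,\ev_1$, but the discrepancy lies in $J_2$: putting $j^0_a\defeq\ev_0\widetilde F(a)-f_0(a)$ and $j^1_a\defeq\ev_1\widetilde F(a)-f_1(a)$, which lie in $J_2$ because $p_2(\ev_t\widetilde F(a))=\ev_tF(a)=f_t(a)=p_2(f_t(a))$, the homomorphism $\widetilde F'\colon\dvr[A_1]\to\dvr[A_2][t]$ determined on generators by $a\mapsto\widetilde F(a)-(1-t)\,j^0_a-t\,j^1_a$ still satisfies $(p_2\otimes\id)\circ\widetilde F'=F\circ p_1$, so $\widetilde F'(J_1)\subseteq J_2\dvr[A_2][t]$, while now $\ev_0\circ\widetilde F'=\dvr[f_0]$ and $\ev_1\circ\widetilde F'=\dvr[f_1]$ on the nose (two homomorphisms agreeing on generators agree).

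By Proposition~\ref{pro:lambda_functorial}, $\widetilde F'$ extends uniquely to a bounded homomorphism $\comg{\dvr[A_1]}{J_1}{\alpha}\to\comg{\dvr[A_2][t]}{J_2\dvr[A_2][t]}{\alpha}$. As $\dvr[A_2]$ carries the fine bornology, Corollary~\ref{cor:lambda_tensor_dagger} (its technical hypothesis then being automatic) together with Theorem~\ref{the:MW_completion} identifies the target with $\comg{\dvr[A_2]}{J_2}{\alpha}\hotimes\comling{\dvr[t]}=\comg{\dvr[A_2]}{J_2}{\alpha}\hotimes\dvr[t]^\updagger$, and this identification intertwines $\ev_t\colon\dvr[A_2][t]\to\dvr[A_2]$ with $\id\hotimes\ev_t$ (again by uniqueness of extensions). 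The composite is the desired $H$, so the $\chain^\alpha$ and $\homo^\alpha$ assertions follow, and likewise the $\chaindR^\alpha$ and $\homdR^\alpha$ ones via Proposition~\ref{pro:dagger-continuous_homotopy_HP}(a). One caveat: part~(a) is phrased for finitely generated $R,S$, whereas $\dvr[A_i]$ need not be finitely generated; to apply it I would first replace $\dvr[A_i]$ by $\dvr[S_i]$ for a finite generating set $S_i\subseteq A_i$ using Corollary~\ref{cor:anypres}, running the same lift-and-correct construction for $\dvr$\nb-algebra lifts $\dvr[S_1]\to\dvr[S_2]$ and $\dvr[S_1]\to\dvr[S_2][t]$ of $f_t$ and $F$, and transporting back (the relevant squares commuting up to dagger homotopy, as in the proof of Proposition~\ref{pro:free_presentation_independent}).

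For $\chain^\rig$, $\homo^\rig$ and their de~Rham counterparts, the point is that $\widetilde F'$ is a single $\dvr$\nb-algebra homomorphism, independent of $\alpha$; by uniqueness of extensions the homotopies $H$ for $\alpha=\nicefrac1m$ commute with the transition maps $\sigma_{\nicefrac1{(m+1)},\nicefrac1m}$, and the chain homotopy delivered by Proposition~\ref{pro:dagger-continuous_homotopy_HP}(b) is natural in the target algebra---it is assembled from the K\"unneth equivalence $CC(Q\hotimes\dvr[x]^\updagger)\simeq CC(Q)\hotimes CC(\dvr[x]^\updagger)$ and the fixed contraction of $CC(\dvf\hotimes\dvr[x]^\updagger)$ of Lemma~\ref{lem:HP_dagger_1-variable}---so these chain homotopies assemble into one on $\prod_{m\ge1}\chain^{\nicefrac1m}(A_i)$ commuting with the chain map~\eqref{eq:hoprojlim_definition}, and hence descend to the homotopy limits; the de~Rham case is the same. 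The main obstacle is exactly the endpoint matching of the second paragraph: a naive lift of $F$ recovers $\dvr[f_0],\dvr[f_1]$ only up to a correction in $J_2$, and one cannot simply concatenate the three dagger homotopies one would otherwise string together, because dagger-continuous homotopy is not visibly transitive (overconvergent one-variable functions need not glue across an interior point); absorbing the correction into the lift circumvents this. A lesser technical point is verifying the naturality in $Q$ of the chain homotopy in Proposition~\ref{pro:dagger-continuous_homotopy_HP}(b), which is needed only for the homotopy-limit statements.
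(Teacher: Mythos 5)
Your proof is correct and follows essentially the same route as the paper's: lift the polynomial homotopy to a homomorphism \(\dvr[A_1]\to\dvr[A_2][t]\) carrying \(J_1\) into \(J_2\otimes\dvr[t]\), extend to the completions via Proposition~\ref{pro:lambda_functorial} and Corollary~\ref{cor:lambda_tensor_dagger}, and invoke Proposition~\ref{pro:dagger-continuous_homotopy_HP}. The one genuine difference is your endpoint correction. The paper takes the canonical lift \(F_*(a)=\sum b_n\otimes t^n\) for \(F(a)=\sum b_n t^n\) with \(b_n\in A_2\subseteq\dvr[A_2]\); this matches \(\dvr[f_0]\) on the nose at \(t=0\), but at \(t=1\) it produces the sum of the generators \(b_n\) inside the free algebra rather than the single generator labelled by \(f_1(a)\), so \(\ev_1\circ F_*\) agrees with \(\dvr[f_1]\) only modulo \(J_2\) --- precisely the discrepancy you isolate. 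The paper passes over this silently; it can be repaired by a further linear-interpolation homotopy as in the proof of Proposition~\ref{pro:free_presentation_independent} and then composing the resulting \emph{chain} homotopies, so no concatenation of dagger-continuous homotopies is ever required (chain homotopy is transitive, which somewhat deflates the obstacle you describe), but your device of absorbing the correction terms \((1-t)j^0_a+t\,j^1_a\) into the lift makes the endpoints match exactly and is the cleaner fix. Your additional care about the finite-generation hypothesis in Proposition~\ref{pro:dagger-continuous_homotopy_HP}(a) (since \(\dvr[A_i]\) is free on an infinite set) and about the compatibility of the level-wise chain homotopies with the transition maps \(\sigma_{\nicefrac1{(m+1)},\nicefrac1m}\) in the homotopy limit addresses points the paper's proof leaves implicit.
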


\begin{proof}
  For \(i=1,2\), let \(p_i\colon \dvr[A_i]\to A_i\) be the canonical
  homomorphisms and let \(J_i \defeq \ker(p_i) \triqui A_i\).  The
  homomorphism \(F\colon A_1\to A_2[t]\) induces a unital
  \(\dvr\)\nb-algebra homomorphism \(F_*\colon \dvr[A_1] \to
  \dvr[A_2][t] = \dvr[A_2]\otimes \dvr[t]\), which maps the
  generator \(a\in A_1\) to \(\sum b_n\otimes t^n\) with \(b_n\in
  A_2\subseteq \dvr[A_2]\) if \(F(a) = \sum b_n t^n \in A_2[t]\).
  We have \((p_2\otimes \id_{\dvr[t]})\circ F_* = F\circ p_1\)
  because this holds on all generators \(a\in A_1\).  Hence \(F(J_1)
  \subseteq J_2 \otimes \dvr[t]\).  Now
  Corollary~\ref{cor:lambda_tensor_dagger} and
  Proposition~\ref{pro:lambda_functorial} show that~\(F\) extends
  uniquely to a bounded unital algebra homomorphism
  \(\bar{F}_*\colon \comg{\dvr[A_1]}{J}{\alpha} \to
  \comg{\dvr[A_2]}{J}{\alpha} \hotimes \dvr[t]^\updagger\).
  Thus~\(\bar{F}_*\) is a dagger-continuous homotopy between the
  homomorphisms induced by \(f_0\) and~\(f_1\).  Now
  Proposition~\ref{pro:dagger-continuous_homotopy_HP} shows that the
  induced maps \(\chain^\alpha(f_0)\) and \(\chain^\alpha(f_1)\) are
  chain homotopic, and similarly for \(\chaindR^\alpha\).
  Since this holds for all~\(\alpha\) the induced maps
  \(\chain^\rig(f_0)\) and \(\chain^\rig(f_1)\) are chain homotopic
  as well, and similarly for~\(\chaindR^\rig\).  Then the
  induced maps on homology are equal.
\end{proof}

\section{Comparison to rigid cohomology}
\label{sec:compare_rigid}

Let~\(A\) be a finitely generated, commutative \(\resf\)\nb-algebra.
We are going to identify \(\homdR^\rig(A)\) with Berthelot's
rigid cohomology of~\(A\), as our notation already suggests.
Berthelot defines rigid cohomology of~\(A\), or more generally of
separated \(\resf\)\nb-schemes of finite type,
in~\cite{Berthelot-finitude}.  His construction is quite involved.
Even in the affine case, it explicitly uses non-affine schemes.
Gro\ss{}e-Kl\"onne's theory of dagger spaces (or rigid analytic
spaces with overconvergent structure sheaf) \cite{gkdagger}
simplifies Berthelot's construction a little bit.  In the following,
we describe this simplified construction.  We begin by recalling the
relevant things from dagger geometry.

%\subsection{Recollections from dagger geometry}

For $n\in\N$ the \emph{Washnitzer algebra} over~$\dvf$ is
\[
W_{n} \defeq \dvr[x_{1}, \ldots, x_{n}]^{\updagger} \otimes_{\dvr} \dvf.
\]
We equip $\dvr[x_{1}, \ldots, x_{n}]^{\updagger}$ with the bornology
as at the beginning of Section~\ref{subsec:dag} and~$W_{n}$
with the induced bornology.  A \emph{dagger} $\dvf$\nb-algebra is a
quotient of some~$W_{n}$ by an ideal.

Let~$L$ be a dagger $\dvf$\nb-algebra.  We denote the set of maximal
ideals of~$L$ by~$\Sp(L)$.  We denote an element in~$\Sp(L)$ either
by $x$ or by~$\fm_{x}$ depending on whether we think of it as a
point~$x$ in the set~$\Sp(L)$ or as a maximal ideal $\fm_{x} \triqui
L$.

Consider an element~$f$ of the dagger algebra~$L$ and $x \in
\Sp(L)$.  The \emph{residue field} $\dvf(x) \defeq L / \fm_{x}$ is a
finite field extension of $\dvf$.  Hence the absolute value
$\abs{\hphantom{x}}$ on~\(\dvf\) extends uniquely to an absolute
value $\abs{\hphantom{x}}\colon \dvf(x) \to \R_{\geq 0}$.  The image
of~$f$ under the composition $L \to \dvf(x) \to \R_{\geq 0}$ will be
denoted by~$\abs{f(x)}$.

\comment{Replaced~\(V\) by~\(T\) for special subsets to avoid
  notational conflict.}
A subset $T \subseteq X = \Sp(L)$ is called \emph{special} if there
are elements $f_{0}, \ldots, f_{r} \in L$ which generate the unit
ideal and such that
\begin{equation}
  \label{eq:special-subset}
  T = \{ x \in X : \abs{f_{i}(x)} \leq \abs{f_{0}(x)} \text{ for all } i = 1, \ldots, r\}.
\end{equation}
Special subsets are stable under finite intersections.  A subset $U
\subseteq X$ is called \emph{admissible open} if there is a covering
$(T_{i})_{i}$ of~$U$ by special subsets satisfying the following
condition.  For every morphism of dagger algebras $L_1 \to L_2$ such
that the induced map $\Sp(L_2) \to \Sp(L_1)$ factors through~$U$,
the induced covering of~$\Sp(L_2)$ admits a finite subcovering.  A
covering of the admissible open subset~$U$ by admissible open
subsets~$U_{i}$ is called \emph{admissible}, if for every $L_1 \to
L_2$ as above, the covering of $\Sp(L_2)$ induced by the covering
$(U_{i})_{i}$ admits a finite refinement by special subsets.  The
admissible open subsets with the admissible coverings form a
Grothendieck pretopology.  By a \emph{sheaf} on~$X$ we mean a sheaf
for this pretopology.

To a special subset~$T$ as in~\eqref{eq:special-subset} one
associates the dagger algebra
\begin{equation}
  \label{eq:associated-dagger-algebra}
  \Gamma(T, \cO_{X}) \defeq L \langle x_{1}, \ldots, x_{r}
  \rangle^{\updagger} / \langle f_{i} - f_{0} x_{i},\ i = 1, \ldots, r \rangle.
\end{equation}
Here $L\langle x \rangle^{\updagger} \defeq L \hotimes
\dvr[x]^{\updagger}$.  The natural map $L \to \Gamma(T, \cO_{X})$
induces a bijection $\Sp(\Gamma(T, \cO_{X})) \xrightarrow{\cong} T
\subseteq X$.  Tate's Acyclicity Theorem implies that the
construction $T \mapsto \Gamma(T, \cO_{X})$ extends uniquely to a
sheaf of rings~$\cO_{X}$ on~$X$, called the \emph{structure sheaf}.
The pair $(X,\cO_{X})$ is called an \emph{affinoid dagger space}.
One may glue affinoid dagger spaces along admissible open
subsets to get the notion of a dagger space.  For example, every
admissible open subset $U \subseteq X$ with structure sheaf given by
the restriction of~$\cO_{X}$ is a dagger space, which is in general
not affinoid.

Let~$L$ be a dagger $\dvf$\nb-algebra.  There is a
universal $\dvf$\nb-derivation from~$L$ to a finite $L$\nb-module,
denoted by $d\colon L \to \Omega^{\updagger,1}_{L/\dvf}$.  This
construction extends uniquely to a coherent sheaf
$\Omega^{\updagger,1}_{X/\dvf}$ on any dagger space~$X$ over~$\dvf$.
In the usual way, we obtain from this a de Rham complex
$\Omega^{\updagger}_{X/\dvf}$ of coherent sheaves on~$X$.
Explicitly, if $L= W_{n} / \langle f_{1}, \ldots, f_{r} \rangle $,
then
\[
\Omega^{\updagger,1}_{L/\dvf} \cong (L \otimes_{\dvr[x_{1}, \ldots, x_{n}]} \Omega^{1}_{\dvr[x_{1}, \ldots, x_{n}]/\dvr} ) / \langle df_{1}, \ldots, df_{r} \rangle.
\]
\begin{remark}
    If~$R$ is a finitely generated commutative
    $\dvr$\nb-algebra, then~$\ul{R^\updagger}$ is a dagger
    $\dvf$\nb-algebra and the de Rham complex
    $\Omega^{\updagger,\star}_{\ul{R^\updagger}/\dvf}$, just
    introduced, is the one of Definition~\ref{def:dR}.
\end{remark}

Rigid cohomology is defined as the de Rham cohomology of certain
dagger spaces that arise as tubes, which are defined by means of
specialisation maps.  We begin by recalling the latter.  These can
be defined in a more general setting, but to avoid technicalities,
we restrict to the situation we need.  Let~$R$ be a finitely
generated commutative $\dvr$\nb-algebra.  Then $\ul{R^{\updagger}}
\defeq R^{\updagger} \otimes_{\dvr} \dvf$ is a dagger
$\dvf$\nb-algebra, and we write $X = \Sp(\ul{R^{\updagger}})$.  The
\emph{specialisation map} is a map of sets
\[
\spec\colon X \to \Spec(R / \dvgen R),
\]
which is continuous for the Grothendieck pretopology on~$X$ and the
Zariski topology on $\Spec(R / \dvgen R)$.  It is constructed as
follows.  The residue field $\dvf(x)$ of a point $x\in X$ is a
finite extension of~$\dvf$.  We denote its valuation ring by
$\dvr(x) \subseteq \dvf(x)$ and its residue field by $\resf(x)$.
Since the elements of~$R^{\updagger}$ are power bounded, their
images in $\dvf(x)$ lie in $\dvr(x)$.  Now the kernel of the
composite map \(R \to \dvr(x) \to \resf(x)\) is a prime ideal of~$R$
containing $\dvgen R$, hence corresponds to a unique prime ideal of
$R / \dvgen R$.  This is the desired point $\spec(x) \in \Spec(R /
\dvgen R)$.

We are now ready to discuss tubes.  In the situation of the
preceding paragraph, let~$Z$ be a closed subset of $\Spec(R / \dvgen
R)$.  The \emph{tube of~$Z$} in $X = \Sp(\ul{R^{\updagger}})$ is the
subset
\begin{equation}
  \label{eq:tube}
  ]Z[ \defeq \spec^{-1}(Z) \subseteq X.
\end{equation}
It is an admissible open subset of~$X$ (see
\cite{berth}*{Prop.~1.1.1}).  Hence it inherits the structure of a
dagger space.

\begin{remark}
  \label{rem:explicit-epsilon-tubes}
  We can describe tubes more explicitly.  Let $f_{1}, \ldots, f_{r}
  \in R$ be elements whose images in $R / \dvgen R$ define the
  closed subset $Z \subseteq \Spec(R / \dvgen R)$.  We may consider
  the~$f_{i}$ as elements of~$\ul{R^{\updagger}}$.  From the
  construction of the specialisation map, we see that a point $x \in
  X$ belongs to the tube~$]Z[$ if and only if $\abs{f_{i}(x)} < 1$
  for all $i = 1, \ldots, r$, that is,
  \[
  ]Z[ =  \{ x\in X :  \abs{f_{i}(x)} < 1 \text{ for } i  = 1, \ldots, r\}.
  \]
  Hence~$]Z[$ is the increasing union $]Z[ = \bigcup_{n \geq 1}
  [Z]_{\pepsilon^{1/n}}$, where $[Z]_{\pepsilon^{1/n}}$ denotes the
  special subset
  \begin{align*}
    [Z]_{\pepsilon^{1/n}} &\defeq \{  x\in X : \abs{f_{i}(x)} \leq \pepsilon^{1/n} \text{ for } i  = 1, \ldots, r\}  \\
    &= \{  x\in X : \abs{f_{i}^{n}(x)} \leq \pepsilon \text{ for } i  = 1, \ldots, r\}.
  \end{align*}
  This gives an admissible covering of~$]Z[$ (see
  \cite{berth}*{Prop.~1.1.9}).  Using
  \eqref{eq:associated-dagger-algebra} and the remarks there, we get
  \[
  [Z]_{\pepsilon^{1/n}} \cong \Sp( \ul{R[x_{1}, \ldots, x_{r}]^{\updagger}} / \langle f_{i}^{n}  - \dvgen x_{i}, i = 1, \ldots, r \rangle ).
  \]
\end{remark}

Finally, we can define rigid cohomology.  Let~$A$ be a
finitely generated commutative \(\resf\)\nb-algebra.  Choose a
smooth commutative \(\dvr\)\nb-algebra~$R$ together with a
surjection $p\colon R \to A$.  As before, we let~$X$ be
$\Sp(\ul{R^{\updagger}})$.  Via~$p$ we identify $\Spec(A)$ with a
closed subscheme~$Z$ of $\Spec(R/\dvgen R)$ and consider its
tube~$]Z[$ in~$X$ (by construction, $]Z[$ only depends on the
underlying set of the scheme~$Z$).

\begin{definition}
  \label{def:rigid-cohomology}
  The \emph{rigid cohomology} of~$A$ with coefficients in~$\dvf$ is
  the de Rham cohomology of the tube~$]Z[$,
  \[
  H_\rig^*(A,\dvf) \defeq \Hy^*(]Z[,\Omega^{\updagger}_{]Z[/\dvf}).
  \]
\end{definition}

\begin{remark}
  Berthelot's original definition is
  \cite{Berthelot-finitude}*{(1.3.1)}.  It depends on the additional
  choice of a compactification $\overline Z$ of $Z$.  This
  compactification is used to define a functor~$j^{\updagger}$ which
  associates to a sheaf on the tube~$]\overline Z[$ the sheaf of its
  overconvergent sections.  This is necessary because Berthelot
  works with rigid spaces.  In contrast, in Gro{\ss}e-Kl\"onne's
  theory of dagger spaces, the overconvergence is already built in.
  This is why the compactification or the functor~$j^{\updagger}$ do
  not appear in the formula in
  Definition~\ref{def:rigid-cohomology}.  Berthelot proves the
  independence of choices up to isomorphism in
  \cite{Berthelot-finitude}*{\S1}.

  By \cite{gkdr}*{Proposition~3.6}, the rigid cohomology groups
  defined in Definition~\ref{def:rigid-cohomology} are canonically
  isomorphic to the groups defined by Berthelot.  The isomorphism is
  functorial in the following sense.  Suppose that $f\colon A_{1}
  \to A_{2}$ is a morphism of finitely generated commutative
  $\resf$\nb-algebras, $R_{1}, R_{2}$ are smooth commutative
  $\dvr$\nb-algebras with surjections $p_{i}\colon R_{i} \to A_{i}$,
  $i = 1,2$, and there is a homomorphism $F\colon R_{1} \to
  R_{2}$ such that the diagram
  \[
  \xymatrix{
    R_{1} \ar[d]^{p_{1}} \ar[r]^{F}   & R_{2} \ar[d]^{p_{2}}  \\
    A_{1} \ar[r]^{f}   & A_{2}
  }
  \]
  commutes.  Then~$F$ induces a morphism between the tubes
  $]\Spec(A_{2})[ \to ]\Spec(A_{1})[$, which in turn induces a
  homomorphism $H_\rig^*(A_{1},\dvf) \to H_\rig^*(A_{2},\dvf)$.
  This homomorphism coincides with the one constructed by Berthelot.
\end{remark}

The main result of this section is the following.

\begin{theorem}
  \label{thm:rig}
  Let~\(A\) be a finitely generated, commutative
  \(\resf\)\nb-algebra.  There are natural isomorphisms
  \(\homdR_j^\rig(A) \cong H_\rig^{j}(A,\dvf)\).  Hence
    \[
    \homo_j^\rig(A) \cong \bigoplus_{n\ge0} H_\rig^{2n+j}(A,\dvf).
    \]
\end{theorem}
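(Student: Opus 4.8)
The plan is to compute $\homdR^\rig(A)$ from a convenient \emph{smooth} presentation of~$A$ and then match the resulting pro-system of dagger algebras, tube by tube, with Berthelot's tube of $\Spec A$. Fix a finite generating set~$S$ of~$A$ and put $R\defeq\dvr[S]$, a polynomial $\dvr$-algebra and hence smooth over~$\dvr$; let $p\colon R\to A$ be the induced surjection and $J\defeq\ker p$, so $\dvgen\in J$. By Corollary~\ref{cor:anypres} (de~Rham version), $\chaindR^\rig(A)$ is naturally chain homotopy equivalent to the homotopy limit of the de~Rham complexes $\comg{R}{J}{\nicefrac1m}\otimes_{\ul R}\Omega^*_{\ul R}$ (Definition~\ref{def:dR}), $m\ge1$, along the structure maps $\sigma_{\nicefrac1{(m+1)},\nicefrac1m}$. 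By Theorem~\ref{the:MW_tube} and Example~\ref{exa:alpha_tube}, $\comg{R}{J}{\nicefrac1m}\cong\dvf\otimes\tub{R}{J^m}{1}^\updagger$ with $\tub{R}{J^m}{1}=\sum_{k\ge0}\dvgen^{-k}J^{mk}\subseteq\ul R$. Choosing $f_1,\dots,f_r\in R$ whose images in $R/\dvgen R$ generate the ideal of $Z\defeq\Spec A\subseteq\Spec(R/\dvgen R)$, so that $J=(\dvgen,f_1,\dots,f_r)$, an elementary computation identifies $\tub{R}{J^m}{1}$ with the $R$-subalgebra $R[\dvgen^{-1}f^{\mathbf a}:\abs{\mathbf a}=m]$ of $\ul R$ generated by $\dvgen^{-1}$ times the monomials of degree~$m$ in the~$f_i$.

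Next I would match this pro-system with the affinoid pieces of the tube. By Remark~\ref{rem:explicit-epsilon-tubes}, $]Z[$ is the countable increasing admissible union of the special subsets $[Z]_{\pepsilon^{1/m}}$, and, since the monomials $\mathbf a=m\mathbf e_i$ recover the defining inequalities, $[Z]_{\pepsilon^{1/m}}=\{x:\abs{f^{\mathbf a}(x)}\le\pepsilon\text{ for all }\abs{\mathbf a}=m\}$. Hence by~\eqref{eq:associated-dagger-algebra}, $\Gamma([Z]_{\pepsilon^{1/m}},\cO_X)=\ul{R^\updagger}\langle y_{\mathbf a}\rangle^\updagger/\langle f^{\mathbf a}-\dvgen y_{\mathbf a}\rangle$. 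Rewriting $\ul{R^\updagger}\langle y_{\mathbf a}\rangle^\updagger=\dvf\otimes R[y_{\mathbf a}]^\updagger$ by Proposition~\ref{pro:tensor_dagger}, moving the quotient inside the weak completion by Lemma~\ref{lem:onto}, and using that the $\dvgen$-torsion of $B\defeq R[y_{\mathbf a}]/\langle f^{\mathbf a}-\dvgen y_{\mathbf a}\rangle$ --- whose torsion-free quotient is exactly $\tub{R}{J^m}{1}$ via $y_{\mathbf a}\mapsto\dvgen^{-1}f^{\mathbf a}$ --- is finitely generated, annihilated by a power of $\dvgen$, and thus dies under $({-})^\updagger\otimes\dvf$ (again Lemma~\ref{lem:onto}), I get a natural isomorphism of dagger $\dvf$-algebras $\Gamma([Z]_{\pepsilon^{1/m}},\cO_X)\cong\dvf\otimes\tub{R}{J^m}{1}^\updagger\cong\comg{R}{J}{\nicefrac1m}$. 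The restriction maps along $[Z]_{\pepsilon^{1/m}}\hookrightarrow[Z]_{\pepsilon^{1/(m+1)}}$ and the maps $\sigma_{\nicefrac1{(m+1)},\nicefrac1m}$ are both the unique $\dvf$-algebra maps restricting to the canonical map from the weak completion of~$R$, so these isomorphisms are compatible along the two towers. Finally, since $\dvf\otimes\tub{R}{J^m}{1}=\ul R$, Remark~\ref{rem:dR} together with the identification of the de~Rham complex of a dagger algebra of the form $\ul{B^\updagger}$ shows that the de~Rham complex of $\comg{R}{J}{\nicefrac1m}$ in the sense of Definition~\ref{def:dR} is $\Gamma([Z]_{\pepsilon^{1/m}},\Omega^{\updagger}_{[Z]_{\pepsilon^{1/m}}/\dvf})$, compatibly with transition maps.

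It then remains to pass to the tube itself: Tate-type acyclicity on affinoid dagger spaces gives $\mathbb{R}\Gamma([Z]_{\pepsilon^{1/m}},\Omega^{\updagger})\simeq(\Gamma([Z]_{\pepsilon^{1/m}},\Omega^{\updagger}),d)$, and since $]Z[=\bigcup_m[Z]_{\pepsilon^{1/m}}$ is a countable increasing admissible union, the sheaf property together with the Milnor $\lim^1$-sequence yields $\mathbb{R}\Gamma(]Z[,\Omega^{\updagger}_{]Z[/\dvf})\simeq\holim_m(\Gamma([Z]_{\pepsilon^{1/m}},\Omega^{\updagger}),d)$; this comparison, and the identification of Definition~\ref{def:rigid-cohomology} with Berthelot's rigid cohomology, are the inputs I take from Gro\ss{}e-Kl\"onne~\cite{gkdr}. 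Combining the three steps, $\chaindR^\rig(A)$ is quasi-isomorphic to the complex computing $\Hy^*(]Z[,\Omega^{\updagger}_{]Z[/\dvf})$, so $\homdR_j^\rig(A)\cong H_\rig^j(A,\dvf)$. Naturality in~$A$ follows because Corollary~\ref{cor:anypres}, the identification $\comg{R}{J}{\nicefrac1m}\cong\Gamma([Z]_{\pepsilon^{1/m}},\cO_X)$, and the functoriality of rigid cohomology (via commuting squares $R_1\to R_2$ of free presentations lifting a given $f\colon A_1\to A_2$) are all functorial; a short compatibility check over a cofinal family of presentations upgrades the isomorphism for a fixed~$S$ to a natural one. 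The second displayed isomorphism is then Theorem~\ref{the:quasi}: $\chain^\rig(A)$ and $\chaindR^\rig(A)$ made $2$-periodic are quasi-isomorphic, whence $\homo_j^\rig(A)\cong\bigoplus_{n\ge0}\homdR^\rig_{2n+j}(A)\cong\bigoplus_{n\ge0}H_\rig^{2n+j}(A,\dvf)$.

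The hard part will be the dagger-algebra matching of the second step: showing that the tube completion $\comg{R}{J}{\nicefrac1m}=\dvf\otimes\tub{R}{J^m}{1}^\updagger$ is literally the algebra of overconvergent functions on the special subset $[Z]_{\pepsilon^{1/m}}$. This needs the precise description of $\tub{R}{J^m}{1}$ as generated over~$R$ by $\dvgen^{-1}$ times \emph{all} degree-$m$ monomials in the~$f_i$ (not merely the $f_i^m$), careful bookkeeping of $\dvgen$-torsion, the interchange of weak completion with a quotient, and the verification that the structure maps of the two towers agree. The secondary difficulty is the passage from the affinoid pieces to the tube, which rests on Gro\ss{}e-Kl\"onne's dagger-space formalism (Tate acyclicity and his comparison of Definition~\ref{def:rigid-cohomology} with Berthelot's theory).
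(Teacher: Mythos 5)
Your proposal is correct and follows essentially the same route as the paper: reduce to a finite presentation via Corollary~\ref{cor:anypres} and Theorem~\ref{the:quasi}, identify \(\comg{R}{J}{\nicefrac1m}\cong\dvf\otimes\tub{R}{J^m}{1}^\updagger\) with the overconvergent functions on an admissible affinoid covering of the tube \(]Z[\) (handling the \(\dvgen\)\nb-torsion with Lemma~\ref{lem:onto}), and then pass from the affinoid pieces to \(]Z[\) using Gro\ss{}e-Kl\"onne's acyclicity and the \(\holim=\varprojlim\) argument for surjective towers. The only (harmless) deviation is in the covering step: you cut out \([Z]_{\pepsilon^{1/m}}\) exactly by all degree\nb-\(m\) monomials \(f^{\mathbf a}\), whereas the paper's Lemma~\ref{lem:covering-of-tube} uses arbitrary generators of \(J^m\) and an interleaving \([Z]'_{\pepsilon^{1/m}}\subseteq[Z]_{\pepsilon^{1/m}}\subseteq[Z]'_{\pepsilon^{1/mr}}\) to see that the resulting special subsets still form an admissible covering.
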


\begin{proof}
  By Theorem~\ref{the:quasi}, the complexes \(\chain^\rig(A)\)
    and~\(\chaindR^\rig(A)\) made \(2\)\nb-periodic are
    quasi-isomorphic.  So the statement about \(\homdR_j^\rig(A)\)
    implies the one about \(\homo_j^\rig(A)\).

  Let \(S \subseteq A\) be a finite set generating~$A$ as
  $\resf$\nb-algebra.  Let \(R\defeq \dvr[S]\), let \(p\colon R\to
  A\) be the homomorphism induced by the inclusion of~\(S\)
  into~\(A\), and let \(J\triqui R\) be the kernel of~\(p\) (the
  ideal~$J$ was denoted by~$J^S$ in
  Section~\ref{sec:homology_residue}).  Since~\(R\) is smooth
  over~\(\dvr\), we may use it to compute rigid cohomology.
  The proof of Theorem~\ref{the:quasi} shows that
    \(\chaindR^\rig(A)\) is quasi-isomorphic to the homotopy limit
    of the complexes \((\comg{\dvr[S]}{J}{\nicefrac1m} \otimes
    \Omega^*_{\dvr[S]},d)\) for \(m\in\N\).

  As before, we write $X = \Sp(\ul{R^{\updagger}})$ and $Z =
  \Spec(A) \subseteq \Spec(R/\dvgen R)$.  Before continuing with the proof of \ref{thm:rig} we relate the
  tube~\eqref{eq:tube} to the bornological
  completions~\(\comg{R}{J}{\alpha}\) using the tube algebras of
  Definition~\ref{def:alpha-tube}.

\begin{lemma}
  \label{lem:covering-of-tube}
  For $m \in \N_{\geq 1}$ the algebra $\comg{R}{J}{\nicefrac1m}$ is
  a dagger $\dvf$\nb-algebra.  The affinoid dagger space
  $\Sp(\comg{R}{J}{\nicefrac1m})$ is an admissible open subset
  of~$X$ contained in~$]Z[$, and
  \[
  ]Z[ = \bigcup_{m \geq 1} \Sp( \comg{R}{J}{\nicefrac1m} )
  \]
  is an admissible covering.
\end{lemma}

\begin{proof}
  Choose generators $g_{1}, \ldots, g_{s}$ of the ideal $J^{m} \triqui R$
  and set
  \[
  S_{m} \defeq R[ y_{1}, \ldots, y_{s} ] / \langle g_{i} - \dvgen y_{i}, i = 1, \ldots, s \rangle.
  \]
  Then $\dvf \otimes S_{m}^{\updagger}$ is a dagger
  $\dvf$\nb-algebra, and by~\eqref{eq:associated-dagger-algebra} and
  the remarks there, the affinoid dagger space $\Sp( \dvf \otimes
  S_{m}^{\updagger} )$ is the special open subset
  \[
  [Z]'_{\pepsilon^{1/m}} \defeq \{ x \in X : \abs{g_{i}(x)} \leq \pepsilon \text{ for all } i = 1, \ldots, s\}
  \]
  of~$X$.  Let $f_{1}, \ldots, f_{r}$ be generators of the
  ideal~$J$.  We use the notation introduced in
  Remark~\ref{rem:explicit-epsilon-tubes}.  From the inclusions of
  ideals \(J^{mr} \subseteq \langle f_{1}^{m} , \ldots, f_{r}^{m}
  \rangle \subseteq J^{m}\) we deduce that \([Z]'_{\pepsilon^{1/m}}
  \subseteq [Z]_{\pepsilon^{1/m}} \subseteq
  [Z]'_{\pepsilon^{1/mr}}\).  Since the $[Z]_{\pepsilon^{1/n}}$, $n
  \in \N_{\geq 1}$, form an admissible covering of~$]Z[$ by
  Remark~\ref{rem:explicit-epsilon-tubes}, it follows formally that
  also the $[Z]'_{\pepsilon^{1/m}}$, $m\in \N_{\geq 1}$, form an
  admissible covering of the tube $]Z[ \subseteq X$.

  Hence it is enough to show that $\dvf \otimes S_{m}^{\updagger}
  \cong \comg{R}{J}{\nicefrac1m}$.  By Theorem~\ref{the:MW_tube} and
  Example~\ref{exa:alpha_tube} we have isomorphisms
  \[
  \comg{R}{J}{\nicefrac1m} \cong \dvf \otimes \tub{R}{J}{\nicefrac1m}^\updagger = \dvf \otimes \tub{R}{J^{m}}{1}^{\updagger}.
  \]
  Moreover, we have a surjective homomorphism $S_{m} \to
  \tub{R}{J^{m}}{1} \subseteq \ul{R}$ induced by $y_{i} \mapsto
  \dvgen^{-1}g_{i}$, $i=1,\ldots, s$.  It induces an isomorphism
  after tensoring with~$\dvf$.  Hence $ \tub{R}{J^{m}}{1} \cong
  S_{m}/I$, where $I \triqui S_{m}$ is the ideal of
  $\dvr$\nb-torsion elements.  Lemma~\ref{lem:onto} implies
  $S_{m}^{\updagger}/IS_{m}^{\updagger} \cong
  \tub{R}{J^{m}}{1}^{\updagger}$ and hence $\dvf \otimes
  S_{m}^{\updagger} \cong \dvf \otimes
  \tub{R}{J^{m}}{1}^{\updagger}$, as desired.
\end{proof}

  Recall that the rigid cohomology of~$A$ is defined as the
  cohomology of the de Rham complex $\Omega^{\updagger}_{]Z[/\dvf}$ on
  the tube~$]Z[$.  We use the admissible covering of~$]Z[$ from
  Lemma~\ref{lem:covering-of-tube} to compute this cohomology.  To
  simplify notation, we set $U_{m} = \Sp( \comg{R}{J}{\nicefrac1m}
  )\subseteq ]Z[$.  Explicitly, we have $\Gamma( U_{m},
  \Omega^{\updagger,l}_{]Z[/\dvf} ) \cong \comg{R}{J}{\nicefrac1m}
  \otimes_{\ul{R}} \Omega_{\ul{R}}^l $.

  Let \( \Omega^{\updagger}_{]Z[/\dvf} \to \cI\) be a Cartan--Eilenberg
  injective resolution in \(\cO_{]Z[}\textup{-Mod}\).  Then
  \(H^*(]Z[,\Omega^{\updagger}_{]Z[/\dvf})\) is the cohomology of the
  complex of global sections \(\Gamma(]Z[,\cI)\).  Let
  \(\Vect_\dvf^{\N^\op}\) be the category of contravariant functors
  \(\N\to\Vect_\dvf\) with natural transformations as morphisms.
  The section functor \(\Gamma(]Z[,{-})\colon \cO_{]Z[}\textup{-Mod}
  \to \Vect(\dvf)\) from sheaves to vector spaces factors as the
  composite of the functor
  \[
  \Gamma(U_\bullet,{-})\colon
  \cO_{]Z[}\textup{-Mod} \to \Vect(\dvf)^{\N^\op},\qquad
  \cS\mapsto \{\Gamma(U_m,\cS_{|U_m})\}_m,
  \]
  followed by
  \[
  \varprojlim\colon \Vect(\dvf)^{\N^\op}\to\Vect(\dvf).
  \]
  Since the~$U_{m}$ are affinoid and the higher cohomology of
  coherent sheaves on affinoid dagger spaces vanishes
  (see~\cite{gkdagger}*{Proposition~3.1}), the map
  \(\Gamma(U_\bullet,\Omega^{\updagger}_{]Z[/\dvf})\to
  \Gamma(U_\bullet, \cI)\) is a levelwise quasi-isomorphism.

  Write~$\holim$ for the homotopy limit construction of a projective
  system of complexes as explained
  around~\eqref{eq:hoprojlim_definition}.  As already mentioned, it
  respects levelwise quasi-isomorphisms.  Collecting what we have
  done so far, we get quasi-isomorphisms
  \begin{align*}
    \chaindR^\rig(A) &\simeq \holim_{m} \comg{R}{J}{\nicefrac1m} \otimes_{\ul{R}} (\Omega^{*}_{\ul{R}},d) \\
    & \cong \holim_{m} \Gamma(U_{m}, \Omega^{\updagger}_{]Z[/\dvf} ) \\
    & \simeq \holim_{m} \Gamma(U_{m}, \cI ).
  \end{align*}

  Since each~\(\cI^l\) is injective, the maps
  \(\Gamma(U_{m+1},\cI^l)\to \Gamma(U_m,\cI^l)\) are surjective.
  This implies that $\holim_{m} \Gamma(U_{m}, \cI ) $ is
  quasi-isomorphic to $\varprojlim_{m} \Gamma(U_{m}, \cI ) \cong
  \Gamma( ]Z[, \cI)$; taking cohomology, we get the theorem.
\end{proof}

\section{A quick route to the construction of complexes computing
  rigid cohomology}
\label{sec:properties}

In Sections \ref{sec:bornologies} and~\ref{sec:borno}, we
have developed a general conceptual framework for treating the kind
of generalized weak completions that are relevant to our approach to
rigid cohomology.  It should be noted however that much of the
material in these sections can be avoided if one is interested only
in a natural description of a complex computing rigid cohomology.
Also, only part of the results in Sections \ref{sec:HH_dagger}
  and~\ref{sec:homology_residue} are needed for that purpose.  We
now sketch a more direct route to the main result in
Section~\ref{sec:compare_rigid}.

\begin{definition}
  Let~$R$ be a $\dvr$\nb-algebra and~$J$ an ideal in~$R$ that
  contains~\(\dvgen^k\) for some $k\in \N$. We define the $J$\nb-adic bornology on~$\ul{R}$
  as the bornology generated by subsets~$S$ of the form
  \[
  S = C\sum_{n\geq 0} \lambda_n M^n,
  \]
  where $C\in \dvf$, $M$ is a finitely generated $\dvr$\nb-submodule
  of~$J$ and $\lambda_n \in \dvf$ are such that $\abs{\lambda_n}\leq
  \pepsilon^{-\alpha n}$ for some $\alpha <1$.
\end{definition}

Let $\comJ{R}{J}$ be the bornological completion of~$\ul{R}$
with respect to this bornology.  The easy fact that this completion
coincides with~$\comg{R}{J}{1}$, as defined in
Definition~\ref{def:lambda_completion} (for the fine bornology
on~$R$), is explained in Lemma~\ref{lem:alpha_bornology}.

Also the following is not difficult to see (compare
Proposition~\ref{pro:dagger_for_quotient}).

\bprop If~$R$ is finitely generated and $J=\dvgen R$, then
$\comJ{R}{J}= \underline{R^\dagger}$. \eprop

Now let~$A$ be a finitely generated, commutative
\(\resf\)\nb-algebra.  We choose a presentation $J\to R \to A$ by a
free commutative $\dvr$\nb-algebra~$R$.  We obtain a projective system
\[
\comJ{R}{J^{n+1}}\to \comJ{R}{J^{n}}
\]
This defines a bornological pro-$\dvf$-algebra $\left(\comJ{R}{J^{n}}\right)$.

\bdefin We define
\bgl\label{eq:HP}\HP_*(A) = \HP_*(\left(\comJ{R}{J^{n}}\right))\egl
\edefin

Here $\HP_*$ for a pro-algebra is defined as the homology of the
homotopy limit of the projective system of cyclic $B-b$-complexes as
in Definition~\ref{def:ccrig}.

We briefly explain why this definition of $\HP_*(A)$ gives the same
result as~$\homo^\rig (A)$ in Definition~\ref{def:ccrig}, and
thus also describes $H_*^\rig(A,\dvf)$ if~$A$ is finitely generated.

First, by definition, one trivially has $\comg{R}{J^n}{1} = \comg{R}{J}{\nicefrac1n}$.
The less trivial result proved in
Proposition~\ref{pro:linear_growth_on_tube}, is that
$\comJ{R}{J^n}=\tub{R}{J}{\nicefrac1n}^\updagger\otimes
\dvf$. The proof of Proposition~\ref{pro:linear_growth_on_tube} also
slightly simplifies for the case $\alpha =\nicefrac1n$ that we need here.
This identification of $\comJ{R}{J^n}$ with the completed tube algebra
yields an isomorphism of $\HP_*(\comJ{R}{J})$ with the homology of
the de Rham complex $\left( \comJ{R}{J}
  \otimes_{\ul{R}}\Omega^*_{\ul{R}},\, d\right)$ as in
\eqref{eq:HP_Rdagger}.

Finally, the discussion in Section~\ref{sec:compare_rigid} remains
valid -- we simply have to replace $\comg{R}{J}{\nicefrac1m}$ there
by $\comg{R}{J^m}{1}=\comJ{R}{J^m}$.  We obtain that $\HP_*(A)$, as
defined in~\eqref{eq:HP}, describes the rigid cohomology of~$A$
made \(2\)\nb-periodic.

Of course, in this discussion, one could also directly replace the
homotopy limit of the cyclic complexes by the homotopy limit of the
corresponding de~Rham complexes and thus avoid using many of the
results in Sections \ref{sec:HH_dagger}
and~\ref{sec:homology_residue}.

\begin{bibdiv}
  \begin{biblist}
    \bibselect{BorCycRig}
  \end{biblist}
\end{bibdiv}
\end{document}